\newtheorem{Thm}{Theorem}[section]
\newtheorem{Prop}[Thm]{Proposition}
\newtheorem{Lem}[Thm]{Lemma}
\newtheorem{Cor}[Thm]{Corollary}
\newtheorem*{MainThm}{Theorem}
\DeclareMathOperator{\Hom}{Hom}
\DeclareMathOperator{\End}{End}
\DeclareMathOperator{\Aut}{Aut}
\DeclareMathOperator{\Ext}{Ext}
\DeclareMathOperator{\ext}{ext}
\DeclareMathOperator{\Spec}{Spec}
\DeclareMathOperator{\Proj}{Proj}
\DeclareMathOperator{\rep}{rep}
\DeclareMathOperator{\inj}{inj}
\DeclareMathOperator{\Fl}{Fl}
\DeclareMathOperator{\Gr}{Gr}
\DeclareMathOperator{\modcat}{mod}
\DeclareMathOperator{\GL}{GL}
\DeclareMathOperator{\Stab}{Stab}
\DeclareMathOperator{\Orb}{Orb}
\DeclareMathOperator{\Transp}{Transp}
\DeclareMathOperator{\Der}{Der}
\DeclareMathOperator{\Ker}{Ker}
\DeclareMathOperator{\Coker}{Coker}
\DeclareMathOperator{\Ima}{Im}
\newcommand{\red}{\mathrm{red}}
\newcommand{\bM}{\mathbb M}
\newcommand{\bC}{\mathbb C}
\newcommand{\bR}{\mathbb R}
\newcommand{\bZ}{\mathbb Z}
\newcommand{\bP}{\mathbb P}
\newcommand{\bA}{\mathbb A}
\newcommand{\ptx}{\mathsf x}
\newcommand{\pty}{\mathsf y}
\newcommand{\ptz}{\mathsf z}
\newcommand{\updot}{^{\hbox{\raise1pt\hbox{\large\bfseries .}}}}
\newcommand{\nfold}[1]{_{\hbox{\raise-1pt\hbox{$\scriptscriptstyle #1$}}}}
\numberwithin{equation}{section}
\renewcommand\subsubsection{\@startsection{subsubsection}{3}%
  \z@{.5\baselineskip\@plus.7\baselineskip}{-.5em}%
  {\normalfont\bfseries}}
\begin{document}

\title{Irreducible components of quiver Grassmannians}

\author{Andrew Hubery}
\address{Bielefeld University, Germany}
\email{hubery@math.uni-bielefeld.de}

\subjclass[2010]{Primary 16G20, 14M15}

\begin{abstract}
We consider the action of a smooth, connected group scheme $G$ on a scheme $Y$, and discuss the problem of when the saturation map $\Theta\colon G\times X\to Y$ is separable, where $X\subset Y$ is an irreducible subscheme. We provide sufficient conditions for this in terms of the induced map on the fibres of the conormal bundles to the orbits. Using jet space calculations, one then obtains a criterion for when the scheme-theoretic image of $\Theta$ is an irreducible component of $Y$.

We apply this result to Grassmannians of submodules and several other schemes arising from representations of algebras, thus obtaining a decomposition theorem for their irreducible components in the spirit of the result by Crawley-Boevey and Schr\"oer for module varieties.
\end{abstract}

\maketitle

\setcounter{tocdepth}{1}
\tableofcontents

\section{Introduction}

In \cite{CBS} Crawley-Boevey and Schr\"oer provided an important generalisation of the work of Kac and Schofield on general properties of quiver representations, extending this to finite-dimensional modules over any finitely-generated algebra (associative and unital, but not necessarily commutative). More precisely, let $K$ be an algebraically-closed field and $\Lambda$ a finitely-generated $K$-algebra. Then there is a scheme $\rep_\Lambda^d$ together with an action of $\GL_d$ such that, for each field extension $L/K$, the $\GL_d(L)$-orbits on $\rep_\Lambda^d(L)$ are in bijection with the isomorphism classes of $d$-dimensional modules over $\Lambda\otimes_KL$. Taking the direct sum of modules yields a `direct sum' morphism
\[ \Theta \colon \GL_{d+e}\times\rep_\Lambda^d\times\rep_\Lambda^e \to \rep_\Lambda^{d+e}, \]
so given irreducible components $X\subset\rep_\Lambda^d$ and $Y\subset\rep_\Lambda^e$, we can construct their `direct sum' to be the irreducible subset
\[ \overline{X\oplus Y} := \overline{\Theta(\GL_{d+e}\times X\times Y)} \subset \rep_\Lambda^{d+e}. \]
The main result of Crawley-Boevey and Schr\"oer was a decomposition theorem for the irreducible components of each $\rep_\Lambda^d$, proving that
\begin{enumerate}
\item every irreducible component can be written as a direct sum of irreducible components whose general representation is indecomposable, and this decomposition is essentially unique.
\item the direct sum $\overline{X\oplus Y}$ of two irreducible components is again an irreducible component if and only if the general representations for $X$ and $Y$ have no extensions with each other.
\end{enumerate}

Of course, there are many other interesting schemes arising from the representation theory of algebras. For example one can consider quiver Grassmannians, or more generally flags of submodules of a fixed module, which in turn have applications to cluster algebras \cite{CC} (the Laurent polynomial describing a cluster variable from a given acyclic seed can be written using Euler characteristics of quiver Grassmannians) as well as quantum groups and Ringel-Hall algebras \cite{Ringel} (when expanding a product of the generators in terms of the basis of root vectors, the coefficients can be written in terms of the number of rational points of quiver flag varieties over finite fields). One can also consider those flags with specified subquotients, and thus the schemes of $\Delta$-filtered modules over a quasi-hereditary algebra. Another example might be the zero sets of homogeneous semi-invariants for quivers \cite{RZ}.

Importantly, in all these examples, we again have an analogue of the direct sum morphism. It is therefore a natural question to ask whether the results of Crawley-Boevey and Schr\"oer can be extended to these other types of schemes.

In this article we answer this question for four such types of schemes, including the schemes $\rep_\Lambda^d$ studied in \cite{CBS}, Grassmannians of submodules in \hyperref[Sec:Grassmannians]{Section \ref*{Sec:Grassmannians}} and more general flags of submodules in \hyperref[Sec:flags]{Section \ref*{Sec:flags}}. Our methods are slightly different than those of Crawley-Boevey and Schr\"oer, though, in that they are based on the fact that the direct sum morphism $\Theta\colon\GL_{d+e}\times X\times Y\to\rep_\Lambda^{d+e}$ is always a separable map.

For Grassmannians of submodules, the theorem reads as follows.

\begin{MainThm}
Let $K$ be algebraically closed, $\Lambda$ a finitely-generated $K$-algebra, and write $\Lambda(2)\subset\bM_2(\Lambda)$ for the subalgebra of upper-triangular matrices. Given a $\Lambda$-module $M$ we define the projective scheme
\[ \Gr_\Lambda\binom Md(R) := \Big\{ U\in\Gr_K\binom Md(R) : U\leq M\otimes_KR\textrm{ is an $\Lambda\otimes_KR$-submodule} \Big\}. \]
Then
\begin{enumerate}
\item the direct sum of representations yields a separable morphism
\[ \Theta \colon \Aut_\Lambda(M\oplus N)\times\Gr_\Lambda\binom Md\times\Gr_\Lambda\binom Ne \to \Gr_\Lambda\binom{M\oplus N}{d+e}. \]
\item every irreducible component of $\Gr_\Lambda\binom Md$ can be written uniquely (up to reordering) as a direct sum $X=\overline{X_1\oplus\cdots\oplus X_n}$, where $M\cong M_1\oplus\cdots\oplus M_n$ and $d=d_1+\cdots+d_n$, and each $X_i\subset\Gr_\Lambda\binom{M_i}{d_i}$ is an irreducible component such that for all $U_i$ in an open dense subset of $X_i$, the corresponding $\Lambda(2)$-module $(U_i\subset M_i)$ is indecomposable.
\item the direct sum $\overline{X\oplus Y}\subset\Gr_\Lambda\binom{M\oplus N}{d+e}$ of two irreducible components $X\subset\Gr_\Lambda\binom Md$ and $Y\subset\Gr_\Lambda\binom Ne$ is again an irreducible component if and only if, for generic $(U,V)\in X\times Y$, every pull-back along some $U\to N/V$ yields a split extension (in the category of $\Lambda(2)$-modules) of $(U\subset M)$ by $(V\subset N)$, and similarly for every pull-back along some $V\to M/U$.
\end{enumerate}
\end{MainThm}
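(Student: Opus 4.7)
The plan is to apply the general criteria developed in the earlier sections of the paper: sufficient conditions, phrased in terms of conormal bundles to orbits, ensuring that the saturation morphism $\Theta\colon G\times X\to Y$ is separable, together with a jet-space criterion for when its scheme-theoretic image is an irreducible component. Throughout I take $G=\Aut_\Lambda(M\oplus N)$ acting on $Y=\Gr_\Lambda\binom{M\oplus N}{d+e}$, with $\Theta$ given by $(g,U,V)\mapsto g\cdot(U\oplus V)$.

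For part (1), the tangent space to $Y$ at a diagonal submodule $W=U\oplus V$ is $\Hom_\Lambda(U\oplus V,(M\oplus N)/(U\oplus V))$, which splits as the direct sum of four pieces indexed by $\{U,V\}\times\{M/U,N/V\}$. The tangent to the source at $(1,U,V)$ contains the diagonal summands $\Hom_\Lambda(U,M/U)\oplus\Hom_\Lambda(V,N/V)$ via the tangent to $\Gr_\Lambda\binom Md\times\Gr_\Lambda\binom Ne$, and the off-diagonal summands $\Hom_\Lambda(U,N/V)\oplus\Hom_\Lambda(V,M/U)$ via block-off-diagonal elements of $\End_\Lambda(M\oplus N)$ acting through $G$. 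I would then verify that the induced map on the fibres of the conormal bundles to the orbits is injective at a generic point, triggering the separability criterion from the introduction.

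For part (2), I exploit the Krull-Schmidt property of the category of $\Lambda(2)$-modules. For an irreducible component $X\subset\Gr_\Lambda\binom Md$, the generic $\Lambda(2)$-module $(U\subset M)$ admits an essentially unique decomposition into indecomposables $(U_i\subset M_i)$; regrouping yields the required decomposition $M=M_1\oplus\cdots\oplus M_n$, $U=U_1\oplus\cdots\oplus U_n$, $d=d_1+\cdots+d_n$. Taking $X_i$ to be the closure of the $\Aut_\Lambda(M_i)$-orbit of $U_i$ in $\Gr_\Lambda\binom{M_i}{d_i}$ produces an irreducible component with indecomposable generic $\Lambda(2)$-module. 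Combined with the separability of $\Theta$ from (1), this forces $X=\overline{X_1\oplus\cdots\oplus X_n}$ (by a dimension count on the image) and uniqueness up to reordering.

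For part (3), I apply the jet-space criterion: $\overline{X\oplus Y}$ is an irreducible component of $\Gr_\Lambda\binom{M\oplus N}{d+e}$ if and only if, at a generic point $U\oplus V$, the tangent space of $\overline{X\oplus Y}$ equals that of the ambient Grassmannian. The obstruction is controlled by the $\Ext^1$-groups in the category of $\Lambda(2)$-modules between $(U\subset M)$ and $(V\subset N)$ in both directions. An element of $\Ext^1_{\Lambda(2)}\bigl((U\subset M),(V\subset N)\bigr)$ is represented geometrically by a pull-back submodule of $M\oplus N$ along some $\Lambda$-linear map $U\to N/V$, and the extension splits precisely when this submodule is $\Aut_\Lambda(M\oplus N)$-conjugate to $U\oplus V$; the symmetric analysis for maps $V\to M/U$ gives the other half. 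The main obstacle I anticipate is the tangent-space analysis of part (1), where one must carefully account for the stabiliser of $U\oplus V$ in $G$ to ensure the conormal-bundle hypothesis genuinely applies; once this is in place, parts (2) and (3) follow rather formally from the general framework.
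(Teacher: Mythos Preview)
Your tangent-space analysis in (1) is where the trouble starts. The claim that off-diagonal blocks of $\End_\Lambda(M\oplus N)$ hit all of $\Hom_\Lambda(U,N/V)\oplus\Hom_\Lambda(V,M/U)$ is false: the map $\Hom_\Lambda(M,N)\to\Hom_\Lambda(U,N/V)$ has cokernel $\overline\Ext(U\updot,V\updot)\subset\Ext^1_{\Lambda(2)}(U\updot,V\updot)$, and this cokernel is precisely the obstruction appearing in (3). Were your description correct, $d\Theta$ would always be surjective and (3) would hold unconditionally. Separability in (1) instead comes from the fact that the normal-space map $\theta$ is the inclusion $\overline\Ext(U\updot,U\updot)\times\overline\Ext(V\updot,V\updot)\hookrightarrow\overline\Ext(U\updot\oplus V\updot,U\updot\oplus V\updot)$, which is always injective; one also needs finiteness of $H$-orbits on $\Orb_G(x)\cap X$, which is Krull--Schmidt for $\Lambda(2)$. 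For (3), since these Grassmannians can be generically non-reduced, equality of tangent spaces at a generic point is not enough: one must show that surjectivity of $\theta$ propagates to all jet spaces $T^{(r)}$. The paper does this by an explicit inductive lifting argument (given a $D_r$-point of the target over $U\oplus V$, extract subquotients of the associated $(r{+}1)(d{+}e)$-dimensional module whose extension classes land in $\overline\Ext$, then conjugate them away one power of $t$ at a time), and this step is not a formality.

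In (2), taking $X_i$ to be the closure of the $\Aut_\Lambda(M_i)$-orbit of a \emph{single} $U_i$ will not produce an irreducible component: such an orbit closure is typically a proper subvariety. The paper's argument is by Noetherian induction: if the generic $(U\subset M)$ on $X$ is decomposable, then $X$ lies in (hence equals, being a component) some $\overline{X'\oplus X''}$ with $X'$, $X''$ irreducible components of strictly smaller Grassmannians, and one iterates until each factor is generically indecomposable.
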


In fact, in the first part of the paper we discuss the following general situation. Suppose we have smooth, connected group schemes $H\leq G$, an action of $G$ on a scheme $Y$, and an $H$-stable irreducible subscheme $X\subset Y$. Associated to this we have the morphism $\Theta\colon G\times X\to Y$, and we are interested in understanding when the image of $\Theta$ is dense in an irreducible component of $Y$.

There is an easy sufficient criterion for this using deformation theory (compare \cite[Lemma 4.1]{CBS}): suppose we have an open, irreducible subset $U\subset G\times X$ such that, for every field $L$ and every commutative diagram
\[ \begin{CD}
\Spec L @>>> \Spec L[[t]]\\
@VVV @VVV\\
U @>>> Y
\end{CD} \]
there exists a morphism $\Spec L[[t]]\to U$ making the two triangles commute. Then the image of $\Theta\colon G\times X\to Y$ is dense in an irreducible component of $Y$.

We show in \hyperref[Thm:sep]{Theorem \ref*{Thm:sep}} that the converse holds whenever $\Theta$ is separable. Moreover, we provide in \hyperref[Thm:group-action]{Theorem \ref*{Thm:group-action}} some sufficient conditions under which such a morphism will be separable. This result covers all the cases we investigate in this paper, and should be applicable quite generally.

\subsection{Overview}

We now describe the various sections of the paper in more detail.

We begin by collating some results about schemes, including tangent spaces, jet spaces, and separable morphisms. In particular, we include Greenberg's Theorem, \hyperref[Thm:Greenberg]{Theorem \ref*{Thm:Greenberg}}, which together with the Theorem of Generic Smoothness allows us in \hyperref[Cor:reduced]{Corollary \ref*{Cor:reduced}} to compute the tangent spaces $T_xX_\red$ in terms of the jet spaces of $X$, at least generically. We also prove an interesting characterisation of when a field extension is separable in terms of iterated tensor products \hyperref[Thm:sep-field]{Theorem \ref*{Thm:sep-field}}; geometrically this corresponds to saying that all iterated fibre products are generically reduced \hyperref[Thm:separable]{Theorem \ref*{Thm:separable}}. Along the way we also give in \hyperref[Prop:differentials-for-regular-local]{Proposition \ref*{Prop:differentials-for-regular-local}} an elementary proof that the module of K\"ahler differentials for a regular local ring over a perfect field is finite free.

In \hyperref[Sec:irred-cpts]{Section \ref*{Sec:irred-cpts}} we study a morphism of schemes $f\colon X\to Y$ and provide in \hyperref[Thm:sep]{Theorem \ref*{Thm:sep}} a sufficient criterion involving jet spaces for the image of an irreducible component of $X$ to be dense in an irreducible component of $Y$; moreover, this condition is necessary whenever $f$ is separable. We also show in \hyperref[Prop:irred-comp]{Proposition \ref*{Prop:irred-comp}} that every irreducible component of a Noetherian scheme has a natural subscheme structure such that all the jet spaces agree generically on the irreducible component.

We finish the first part by discussing group scheme actions in \hyperref[Sec:group-schemes]{Section \ref*{Sec:group-schemes}}. Suppose we have a group scheme $G$ acting on a scheme $X$, and let $\pi\colon X\to Y$ be a morphism of schemes which is constant on $G$-orbits. We begin by showing in \hyperref[Prop:geom-quotient]{Proposition \ref*{Prop:geom-quotient}} that $\pi$ is a geometric quotient if and only if it is a quotient in the category of all locally-ringed spaces, thus generalising \cite[Proposition 0.1]{GIT}. We also show that if $\pi$ is a quotient in the category of faisceaux, then it is automatically a universal geometric quotient. We then recall several results about when such quotient faisceaux exist and what properties they inherit, such as smoothness, affineness, faithful flatness and reducedness. We also provide a result which allows us to identify associated fibrations, \hyperref[Lem:assoc-fib]{Lemma \ref*{Lem:assoc-fib}}, and prove that all principal $G$-bundles are necessarily quotient faisceaux, \hyperref[Lem:principal-G-bundle]{Lemma \ref*{Lem:principal-G-bundle}}.

We then discuss the particular situation we are interested in, where we have smooth, connected groups $H\leq G$, an action of $G$ on a scheme $Y$, and an $H$-stable subscheme $X\subset Y$. In this situation one may consider what are essentially the fibres of the conormal bundles to the orbits: given an $L$-valued point $x\in X(L)$ for some field $L$, set $N_{X,x}:=T_xX/T_x\Orb_H(x)$ and similarly $N_{Y,x}:=T_xY/T_x\Orb_G(x)$. The morphism $\Theta\colon G\times X\to Y$ then induces a linear map $\theta_x\colon N_{X,x}\to N_{Y,x}$. We describe in \hyperref[Thm:group-action]{Theorem \ref*{Thm:group-action}} sufficient conditions involving $\theta_x$ for $\Theta$ to be separable.

In the second part of the paper we apply these ideas to four types of schemes arising from representation theory. We begin in \hyperref[Sec:rep-schemes]{Section \ref*{Sec:rep-schemes}} by considering the schemes $\rep_\Lambda^d$, as studied by Crawley-Boevey and Schr\"oer. We also discuss the generalisation to the case where we fix a complete set of orthogonal idempotents in the algebra $\Lambda$. This is a slight extension of results in \cite{Bongartz,Gabriel}, but note that there it is assumed $\Lambda$ is finite dimensional, so splits as a semisimple subalgebra together with the Jacobson radical. This does not hold in general, so we need a different proof. Our approach is in fact based on a formula for the determinant of a sum of square matrices, \hyperref[Lem:det-sum]{Lemma \ref*{Lem:det-sum}}, which may also be of independent interest.

In \hyperref[Sec:rep-schemes-homs]{Section \ref*{Sec:rep-schemes-homs}} we consider subschemes of the $\rep_\Lambda^d$ parameterising those representations $X$ such that $\dim\Hom(X,M)=u$, for some given module $M$ and non-negative integer $u$. Extending this to more than one module $M$, and taking the dual result for $\dim\Hom(M,X)$, one obtains (stratifications of) many types of subschemes. For example, if $\Lambda$ has a preprojective component, then one may consider those modules whose preprojective summand is fixed; fixing syzygies of the simples, one may consider those modules having fixed projective dimension; for the path algebra of a quiver without oriented cycles, one may also construct the zero sets of semi-invariants in this way.

We study Grassmannians of submodules in \hyperref[Sec:Grassmannians]{Section \ref*{Sec:Grassmannians}}, via their construction as a quotient for an action of a general linear group. We also study the smooth and irreducible subschemes given by fixing the isomorphism type of the submodule, and provide a criterion for such a subscheme to be dense in an irreducible component of the Grassmannians, \hyperref[Lem:S-rho-irred-cpt]{Lemma \ref*{Lem:S-rho-irred-cpt}}. This can be seen as an analogue of the easy consequence of Voigt's Lemma, that if a module has no self-extensions, then the closure of its orbit is an irreducible component of the representation scheme, \hyperref[Lem:orbit-irred-cpt]{Lemma \ref*{Lem:orbit-irred-cpt}}. We also compute several examples of quiver Grassmannians, showing that they can be generically non-reduced, and providing the decomposition of their irreducible components.

We finish with a brief discussion of flags of submodules. Our main observation here is that we can regard flags of $\Lambda$-modules of length $m$ as a special case of a Grassmannian for $\Lambda(m)$-modules, where $\Lambda(m)$ is the subalgebra of upper-triangular matrices in $\bM_m(\Lambda)$. Thus all the results for flags follow immediately from the results for Grassmannians.

\subsection*{Acknowledgements}

This work was written whilst visiting the University of Bielefeld under the support of the SFB 701. I would especially like to thank  Henning Krause for inviting me to Bielefeld, Dieter Vossieck for bringing the article of Greenberg \cite{Greenberg} to my attention, and Julia Sauter, Greg Stevenson and Markus Perling for interesting discussions. I would also like to thank the referee for their careful reading of the paper and for pointing out the connection to Zwara's paper \cite{Zwara}.

\section{Schemes}

We will mostly use the functorial approach to schemes, regarding them as covariant functors from the category of $K$-algebras to sets, for some base field $K$. Our basic references will be the books by Demazure and Gabriel \cite{DG} and Matsumura \cite{Matsumura1}. Given a scheme $X$ and a $K$-algebra $R$ we denote by $X(R)$ the set of $R$-valued points of $X$, which by Yoneda's Lemma is identified with the natural transformations $\Spec R\to X$. We call $X(K)$ the set of rational points of $X$. If $L/K$ is a field extension, then we can change base to obtain an $L$-scheme $X^L=X\times_{\Spec K}\Spec L$; note that if $R$ is an $L$-algebra, then $X^L(R)=X(R)$.

Let $f\colon X\to Y$ be a morphism of schemes (so a natural transformation of functors). If $y\in Y(L)$, then the fibre $f^{-1}(y)$ is the fibre product of $f$ and $y\colon\Spec L\to Y$. This is naturally an $L$-scheme, having $R$-valued points (for an $L$-algebra $R$) those $x\in X(R)$ such that $f(x)=y^R\colon\Spec R\to\Spec L\to Y$, and is in fact a closed subscheme of $X^L$.

\subsection{The relative tangent space}

Let $K$ be a field and $X$ a $K$-scheme. For a field $L$ and point $x\in X(L)$ we define the (relative) tangent space at $x$ via the pull-back
\[ \begin{CD}
T_x X @>>> \{x\}\\
@VVV @VVV\\
X(D_1) @>>> X(L)
\end{CD} \]
where $D_1:=L[t]/(t^2)$ is the $L$-algebra of dual numbers.

To relate this to the description of $X$ as a locally-ringed space, recall first that if $(R,\mathfrak m)$ is a local algebra, then elements of $X(R)$ can be described as pairs consisting of a point $\ptx\in X$ in the underlying topological space together with a local homomorphism of $K$-algebras $\mathcal O_{X,\ptx}\to R$. The point $\ptx$ is thus the image of the map $\Spec R/\mathfrak m\to\Spec R\to X$. The local homomorphisms $\mathcal O_{X,\ptx}\to D_1$ are precisely the maps of the form $x+\xi t$, where $x\colon\mathcal O_{X,\ptx}\to L$ is a local homomorphism and $\xi\colon\mathcal O_{X,\ptx}\to L$ is a $K$-derivation (with respect to $x$). This gives the alternative description
\begin{align*}
T_x X &= \Der_K(\mathcal O_{X,\ptx},L)\\
&= \{\textrm{$K$-linear } \xi\colon\mathcal O_{X,\ptx}\to L : \xi(ab)=\xi(a)x(b)+x(a)\xi(b)\}.
\end{align*}
In particular, $T_x X$ is naturally an $L$-vector space. If $f\colon X\to Y$ is a morphism of schemes, then the map on $D_1$-valued points restricts to a differential
\[ d_x f \colon T_x X \to T_{f(x)}Y. \]
In terms of derivations, this is just given by composition with the local homomorphism $f^\#_\ptx\colon\mathcal O_{Y,f(\ptx)}\to\mathcal O_{X,\ptx}$.

Following \cite[I \S4, 2.10]{DG} write $\Omega_{X/K,\ptx}:=\Omega_{\mathcal O_{X,\ptx}/K}$ for the module of K\"ahler differentials and set
\[ \Omega_{X/K}(\ptx):=\Omega_{X/K,\ptx}\otimes_{\mathcal O_{X,\ptx}}\kappa(\ptx). \]
Then we can also express the tangent space as
\[ T_xX = \Hom_{\kappa(\ptx)}(\Omega_{X/K}(\ptx),L). \]
It follows that if $X$ is locally of finite type\footnote{
A morphism $f\colon X\to Y$ is locally of finite type if the preimage of every open affine $\Spec A$ in $Y$ can be covered by open affines $\Spec B$ in $X$ such that each corresponding algebra homomorphism $A\to B$ is of finite type. We say $f$ is of finite type if we can cover the preimage of $\Spec A$ by finitely many such open affines $\Spec B$.
}
over $K$, $x\in X(L)$, $M/L$ is a field extension, and $x':=x^M\in X(M)$, then $T_{x'}X\cong M\otimes_L T_xX$. For, $\Omega_{X/K}(\ptx)$ is a finite-dimensional $\kappa(\ptx)$-vector space, and hence the natural embedding
\[  M\otimes_L\Hom_{\kappa(\ptx)}(\Omega_{X/K}(\ptx),L) \hookrightarrow \Hom_{\kappa(\ptx)}(\Omega_{X/K}(\ptx),M) \]
is an isomorphism. Thus by \cite[I \S4 Proposition 2.10]{DG} and \cite[I \S3 Theorem 6.1]{DG} we have for all $x\in X(L)$ that
\[ \dim_LT_x X = \dim_{\kappa(\ptx)}\Omega_{X/K}(\ptx) \geq \dim_\ptx X := \dim\mathcal O_{X,\ptx}+\mathrm{tr.}\deg_K\kappa(\ptx) \]
with equality if and only if $\ptx$ is non-singular, in which case we also say that $x$ is non-singular. We recall also that $\dim_\ptx X=\max_i\dim X_i$, where the $X_i$ are the irreducible components of $X$ containing $\ptx$.

Finally, let us compare the relative tangent space to the Zariski tangent space
\[ T^{\mathrm{Zar}}_\ptx X := \Hom_{\kappa(\ptx)}(\mathfrak m_\ptx/\mathfrak m_\ptx^2,\kappa(\ptx)). \]
Given $\ptx\in X$, let $x\colon\mathcal O_{X,\ptx}\to\kappa(\ptx)$ be the canonical homomorphism. Each derivation $\xi\in\Der_K(\mathcal O_{X,\ptx},\kappa(\ptx))$ induces a $\kappa(\ptx)$-linear map $\mathfrak m_\ptx/\mathfrak m_\ptx^2\to\kappa(\ptx)$, so yields an exact sequence
\[ 0 \to \Der_K(\kappa(\ptx),\kappa(\ptx)) \to T_x X \to T^{\mathrm{Zar}}_\ptx X. \]
When $\kappa(\ptx)/K$ is separable, the surjective algebra homomorphism $\mathcal O_{X,\ptx}/\mathfrak m_\ptx^2\to\kappa(\ptx)$ splits (in other words, $\mathcal O_{X,\ptx}/\mathfrak m_\ptx^2$ has a coefficient field containing $K$ \cite[(28.I) Proposition]{Matsumura1}), and hence $T_x X\to T^{\mathrm{Zar}}_\ptx X$ is surjective. In this case also $\Der_K(\kappa(\ptx),\kappa(\ptx))\cong\kappa(\ptx)^d$, where $d=\mathrm{tr.deg}_K\kappa(\ptx)$, by \cite[(27.B) Theorem 59]{Matsumura1}, giving
\[ 0 \to \kappa(\ptx)^d \to T_x X \to T^{\mathrm{Zar}}_\ptx X \to 0. \]
Thus if $\kappa(\ptx)/K$ is separable, then $\ptx$ is non-singular if and only if $\dim (\mathfrak m_\ptx/\mathfrak m_\ptx^2)=\dim\mathcal O_{X,\ptx}$, which is if and only if $\mathcal O_{X,\ptx}$ is a regular local ring.

As a special case we see that $T_xX=T_\ptx^{\mathrm{Zar}}X$ whenever $\kappa(\ptx)/K$ is a separable algebraic extension (for example if $x$ is a rational point). Thus, given any $x\in X(L)$, we can regard $x$ as a rational point of $X^L$, say corresponding to $\ptx\in X^L$, in which case $T_xX=T^{\mathrm{Zar}}_\ptx X^L$.

Note that if $f\colon X\to Y$, $\ptx\in X$ and $\pty=f(\ptx)\in Y$, then $\kappa(\ptx)$ is a field extension of $\kappa(\pty)$. We obtain a $\kappa(\ptx)$-linear map
\[ (\mathfrak m_\pty/\mathfrak m_\pty^2)\otimes_{\kappa(\pty)}\kappa(\ptx) \to \mathfrak m_\ptx/\mathfrak m_\ptx^2, \]
and hence an induced map
\[ \Hom_{\kappa(\ptx)}(\mathfrak m_\ptx/\mathfrak m_\ptx^2,\kappa(\ptx)) \to \Hom_{\kappa(\pty)}(\mathfrak m_\pty/\mathfrak m_\pty^2,\kappa(\ptx)). \]
Thus we only get a differential on Zariski tangent spaces under the assumption that $\kappa(\ptx)=\kappa(\pty)$. In other words, the relative tangent space is functorial, unlike the Zariski tangent space.

For convenience we recall the following result (see for example \cite[I \S4, Corollary 4.13]{DG}).

\begin{Prop}\label{Prop:differentials-for-regular-local}
Let $(A,\mathfrak m,L)$ be a regular local $K$-algebra, essentially of finite type.\footnote{
A $K$-algebra $A$ is essentially of finite type if it is a localisation of a finitely-generated algebra.
}
If $L/K$ is separable, then $\Omega_{A/K}$ is a free $A$-module of rank $\dim A+\mathrm{tr.}\deg_KL$.

In particular, let $X$ be a scheme, locally of finite type over $K$, and $\ptx\in X$ a non-singular point. If $\kappa(\ptx)/K$ is separable, then $\Omega_{X/K,\ptx}$ is a free $\mathcal O_{X,\ptx}$-module of rank $\dim_\ptx X$.
\end{Prop}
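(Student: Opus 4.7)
The plan is to determine the number of generators of $\Omega_{A/K}$ via Nakayama at the closed point, and to show these generators are independent by a dimension count at the generic point; set $n:=\dim A+\mathrm{tr.}\deg_K L$.

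First, I would compute the fibre $\Omega_{A/K}\otimes_A L$. The right-exact conormal sequence
\[ \mathfrak m/\mathfrak m^2 \to \Omega_{A/K}\otimes_A L \to \Omega_{L/K} \to 0 \]
is in fact short exact: separability of $L/K$ provides, via \cite[(28.I)]{Matsumura1}, a coefficient field for $A/\mathfrak m^2$ extending $K$, hence a $K$-algebra splitting $A/\mathfrak m^2\to L$. The associated $K$-derivation $A\to\mathfrak m/\mathfrak m^2$ (sending $a$ to its $\mathfrak m/\mathfrak m^2$-component) factors through $\Omega_{A/K}\otimes_A L$ and provides a left inverse to the leftmost arrow, so that arrow is injective. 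Regularity gives $\dim_L\mathfrak m/\mathfrak m^2=\dim A$, and \cite[(27.B) Theorem 59]{Matsumura1} (already used in this section) gives $\dim_L\Omega_{L/K}=\mathrm{tr.}\deg_K L$, so $\dim_L\Omega_{A/K}\otimes_A L=n$. Since $A$ is essentially of finite type, $\Omega_{A/K}$ is finitely generated, and Nakayama yields a surjection $\phi\colon A^n\twoheadrightarrow \Omega_{A/K}$.

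For injectivity, I would exploit that $A$ is a domain (regular local rings are), with fraction field $Q$ say. Tensoring $\phi$ with $Q$ yields a surjection $Q^n\twoheadrightarrow \Omega_{A/K}\otimes_A Q=\Omega_{Q/K}$. Writing $A=B_{\mathfrak p}$ for some finitely-generated $K$-domain $B$ and prime $\mathfrak p$, the dimension formula $\dim B_{\mathfrak p}+\dim B/\mathfrak p=\dim B$ for finitely-generated $K$-domains yields $\mathrm{tr.}\deg_K Q=\dim B=\dim A+\mathrm{tr.}\deg_K L=n$. Applying the general inequality $\dim_Q\Omega_{Q/K}\geq\mathrm{tr.}\deg_K Q$ recalled above (namely $\dim_L T_xX\geq\dim_\ptx X$ applied at the generic point of $\Spec A$) forces the surjection $Q^n\to\Omega_{Q/K}$ to be an isomorphism. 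Thus $\ker\phi$ is torsion and hence vanishes since $A^n$ is torsion-free over the domain $A$; so $\phi$ is an isomorphism and $\Omega_{A/K}\cong A^n$.

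The scheme-theoretic statement is immediate on taking $A=\mathcal O_{X,\ptx}$, which is regular of dimension $\dim_\ptx X-\mathrm{tr.}\deg_K\kappa(\ptx)$ by the non-singularity characterisation under a separable residue field recalled earlier in this subsection. The one genuinely subtle step is the left-exactness of the conormal sequence, which is exactly where the separability hypothesis on $L/K$ enters; the remainder is a routine combination of Nakayama, catenarity of finitely-generated $K$-algebras, and the already-established tangent-space bound applied at the generic point of $\Spec A$.
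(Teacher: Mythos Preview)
Your proof is correct, and it takes a genuinely different route from the paper's. You work \emph{intrinsically}: you compute the fibre $\Omega_{A/K}\otimes_AL$ via the conormal sequence (using the coefficient field to split it), invoke Nakayama to get a surjection $A^n\twoheadrightarrow\Omega_{A/K}$, and then kill the kernel by a dimension count at the generic point, using that $A$ is a domain and the catenarity formula $\dim B_{\mathfrak p}+\dim B/\mathfrak p=\dim B$ for finitely-generated $K$-domains. The paper instead works \emph{extrinsically}: it chooses a separating transcendence basis for $L/K$ and a regular system of parameters to build an auxiliary regular local ring $B=K(t_1,\dots,t_n)[X_1,\dots,X_d]_{(X_1,\dots,X_d)}$ with $\Omega_{B/K}$ manifestly free, embeds $B\hookrightarrow A$, and shows via a Hensel-type lifting argument that $\Der_K(A,M)\to\Der_K(B,M)$ is an isomorphism for every finite-length $M$, whence $\Omega_{A/K}\cong\Omega_{B/K}\otimes_BA$. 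Your argument is shorter and uses only standard commutative algebra (Nakayama, catenarity, the tangent-space inequality); the paper's argument is more hands-on but yields the extra structural information that $A$ is, as far as differentials are concerned, a base change of an explicit polynomial-type ring. One minor remark: your reduction to $A=B_{\mathfrak p}$ with $B$ a \emph{domain} is valid (pass to the quotient by the unique minimal prime contained in $\mathfrak p$, which exists since $A$ is a domain), but it would not hurt to say so explicitly.
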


\begin{proof}
Set $d:=\dim(A)$ and $n:=\mathrm{tr.}\deg_KL$. Since $A$ is essentially of finite type, we know that $\Omega_{A/K}$ is a finitely-generated $A$-module. Choose $t_1,\ldots,t_n\in A$ such that their images form a separating transcendence basis for $L/K$. Then $F:=K(t_1,\ldots,t_n)\subset A$ is a subfield and $L/F$ is separable algebraic and finitely-generated, so finite dimensional. Let $\alpha\in L$ be a primitive element, with minimal polynomial $f$ over $F$. Let $x_1,\ldots,x_d\in\mathfrak m$ be a regular system of parameters and set $B:=F[X_1,\ldots,X_d]_{(X_1,\ldots,X_d)}$. Then $B$ is a regular local $K$-algebra of dimension $d$, we have $\Omega_{B/K}\cong B^{d+n}$ (see for example \cite[(26.J) Example]{Matsumura1}), and we have a natural injective algebra homomorphism $B\to A$ sending $X_i\mapsto x_i$.

The first fundamental exact sequence for K\"ahler differentials \cite[(26.H) Theorem 57]{Matsumura1}
\[ \Omega_{B/K}\otimes_BA \xrightarrow{\delta} \Omega_{A/K} \to \Omega_{A/B} \to 0 \]
 yields for each $A$-module $M$ an exact sequence
\[ 0 \to \Der_B(A,M) \to \Der_K(A,M) \xrightarrow{\delta^\ast} \Der_K(B,M). \]
We will show that $\delta^\ast$ is an isomorphism whenever $M$ has finite length (so finitely generated and $\mathfrak m^rM=0$ for some $r$).

Consider a $K$-derivation $d\colon B\to M$, and take $r$ such that $\mathfrak m^{r-1}M=0$. It follows that $d(\mathfrak m^r)=0$. Since $L/F$ is finite separable, we can lift $\alpha$ to an element $y\in A$ such that $f(y)\in\mathfrak m^r$ (see for example \cite[(28.I) Proposition]{Matsumura1}), from which it follows that $A/\mathfrak m^r\cong L[Z_1,\ldots,Z_d]/(Z_1,\ldots,Z_d)^r$. Now $0=d(f(y))=(df)(y)+f'(y)dy$, where $df$ is the polynomial with coefficients in $M$ obtained by applying $d$ to each coefficient of $f$. Since $f'(y)\in A$ is invertible, we see that $dy$ exists and is unique. It follows that there is a unique lift of $d$ to a $K$-derivation $d\colon A\to M$. Hence $\Der_K(A,M)\cong\Der_K(B,M)$ for all $A$-modules $M$ of finite length.

Now $\Omega_{A/B}$ is finitely-generated and $\Hom_A(\Omega_{A/B},M)=0$ for all finite length modules $M$, so $\Omega_{A/B}=0$. Also, if the kernel $U$ of $\delta$ is non-zero, then by Krull's Intersection Theorem there exists $r$ such that $U\not\subset\mathfrak m^{r-1}A^{d+n}$. It follows that $M=A^{d+n}/\mathfrak m^{r-1}A^{d+n}$ has finite length and the composition $U\to A^{d+n}\to M$ is non-zero, contradicting the fact that $\Der_K(A,M)\to\Der_K(B,M)$ is onto. Thus $U=0$, so $\Omega_{A/K}\cong\Omega_{B/K}\otimes_BA\cong A^{d+n}$.
\end{proof}

\subsubsection{The tangent bundle}

\begin{Lem}\label{Lem:gluing}
Let $X$ be a scheme, and suppose we are given for each open affine $U\subset X$ an $\mathcal O_X(U)$-module $M_U$, compatible with localisation; in other words, for each open affine $U$ and each $f\in\mathcal O_X(U)$ we have an isomorphism $\mathrm{res}^U_{D(f)}\colon M_U\otimes_{\mathcal O_X(U)}\mathcal O_X(D(f))\xrightarrow\sim M_{D(f)}$ of $\mathcal O_X(D(f))$-modules such that $\mathrm{res}^U_{D(fg)}=\mathrm{res}^{D(f)}_{D(fg)}\circ\mathrm{res}^U_{D(f)}$. Then there is a quasi-coherent sheaf $\mathcal M$ on $X$ such that $\mathcal M(U)=M_U$ for all open affines $U\subset X$.
\end{Lem}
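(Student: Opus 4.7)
The plan is to construct $\mathcal M$ by first building a quasi-coherent sheaf $\widetilde{M_U}$ on each open affine $U \subset X$, then establishing a canonical isomorphism $\widetilde{M_U}|_V \cong \widetilde{M_V}$ for arbitrary inclusions of open affines $V \subset U$, and finally gluing these local sheaves over an open affine cover of $X$. For each open affine $U$, the $\mathcal O_X(U)$-module $M_U$ determines in the standard way a quasi-coherent sheaf $\widetilde{M_U}$ on $U = \Spec \mathcal O_X(U)$, whose sections on a basic open $D_U(f) \subset U$ are $M_U \otimes_{\mathcal O_X(U)} \mathcal O_X(D_U(f))$. The given isomorphisms $\mathrm{res}^U_{D(f)}$ identify these sections with $M_{D_U(f)}$, and the cocycle condition precisely expresses that these identifications are compatible with further localisation; consequently, $\widetilde{M_U}|_{D_U(f)} \cong \widetilde{M_{D_U(f)}}$ canonically.

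The key step is the compatibility for an arbitrary inclusion of open affines $V \subset U$. Since basic opens form a basis for the topology of $U$, I would cover $V$ by basic opens $D_U(f_i) \subset V$ of $U$; each such $D_U(f_i)$ equals $D_V(f_i|_V)$ and is therefore simultaneously a basic open of $V$. Applying the previous observation both to $U$ and to $V$, each of $\widetilde{M_U}|_{D_U(f_i)}$ and $\widetilde{M_V}|_{D_U(f_i)}$ is canonically identified with $\widetilde{M_{D_U(f_i)}}$. Compatibility of these identifications on pairwise intersections $D_U(f_i f_j)$ follows from another instance of the cocycle, so the local isomorphisms glue to a sheaf isomorphism $\widetilde{M_U}|_V \cong \widetilde{M_V}$ on $V$.

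Finally, fix any open affine cover $\{U_i\}$ of $X$. On each pairwise intersection $U_i \cap U_j$, cover by open affines $V$ and combine the canonical isomorphisms $\widetilde{M_{U_i}}|_V \cong \widetilde{M_V} \cong \widetilde{M_{U_j}}|_V$ from the previous step into transition data $\phi_{ij}$. Canonicity yields the cocycle $\phi_{jk} \circ \phi_{ij} = \phi_{ik}$ on triple intersections, so the local sheaves glue to a quasi-coherent sheaf $\mathcal M$ on $X$. Applying the compatibility step once more shows $\mathcal M|_W \cong \widetilde{M_W}$ for every open affine $W \subset X$, and taking global sections gives $\mathcal M(W) = M_W$. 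The main obstacle is the compatibility $\widetilde{M_U}|_V \cong \widetilde{M_V}$ for $V$ not a basic open of $U$: this is what allows the global gluing to be reduced entirely to the single given cocycle identity, after which all higher coherence for the gluing is automatic.
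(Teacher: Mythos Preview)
Your argument is correct but proceeds differently from the paper. You build $\widetilde{M_U}$ on each open affine $U$, establish a canonical isomorphism $\widetilde{M_U}|_V\cong\widetilde{M_V}$ for an arbitrary inclusion of open affines $V\subset U$ by covering $V$ with basic opens that are simultaneously basic in $U$ and in $V$, and then invoke the gluing lemma for sheaves over an affine cover of $X$. The paper instead works pointwise: it notes that for any two open affines containing a point $\ptx$ one can find a common basic open of both containing $\ptx$, so the affine neighbourhoods of $\ptx$ form a directed system and one may define the stalk $M_\ptx$ directly as a colimit; the sheaf $\mathcal M$ is then constructed in espace-\'etal\'e fashion by declaring $\mathcal M(W)$ to be those tuples $(s_\ptx)\in\prod_{\ptx\in W}M_\ptx$ that are locally represented by an element of some $M_U$. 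Your route makes quasi-coherence immediate and packages all compatibility into one sheaf-gluing step, at the cost of importing the $\widetilde{\phantom M}$ functor and the gluing lemma; the paper's route is more bare-handed and self-contained, but then has to verify by hand that the resulting presheaf is a sheaf and that $\mathcal M(U)=M_U$ on affines. Both ultimately rest on the same observation you flagged as the main obstacle: a basic open of $U$ contained in $V$ is automatically a basic open of $V$.
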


\begin{proof}
Given two open affines $U,V\subset X$ and an open subset $W\subset U\cap V$, we can find $f_i\in\mathcal O_X(U)$ and $g_i\in\mathcal O_X(V)$ such that $D(f_i)=D(g_i)\subset W$ and $W=\bigcup_iD(f_i)$. This, together with the compatibility condition, implies that for each $\ptx\in X$ the open affine neighbourhoods of $\ptx$ form a directed system, and hence we can define the stalk $M_\ptx$. We can now define $\mathcal M(W)$ on any open subset $W\subset X$ to be those $(s_\ptx)\in\prod_{\ptx\in U}M_\ptx$ which are locally given by an element in some $M_U$. It is easy to see that this defines a sheaf on $X$, and that we may identify $\mathcal M(U)=M_U$ for each open affine $U\subset X$.
\end{proof}

We can use this lemma to construct first the sheaf of K\"ahler differentials on $X$, and then the tangent bundle. For each open affine $U\subset X$, let $\Omega_{\mathcal O_X(U)/K}$ be the $\mathcal O_X(U)$-module of K\"ahler differentials, and let $\mathrm{Sym}\updot(\Omega_{\mathcal O_X(U)/K})$ be its symmetric algebra. By the respective universal properties it is clear that these constructions are compatible with localisation, in the sense of the lemma. It follows that there are quasi-coherent sheaves $\Omega_{X/K}$ and $\mathrm{Sym}\updot(\Omega_{X/K})$ satisfying $\Omega_{X/K}(U)=\Omega_{\mathcal O_X(U)/K}$ and $\mathrm{Sym}\updot(\Omega_{X/K})(U)=\mathrm{Sym}\updot(\Omega_{\mathcal O_X(U)/K})$ for all open affines $U\subset X$.

Since $\mathrm{Sym}\updot(\Omega_{X/K})$ is a sheaf of $\mathcal O_X$-algebras, we may define the tangent bundle $TX$ to be the associated scheme $TX:=\Spec\mathrm{Sym}\updot(\Omega_{X/K})$. This comes with a natural morphism $\pi\colon TX\to X$, whose restriction over an open affine $U$ corresponds to the structure morphism $\mathcal O_X(U)\to\mathrm{Sym}\updot(\Omega_{\mathcal O_X(U)/K})$, and hence $\pi$ is an affine morphism. If $L$ is a field, then $(TX)(L)=X(D_1)$, so consists of pairs $(x,\xi)$ such that $x\in X(L)$ and $\xi\in T_x X$. Hence $T_xX$ is just the fibre of $TX\to X$ over the point $x$.

Note that if $X=\Spec A$ is affine, then $TX=\Spec\mathrm{Sym}\updot(\Omega_{A/K})$, and so $TX(R)$ is the set of algebra homomorphisms $\mathrm{Sym}\updot(\Omega_{A/K})\to R$, which we can identify with the set of algebra homomorphisms from $A$ to $R[t]/(t^2)$.

We remark that $TX\to X$ is in general not locally trivial, but following \cite[16.5.12]{EGA4.4} we will still refer to it as the tangent bundle.

\subsubsection{Upper semi-continuity}

A function $f\colon X\to\bZ$ is upper semi-continuous provided that for all $d$ there exists an open subset $U\subset X$ such that $f(\ptx)\leq d$ if and only if $\ptx\in U$. Equivalently, for any $x\in X(L)$ we have $f(x)\leq d$ if and only if $x\in U(L)$.

\begin{Prop}[Chevalley {\cite[Theorem 13.1.3]{EGA4.3}}]\label{Prop:usc}
Let $f\colon X\to Y$ be a morphism of schemes, with $f$ locally of finite type. Then the function $\ptx\mapsto\dim_\ptx f^{-1}(f(\ptx))$ is upper semi-continuous on $X$.
\end{Prop}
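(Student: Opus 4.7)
The plan is to reduce to a quasi-finite morphism and then invoke the openness of the quasi-finite locus. Since $\dim_\ptx f^{-1}(f(\ptx))$ depends only on the behaviour of $f$ in a neighbourhood of $\ptx$, we may assume $X=\Spec B$ and $Y=\Spec A$ are affine with $B$ a finitely generated $A$-algebra. Fix a point $\ptx_0\in X$ with image $\pty_0:=f(\ptx_0)$ and set $e:=\dim_{\ptx_0}f^{-1}(\pty_0)$. It then suffices to exhibit an open neighbourhood $U$ of $\ptx_0$ in $X$ on which $\dim_\ptx f^{-1}(f(\ptx))\leq e$.

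The key construction is a factorisation of $f$ through affine space over $Y$. By Krull's dimension theorem, the local ring $\mathcal O_{f^{-1}(\pty_0),\ptx_0}$ has dimension $e$, so one can choose $b_1,\ldots,b_e\in B$ whose residues there generate an ideal of definition. These define a $Y$-morphism $g=(b_1,\ldots,b_e)\colon X\to\bA^e_Y$ with $f=\pi\circ g$, where $\pi\colon\bA^e_Y\to Y$ is the projection. By the choice of the $b_i$, the point $\ptx_0$ is isolated in the fibre $g^{-1}(g(\ptx_0))$, i.e.\ $g$ is quasi-finite at $\ptx_0$.

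The decisive step is then to invoke the openness of the quasi-finite locus for morphisms locally of finite type: there is an open neighbourhood $U$ of $\ptx_0$ on which $g$ itself is quasi-finite. For any $\ptx\in U$ with $\pty:=f(\ptx)$, the restriction $g|_{f^{-1}(\pty)}\colon f^{-1}(\pty)\to\bA^e_{\kappa(\pty)}$ is then quasi-finite at $\ptx$, and a quasi-finite morphism of finite type preserves the local dimensions of the irreducible components through a given point (their function-field extensions have transcendence degree zero). Hence every irreducible component of $f^{-1}(\pty)$ passing through $\ptx$ has dimension at most $\dim\bA^e_{\kappa(\pty)}=e$, which gives $\dim_\ptx f^{-1}(\pty)\leq e$ as required.

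The main obstacle is precisely the openness of the quasi-finite locus, which is itself a non-trivial companion theorem requiring a separate argument (classically derived from Zariski's Main Theorem, or from an analysis of Noetherian induction on fibres). A more self-contained alternative would factor $f$ as a closed immersion $X\hookrightarrow\bA^n_Y$ followed by the standard projection, and induct on the number of defining equations, using Krull's Hauptidealsatz at each step to control the drop in fibre dimension when intersecting with an additional hypersurface.
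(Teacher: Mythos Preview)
The paper does not supply a proof of this proposition at all: it is simply quoted as Chevalley's theorem with a reference to \cite[Theorem 13.1.3]{EGA4.3}, and is then used as a black box (for instance in the subsequent corollary on cones). So there is no argument in the paper to compare your proposal against.

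That said, your sketch is a correct outline of one of the standard proofs of Chevalley's theorem. The reduction to the affine case, the choice of $b_1,\ldots,b_e$ cutting down the fibre at $\ptx_0$ to an isolated point, the resulting factorisation through $\bA^e_Y$, and the appeal to the openness of the quasi-finite locus are all sound. You are also right to flag that the openness of the quasi-finite locus is itself a substantial input; in EGA it appears in the same circle of results (IV, \S13), and one has to be careful not to make the argument circular. Your alternative suggestion via a closed immersion into $\bA^n_Y$ and induction using the Hauptidealsatz is likewise a legitimate route. Either way, since the paper treats the result as a citation, any self-contained proof you write goes beyond what the paper itself does.
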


\begin{Cor}\label{Cor:usc}
Let $X$ be a scheme and $V\subset X\times\bA^n$ a closed subscheme. Write $\pi\colon V\to X$ for the restriction of the projection onto the first co-ordinate, and write $V_x:=\pi^{-1}(x)\subset L^n$ for the fibre over $x\in X(L)$. If each irreducible component of $V_x$ contains $(x,0)$, then the function $x\mapsto\dim V_x$ is upper semi-continuous on $X$.
\end{Cor}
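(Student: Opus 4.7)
The plan is to reduce to Proposition \ref{Prop:usc} applied to the restricted projection $\pi\colon V\to X$, which is locally of finite type since $V\hookrightarrow X\times\bA^n$ is a closed immersion and $X\times\bA^n\to X$ is of finite type. Chevalley then gives that $\ptz\mapsto\dim_\ptz\pi^{-1}(\pi(\ptz))$ is upper semi-continuous on $V$; the strategy is to produce a zero section $s\colon X\to V$ of $\pi$ and pull this function back along $s$, obtaining the upper semi-continuity of $x\mapsto\dim_{s(x)}V_x$, and then to use the cone hypothesis to identify $\dim_{s(x)}V_x$ with $\dim V_x$.

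For the existence of $s$, since upper semi-continuity is a topological statement I may replace $X$ by $X_\red$. The question then reduces to a local one: if $X_\red=\Spec A$ with $A$ reduced and $V$ is cut out in $\Spec A[y_1,\ldots,y_n]$ by an ideal $I$, I claim the constant term (viewed in $A$) of every $f\in I$ is zero. Indeed, for every $\ptx\in X_\red$ with residue field $\kappa(\ptx)$, the fibre $V_\ptx\subset\bA^n_{\kappa(\ptx)}$ contains $0$ by hypothesis, so $I$ lies in the augmentation ideal modulo $\mathfrak p_\ptx$; hence the constant term lies in every prime of $A$ and is therefore zero. Thus $I\subset(y_1,\ldots,y_n)$, and the zero section factors through $V$ to give $s\colon X_\red\to V$.

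For the identification of dimensions, the cone hypothesis also implies that the fibre ideal $I_x\subset L[y_1,\ldots,y_n]$ is homogeneous in the variables $y_i$, so the minimal primes of $L[y_1,\ldots,y_n]/I_x$ are homogeneous as well, and every homogeneous prime is contained in the irrelevant ideal $(y_1,\ldots,y_n)$. Hence every irreducible component of $V_x$ passes through the origin, and $\dim V_x=\dim_0 V_x=\dim_{s(x)}V_x$. Putting the pieces together yields the corollary. The main obstacle is the second step — converting a hypothesis stated fibrewise into a genuine scheme-theoretic section — and passing to $X_\red$ (harmless because the conclusion is topological) is precisely what makes the conversion valid.
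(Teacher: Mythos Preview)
Your proof is correct and follows the same approach as the paper, whose entire proof is the one line ``Combine with the morphism $X\to V$, $x\mapsto(x,0)$.'' You supply the details the paper omits, namely the passage to $X_\red$ to justify that the zero section really factors through $V$, and the cone argument identifying $\dim_{0}V_x$ with $\dim V_x$; one small caveat is that the set-theoretic cone hypothesis only forces $\sqrt{I_x}$ (not $I_x$ itself) to be homogeneous, but this is harmless since the minimal primes depend only on the radical.
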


\begin{proof}
Combine with the morphism $X\to V$, $x\mapsto(x,0)$.
\end{proof}

For example, if $X$ is locally of finite type over $K$, then the function $x\mapsto\dim T_xX$ is upper semi-continuous. For, the question is local on $X$, so we may assume $X=\Spec(A)$ for some $K$-algebra $A$ of finite type, and if $A$ is generated by $n$ elements, then $TX\subset X\times\bA^n$ is closed.

\subsection{Jet spaces}

Jet spaces are generalisations of tangent spaces, where we now consider $D_r$-valued points for $D_r:=L[[t]]/(t^{r+1})$ and $r\in[0,\infty]$. Note that $D_0=L$, $D_1=L[t]/(t^2)$ is the algebra of dual numbers, and $D_\infty=L[[t]]$ is the power series algebra. We have canonical algebra homomorphisms $D_r\to D_s$ whenever $r\geq s$, and $D_\infty=\varprojlim_r D_r$.

The set of $D_r$-valued points $X(D_r)$ is called the space of $r$-jets, and we again consider the pull-back
\[ \begin{CD}
T_x^{(r)}X @>>> \{x\}\\
@VVV @VVV\\
X(D_r) @>>> X(L).
\end{CD} \]

As for tangent spaces, an element of $X(D_r)$ for $r\in[0,\infty]$ can be thought of as a pair consisting of a point $\ptx\in X$ together with a local $K$-algebra homomorphism $\mathcal O_{X,\ptx}\to D_r$. If $r$ is finite, then every such map vanishes on $\mathfrak m_\ptx^{r+1}$, so it is enough to give $\mathcal O_{X,\ptx}/\mathfrak m_\ptx^{r+1}\to D_r$. Moreover, if $X$ is locally of finite type over $K$, then $\mathcal O_{X,\ptx}/\mathfrak m_\ptx^{r+1}$ is a finite-dimensional $K$-algebra and so $T_x^{(r)}X$ is an affine variety over $L$.

Greenberg's Theorem below allows one to restrict attention from $D_\infty$-valued points to $D_r$-valued points for $r$ sufficiently large.

\begin{Thm}[Greenberg \cite{Greenberg}]\label{Thm:Greenberg}
Let $A$ be a $K$-algebra of finite type. Then there exist positive integers $c$, $s$ and $N\geq c(s+1)$ such that, for all $r\geq N$ and all $\xi\in(\Spec A)(D_r)$, the image of $\xi$ in $(\Spec A)(D_{[r/c]-s})$ lifts to a point in $(\Spec A)(D_\infty)$.
\end{Thm}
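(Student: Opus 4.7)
The plan is to prove this by induction on $\dim\Spec A$, using a quantitative form of Hensel's lemma as the essential tool. First I would present $A=K[x_1,\ldots,x_n]/(f_1,\ldots,f_m)$, so an element of $(\Spec A)(D_r)$ corresponds to a tuple $b\in(L[[t]]/(t^{r+1}))^n$ with $f_j(b)\equiv 0\pmod{t^{r+1}}$ for all $j$. The key technical ingredient is the following quantitative Hensel's lemma, proved by the usual Newton iteration: if a lift $\widetilde b\in L[[t]]^n$ of $b$ satisfies $f_j(\widetilde b)\equiv 0\pmod{t^{2k+1}}$ for all $j$, and some $h\times h$ minor $\Delta$ of the Jacobian matrix $(\partial f_j/\partial x_i)(\widetilde b)$ has $v_t(\Delta)\leq k$ (where $h$ is the codimension of the relevant component of $\Spec A$), then $\widetilde b$ can be perturbed modulo $t^{k+1}$ to produce $b^\ast\in L[[t]]^n$ with $f_j(b^\ast)=0$ for all $j$.

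Given $\xi\in(\Spec A)(D_r)$ with $r$ large, I would then proceed by the following dichotomy. Fix a cutoff $k$ growing linearly in $r$, and pick any lift $\widetilde b$ of $b$ to $L[[t]]^n$. In the Hensel branch, where some $h\times h$ Jacobian minor $\Delta$ at $\widetilde b$ satisfies $v_t(\Delta)\leq k$, the lemma produces a true solution $b^\ast$ agreeing with $\widetilde b$ modulo $t^{k+1}$, hence a lift to $(\Spec A)(D_\infty)$ of the truncation of $\xi$ to $D_k$. In the singular branch, where every such Jacobian minor has $v_t>k$, the image of the Jacobian ideal $I_h\subset A$ under $\xi$ lies in $(t^{k+1})/(t^{r+1})$, so some truncation of $\xi$ factors through a thickening $A/I_h^M$ of the singular locus; since $\dim V(I_h)<\dim\Spec A$ by the Jacobian criterion, the inductive hypothesis applied to a presentation of $A/I_h^M$ yields the required lift.

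The main obstacle is producing a single, uniform triple of constants $(c,s,N)$ that governs both branches. The cutoff $k$ separating the Hensel-applicable regime from the singular regime must scale linearly in $r$, and in the singular branch one must verify that after passing to the smaller scheme $V(I_h)$ (and absorbing the $M$th power), the surviving truncation level is still at least $[r/c]-s$. This amounts to careful bookkeeping of how the inductive constants for $V(I_h)$, the nilpotent degree arising from $I_h^M$, the polynomial degrees of the $f_j$ and of their Jacobian minors, and the codimension $h$ interact. The induction terminates since $\dim V(I_h)<\dim\Spec A$, and the base case $\dim\Spec A=0$ is immediate: $\Spec A$ is then a finite union of fat points, and for $r$ large any $D_r$-valued point is determined by its value on residue fields and so lifts trivially to $D_\infty$.
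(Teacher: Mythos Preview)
The paper does not prove this statement: it is quoted as Greenberg's theorem with a citation and then used as a black box. So there is no proof in the paper to compare against.

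Your outline is nonetheless close to how the result is actually proved (induction on $\dim\Spec A$, with a quantitative Hensel lemma covering the generically smooth part and the singular locus handled inductively). Two points need tightening.

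First, the singular branch is muddled. From $\xi(I_h)\subset (t^{k+1})$ the clean conclusion is that the \emph{truncation} of $\xi$ to $D_k$ already kills $I_h$ and hence factors through $A/I_h$ itself; no thickening is required. One then applies the inductive hypothesis to the single fixed algebra $A/I_h$, obtaining constants $(c',s',N')$ independent of $r$, and chooses $(c,s,N)$ so that $[k/c']-s'\geq [r/c]-s$ once $k$ is your linear function of $r$. Your phrasing instead suggests applying the inductive hypothesis to $A/I_h^M$ with an exponent $M$ chosen so that the \emph{untruncated} $\xi$ factors through it; but then $M$ depends on $r$, the inductive constants for $A/I_h^M$ depend on $M$, and the uniformity in $(c,s,N)$ is lost. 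Drop the thickening and truncate instead.

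Second, two routine reductions are missing. You must first pass to $A_\red$ (harmless, since every $D_\infty$-valued point already factors through $A_\red$) so that the Jacobian criterion genuinely forces $\dim V(I_h)<\dim\Spec A$; otherwise $I_h$ could be nilpotent and the dimension need not drop. And when $\Spec A$ is not equidimensional there is no single codimension $h$, so one should either decompose into irreducible components first or take the intersection of the Jacobian ideals over the finitely many relevant $h$. Neither point is hard, but without them the induction does not start cleanly.
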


We will not need such tight bounds, so observe that by doubling $N$ and $c$ we may assume that $s=0$.

The natural map $D_r\to D_1$ induces a map $T_x^{(r)}X\to T_xX$. We write $\overline T_x^{(r)}X$ for its image. If $X$ is locally of finite type, then this map is the restriction to $T_x^{(r)}X$ of a linear map, and hence the $\overline T_x^{(r)}X$ form a decreasing chain of constructible subsets, even cones, of the tangent space. We will show that this chain stabilises, and generically on $X$ the limit will be $T_xX_\red$.

A commutative local $K$-algebra $(A,\mathfrak m)$ is said to be formally smooth over $K$ provided it satisfies the infinitesimal lifting property: given a commutative $K$-algebra $R$ and a nilpotent ideal $N$, every algebra homomorphism $\xi\colon A\to R/N$ whose kernel contains some power of $\mathfrak m$ can be lifted to an algebra homomorphism $\hat\xi\colon A\to R$. If $A$ is Noetherian, then formally smooth implies regular, with the converse holding whenever $K$ is perfect \cite[(28.M) Proposition]{Matsumura1}.

\begin{Prop}
Let $X$ be a $K$-scheme and $x\in X(L)$. Then
\[ \overline T_x^{(\infty)}X \subset \bigcap_r\overline T_x^{(r)}X \subset T_x X_\red \subset T_x X. \]
If $x\colon\mathcal O_{X,\ptx}\to L$ and $(\mathcal O_{X,\ptx})_\red$ is formally smooth over $K$, then we have equalities
\[ \overline T_x^{(\infty)}X = \bigcap_r\overline T_x^{(r)}X = T_x X_\red. \]
\end{Prop}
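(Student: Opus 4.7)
The plan is to establish the three inclusions of the first chain in order, with the middle one $\bigcap_r\overline T_x^{(r)}X \subset T_xX_\red$ being the main content, and then under the formal smoothness hypothesis to close the loop by showing $T_xX_\red \subset \overline T_x^{(\infty)}X$. The outer inclusion $\overline T_x^{(\infty)}X \subset \bigcap_r\overline T_x^{(r)}X$ is immediate from composing any $\infty$-jet with the canonical quotient $D_\infty \to D_r$, which commutes with projection to $D_1$. The rightmost $T_xX_\red \subset T_xX$ is the pullback of derivations $(\mathcal O_{X,\ptx})_\red\to L$ along the surjection $\mathcal O_{X,\ptx}\to(\mathcal O_{X,\ptx})_\red$.

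For the middle inclusion, given $\xi\in\bigcap_r\overline T_x^{(r)}X$ viewed as a $K$-derivation $\mathcal O_{X,\ptx}\to L$ over $x$, I would show that $\xi$ annihilates every nilpotent $n\in\mathcal O_{X,\ptx}$, so that $\xi$ descends to $(\mathcal O_{X,\ptx})_\red$. Pick $k$ with $n^k=0$, choose $r\geq k$, and take a lift $f_r\colon\mathcal O_{X,\ptx}\to D_r$ of $\xi$. Since $L$ is a field, $x(n)=0$, and $f_r$ is local, so $f_r(n)\in(t)\subset D_r$ and its $t$-coefficient is $\xi(n)$. Then $f_r(n)^k=0$ has $\xi(n)^k$ as its $t^k$-coefficient, forcing $\xi(n)=0$. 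Hence $\xi\in T_xX_\red$.

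For the equalities under formal smoothness it suffices to show $T_xX_\red\subset\overline T_x^{(\infty)}X$, and I would build a compatible tower of lifts. An element of $T_xX_\red$ is a local homomorphism $\xi_1\colon(\mathcal O_{X,\ptx})_\red\to D_1$ over $\bar x$. Assuming $\xi_r\colon(\mathcal O_{X,\ptx})_\red\to D_r$ has been constructed, apply the infinitesimal lifting property with $R=D_{r+1}$ and the ideal $N=(t^{r+1})\subset D_{r+1}$: one has $N^2=0$, $R/N\cong D_r$, and locality of $\xi_r$ gives $\xi_r(\bar{\mathfrak m}^{r+1})\subset(t)^{r+1}=0$, so the kernel of the composite to $R/N$ contains the required power of the maximal ideal. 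This produces $\xi_{r+1}$ lifting $\xi_r$, and the compatible family assembles to a homomorphism $(\mathcal O_{X,\ptx})_\red\to\varprojlim D_r=D_\infty$, which composed with $\mathcal O_{X,\ptx}\to(\mathcal O_{X,\ptx})_\red$ gives an $\infty$-jet projecting to $\xi$.

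The hard part is really just recognising that the $D_r$-jet condition $f_r(n)^k=0$ detects vanishing of $\xi(n)$ on nilpotents via the lowest-order coefficient, and dually that formal smoothness is tailor-made to build the inverse system realising a reduced tangent vector as a genuine $\infty$-jet. No other subtle obstacle seems to arise, provided one is careful to note that the maps $\xi_r$ are automatically local (so satisfy the kernel hypothesis for formal smoothness) and that the kernel of each truncation $D_{r+1}\to D_r$ is square-zero.
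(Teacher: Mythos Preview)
Your proof is correct and follows essentially the same approach as the paper's own argument: the nilpotent-killing computation via the $t^k$-coefficient of $f_r(n)^k$ is exactly how the paper establishes $\bigcap_r\overline T_x^{(r)}X\subset T_xX_\red$, and the iterated use of the infinitesimal lifting property to build a compatible tower into $\varprojlim D_r=D_\infty$ is likewise what the paper does for the reverse inclusion under formal smoothness. Your version is slightly more explicit about why the inductively constructed lifts remain local and why the kernel hypothesis in the definition of formal smoothness is met at each stage, which is a useful clarification.
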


\begin{proof}
Since the constructions are local it is enough to prove the corresponding statements for a local ring $(A,\mathfrak m)$. Let $\xi\colon A\to D_1=L[t]/(t^2)$ be a local algebra homomorphism, so $\xi(\mathfrak m)\subset(t)$.

If $\xi$ can be lifted to $\hat\xi\colon A\to D_{\infty}$, then \textit{a fortiori} it can be lifted to each $A\to D_r$. This proves that $\overline T_x^{(\infty)}X\subset\bigcap_r\overline T_x^{(r)}X$.

Now suppose that, for each $r\geq2$, the map $\xi\colon A\to D_1$ can be lifted to some $\xi_r\colon A\to D_r$. We want to show that $\xi$ factors over the reduced ring $A/\mathrm{nil}(A)$; that is, $\xi(a)=0$ for all nilpotent elements $a\in A$. Suppose $a^m=0$. We must have $\xi(a)=\alpha t$ for some $\alpha\in L$. Then $\xi_m(a)=\alpha t+\cdots$, so $0=\xi_m(a^m)=\alpha^mt^m$. Hence $\alpha=0$, so $a\in\Ker(\xi)$ as required. This proves that $\bigcap_r\overline T_x^{(r)}X\subset T_x X_\red$.

Finally, suppose that $A_\red$ is formally smooth. Then we can lift any $\xi\colon A_\red\to D_1$ to $D_2$, then to $D_3$, and so on, and thus to an algebra homomorphism $\hat\xi\colon A_\red\to D_\infty=\varprojlim D_r$. This proves that $T_x X_\red\subset\overline T_x^{(\infty)}X_\red$ whenever $(\mathcal O_{X,\ptx})_\red$ is formally smooth.
\end{proof}

\begin{Cor}
Let $X$ be a scheme, locally of finite type over $K$. Then for each $x$ there exists an $N$ such that $\overline T_x^{(N)}X=\overline T_x^{(\infty)}X$. In particular each $\overline T_x^{(\infty)}X$ is constructible inside the tangent space.

If $X$ is of finite type over $K$, then we can take the same $N$ for all $x$.
\end{Cor}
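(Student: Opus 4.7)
The plan is to apply Greenberg's Theorem (\hyperref[Thm:Greenberg]{Theorem \ref*{Thm:Greenberg}}) directly. Since the jet sets $T_x^{(r)}X$ and $T_x^{(\infty)}X$ are defined via local homomorphisms out of $\mathcal O_{X,\ptx}$, the question is local at $\ptx$, so I would first replace $X$ by any affine open neighbourhood $\Spec A \subset X$ of $\ptx$ with $A$ finitely generated over $K$. This is harmless: any $\xi\in X(D_r)$ mapping to $x\in X(L)$ has topological image $\ptx\in\Spec A$, hence factors through $\Spec A$, and likewise for $D_\infty$.

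Next, I would apply \hyperref[Thm:Greenberg]{Theorem \ref*{Thm:Greenberg}} to $A$ to obtain positive integers $c,s,N$ with $N\geq c(s+1)$ such that, for every $r\geq N$ and every $\xi\in(\Spec A)(D_r)$, the image of $\xi$ in $(\Spec A)(D_{[r/c]-s})$ lifts to a point of $(\Spec A)(D_\infty)$. The key observation is that $N\geq c(s+1)$ forces $[r/c]-s\geq 1$ for all $r\geq N$, so projecting further along $D_{[r/c]-s}\to D_1$ shows that the image of any $\xi\in X(D_r)$ in $X(D_1)$ lies in $\overline T_x^{(\infty)}X$. Thus $\overline T_x^{(r)}X\subset\overline T_x^{(\infty)}X$ for all $r\geq N$, and together with the reverse inclusion from the preceding proposition we conclude $\overline T_x^{(r)}X=\overline T_x^{(\infty)}X$ for every $r\geq N$.

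For constructibility, I would use that, as recalled in \hyperref[Sec:rep-schemes]{Section 2.2}, local finiteness makes $T_x^{(N)}X$ an affine $L$-variety and the projection $T_x^{(N)}X\to T_xX$ a morphism of finite-type $L$-schemes. Chevalley's theorem on the image of a morphism then shows that $\overline T_x^{(N)}X=\overline T_x^{(\infty)}X$ is constructible in $T_xX$.

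For the uniform statement when $X$ is of finite type, cover $X$ by finitely many affine opens $\Spec A_1,\ldots,\Spec A_m$ with each $A_i$ of finite type, apply Greenberg to each to obtain $(c_i,s_i,N_i)$, and set $N:=\max_i N_i$. For any $x\in X(L)$ the underlying $\ptx$ lies in some $\Spec A_i$, and since $N\geq N_i\geq c_i(s_i+1)$ the local argument above applies, giving $\overline T_x^{(r)}X=\overline T_x^{(\infty)}X$ for all $r\geq N$. The only genuinely substantive step is invoking Greenberg's theorem correctly; everything else — verifying $[r/c]-s\geq1$, citing Chevalley, and taking a maximum over a finite cover — is routine bookkeeping, so I do not anticipate any real obstacle.
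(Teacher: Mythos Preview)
Your proposal is correct and follows essentially the same route as the paper: reduce to an affine neighbourhood, invoke Greenberg's Theorem, check that the bound guarantees $[r/c]-s\geq 1$ so the lift agrees with the original jet at the $D_1$-level, and for the uniform version take the maximum over a finite affine cover. The only cosmetic difference is that the paper absorbs $s$ into $N$ and $c$ (doubling them to assume $s=0$) before quoting Greenberg, whereas you carry $s$ through explicitly; also, constructibility of $\overline T_x^{(N)}X$ was already noted just before the corollary (as the image of a linear map restricted to an affine variety), so you need not invoke Chevalley separately.
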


\begin{proof}
Take an open affine neighbourhood $\Spec A$ of $x$. Take $N$ and $c$ as in Greenberg's Theorem. If $r\geq N$ and $\xi\in\overline T_x^{(r)}X$, then we can lift $\xi$ to an element $\xi_r\in T_x^{(r)}X$, and we can find $\hat\xi\in T_x^{(\infty)}X$ such that $\hat\xi=\xi_r\in T_x^{([r/c])}X$. Since $[r/c]\geq1$ we have $\hat\xi=\xi\in T_xX$. Thus $\xi\in\overline T_x^{(\infty)}X$.

If $X$ is of finite type over $K$, then we can cover $X$ by finitely many open affine subschemes $\Spec A_i$. For each of these we have $N_i$ and $c_i$ as in the theorem. Now take $N=\max\{N_i\}$.
\end{proof}

We note that some finiteness condition on $X$ is essential. For, consider
\[ A := K[s_1,s_2,s_3,\ldots]/(s_1^1,s_2^2,s_3^3,\ldots) \]
and take $x\colon A\to K$, $s_i\mapsto 0$. Then
\[ \overline T_x^{(r)}\Spec A = \{(\xi_i)\in K^{(\mathbb N)}:\xi_i=0\textrm{ for }i\leq r\}, \]
so the $\overline T_x^{(r)}\Spec A$ form a strictly decreasing chain.

In a similar vein we have the following result.

\begin{Lem}
Let $X$ be a scheme, locally of finite type over $K$. Then the set $\overline T_x^{(2)}X$ is always a closed subvariety of $T_x X$, whereas $\overline T_x^{(3)}X$ is in general only constructible.
\end{Lem}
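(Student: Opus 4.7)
The plan is to establish each half of the statement by a direct coordinate calculation on an affine chart. Since everything is local on $X$, I would pass to an open affine neighbourhood $\Spec A$ of $\ptx$, base-change to $L$, and choose coordinates so that $\ptx$ becomes the origin, writing $A = L[X_1,\ldots,X_n]/(f_1,\ldots,f_m)$ with each $f_j$ vanishing at the origin. Decomposing $f_j = f_j^{(1)} + f_j^{(2)} + \cdots$ into homogeneous components and substituting $\xi(t) = \xi_1 t + \xi_2 t^2 + \cdots$ (each $\xi_i \in L^n$), the condition $f_j(\xi(t)) \equiv 0 \pmod{t^{r+1}}$ becomes an explicit polynomial system describing $T_x^{(r)} X$.

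For $r = 2$ the expansion reads
\[
f_j(\xi_1 t + \xi_2 t^2) \equiv f_j^{(1)}(\xi_1)\, t + \bigl(f_j^{(1)}(\xi_2) + f_j^{(2)}(\xi_1)\bigr) t^2 \pmod{t^3},
\]
so a tangent vector $\xi_1 \in T_x X$ lifts to a 2-jet if and only if the quadratic image $F(\xi_1) := (f_j^{(2)}(\xi_1))_j$ lies in $\Ima(\phi) \subseteq L^m$, where $\phi\colon L^n \to L^m$ is the linear map $\xi \mapsto (f_j^{(1)}(\xi))_j$. Hence $\overline T_x^{(2)} X$ is the preimage of $0$ under the induced polynomial map $T_x X \to L^m/\Ima(\phi)$, and is in particular a closed subvariety of $T_x X$ cut out by finitely many quadratic equations. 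The decisive point is that the dependence on $\xi_2$ is linear, so the obstruction lives in the fixed quotient $L^m/\Ima(\phi)$ and is polynomial in $\xi_1$.

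For the counterexample, take $A = K[x, y, z]/(f)$ with $f = y^2 - xy + z^3$ and $\ptx$ the origin, so that $T_x X = L^3$ and $\overline T_x^{(2)} X = \{(a, b, c) \in L^3 : b(b - a) = 0\}$ is the union of two planes. Expanding $f$ modulo $t^4$ produces, in addition to the 2-jet condition, the extra equation $(2 b_1 - a_1)\, b_2 - a_2 b_1 + c_1^3 = 0$; read as a linear equation in $(a_2, b_2)$ with coefficient vector $(-b_1,\,2 b_1 - a_1)$, it is solvable exactly when that vector is nonzero or $c_1 = 0$. A short case split then yields
\[
\overline T_x^{(3)} X = \overline T_x^{(2)} X \setminus \bigl\{(0, 0, c) : c \neq 0\bigr\}.
\]
This set is constructible but not closed: the point $(0, 0, c)$ for $c \neq 0$ lies in the closure of $\{(a, 0, c) : a \neq 0\} \subset \overline T_x^{(3)} X$ yet is absent from $\overline T_x^{(3)} X$ itself.

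The main obstacle is locating a counterexample of the right shape. Many naive candidates (two-variable hypersurfaces, nilpotent extensions, smooth quadric cones) accidentally yield a closed $\overline T^{(3)}$, because either $\phi$ is already surjective (absorbing every obstruction) or $f^{(3)}$ vanishes on the entire 2-jet locus. One needs $f^{(1)} = 0$ together with a reducible 2-jet locus meeting along a positive-dimensional stratum, and an $f^{(3)}$ whose required preimage under $\phi$ (here $b_2 = c_1^3/a_1$) depends rationally on the tangent data and blows up along that stratum. Once such an example is in hand, both the closedness argument and the case analysis reduce to routine bookkeeping.
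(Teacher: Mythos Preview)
Your proof is correct and follows essentially the same approach as the paper. For the closedness of $\overline T_x^{(2)}X$, both you and the paper reduce to affine coordinates and observe that the liftability condition is the solvability of an inhomogeneous linear system in $\xi_2$, hence a rank (equivalently, image-in-quotient) condition cut out by quadrics in $\xi_1$. For the counterexample, your polynomial $y^2 - xy + z^3$ is the paper's $st - u^3$ after the linear substitution $s = y$, $t = x - y$, $u = z$, and your description of $\overline T_x^{(3)}X$ matches the paper's exactly under this change of variables.
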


\begin{proof}
After changing base we may assume that $x$ is a rational point. Restricting to an open affine neighbourhood, we may then take $A=K[s_1,\ldots,s_m]/(f_1,\ldots,f_r)$ and $x\colon A\to K$, $s_i\mapsto 0$. Write $f_i$ in terms of its homogeneous parts
\[ f_i = \sum_pg_{ip}s_p + \sum_{p\leq q}h_{ipq}s_ps_q + \cdots. \]
Set $G=(g_{ip})$, an $r\times m$ matrix over $K$. Moreover, choosing an ordering for the pairs $(p,q)$ with $p\leq q$, we can consider $H=(h_{ipq})$ as an $r\times\binom{m+1}{2}$ matrix.

Now, an algebra homomorphism $\xi\colon A\to D_1$ extending $x$ is determined by $s_p\mapsto\xi_pt$ such that $\xi(f_i)=0$, so by a vector $(\xi_p)\in K^m$ such that $G(\xi_p)=0$. This identifies $T_x\Spec A$ with $\Ker(G)$. Similarly, $\xi\in\overline T_x^{(2)}\Spec A$ if and only if there is a vector $(\eta_p)\in K^m$ such that $s_p\mapsto \xi_pt+\eta_pt^2$ defines an algebra homomorphism $A\to D_2$. This is if and only if $G(\eta_p)+H(\xi_p\xi_q)=0$, and we can solve this inhomogeneous system if and only if $\mathrm{rank}(G,H(\xi_p\xi_q))\leq\mathrm{rank}(G)$. This proves that $\overline T_x^{(2)}\Spec A$ is closed; in fact, it is cut out by quadrics.

On the other hand, consider $A=K[s,t,u]/(st-u^3)$ and the point $x\colon A\to K$, $(s,t,u)\mapsto(0,0,0)$. Then $T_x\Spec A=K^3$ and $\overline T_x^{(2)}\Spec A=\{(\xi,\eta,\zeta):\xi\eta=0\}$, but
\[ \overline T_x^{(3)}X = \{(\xi,\eta,\zeta) : \xi\eta=0,\ (\xi,\eta)\neq(0,0)\}\cup\{(0,0,0)\}. \qedhere \]
\end{proof}

We now show how the spaces $\overline T_x^{(r)}X$ can be used to compute the tangent space $T_xX_\red$. This relies on the Theorem of Generic Smoothness \cite[I \S4 Corollary 4.10]{DG}.

\begin{Thm}[Generic Smoothness]
Let $X$ be a reduced scheme, locally of finite type over a perfect field $K$. Then the set of non-singular points in $X$ is open and dense.
\end{Thm}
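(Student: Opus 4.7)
The strategy is to reduce to the case where $X$ is integral, apply Proposition \ref{Prop:differentials-for-regular-local} at the generic point (where perfectness of $K$ guarantees separability of the residue field), and then spread this out via upper semi-continuity of the tangent dimension. For the reduction, write $X = \bigcup X_i$ for the distinct irreducible components with their reduced induced scheme structures. A standard argument using the fact that $A \to \prod A/\mathfrak p_i$ is injective for reduced $A$ shows that for any point $\ptx$ lying on a single $X_i$ one has $\mathcal O_{X,\ptx} = \mathcal O_{X_i,\ptx}$, so such a $\ptx$ is non-singular in $X$ if and only if it is non-singular in $X_i$. Points lying on two or more components are always singular and form a closed subset that is nowhere dense in each $X_i$, so the theorem for general reduced $X$ will follow from the theorem for each integral $X_i$.

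We may therefore assume $X$ is integral of dimension $n$, with generic point $\eta$. Then $\mathcal O_{X,\eta} = \kappa(\eta)$ is a field, hence trivially regular local of dimension zero. Because $K$ is perfect, every field extension of $K$ is separable, so in particular $\kappa(\eta)/K$ is; Proposition \ref{Prop:differentials-for-regular-local} then yields that $\Omega_{X/K,\eta}$ is free of rank $\mathrm{tr.}\deg_K \kappa(\eta) = n$. Upper semi-continuity of $\ptx \mapsto \dim T_\ptx X = \dim_{\kappa(\ptx)}\Omega_{X/K}(\ptx)$ (a consequence of Corollary \ref{Cor:usc} applied to the tangent bundle, as noted in the excerpt) now produces an open neighborhood $U$ of $\eta$ on which $\dim T_\ptx X \leq n$. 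Since $X$ is integral of finite type we have $\dim_\ptx X = n$ at every $\ptx$, and the general lower bound $\dim T_\ptx X \geq \dim_\ptx X$ forces equality throughout $U$. Thus every point of $U$ is non-singular; openness of the non-singular locus in $X$ follows from the same semi-continuity together with the constancy of $\dim_\ptx X$, while density follows because $U$ is a non-empty open subset of the irreducible scheme $X$.

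The one step that genuinely uses the hypothesis on $K$ is the application of Proposition \ref{Prop:differentials-for-regular-local} at $\eta$: perfectness is precisely what is needed to ensure $\kappa(\eta)/K$ is separable, and without it the theorem fails outright (take $X = \Spec L$ for any non-trivial purely inseparable extension $L/K$). Everything else is essentially a bookkeeping exercise with semi-continuity and the decomposition into irreducible components.
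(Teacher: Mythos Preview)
Your argument is correct. The paper itself does not prove this statement: it simply quotes Generic Smoothness as a black box from \cite[I \S4 Corollary 4.10]{DG} and moves on. What you have written is the standard proof, and it fits neatly into the paper's framework because the two ingredients you invoke---Proposition~\ref{Prop:differentials-for-regular-local} (freeness of $\Omega_{X/K,\ptx}$ at a regular point with separable residue field) and the upper semi-continuity of $\ptx\mapsto\dim_{\kappa(\ptx)}\Omega_{X/K}(\ptx)$ noted after Corollary~\ref{Cor:usc}---are already established in the text. One cosmetic point: you write ``integral of finite type'' in step~4, but the hypothesis is only \emph{locally} of finite type. This does no harm, since $\dim_\ptx X=\dim\mathcal O_{X,\ptx}+\mathrm{tr.deg}_K\kappa(\ptx)$ is computed locally and equals $\mathrm{tr.deg}_K K(X)=n$ at every point of an integral scheme locally of finite type over a field; but it would be cleaner to say so explicitly rather than silently pass to an affine open.
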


\begin{Cor}\label{Cor:reduced}
Let $X$ be a scheme, locally of finite type over a perfect field $K$. Then for all $x$ in an open dense subset of $X$ we have $T_xX_\red=\overline T_x^{(r)}X$ for all $r$ sufficiently large.

In particular, $X$ is generically reduced if and only if, for all $x$ in an open dense subset of $X$, we have $\overline T_x^{(r)}X=T_xX$ for all $r$ sufficiently large.
\end{Cor}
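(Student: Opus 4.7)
The plan is to combine the preceding Proposition and Corollary with the Theorem of Generic Smoothness. The key point is that over a perfect field a Noetherian regular local $K$-algebra essentially of finite type is formally smooth, so the Proposition's equality $\overline T_x^{(\infty)}X=T_xX_\red$ holds at every point of the non-singular locus of $X_\red$.

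For the first claim, I would apply Generic Smoothness to the reduced scheme $X_\red$, which is locally of finite type over the perfect field $K$, yielding an open dense subset $U\subset X_\red$ of non-singular points. Since $X$ and $X_\red$ share the same underlying topological space, $U$ is also open and dense in $X$. For any $\ptx\in U$ the local ring $(\mathcal O_{X,\ptx})_\red=\mathcal O_{X_\red,\ptx}$ is a regular local $K$-algebra essentially of finite type, hence formally smooth. The preceding Proposition therefore gives $\overline T_x^{(\infty)}X=\bigcap_r\overline T_x^{(r)}X=T_xX_\red$ for every $x\in X(L)$ whose underlying point $\ptx$ lies in $U$, while the preceding Corollary supplies, for each such $x$, an integer $N$ with $\overline T_x^{(r)}X=T_xX_\red$ for all $r\geq N$.

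For the \emph{if} direction of the second claim, generic reducedness produces an open dense $V\subset X$ with $X|_V=X_\red|_V$; intersecting $V$ with $U$ gives an open dense subset on which $T_xX=T_xX_\red=\overline T_x^{(r)}X$ for $r$ sufficiently large. For the \emph{only if} direction, suppose that on some open dense $W\subset X$ we have $\overline T_x^{(r)}X=T_xX$ for $r$ large. Combining with the first claim on $U\cap W$ yields $T_xX=T_xX_\red$ on an open dense subset of $X$, and in particular at every generic point $\ptx$ of every irreducible component of $X$ (each such generic point lies in every open dense subset, by restricting to an open affine Noetherian neighbourhood). At such a $\ptx$, the ring $\mathcal O_{X,\ptx}$ is Artinian local with $(\mathcal O_{X,\ptx})_\red=\kappa(\ptx)$, and since $K$ is perfect the extension $\kappa(\ptx)/K$ is separable, so the second fundamental exact sequence is left exact:
\[ 0\to\mathfrak m_\ptx/\mathfrak m_\ptx^2\to\Omega_{X/K}(\ptx)\to\Omega_{\kappa(\ptx)/K}\to 0, \]
with the analogous sequence for $X_\red$ reducing to $\Omega_{X_\red/K}(\ptx)\cong\Omega_{\kappa(\ptx)/K}$. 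Comparing dimensions, the hypothesis $T_xX=T_xX_\red$ forces $\mathfrak m_\ptx/\mathfrak m_\ptx^2=0$, so by Nakayama $\mathfrak m_\ptx=0$. Hence $\mathcal O_{X,\ptx}$ is reduced at every generic point, i.e.\ $X$ is generically reduced.

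The main obstacle is the \emph{only if} direction, where the tangent-space equality must be converted into honest reducedness of the stalk. This uses perfectness of $K$ twice: for Generic Smoothness together with the Matsumura criterion, and for left exactness of the second fundamental sequence at the generic points.
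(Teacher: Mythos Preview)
Your proof is correct and follows the same approach as the paper, which simply asserts the final equivalence (generic reducedness $\Leftrightarrow$ $T_xX=T_xX_\red$ on an open dense set) without justification, whereas you supply a careful argument via the second fundamental sequence at the generic points and Nakayama. Note, however, that you have swapped the labels ``if'' and ``only if'': in the biconditional as stated, the direction assuming generic reducedness is the \emph{only if} direction, and the direction assuming the jet-space condition is the \emph{if} direction.
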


\begin{proof}
For all $x$ we have $\overline T_x^{(r)}X=\overline T_x^{(\infty)}X$ when $r$ is sufficiently large, whereas Generic Smoothness implies that $T_x X_\red=\overline T_x^{(\infty)}X$ for all $x$ in an open dense subset. Finally, $X$ is generically reduced if and only if $T_x X=T_x X_\red$ on an open dense subset.
\end{proof}

\subsection{Separable morphisms}

A scheme $X$ is called integral provided it is both irreducible and reduced, and a dominant morphism $f\colon X\to Y$ between integral schemes is called separable whenever $K(X)/K(Y)$ is a separable field extension. More generally, if $X$ is integral and $f\colon X\to Y$ is a morphism, then $f$ is separable provided $f\colon X\to\overline{f(X)}$ is separable, where $\overline{f(X)}$ is the scheme-theoretic image of $f$.

\begin{Thm}\label{Thm:surj}
Let $f\colon X\to Y$ be a dominant morphism between integral schemes which are locally of finite type over $K$. Write $n:=\mathrm{tr.deg}\,K(X)/K(Y)$ for the relative degree. Then there exists an open dense $U\subset X$ and integers $d\geq n$ and $e$ such that
\[ \dim_L\,\Ker(d_x f)=d \quad\textrm{and}\quad \dim_L\,\Coker(d_x f)=e \quad\textrm{for all }x\in U(L). \]
Moreover,
\begin{enumerate}
\item $f$ is separable if and only if $d=n$.
\item $f$ separable implies $e=0$, and the converse holds whenever $K$ is perfect.
\end{enumerate}
\end{Thm}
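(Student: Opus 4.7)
The plan is to translate the question about the differential $d_xf$ into one about the natural morphism of coherent sheaves $\alpha\colon f^\ast\Omega_{Y/K}\to\Omega_{X/K}$ on $X$, extract the numbers $d$ and $e$ from the generic point of $X$, and then read off separability from the second fundamental exact sequence for K\"ahler differentials.

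First, using the identification $T_xX=\Hom_{\kappa(\ptx)}(\Omega_{X/K}(\ptx),L)$ recalled in the previous subsection, the differential $d_xf$ is, after base change to $L$, the dual of the natural map of finite-dimensional $\kappa(\ptx)$-vector spaces
\[ \alpha_\ptx\colon\Omega_{Y/K}(f(\ptx))\otimes_{\kappa(f(\ptx))}\kappa(\ptx)\to\Omega_{X/K}(\ptx). \]
Duality between finite-dimensional vector spaces swaps kernel and cokernel, so
\[ \dim_L\Ker(d_xf)=\dim_{\kappa(\ptx)}\Coker\alpha_\ptx \quad\text{and}\quad \dim_L\Coker(d_xf)=\dim_{\kappa(\ptx)}\Ker\alpha_\ptx; \]
in particular, both dimensions depend only on the topological point $\ptx$, not on $L$.

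To produce $U$ I would invoke generic freeness of coherent sheaves on the integral Noetherian schemes $X$ and $Y$: the sheaves $\Omega_{X/K}$ and $\Omega_{Y/K}$ are locally free of constant rank on dense opens $U_X\subset X$ and $U_Y\subset Y$, and by dominance $U_X\cap f^{-1}(U_Y)$ is a dense open of $X$ on which $\alpha$ is a morphism between vector bundles of constant ranks. Lower semi-continuity of the rank of such a morphism then cuts out a further dense open $U\subset X$ on which $\dim\Coker\alpha_\ptx=d$ and $\dim\Ker\alpha_\ptx=e$ are constant, equal to the values at the generic point $\eta\in X$.

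Since $f$ is dominant, $f(\eta)$ is the generic point of $Y$, so $\alpha_\eta$ is the middle arrow in the second fundamental exact sequence for the tower $K\subset K(Y)\subset K(X)$:
\[ \Omega_{K(Y)/K}\otimes_{K(Y)}K(X)\xrightarrow{\alpha_\eta}\Omega_{K(X)/K}\to\Omega_{K(X)/K(Y)}\to 0. \]
Hence $d=\dim_{K(X)}\Omega_{K(X)/K(Y)}$, and the standard inequality $\dim_E\Omega_{E/F}\geq\mathrm{tr.deg}(E/F)$ for finitely-generated field extensions, with equality iff $E/F$ is separable, yields $d\geq n$ together with part (1). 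For part (2), separability of $K(X)/K(Y)$ makes the displayed sequence left-exact, so $\alpha_\eta$ is injective and $e=0$. Conversely, if $K$ is perfect then $K(X)/K$ and $K(Y)/K$ are separable, so \hyperref[Prop:differentials-for-regular-local]{Proposition \ref*{Prop:differentials-for-regular-local}} applied at a generic non-singular point (which exists by Generic Smoothness) gives $\dim_{K(X)}\Omega_{K(X)/K}=\dim X$ and $\dim_{K(Y)}\Omega_{K(Y)/K}=\dim Y$; a dimension count in the sequence then delivers $e=d-n$, so $e=0$ forces $d=n$ and hence separability. The main point to watch is combining the several semi-continuity arguments in the second step so that $\dim\Ker\alpha_\ptx$ and $\dim\Coker\alpha_\ptx$ stabilise simultaneously on a single dense open $U$; the rest is a routine translation between tangent spaces, their $\kappa(\ptx)$-duals, and the differentials of a field tower.
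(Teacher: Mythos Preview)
Your proposal is correct and follows essentially the same route as the paper: both arguments dualise $d_xf$ to the map $f^\ast\Omega_{Y/K}\to\Omega_{X/K}$, use generic freeness to stabilise the relevant dimensions on a dense open, and then evaluate at the generic point via the fundamental exact sequence for the tower $K\subset K(Y)\subset K(X)$ together with the inequality $\dim_E\Omega_{E/F}\geq\mathrm{tr.deg}(E/F)$. The only cosmetic differences are that the paper works locally with rings from the outset and applies generic freeness directly to the kernel, image and cokernel of the map (rather than first making source and target locally free and then invoking rank semi-continuity), and that what you call the ``second'' fundamental exact sequence the paper calls the ``first''.
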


\begin{proof}
The question is local, so we may assume that we have an embedding of domains $A\hookrightarrow B$, both of finite type over $K$. The first fundamental exact sequence for K\"ahler differentials \cite[(26.H) Theorem 57]{Matsumura1}
\[ \Omega_{A/K}\otimes_AB \to \Omega_{B/K} \to \Omega_{B/A} \to 0 \]
 yields, for each $x\colon B\to L$ with $L$ a field, an exact sequence
\[ 0 \to \Der_A(B,L) \to \Der_K(B,L) \to \Der_K(A,L), \]
and the right hand map can be identified with the differential
\[ d_x f\colon T_x\Spec B\to T_{f(x)}\Spec A. \]

The modules of K\"ahler differentials are all finitely generated over $B$, so by the Theorem of Generic Freeness \cite[(22.A) Lemma 1]{Matsumura1} we can localise $B$ to assume that each of these is free of finite rank over $B$. The dimensions of
\[ \Ker(d_x f), \quad T_x\Spec B \quad\textrm{and}\quad T_{f(x)}\Spec A, \]
and hence also of $\Coker(d_x f)$, are then all constant on an open dense subset of $\Spec B$.

It follows that we can compute these generic values by calculating them for the generic point of $\Spec B$. Let $L$ and $M$ be the fields of fractions of $A$ and $B$, respectively. Tensoring the first fundamental exact sequence with $M$ yields
\[ M\otimes_L\Omega_{L/K} \to \Omega_{M/K} \to \Omega_{M/L} \to 0. \]
Thus the kernel of the differential has dimension $\dim_M\Omega_{M/L}$ and the cokernel has dimension
\[ \dim_M\Omega_{M/L}-\dim_M\Omega_{M/K}+\dim_L\Omega_{L/K}. \]
By \cite[(27.B) Theorem 59]{Matsumura1} we know that
\[ d := \dim_M\Omega_{M/L} \geq \mathrm{tr.deg}\,M/L = n \]
and
\[ \dim_M\Omega_{M/K}\geq\dim_L\Omega_{L/K}+n \]
with equality in either case if and only if $M/L$ is separable. This proves (1) and the first part of (2).

Conversely suppose that $K$ is perfect. Then $M/K$ is separable, so $\dim\Omega_{M/K}=\mathrm{tr.deg}\,M/K$, and similarly for $L/K$. If moreover $e=0$, then
\[ \dim\Omega_{M/L} = \mathrm{tr.deg}\,M/K-\mathrm{tr.deg}\,L/K = \mathrm{tr.deg}\,M/L = n, \]
whence $M/L$ is separable.
\end{proof}

We observe that the converse in (2) is not true in general. For example, let $k$ have characteristic $p>0$, and set $K=k(t)$ and $L=k(u)$, together with an embedding $K\hookrightarrow L$, $t\mapsto u^p$. Then $X=\Spec L$ and $Y=\Spec K$ are both integral and $L/K$ is purely inseparable, but $T_xX=L$ and $T_yY=0$ so the differential is surjective.

Also, the dimension of the cokernel is not upper semi-continuous. For, consider the projection from a cuspidal cubic to a line in characteristic two
\[ f \colon X=\Spec K[s,t]/(s^2-t^3) \to \mathbb A^1 = \Spec K[t], \quad (a,b)\mapsto b. \]
Then $T_{(a,b)}X = \{(\xi,\eta):b\eta=0\}$ and $df$ is the projection onto the second co-ordinate. Thus $df$ is surjective at the origin and zero elsewhere (so $e=1$).

We next prove a nice result about separability of field extensions.

\begin{Thm}\label{Thm:sep-field}
A field extension $L/K$ is separable if and only if the $n$-fold tensor product $L^{\otimes\nfold Kn}=L\otimes_K\cdots\otimes_KL$ is reduced for all $n$. If $L/K$ is finitely generated, then there exists $N$ such that $L/K$ is separable if and only if $L^{\otimes\nfold KN}$ is reduced. If $L/K$ is algebraic, then we can take $N=2$.
\end{Thm}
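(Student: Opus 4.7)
The plan is to use the standard characterization of separability together with a ``Slater-determinant'' construction to produce the required nilpotents. For the forward direction of the main equivalence, I would use that $L/K$ separable is equivalent to $L$ being geometrically reduced over $K$, so $L \otimes_K F$ is reduced for every field extension $F/K$. The workhorse generalisation is that if $A$ is any reduced $K$-algebra, then $A \otimes_K L$ is reduced: embed $A$ into the product $\prod_{\mathfrak p}\kappa(\mathfrak p)$ of residue fields at its minimal primes, tensor with the flat $K$-algebra $L$, and apply geometric reducedness to each factor. Induction on $n$ then yields $L^{\otimes n}$ reduced for all $n$.

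For the reverse implication (contrapositive), assume $L/K$ is inseparable, so $\mathrm{char}\,K = p > 0$ and MacLane's criterion provides $b_1, \dots, b_r \in L$ that are $K$-linearly independent but whose $p$-th powers satisfy a nontrivial relation $\sum_i c_i b_i^p = 0$ with $c_i \in K$. The key construction is
\[
\omega := \sum_{\sigma \in S_r} \mathrm{sgn}(\sigma)\, b_{\sigma(1)} \otimes b_{\sigma(2)} \otimes \cdots \otimes b_{\sigma(r)} \in L^{\otimes r}.
\]
Extending $\{b_i\}$ to a $K$-basis of $L$ shows that the $r!$ tensor-monomials occurring in $\omega$ are distinct $K$-basis vectors of $L^{\otimes r}$, so $\omega \neq 0$. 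By Frobenius additivity in characteristic $p$, $\omega^p$ equals the determinant (or in characteristic $2$ the permanent) of the $r \times r$ matrix whose $(i,j)$-entry is $b_j^p$ placed in the $i$-th tensor slot; the given relation makes this matrix's columns $K$-linearly dependent, so $\omega^p = 0$ (in characteristic $2$, via multilinearity and the vanishing of the permanent when two columns agree). The finitely generated case follows with $N = r$: the MacLane dependency has finite length, and $\omega \otimes 1 \in L^{\otimes n}$ persists as a nonzero nilpotent for all $n \geq r$ by faithful flatness.

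For the algebraic case with $N = 2$, a more direct argument applies. Pick $\alpha \in L$ inseparable with minimal polynomial $f \in K[x]$; since $f$ is inseparable, $\alpha$ is a multiple root of $f$ in $K(\alpha)$. The Chinese-remainder decomposition of $K(\alpha) \otimes_K K(\alpha) \cong K(\alpha)[y]/(f(y))$ therefore contains a non-reduced local factor $K(\alpha)[y]/((y-\alpha)^m)$ with $m \geq 2$, and the natural map $K(\alpha) \otimes_K K(\alpha) \hookrightarrow L \otimes_K L$, injective by flatness of the field extension $K(\alpha) \hookrightarrow L$, transports this nilpotent into $L \otimes_K L$. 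The main subtle point throughout is verifying the nonvanishing of $\omega$ via the basis-extension argument; the vanishing of $\omega^p$ is then essentially formal, given Frobenius additivity and the vanishing of determinants/permanents on matrices with linearly-dependent columns.
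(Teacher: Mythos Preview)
Your proof is correct and shares the same core idea as the paper's: construct an element of $L^{\otimes n}$ as the determinant of the matrix $(\beta_i^{(j)})$ for suitable $K$-linearly independent $\beta_1,\ldots,\beta_n\in L$ whose $p$-th powers satisfy a $K$-linear relation, so that the determinant is nonzero but its $p$-th power vanishes. The differences are in how the $\beta_i$ are produced and how non-vanishing is checked. The paper first passes to a finitely-generated subfield $L'=F(x_d)$ with $F/K$ purely transcendental, then extracts the relation $\sum\lambda_i\overline\alpha_i^p=0$ from the minimal polynomial of $x_d$ over $F$, and argues non-vanishing via the $R=F^{\otimes n}$-basis of $S=L^{\otimes n}$. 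You instead quote MacLane's criterion directly to obtain the $b_i$, and your non-vanishing argument (extend $\{b_i\}$ to a $K$-basis of $L$, so the Leibniz terms are distinct $K$-basis vectors of $L^{\otimes r}$) is shorter and more transparent. Your treatment of the algebraic case $N=2$ via the Chinese remainder decomposition of $K(\alpha)[y]/(f(y))$ is also more explicit than the paper, which simply cites this as standard.

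One cosmetic remark: your separate handling of $p=2$ is unnecessary. In characteristic $2$ one has $\mathrm{sgn}(\sigma)=1$, so the permanent and the determinant coincide, and the usual vanishing of the determinant on a matrix with $K$-linearly dependent columns already gives $\omega^p=0$ uniformly for all $p$.
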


\begin{proof}
If $L/K$ is separable, then by definition we know that $L\otimes_KA$ is reduced for all reduced $K$-algebras $A$. Conversely, suppose that $L/K$ is not separable. Then there exists an intermediate field $L'$ which is a primitive extension of a finitely-generated, purely transcendental extension of $K$ and such that $L'/K$ is inseparable. It is enough to prove that $(L')^{\otimes\nfold Kn}$ is not reduced for some $n$, and hence we may assume that $L=L'$.

Let $p=\mathrm{char}(K)>0$ and write $L=K(x_1,\ldots,x_d)$, where $F=K(x_1,\ldots,x_{d-1})$ is a purely transcendental extension of $K$. Let $x_d$ have minimal polynomial $f$ over $F$. Clearing denominators we obtain an equation $\sum_{i=1}^n\lambda_i\alpha_i=0$ with $\lambda_i\in K$ and each $\alpha_i$ a monomial in $x_1,\ldots,x_d$. Note that, by Gauss' Lemma, this polynomial is irreducible over each field $K(x_1,\ldots,\hat x_j,\ldots,x_d)$. Thus, if some $\alpha_i$ is not a $p$-th power, then some $x_j$ occurs in $\alpha_i$ with exponent not divisible by $p$, and hence $x_j$ is separable algebraic over $K(x_1,\ldots,\hat x_j,\ldots,x_d)$, a contradiction. Therefore we can write $\alpha_i=\overline \alpha_i^p$ for all $i$.

Set $R:=F^{\otimes\nfold Kn}$ and $S:=L^{\otimes\nfold Kn}$. Given $x\in L$, write
\[ x^{(j)} := 1^{\otimes{(j-1)}}\otimes x\otimes1^{\otimes{(n-j)}}\in S. \]
Then $R$ is an integral domain and $S\cong R[T_1,\ldots,T_n]/(f(T_1),\ldots,f(T_n))$ via the map $T_j\mapsto x_d^{(j)}$. Thus $S$ is a free $R$-module with basis the monomials $(x_d^{(1)})^{m_1}\cdots (x_d^{(n)})^{m_n}$ for $0\leq m_i<\deg(f)$.

Consider now the $n\times n$ matrices $M=\big(\alpha_i^{(j)}\big)$ and $\overline M=\big(\overline\alpha_i^{(j)}\big)$, having coefficients in $S$. If $\chi=\det\overline M$, then $\det M=\chi^p$. Also, since $\sum_i\lambda_i\alpha_i=0$, we know that $\det M=0$. On the other hand, using the Leibniz formula for $\det\overline M$, together with the basis for $S$ as a free $R$-module given above, we know that $\det\overline M\neq0$. Thus $\chi\in S$ is a non-trivial nilpotent element, so $S=L^{\otimes\nfold Kn}$ is not reduced.

If $L/K$ is finitely generated, then we can write $L=F(x_1,\ldots,x_e)$ with $F/K$ purely transcendental. Then $L/K$ is separable if and only if each $F(x_i)$ is separable over $K$, and hence there exists some $N$ such that $L/K$ is separable if and only if $L^{\otimes\nfold KN}$ is reduced. If $L/K$ is algebraic, then it is well-known that $L/K$ is separable if and only if $L\otimes_KL$ is reduced (see for example \cite[\S5.5 Exercise 11]{Cohn}, but note that it is falsely claimed in that exercise that $N=2$ works even for non-algebraic extensions).
\end{proof}

As an example, let $\alpha,\beta\in K\setminus K^p$ such that $\alpha^{1/p},\beta^{1/p}$ are $p$-independent over $K$, so $[K(\alpha^{1/p},\beta^{1/p}):K]=p^2$. Then $K$ is relatively algebraically closed in $L=K(x)[y]/(y^p+\alpha x^p+\beta)$. Now $L\otimes_KL\cong L\otimes_KK(\alpha^{1/p},x)$, where $\alpha^{1/p}=-(y^{(2)}-y^{(1)})/(x^{(2)}-x^{(1)})$, so is reduced, whereas $L\otimes_KL\otimes_KL\cong L\otimes_KK(\alpha^{1/p},x)\otimes_KK(\alpha^{1/p},x)$ is not reduced.

The theorem above leads to the following geometric criterion for separability.

\begin{Thm}\label{Thm:separable}
Let $f\colon X\to Y$ be a morphism, locally of finite type, between locally-Noetherian schemes\footnote{
We call a scheme locally-Noetherian if every open affine is the spectrum of a Noetherian algebra; it is Noetherian if it is locally-Noetherian and quasi-compact, so has a finite open affine cover by spectra of Noetherian algebras.
}
and assume that $X$ is integral. Then $f$ is separable if and only if, for each $n$, we can find an open dense $U\subset X$ such that $U^{\times\nfold Yn}$ is reduced.
\end{Thm}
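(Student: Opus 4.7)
The plan is to deduce this from the algebraic Theorem \ref{Thm:sep-field} by relating the scheme $U^{\times\nfold Yn}$ to the tensor power $K(X)^{\otimes\nfold{K(Y)}n}$ of function fields via a localisation argument.

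First I would make the standard reductions. By definition of separability $X$ must be integral and we only care about $f\colon X\to\overline{f(X)}$, so I may replace $Y$ by the integral scheme-theoretic image and assume $f$ is dominant; the identity $U\times_YU=U\times_{\overline{f(X)}}U$ is automatic because $U\to Y$ factors through $\overline{f(X)}$. Since $X$ is irreducible every nonempty open contains the generic point $\eta_X$, so I may restrict to an affine open neighbourhood $\Spec B\subset X$ of $\eta_X$ that maps into an affine $\Spec A\subset Y$, with $A\hookrightarrow B$ Noetherian domains and $B$ of finite type over $A$. Write $k:=\mathrm{Frac}(A)=K(Y)$, $l:=\mathrm{Frac}(B)=K(X)$, and consider the intermediate domain $C:=B\otimes_Ak\subset l$.

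The central observation is that the following three statements are equivalent: $l^{\otimes\nfold kn}$ is reduced; $C^{\otimes\nfold kn}$ is reduced; and $B^{\otimes\nfold An}[1/a]$ is reduced for some $a\in A\setminus\{0\}$. The first equivalence combines the injection $C\hookrightarrow l$ with the flatness of every $k$-module to give $C^{\otimes\nfold kn}\hookrightarrow l^{\otimes\nfold kn}$, while conversely $l^{\otimes\nfold kn}$ is a localisation of $C^{\otimes\nfold kn}$ and localisations of reduced rings are reduced. The second equivalence uses that $C^{\otimes\nfold kn}=B^{\otimes\nfold An}\otimes_Ak$ is precisely the localisation of $B^{\otimes\nfold An}$ at $A\setminus\{0\}$; since $B^{\otimes\nfold An}$ is Noetherian its nilradical is finitely generated, so a single $a\in A\setminus\{0\}$ can kill it.

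If $f$ is separable then Theorem \ref{Thm:sep-field} yields $l^{\otimes\nfold kn}$ reduced, and so for each $n$ I obtain $a\in A\setminus\{0\}$ with $B^{\otimes\nfold An}[1/a]=(B[1/a])^{\otimes\nfold An}$ reduced, making $U:=\Spec B[1/a]$ the desired open dense subset. Conversely, assuming such a $U$ exists for every $n$, for each $n$ I choose an affine open $\Spec B\subset U$ mapping into some affine $\Spec A\subset Y$; since open immersions are stable under fibre product, $\Spec B^{\otimes\nfold An}\hookrightarrow U^{\times\nfold Yn}$ is an open immersion into a reduced scheme, so $B^{\otimes\nfold An}$ is reduced and running the equivalences the other way gives $l^{\otimes\nfold kn}$ reduced, whence $l/k$ is separable by Theorem \ref{Thm:sep-field}. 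The most delicate point is the asymmetry in the first equivalence: the natural map $C^{\otimes\nfold kn}\to l^{\otimes\nfold kn}$ is neither manifestly injective nor manifestly a localisation in a reversible sense --- tensors of nonzero elements of $C$ can be zero-divisors in $C^{\otimes\nfold kn}$ --- so the two implications really require the two different arguments.
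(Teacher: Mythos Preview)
Your proof is correct and follows essentially the same route as the paper's: reduce to a dominant morphism of integral affines $\Spec B\to\Spec A$, identify $K(X)^{\otimes\nfold{K(Y)}n}$ with a localisation of the Noetherian ring $B^{\otimes\nfold An}$ so that a single element suffices to kill the nilradical, and invoke Theorem~\ref{Thm:sep-field}. The only cosmetic difference is that the paper localises $B^{\otimes\nfold An}$ at $S=\{b\otimes\cdots\otimes b:0\neq b\in B\}$ and produces $U=D(b)$ for some $b\in B$, whereas you interpose $C=B\otimes_Ak$ and land on $U=D(a)$ for some $a\in A$; both choices work for the same reason.
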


\begin{proof}
If $Z$ is the scheme-theoretic image of $f$, then $X\times_YX\cong X\times_ZX$, so we may assume $Y=Z$, and hence that $f$ is a dominant morphism between integral schemes. Moreover, both conditions are invariant if we replace $X$ and $Y$ by open affines. Thus we have an inclusion of Noetherian domains $A\hookrightarrow B$, where $B$ is finitely-generated as an $A$-algebra, and with respective fields of fractions $K(Y)\hookrightarrow K(X)$.

Note that $K(X)^{\otimes\nfold{K(Y)}n}$ is the localisation of $B^{\otimes\nfold An}$ with respect to the multi\-plicatively-closed set $S=\{b\otimes\cdots\otimes b:0\neq b\in B\}$. Thus $\mathrm{nil}\big(K(X)^{\otimes\nfold{K(Y)}n}\big)=S^{-1}\mathrm{nil}\big(B^{\otimes\nfold An}\big)$, and since $B^{\otimes\nfold An}$ is Noetherian, we see that this vanishes if and only if there exists some non-zero $b\in B$ such that $b^{\otimes n}x=0$ for all $x\in\mathrm{nil}\big(B^{\otimes\nfold An}\big)$. Geometrically this says that we have a distinguished open $U=D(b)\subset X$ such that $U^{\times\nfold Yn}$ is reduced.

The result now follows from the previous theorem: $f$ is separable if and only, for each $n$, the nilradical of $K(X)^{\otimes\nfold{K(Y)}n}$ is zero.
\end{proof}

The next proposition shows that jet spaces behave well for separable morphisms.

\begin{Prop}\label{Prop:sep}
Let $f\colon X\to Y$ be a separable, dominant morphism, locally of finite type, between integral schemes. Then $T_x^{(r)}X\to T_{f(x)}^{(r)}Y$ is surjective for all $r\in[1,\infty]$ and all $x$ in an open dense subset of $X$.
\end{Prop}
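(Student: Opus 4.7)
The strategy is to reduce to the case where $f$ is smooth on an open dense subset of $X$, and then to invoke the infinitesimal lifting property of smooth morphisms to produce the required $r$-jet lifts.

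For the reduction, we work locally, letting $f$ correspond to an inclusion of finitely generated integral domains $A\hookrightarrow B$. The proof of \hyperref[Thm:surj]{Theorem \ref*{Thm:surj}} already shows that separability of the extension of fields of fractions is equivalent to the first fundamental sequence
\[ 0 \to \Omega_{A/K}\otimes_AB \to \Omega_{B/K} \to \Omega_{B/A} \to 0 \]
being short exact at the generic point with $\Omega_{B/A}$ of $B$-rank equal to the relative transcendence degree $n$. Applying generic freeness to these three modules together with generic flatness of $B$ over $A$, we can localize $B$ so that the sequence above is a short exact sequence of free $B$-modules of the expected ranks and $B$ is flat over $A$. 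The standard criterion that a flat morphism whose relative cotangent sheaf is locally free of rank equal to the relative dimension is smooth then yields smoothness of $f$ on the resulting open dense subset of $X$.

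For the lifting step, let $x\in X(L)$ lie in the smooth locus and let $\xi\in T_{f(x)}^{(r)}Y$ be given as a local homomorphism $\mathcal O_{Y,f(x)}\to D_r$. We wish to produce $\tilde\xi\colon\mathcal O_{X,x}\to D_r$ with $\tilde\xi\circ f^\#=\xi$ that reduces modulo $(t)$ to the local homomorphism corresponding to $x$. For finite $r$, the surjection $D_r\twoheadrightarrow L$ has nilpotent kernel, so formal smoothness of $f$ at $x$ directly supplies the diagonal lift. For $r=\infty$ we use $D_\infty=\varprojlim_s D_s$ and build $\tilde\xi$ as an inverse limit: setting $\tilde\xi_0=x$, we inductively produce $\tilde\xi_{s+1}$ from $\tilde\xi_s$ and $\xi_{s+1}:=\xi\bmod t^{s+2}$ by applying formal smoothness of $f$ to the square-zero extension $D_{s+1}\twoheadrightarrow D_s$; the compatible family then assembles into a local homomorphism $\mathcal O_{X,x}\to D_\infty$.

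The main technical obstacle is the reduction to smoothness in the first step. The paper's \hyperref[Thm:surj]{Theorem \ref*{Thm:surj}} establishes only the generic surjectivity of the differential (the $r=1$ case), and promoting this to smoothness of $f$ itself requires combining generic freeness of the various modules of K\"ahler differentials with generic flatness of $B$ over $A$, together with the criterion ``flat plus $\Omega_{B/A}$ locally free of the correct rank implies smooth''. Once this reduction is in place, both the finite-$r$ and $r=\infty$ cases follow uniformly from formal smoothness of $f$.
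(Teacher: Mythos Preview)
Your proof is correct, but it takes a genuinely different route from the paper's.

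The paper argues directly and constructively: writing $B=A[u_1,\ldots,u_m,v_1,\ldots,v_n]$ with the $u_i$ a separating transcendence basis for $K(X)/K(Y)$, it clears denominators in the minimal polynomials $h_i$ of the $v_i$ to produce elements $\alpha,\beta$ such that on $D(\alpha\beta)$ one can lift any $\eta\colon A\to D_r$ by hand --- first choosing arbitrary lifts of the $u_i$, then solving iteratively for the images of the $v_i$ via a Hensel-type argument using invertibility of $h_i'(v_i)$. This is essentially an explicit local presentation of $f$ as \'etale over $\bA^m_Y$, carried out without invoking the word ``smooth''.

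Your argument is more conceptual: reduce to an open subset where $f$ is smooth, then invoke formal smoothness. This is shorter and makes the underlying reason transparent, at the cost of importing the machinery of smooth morphisms. One small remark: the criterion you cite, ``flat plus $\Omega_{B/A}$ locally free of rank $n$ implies smooth'', is not quite correct as stated (e.g.\ $k\to k[x]/(x^p)$ in characteristic $p$ is flat with $\Omega$ free of rank $1$ but not smooth); one also needs the fibre dimension to equal $n$. A cleaner way to reach the same reduction is simply to observe that the smooth locus of $f$ is open and contains the generic point $\xi$ of $X$, since $K(Y)\to K(X)$ is flat and the generic fibre is smooth over $K(Y)$ at $\xi$ precisely because $K(X)/K(Y)$ is separable. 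With that adjustment your argument goes through cleanly for finite $r$ and, via the inverse-limit step you describe, for $r=\infty$ as well.
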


\begin{proof}
The question is local on $X$, so we may assume that we have a monomorphism $A\hookrightarrow B$ of finite type between integral domains. Let $A$ and $B$ have quotient fields $L$ and $M$ respectively, so that $M/L$ is finitely generated and separable. Any set of generators for $B$ over $A$ must also generate $M$ over $L$, so contains a separating transcendence basis \cite[Theorem 26.2 and subsequent remark]{Matsumura2}. We may therefore assume that $B=A[u_1,\ldots,u_m,v_1,\ldots,v_n]$ with the $u_i$ forming a separating transcendence basis for $M/L$. Write $A[u]=A[u_1,\ldots,u_m]$, with quotient field $L(u)$, and let $h_i$ be the minimal polynomial of $v_i$ over $L(u)[v_1,\ldots,v_{i-1}]$, so a separable polynomial. Viewing the coefficients of $h_i$ as polynomials in $v_1,\ldots,v_{i-1}$ with coefficients in $L(u)$, we may take $\alpha\in A[u]$ to be a common denominator for all these coefficients, for all of the $h_i$. Thus $h_i$ is a polynomial in $A[u,\alpha^{-1},v_1,\ldots,v_{i-1}]$. Also, let $\beta\in B$ be the product of all $h_i'(v_i)$. Since $B$ is a domain and each $h_i$ is separable, $\beta$ is non-zero.

Suppose now that we have a commutative square
\[ \begin{CD}
A @>{\eta}>> D_r\\
@VVV @VVV\\
B @>{x}>> L
\end{CD} \]
with $x(\alpha),x(\beta)\neq0$. We can extend $\eta$ to $\xi_0\colon A[u]\to D_r$ by choosing $\xi_0(u_i)\in D_r$ to be any lift of $x(u_i)$. Since $D_r$ is local and $\xi_0(\alpha)$ is a lift of $x(\alpha)\neq0$, we know $\xi_0(\alpha)$ is invertible, so induces $\xi_0\colon A[u,\alpha^{-1}]\to D_r$.

Assume we have constructed $\xi_{i-1}\colon A[u,\alpha^{-1},v_1,\ldots,v_{i-1}]\to D_r$ extending $\eta$ and lying over $x$. In order to extend $\xi_{i-1}$ to a homomorphism $\xi_i\colon A[u,\alpha^{-1},v_1,\ldots,v_i]\to D_r$ lying over $x$, we need to construct $\xi_i(v_i)=x(v_i)+\sum_j\lambda_jt^j\in D_r$ such that $h_i(\xi_i(v_i))=0$. The coefficient of $t^j$ in $h_i(\xi_i(v_i))$ is a sum of $x(h_i'(v_i))\lambda_j$ together with things involving $\lambda_1,\ldots,\lambda_{j-1}$. Since $h_i'(v_i)$ is a factor of $\beta$ and $x(\beta)\neq0$, the coefficient of $\lambda_j$ is non-zero, so we can solve for $\lambda_j$ iteratively.

This shows that we can extend $\eta$ to a homomorphism $\xi\colon B\to D_r$ lying over $x$. Hence $T_x^{(r)}X\to T_{f(x)}^{(r)}Y$ is surjective for all $r$ on the open dense subset where $x(\alpha),x(\beta)\neq0$.
\end{proof}

\section{Detecting Irreducible Components}\label{Sec:irred-cpts}

We begin with the following easy observation.

\begin{Lem}\label{Lem:irred-cpt-non-sing}
Let $Y$ be a scheme, locally of finite type over $K$, and $X\subset Y$ an irreducible subscheme. If $T_xX=T_xY$ for some non-singular point $x\in X(L)$, then $\overline X$ is an irreducible component of $Y$.
\end{Lem}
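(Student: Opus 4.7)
The strategy is a dimension count using the equality criterion for non-singularity recalled earlier in the excerpt. The hypothesis $T_xX=T_xY$ will force $\overline X$ to have the same dimension as some irreducible component $Z$ of $Y$ in which it sits, and then catenarity will promote this to the equality $\overline X = Z$.

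Concretely, let $\ptx\in X$ denote the underlying topological point of $x$. Since $x$ is non-singular on $X$, the text gives
\[
\dim_L T_x X \;=\; \dim_\ptx X.
\]
As $X$ is irreducible it has a single irreducible component and a single generic point, which is also the generic point of $\overline X$; hence $\dim_\ptx X = \dim \overline X$. Combining with the general bound $\dim_L T_x Y \geq \dim_\ptx Y$ and the hypothesis $T_xX=T_xY$ gives
\[
\dim \overline X \;=\; \dim_L T_x X \;=\; \dim_L T_x Y \;\geq\; \dim_\ptx Y.
\]
On the other hand, $\overline X$ is an irreducible closed subset of $Y$ containing $\ptx$, so it lies in some irreducible component $Z$ of $Y$ through $\ptx$; hence $\dim \overline X \leq \dim Z \leq \dim_\ptx Y$. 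Thus all of these quantities coincide, and in particular $\dim \overline X = \dim Z$.

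The main (and only) delicate point is the final step, upgrading the equality of dimensions to the equality $\overline X = Z$. Here I would invoke the fact that $Y$ is locally of finite type over the field $K$, so the irreducible component $Z$ (with its reduced structure) is catenary: if $\overline X \subsetneq Z$ were proper, the generic point $\eta$ of $\overline X$ would be a proper specialisation of the generic point of $Z$, and catenarity would give $\dim\overline X + \dim\mathcal O_{Z,\eta} = \dim Z$ with $\dim\mathcal O_{Z,\eta}\geq 1$, contradicting $\dim\overline X=\dim Z$. Therefore $\overline X = Z$ is an irreducible component of $Y$, as required.
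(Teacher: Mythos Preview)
Your argument is correct and follows essentially the same dimension count as the paper's proof: both use non-singularity of $x$ on $X$ to get $\dim_L T_xX=\dim_\ptx X=\dim\overline X$, combine with $\dim_L T_xY\geq\dim_\ptx Y$ and the hypothesis to obtain $\dim\overline X=\dim_\ptx Y$, and conclude that $\overline X$ is an irreducible component. The paper simply asserts this last implication, whereas you spell it out via catenarity; that elaboration is fine but not a genuinely different route.
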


\begin{proof}
Let $\ptx\in X$ be the corresponding point in the underlying topological space. Then $x$ non-singular and $X$ irreducible imply $\dim_LT_xX=\dim_\ptx X=\dim X$. On the other hand, $\dim_\ptx Y\leq\dim_LT_xY$, so $T_xX=T_xY$ implies $\dim X=\dim_\ptx Y$, whence $\overline X$ is an irreducible component of $Y$.
\end{proof}

In general, the subscheme $X$ will not be reduced, and so will not have any non-singular points. We therefore need to consider more general jet spaces and not just tangent spaces. 

\begin{Thm}
Let $K$ be a perfect field and $(A,\mathfrak m,L)$ a local $K$-algebra which is a Noetherian domain of dimension 1. Then there exists a field extension $L'/L$ and an injective algebra homomorphism $A\to L'[[t]]$ lifting the canonical map $A\to A/\mathfrak m=L\hookrightarrow L'$. If $A$ is finitely generated over $K$, then we may take $L'/L$ to be finite.
\end{Thm}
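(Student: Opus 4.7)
The plan is to pass to the $\mathfrak m$-adic completion $\hat A$, mod out a suitably chosen minimal prime to obtain an analytically irreducible ring, take its normalisation to land in a complete discrete valuation ring, and then apply Cohen's structure theorem.

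First I would note that $\hat A$ is a complete Noetherian local ring of dimension one with residue field $L$, and $A\hookrightarrow\hat A$ is injective by faithful flatness. Moreover, since $A\to\hat A$ is flat, going-down ensures that every minimal prime of $\hat A$ contracts to the zero ideal of $A$. Pick any minimal prime $\mathfrak p$ of $\hat A$; then $A\hookrightarrow\hat A/\mathfrak p$, and $\hat A/\mathfrak p$ is a $1$-dimensional complete Noetherian local domain with residue field $L$ (any non-zero element of $\mathfrak m$ gives a system of parameters). Next, let $B$ be the integral closure of $\hat A/\mathfrak p$ in its fraction field. Since complete Noetherian local domains are Japanese (reduce via Cohen's structure theorem to $L[[x]]$, which is excellent), $B$ is a finite $\hat A/\mathfrak p$-module. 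A finite algebra over a complete local ring is a product of complete local rings, so since $B$ is a domain it is itself a complete local ring. Thus $B$ is a $1$-dimensional complete Noetherian local normal domain, i.e., a complete discrete valuation ring, whose residue field $L'$ is a finite extension of $L$.

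Finally, since $K$ is perfect, $L'/K$ is separable, so Cohen's structure theorem furnishes a coefficient field of $B$ containing the image of $K$; thus $B\cong L'[[t]]$ as a $K$-algebra, with $t$ a uniformiser. The composition $A\hookrightarrow\hat A/\mathfrak p\hookrightarrow B\cong L'[[t]]$ is then a local injective $K$-algebra homomorphism, which therefore lifts $A\to L\hookrightarrow L'$; and by construction $L'/L$ is always finite, so the finite-type refinement is automatic. The main obstacle I expect is the finiteness of the normalisation step, which relies on the Nagata/Japanese property of complete Noetherian local domains; without this one would only obtain a (non-Noetherian) valuation ring in the fraction field by Krull--Akizuki, and the identification with a power series ring via Cohen's structure theorem would break down.
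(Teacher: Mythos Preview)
Your argument is correct, and it takes a genuinely different route from the paper. The paper normalises first and completes afterwards: it takes the integral closure $B$ of $A$ (a Dedekind ring by Krull--Akizuki), localises at a maximal ideal to obtain a DVR, and then completes to get $L'[[t]]$ via Cohen. You instead complete first and normalise afterwards: pass to $\hat A$, kill a minimal prime, and then take the integral closure, which is finite because complete Noetherian local domains are Nagata.

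Your order of operations buys something real: since normalisation is finite over a complete local domain, your $L'/L$ is finite unconditionally, so the finite-type hypothesis in the second sentence of the statement becomes superfluous. The paper's route, by contrast, only uses Krull--Akizuki and the basic Cohen structure theorem, avoiding the Nagata/excellence input; but the price is that finiteness of $L'/L$ requires the extra finite-type assumption on $A$ (so that normalisation is a finite module).

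One small correction to your closing remark: even without the Japanese property, Krull--Akizuki still yields a \emph{Noetherian} (indeed Dedekind) integral closure, just not a finite one. So the fallback you describe would still give a DVR after localising at a maximal ideal, and Cohen would still apply; what would be lost is only the finiteness of $L'/L$, not the existence of the embedding.
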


\begin{proof}
Let $B$ be the integral closure of $A$, so a Dedekind algebra by the Krull-Akizuki Theorem \cite[\S11 Theorem 11.7 and its corollary]{Matsumura2}. Let $\mathfrak n$ be a maximal ideal of $B$, so $B_{\mathfrak n}$ is a DVR, say with residue field $L'$. By the Cohen Structure Theorem \cite[(28.M) Proposition]{Matsumura1} its completion is isomorphic (as a $K$-algebra) to $L'[[t]]$. Finally, the natural maps $A\to B\to L'[[t]]$ are all injective.

If $A$ is finitely generated over a field, then $B$ is a finite $A$-module \cite[Corollary 8.11]{Reid}, hence $L'/L$ is finite.
\end{proof}

\begin{Cor}\label{Cor:min-gen}
Let $X$ be a scheme, locally of finite type over a perfect field $K$. Let $\ptx\in X$ and let $\ptx'$ be a minimal generalisation of $\ptx$. Then there exists a finite field extension $L/\kappa(\ptx)$ and a local homomorphism $\xi\colon\mathcal O_{X,\ptx}\to L[[t]]$ with kernel corresponding to $\ptx'$.
\end{Cor}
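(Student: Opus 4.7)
The plan is to reduce the corollary to the preceding theorem, applied to the quotient $A := \mathcal{O}_{X,\ptx}/\mathfrak p$, where $\mathfrak p \subset \mathcal{O}_{X,\ptx}$ is the prime ideal corresponding to $\ptx'$. The key algebraic translation is that $\ptx'$ being a minimal generalisation of $\ptx$ means there is no prime strictly between $\mathfrak p$ and $\mathfrak m_\ptx$; equivalently, $A$ is a Noetherian local domain of Krull dimension one, with maximal ideal $\mathfrak m_\ptx/\mathfrak p$ and residue field $\kappa(\ptx)$.

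Applying the preceding theorem to $A$ then produces a field extension $L/\kappa(\ptx)$ together with an injective local algebra homomorphism $\iota \colon A \hookrightarrow L[[t]]$ lifting the canonical map to the residue field. Composing with the quotient $\mathcal{O}_{X,\ptx} \twoheadrightarrow A$ yields the desired $\xi$; its kernel equals $\mathfrak p$ since $\iota$ is injective and $L[[t]]$ is a domain.

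The only point that does not follow directly from the stated theorem is the \emph{finiteness} of $L/\kappa(\ptx)$: the preceding theorem guarantees this only when $A$ is finitely generated over $K$, whereas our $A$ is merely essentially of finite type. To handle this, I would choose an open affine neighbourhood $\Spec R \subset X$ of $\ptx$ with $R$ a finitely generated $K$-algebra, and let $\mathfrak M, \mathfrak P$ be the primes of $R$ corresponding to $\ptx, \ptx'$, so that $A = (R/\mathfrak P)_{\mathfrak M/\mathfrak P}$. The integral closure $S$ of the finitely generated $K$-algebra domain $R/\mathfrak P$ in its field of fractions is finite over $R/\mathfrak P$ (Reid, Corollary 8.11), so $B := S_{\mathfrak M/\mathfrak P}$ is a finite $A$-module and coincides with the integral closure of $A$. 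Running the proof of the preceding theorem with this $B$ in hand, $B$ is a semilocal Dedekind algebra, and each residue field $B/\mathfrak n$ at a maximal ideal is a finite extension of $A/\mathfrak m_A = \kappa(\ptx)$; the Cohen Structure Theorem then supplies the map into $L[[t]]$ with $L/\kappa(\ptx)$ finite. The main obstacle is therefore this finiteness issue, but it is resolved cleanly by passing to the affine neighbourhood.
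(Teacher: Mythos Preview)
Your approach is exactly the paper's: the paper's entire proof is the single sentence ``Apply the theorem to the local ring $\mathcal O_{X,\ptx}/\mathfrak p$, where $\mathfrak p$ is the prime ideal corresponding to $\ptx'$.'' Your additional paragraph on finiteness is not in the paper --- the paper simply invokes the theorem and leaves the finiteness of $L/\kappa(\ptx)$ implicit, even though the theorem as stated only guarantees finiteness when $A$ is finitely generated over $K$, not merely essentially of finite type. Your fix (pass to an affine neighbourhood, take integral closure there, and localise) is correct and is precisely the argument needed to close this small gap; alternatively one can note that integral closure commutes with localisation, so the finiteness of the integral closure for the finitely generated $R/\mathfrak P$ immediately yields finiteness for its localisation $A$.
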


\begin{proof}
Apply the theorem to the local ring $\mathcal O_{X,\ptx}/\mathfrak p$, where $\mathfrak p$ is the prime ideal corresponding to $\ptx'$.
\end{proof}

We can now prove the following proposition.

\begin{Prop}\label{Prop:irred-cpt}
Let $Y$ be a scheme, locally of finite type over $K$, and let $X\subset Y$ be an irreducible subscheme. Then the following are equivalent.
\begin{enumerate}
\item $\overline X$ is an irreducible component of $Y$.
\item There exists an open dense subset $U\subset X$ such that $T_x^{(\infty)}X=T_x^{(\infty)}Y$ for all $x\in U(L)$.
\item There exists an open dense subset $U\subset X$ such that, for all $x\in U(L)$, we can find positive integers $N,c$ with the property that, when $r\geq N$, any $\eta\in T_x^{(r)}Y$ restricts to $\eta\in T_x^{([r/c])}X$.
\end{enumerate}
\end{Prop}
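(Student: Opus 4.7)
The plan is to prove the implications $(1)\Rightarrow(2)\Rightarrow(3)\Rightarrow(1)$.

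For $(1)\Rightarrow(2)$, I would first observe that since $L[[t]]$ is a domain, every local homomorphism $\mathcal O_{X,\ptx}\to L[[t]]$ kills the nilradical, so $T_x^{(\infty)}X=T_x^{(\infty)}X_\red$ and similarly $T_x^{(\infty)}Y=T_x^{(\infty)}Y_\red$. The inclusion $X\hookrightarrow\overline X$ is an open immersion (it factors a locally closed immersion), so $\mathcal O_{X_\red,\ptx}=\mathcal O_{\overline X_\red,\ptx}$ for all $\ptx\in X$. Because $Y$ is locally of finite type over $K$ it is locally Noetherian, so only finitely many irreducible components of $Y$ meet any quasi-compact open set. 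Let $U\subset X$ be the open dense subset of points lying in no other irreducible component of $Y$; for $\ptx\in U$ the reduced local rings $\mathcal O_{\overline X_\red,\ptx}$ and $\mathcal O_{Y_\red,\ptx}$ coincide, yielding the desired equality of jet spaces.

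For $(2)\Rightarrow(3)$ I would apply Greenberg's theorem (Theorem \ref{Thm:Greenberg}) locally on $Y$: cover a quasi-compact open neighborhood of the relevant point by finitely many affines, double the constants to reduce to $s=0$, and take common bounds to obtain $N,c$. For $r\geq N$ and $\eta\in T_x^{(r)}Y$, the image $\bar\eta\in T_x^{([r/c])}Y$ lifts to some $\hat\eta\in T_x^{(\infty)}Y$; by (2) this $\hat\eta$ lies in $T_x^{(\infty)}X$, and its truncation modulo $t^{[r/c]+1}$ is the required element of $T_x^{([r/c])}X$ mapping to $\bar\eta$.

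For $(3)\Rightarrow(1)$ I would argue by contradiction (tacitly using that $K$ is perfect, as required by Corollary \ref{Cor:min-gen}). Suppose $\overline X$ is not an irreducible component of $Y$, and let $\ptx$ be the generic point of $X$. Then $\mathcal O_{Y,\ptx}$ has positive Krull dimension, while the ideal $I=\ker(\mathcal O_{Y,\ptx}\to\mathcal O_{X,\ptx})$ defining $\overline X$ locally satisfies $V(I)=\{\mathfrak m_\ptx\}$. Choose a prime $\mathfrak p\subsetneq\mathfrak m_\ptx$ with $\mathcal O_{Y,\ptx}/\mathfrak p$ a one-dimensional domain; such $\mathfrak p$ exists and necessarily $\mathfrak p\not\supset I$. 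By Corollary \ref{Cor:min-gen}, there is a finite extension $L/\kappa(\ptx)$ and a local homomorphism $\eta\colon\mathcal O_{Y,\ptx}\to L[[t]]$ with $\ker\eta=\mathfrak p$; this is an element of $T_x^{(\infty)}Y$ for the $L$-valued point $x$ at $\ptx$, and $x\in U(L)$ since $\ptx$ is generic. Applying (3) to the truncation $\eta\bmod t^{r+1}\in T_x^{(r)}Y$ for $r\geq N$, the further truncation $\eta\bmod t^{[r/c]+1}$ must factor through $\mathcal O_{X,\ptx}$, so $I\subset\ker(\eta\bmod t^{[r/c]+1})$. Intersecting over $r$ and using $\bigcap_k t^kL[[t]]=0$ gives $I\subset\ker\eta=\mathfrak p$, a contradiction.

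The main obstacle is the last direction: one must produce a single test jet in $T_x^{(\infty)}Y$ whose kernel is a prime strictly below $\mathfrak m_\ptx$ and not containing $I$, so that assumption (3) can be violated. This is precisely the content of Corollary \ref{Cor:min-gen} (ultimately the Cohen structure theorem together with Krull--Akizuki), and the Greenberg-style bookkeeping lets us cash an infinite-order jet on the $Y$-side into the $(r)$-level statement of (3).
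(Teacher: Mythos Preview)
Your proof is correct and uses the same two key ingredients as the paper (Greenberg's theorem and Corollary~\ref{Cor:min-gen}), but you run the cycle in the opposite direction: the paper proves $(1)\Rightarrow(3)\Rightarrow(2)\Rightarrow(1)$, whereas you prove $(1)\Rightarrow(2)\Rightarrow(3)\Rightarrow(1)$. Your $(1)\Rightarrow(2)$ is slightly cleaner than the paper's $(1)\Rightarrow(3)$, since you bypass Greenberg entirely at that step by directly comparing reduced local rings on the open locus away from the other components; you then spend Greenberg on $(2)\Rightarrow(3)$, where the paper spends it on $(1)\Rightarrow(3)$. For the return implication to $(1)$, both arguments are the same contradiction via a minimal generalisation and Corollary~\ref{Cor:min-gen}.

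The one point where your proposal falls short of the paper is the non-perfect case. You acknowledge that your $(3)\Rightarrow(1)$ tacitly assumes $K$ perfect, but you do not close this gap; the paper does, by observing that condition $(2)$ (and equally $(3)$) passes to the base change $X^{\overline K}\subset Y^{\overline K}$, applying the perfect case there, and then reading off the conclusion from the equality $\dim X=\dim X^{\overline K}$. The same reduction works verbatim for your $(3)\Rightarrow(1)$, so this is a small omission rather than a genuine obstacle.
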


\begin{proof}
By shrinking $Y$ we may assume that $Y$ is irreducible and affine (so of finite type over $K$), and that $X\subset Y$ is closed.

$(1)\Rightarrow(3)\colon$ Take $N$ and $c$ as in Greenberg's Theorem for the scheme $Y$. If $r\geq N$ and $\eta\in T_x^{(r)}Y$, then we can find $\hat\eta\in T_x^{(\infty)}Y$ such that $\eta=\hat\eta\in T_x^{([r/c])}Y$. Since $X_\red=Y_\red$ by assumption we have $T_x^{(\infty)}Y=T_x^{(\infty)}X$, and so $\eta=\hat\eta\in T_x^{([r/c])}X$.

$(3)\Rightarrow(2)\colon$ Take $x\in U(L)$ and $\eta\in T_x^{(\infty)}Y$. For all $r\geq N$ we have $\eta\in T_x^{(rc)}Y$, so $\eta\in T_x^{(r)}X$. Thus $\eta\in T_x^{(\infty)}X$.

$(2)\Rightarrow(1)\colon$ Assume first that $K$ is a perfect field. Let $\ptx\in Y$ be the generic point of $X$. If this is not the generic point of $Y$, then we can take a minimal generalisation $\ptx'$. By \hyperref[Cor:min-gen]{Corollary \ref*{Cor:min-gen}} we can find a field $L$ and a local homomorphism $\mathcal O_{Y,\ptx}\to L[[t]]$ with kernel corresponding to $\ptx'$. Setting $x\colon\mathcal O_{Y,\ptx}\to L$ we get a point of $T_x^{(\infty)}Y$ which does not lie in $T_x^{(\infty)}X$, a contradiction.

In general let $\overline K$ be the algebraic closure of $K$, and recall that $X^{\overline K}$ is pure of dimension $\dim X$, and similarly for $Y$ \cite[I \S3 Corollary 6.2]{DG}. Then $U^{\overline K}$ is open and dense in $X^{\overline K}$ and clearly property (2) holds for $U^{\overline K}\subset X^{\overline K}$. Using the result for perfect fields we conclude that $\dim X=\dim Y$, so that $X_\red=Y_\red$.
\end{proof}

\subsection{Irreducible components via morphisms}

We now want to determine when the image of an irreducible scheme is dense in an irreducible component. We begin with the following sufficient criterion when the domain is reduced.

\begin{Lem}\label{Lem:sep-non-sing}
Let $f\colon X\to Y$ be a morphism between schemes which are locally of finite type over a perfect field $K$, and assume that $X$ is integral. If $d_xf\colon T_xX\to T_{f(x)}Y$ is surjective on an open dense subset of $X$, then $f$ is separable and $\overline{f(X)}$ is an irreducible component of $Y$.
\end{Lem}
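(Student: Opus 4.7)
First I would set up the factorization: let $Z := \overline{f(X)}$ be the scheme-theoretic image of $f$, so that $f$ decomposes as $X \xrightarrow{g} Z \xrightarrow{\iota} Y$ with $\iota$ a closed immersion and $g$ dominant. Since $X$ is integral, $Z$ is integral: the map $f$ factors through $Y_{\mathrm{red}}$ (because $X$ is reduced), forcing $Z$ to be reduced, and the irreducibility of the set-theoretic image $f(X)$ forces the irreducibility of its closure $Z$. As a closed subscheme of $Y$, the scheme $Z$ is also locally of finite type over $K$. The closed immersion $\iota$ induces an injection $T_{f(x)}Z \hookrightarrow T_{f(x)}Y$ at every point, through which $d_x f = (d_{f(x)}\iota)\circ d_x g$ factors. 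Thus the hypothesis that $d_x f$ is surjective on an open dense $U \subset X$ forces both the equality $T_{f(x)}Z = T_{f(x)}Y$ and the surjectivity of $d_x g\colon T_xX \to T_{f(x)}Z$, for every $x \in U(L)$.

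For separability, I would apply \hyperref[Thm:surj]{Theorem \ref*{Thm:surj}} to $g\colon X \to Z$, a dominant morphism between integral schemes locally of finite type over $K$. Up to shrinking $U$ to its intersection with the open dense subset furnished by that theorem, the generic cokernel dimension $e$ of $d_x g$ is constant, and by the previous paragraph this value is zero. Since $K$ is perfect, part (2) of \hyperref[Thm:surj]{Theorem \ref*{Thm:surj}} then gives that $g$ is separable, hence so is $f$ by definition.

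To show that $Z$ is an irreducible component of $Y$ I would invoke \hyperref[Lem:irred-cpt-non-sing]{Lemma \ref*{Lem:irred-cpt-non-sing}} at the generic point. Let $\eta$ be the generic point of $X$, regarded as a $K(X)$-valued point; since $U$ is open dense we have $\eta \in U(K(X))$, so $T_\zeta Z = T_\zeta Y$ where $\zeta := g(\eta)$ is the $K(X)$-valued point whose underlying topological point is the generic point of $Z$. The local ring $\mathcal O_{Z,\zeta} = K(Z)$ is a field, hence regular of dimension zero, so $\zeta$ is non-singular in $Z$. Applying \hyperref[Lem:irred-cpt-non-sing]{Lemma \ref*{Lem:irred-cpt-non-sing}} to the integral closed subscheme $Z \subset Y$ and the non-singular point $\zeta \in Z(K(X))$ now yields that $Z$ is an irreducible component of $Y$. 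The only real bookkeeping issue is transferring everything from $Y$ to the scheme-theoretic image $Z$ via the closed immersion $\iota$; once this is set up, the two conclusions are immediate applications of the results established earlier in the section.
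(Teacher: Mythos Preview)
Your proof is correct and uses essentially the same ingredients as the paper for the separability part: factor through the scheme-theoretic image $Z$, note that $d_xg$ is then surjective on $U$, and apply \hyperref[Thm:surj]{Theorem~\ref*{Thm:surj}} (part~(2), using that $K$ is perfect).

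For the irreducible-component assertion, your route differs from the paper's. You go directly to the generic point: since $U$ is dense the generic point $\eta$ of $X$ lies in $U$, its image is the generic point of $Z$, and the local ring there is the field $K(Z)$, which is regular. Because $K$ is perfect, $K(Z)/K$ is separable, so regularity at this point does translate into non-singularity in the sense used in \hyperref[Lem:irred-cpt-non-sing]{Lemma~\ref*{Lem:irred-cpt-non-sing}}; you should make this dependence on perfectness explicit, since without it ``regular local ring'' and ``non-singular'' need not coincide. The paper instead argues at a closed point: it invokes Chevalley's Theorem to show that $f(U)$ contains a dense open subset of $Z$, then applies Generic Smoothness to find a non-singular point $\pty$ of $Z$ inside $f(U)$, and finally chooses $x\in U(L)$ mapping to it. Both approaches feed the same \hyperref[Lem:irred-cpt-non-sing]{Lemma~\ref*{Lem:irred-cpt-non-sing}}; yours is slightly more economical in that it avoids Chevalley and Generic Smoothness, while the paper's has the virtue of producing a point whose residue field is closer to $K$.
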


\begin{proof}
Let $Y':=\overline{f(X)}$ be the scheme-theoretic image of $X$, so an integral subscheme of $Y$. Then $d_xf\colon T_xX\to T_{f(x)}Y'\hookrightarrow T_{f(x)}Y$. Our hypothesis implies that $T_xX\to T_{f(x)}Y'$ is surjective on an open dense subset $U$ of $X$, so $f$ is separable by \hyperref[Thm:surj]{Theorem \ref*{Thm:surj}}. On the other hand, Chevellay's Theorem tells us that $f(U)$ contains a dense open subset of $Y'$, so by Generic Smoothness it contains some non-singular point $\pty\in Y'$. Now, for some field $L$, we can find $x\in U(L)$ such that $f(x)\in Y'(L)$ corresponds to $\pty$, so is non-singular. Now $x\in U(L)$ implies $T_{f(x)}Y'=T_{f(x)}Y$, so $Y'$ is an irreducible component by \hyperref[Lem:irred-cpt-non-sing]{Lemma \ref*{Lem:irred-cpt-non-sing}}.
\end{proof}

If we know more about the points, then we can get away with fewer assumptions (c.f. \cite[I \S4 Corollary 4.14]{DG}).

\begin{Lem}
Let $f\colon X\to Y$ be a morphism between schemes which are locally of finite type over a perfect field $K$, and assume that $X$ is irreducible. If $x\in X(L)$ and $y=f(x)\in Y(L)$ are non-singular and $d_xf$ is surjective, then $f\colon X_\red\to Y$ is separable and $\overline{f(X)}$ is an irreducible component of $Y$.
\end{Lem}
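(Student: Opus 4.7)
The plan is to reduce to the hypotheses of \hyperref[Lem:sep-non-sing]{Lemma \ref*{Lem:sep-non-sing}} by propagating the surjectivity of $d_xf$ from the single point $x$ to an open dense subset of $X$. The essential tools will be lower semi-continuity of the rank of a morphism of locally free sheaves, together with the observation that non-singularity at both $x$ and $y$ makes the relevant tangent sheaves locally free near those points.

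First I would reduce to the case where both $X$ and $Y$ are integral. Since $x$ is non-singular, $\mathcal O_{X,\ptx}$ is a regular local ring and hence an integral domain, so there is an open affine neighborhood $X_0$ of $\ptx$ on which $X$ is integral; because $X$ is irreducible, $X_0$ is dense in $X$, so $\overline{f(X_0)} = \overline{f(X)}$ topologically and $X_0$ and $X$ share their function field. The same observation at $\pty$ yields an open neighborhood $Y_0 \subseteq Y$ on which $Y$ is integral. After replacing $X$ with $X_0 \cap f^{-1}(Y_0)$ and $Y$ with $Y_0$, I may assume that $X$ and $Y$ are both integral; the conclusions for $\overline{f(X)}$ being an irreducible component and for separability of $f$ transfer back to the original schemes along this reduction.

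Next I would exhibit an open dense subset of $X$ on which $d_{x'}f$ is surjective. Over a perfect field the non-singular locus of an integral scheme locally of finite type is open, so $U := X^{\mathrm{ns}} \cap f^{-1}(Y^{\mathrm{ns}})$ is open in $X$; it contains $x$ by hypothesis, and hence is dense in the irreducible $X$. On $U$ the sheaves $TX$ and $f^*TY$ are locally free of ranks $\dim X$ and $\dim Y$ respectively, so the fibre-wise rank of $df\colon TX|_U \to f^*TY|_U$ is lower semi-continuous. At $x$ this rank already equals $\dim T_yY = \dim Y$ by surjectivity of $d_xf$, which is the largest value it can take; lower semi-continuity then forces the rank to equal $\dim Y$ on some open neighborhood $W$ of $x$ in $U$, and $W$ is open dense in $X$. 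Hence $d_{x'}f$ is surjective for every $x' \in W(L')$.

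Finally, \hyperref[Lem:sep-non-sing]{Lemma \ref*{Lem:sep-non-sing}} applies to the (now integral) $X$ and the morphism $f$ with $d_{x'}f$ surjective on the open dense $W$, and yields that $f$ is separable and $\overline{f(X)}$ is an irreducible component of $Y$, as required. The only delicate step is upgrading surjectivity of $d_xf$ at the single point to surjectivity on an open dense subset, and this is made possible by non-singularity at $y$ (ensuring $f^*TY$ is locally free near $x$) in addition to non-singularity at $x$ (ensuring $TX$ is locally free near $x$); this is precisely the way in which the extra hypothesis of this lemma, compared with \hyperref[Lem:sep-non-sing]{Lemma \ref*{Lem:sep-non-sing}}, gets used.
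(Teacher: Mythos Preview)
Your argument is correct, and it takes a genuinely different route from the paper's proof. You globalise: using that $\Omega_{X/K}$ and $f^*\Omega_{Y/K}$ are locally free on $U=X^{\mathrm{ns}}\cap f^{-1}(Y^{\mathrm{ns}})$, the surjectivity of $d_xf$ at the one point propagates to an open neighbourhood by lower semi-continuity of rank, and then \hyperref[Lem:sep-non-sing]{Lemma \ref*{Lem:sep-non-sing}} finishes the job. The paper instead stays at the stalk: from \hyperref[Prop:differentials-for-regular-local]{Proposition \ref*{Prop:differentials-for-regular-local}} the map $\theta\colon\mathcal O_{X,\ptx}\otimes_{\mathcal O_{Y,\pty}}\Omega_{Y/K,\pty}\to\Omega_{X/K,\ptx}$ is between free modules, and surjectivity of $\Hom(\theta,\kappa(\ptx))$ forces some maximal minor to be a unit in $\mathcal O_{X,\ptx}$, so $\theta$ is split injective and $\Omega_{X/Y,\ptx}$ is free of rank $\dim X-\dim_\pty Y$. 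Localising at the generic point $\xi$ then gives $\dim\Ker(d_\xi f)=\dim X-\dim_\pty Y$, and a direct dimension comparison via \hyperref[Thm:surj]{Theorem \ref*{Thm:surj}} yields both that $\overline{f(X)}$ is an irreducible component and that $f$ is separable, without passing through \hyperref[Lem:sep-non-sing]{Lemma \ref*{Lem:sep-non-sing}}. Your approach is more geometric and modular (it cleanly isolates the role of non-singularity of $y$ as making $f^*TY$ locally free), while the paper's is a purely local computation that extracts the precise rank of $\Omega_{X/Y}$ at $\ptx$; the two are really the same minor-lifting argument, one packaged globally and one locally. A small point: in your reduction step $Y$ is not assumed irreducible, so you should note explicitly that non-singularity of $\pty$ forces it to lie on a unique irreducible component of $Y$, after which your open $Y_0$ can be taken integral.
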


\begin{proof}
The set of non-singular points of $X$ is always open, and it is non-empty by assumption. We now replace $X$ by its open dense subset of non-singular points.

Let $x$ and $y$ correspond to the points $\ptx$ and $\pty$ respectively.  We know from \hyperref[Prop:differentials-for-regular-local]{Proposition \ref*{Prop:differentials-for-regular-local}} that $\Omega_{X/K,\ptx}$ is free of rank $\dim_\ptx X=\dim X$, and similarly $\Omega_{Y/K,\pty}$ is free of rank $\dim_\pty Y$. The natural homomorphism $\theta\colon\mathcal O_{X,\ptx}\otimes_{\mathcal O_{Y,\pty}}\Omega_{Y/K,\pty}\to\Omega_{X/K,\ptx}$ now goes between finite free $\mathcal O_{X,\ptx}$-modules, and by assumption $\Hom(\theta,L)$ is surjective, whence $\Hom(\theta,\kappa(\ptx))$ is surjective. Representing $\theta$ by a matrix, we see that some minor of full rank is invertible in $\kappa(\ptx)$, so is invertible in $\mathcal O_{X,\ptx}$, proving  that $\theta$ is a split monomorphism. We deduce that the cokernel of $\theta$, denoted $\Omega_{X/Y,\ptx}$, is projective (and hence free) of rank $d:=\dim X-\dim_\pty Y$.

Let $\xi$ be the generic point of $X$, and set $\eta:=f(\xi)$, the generic point of $\overline{f(X)}$. Localising $\theta$ we deduce that the kernel of $d_\xi f$ has dimension $d$, which by \hyperref[Thm:surj]{Theorem \ref*{Thm:surj}} is at least $\dim X-\dim\overline{f(X)}$. It follows that $\dim\overline{f(X)}\geq\dim_\pty Y$, so we must have equality, and hence $\overline{f(X)}$ is an irreducible component of $Y$. Finally $f$ is separable by \hyperref[Thm:surj]{Theorem \ref*{Thm:surj}} once more.
\end{proof}

We now consider morphisms between non-reduced schemes.

\begin{Thm}\label{Thm:sep}
Let $f\colon X\to Y$ be a morphism, where $X$ and $Y$ are locally of finite type over $K$. Let $X'$ be an irreducible component of $X$, with the reduced subscheme structure, and let $Y'=\overline{f(X')}$ be the scheme-theoretic image. Consider the following statements.
\begin{enumerate}
\item $Y'$ is an irreducible component of $Y$.
\item There exists an open dense subset $U\subset X'$ such that $T_x^{(\infty)}X\to T_{f(x)}^{(\infty)}Y$ is surjective for all $x\in U(L)$.
\item There exists an open dense subset $U\subset X'$ such that, for all $x\in U(L)$, we can find positive integers $N,c$ with the property that, when $r\geq N$, any $\eta\in T_{f(x)}^{(r)}Y$  restricts to something in the image of $T_x^{([r/c])}X$.
\end{enumerate}
Then $(2)\Rightarrow(3)\Rightarrow(1)$, and $(1)\Rightarrow(2)$ whenever $f\colon X'\to Y$ is separable.
\end{Thm}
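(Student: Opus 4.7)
The plan is to handle the three implications $(2) \Rightarrow (3)$, $(1) \Rightarrow (2)$ (under separability), and $(3) \Rightarrow (1)$ in turn, with the last being the substantive step.

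For $(2) \Rightarrow (3)$, I would combine hypothesis $(2)$ with Greenberg's Theorem~\ref{Thm:Greenberg}. Shrinking $U$ so that $f(U)$ lies in a single open affine of $Y$, Greenberg supplies integers $N,c$ such that for $r \geq N$ every $\eta \in T_y^{(r)}Y$ lifts to some $\hat\eta \in T_y^{(\infty)}Y$ agreeing with $\eta$ modulo $t^{[r/c]+1}$. By $(2)$ we may write $\hat\eta = f(\hat\xi)$ for some $\hat\xi \in T_x^{(\infty)}X$, and then $\hat\xi|_{[r/c]} \in T_x^{([r/c])}X$ maps to $\eta|_{[r/c]}$, as required.

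For $(1) \Rightarrow (2)$ under the separability hypothesis, the restriction $f|_{X'}$ factors as a dominant separable morphism $g\colon X' \to Y'$. Proposition~\ref{Prop:sep} then provides an open dense $V_1 \subset X'$ on which $T_x^{(\infty)}X' \to T_{g(x)}^{(\infty)}Y'$ is surjective, while $(1)$ together with Proposition~\ref{Prop:irred-cpt} provides an open dense $V_2 \subset Y'$ on which $T_y^{(\infty)}Y' = T_y^{(\infty)}Y$. Using the inclusion $T_x^{(\infty)}X' \hookrightarrow T_x^{(\infty)}X$ induced by the closed immersion $X' \hookrightarrow X$, on the nonempty open dense set $V_1 \cap g^{-1}(V_2)$ the image of $T_x^{(\infty)}X$ in $T_{f(x)}^{(\infty)}Y$ contains $T_{g(x)}^{(\infty)}Y' = T_{f(x)}^{(\infty)}Y$, yielding $(2)$.

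The heart of the proof is $(3) \Rightarrow (1)$, which I would establish by contradiction; after base-changing to $\overline K$ as in Proposition~\ref{Prop:irred-cpt} I may assume $K$ is perfect. Suppose $Y'$ is not an irreducible component of $Y$, and let $\pty$ be its generic point, so $\dim\mathcal O_{Y,\pty} > 0$ and $\mathcal I_{Y',\pty} = \mathfrak m_\pty$. Choose a minimal generalisation $\pty'$ of $\pty$ in $Y$ and invoke Corollary~\ref{Cor:min-gen} to produce a finite extension $L_0/\kappa(\pty)$ and a local homomorphism $\xi'\colon\mathcal O_{Y,\pty}\to L_0[[t]]$ whose kernel is the prime $\mathfrak q' \subsetneq \mathfrak m_\pty$ corresponding to $\pty'$; picking $a \in \mathfrak m_\pty \setminus \mathfrak q'$ gives $\xi'(a) = \alpha_k t^k + \cdots$ with $\alpha_k \neq 0$ and some $k \geq 1$. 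Take $\ptx$ to be the generic point of $X'$, so $\ptx \in U$; since $\ptx$ is the generic point of an irreducible component of $X$ and of no other, $\mathcal O_{X,\ptx}$ is Artinian with $\mathfrak m_\ptx = \mathfrak p := \ker(\mathcal O_{X,\ptx}\to\mathcal O_{X',\ptx})$ and $\mathfrak m_\ptx^N = 0$ for some $N$. After enlarging scalars to a field $L$ containing both $L_0$ and $K(X') = \kappa(\ptx)$ compatibly over $\kappa(\pty)$, $\xi'$ becomes an element of $T_y^{(\infty)}Y$ at the induced $L$-valued point $y = f(x)$. Hypothesis $(3)$ then produces, for each sufficiently large $r$, a jet $\xi_r \in T_x^{([r/c])}X$ with $f(\xi_r) = \xi'|_{[r/c]}$. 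Since the composition $\mathcal O_{Y,\pty}\to\mathcal O_{X,\ptx}\to\mathcal O_{X',\ptx}$ has kernel $\mathcal I_{Y',\pty}$, we get $f^\#(a) \in \mathfrak p$, so $(f^\#(a))^N = 0$ in $\mathcal O_{X,\ptx}$, whence
\[ 0 \;=\; \xi_r\bigl((f^\#(a))^N\bigr) \;=\; \bigl(\xi_r(f^\#(a))\bigr)^N \;=\; \bigl(\alpha_k t^k + \cdots + \alpha_{[r/c]} t^{[r/c]}\bigr)^N \;=\; \alpha_k^N t^{Nk} + \cdots \]
in $D_{[r/c]}$. This is nonzero once $[r/c] \geq Nk$, giving the desired contradiction.

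The main obstacle is the $(3) \Rightarrow (1)$ argument, which requires combining three ingredients whose interaction is delicate: Corollary~\ref{Cor:min-gen} (to realise the minimal generalisation as an honest power-series jet), the Artinian stalk at the generic point of $X'$ (which promotes the ideal-theoretic fact $f^\#(a) \in \mathfrak p$ to the uniform nilpotency bound $(f^\#(a))^N = 0$, despite $X$ itself possibly being highly non-reduced), and a numerical comparison of $Nk$ with the level $[r/c]$ supplied by $(3)$ and the order of vanishing $k$ of $\xi'(a)$.
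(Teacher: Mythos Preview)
Your arguments for $(2)\Rightarrow(3)$ and $(1)\Rightarrow(2)$ are essentially the paper's own. For $(3)\Rightarrow(1)$, however, you take a genuinely different route. The paper does not argue by contradiction at the generic point; instead it reduces the statement to two applications of Proposition~\ref{Prop:irred-cpt}. First, applying that proposition (direction $(1)\Rightarrow(3)$) to the inclusion $X'\subset X$ shows that on a dense open, every jet in $T_x^{(s)}X$ restricts to one in $T_x^{([s/c'])}X'$; composing this with hypothesis~(3) of the theorem yields that every $\eta\in T_{f(x)}^{(r)}Y$ restricts to something in $T_{f(x)}^{([r/cc'])}Y'$. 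Since $f(U)$ contains a dense open of $Y'$ by Chevalley, this is precisely condition~(3) of Proposition~\ref{Prop:irred-cpt} for the pair $Y'\subset Y$, and a second application of that proposition gives the conclusion. Your argument instead bypasses Proposition~\ref{Prop:irred-cpt} and replays its key ingredient, Corollary~\ref{Cor:min-gen}, directly, exploiting that $\mathcal O_{X,\ptx}$ is Artinian at the generic point of $X'$ to force the nilpotency contradiction $(f^\#(a))^N=0$ versus $\alpha_k^Nt^{Nk}\neq 0$. Both are correct; the paper's approach is cleaner and makes the structural parallel with Proposition~\ref{Prop:irred-cpt} transparent, while yours is more self-contained and makes explicit why even severe non-reducedness of $X$ cannot obstruct the conclusion (the nilpotency exponent at the generic point is uniform, independent of any embedded structure elsewhere).
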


\begin{proof}
$(2)\Rightarrow(3)\colon$ Take $U\subset X'$ satisfying (2). Given $x\in U(L)$, choose $N$ and $c$ satisfying Greenberg's Theorem for $f(x)\in Y(L)$. Thus, given $r\geq N$ and $\eta\in T_{f(x)}^{(r)}Y$, we can find $\hat\eta\in T_{f(x)}^{(\infty)}Y$ such that $\eta=\hat\eta\in T_{f(x)}^{([r/c])}Y$. By assumption we have $\hat\xi\in T_x^{(\infty)}X$ mapping to $\hat\eta$, so its restriction $\xi\in T_x^{([r/c])}X$ maps to the restriction of $\eta$.

$(3)\Rightarrow(1)\colon$ By hypothesis, and applying \hyperref[Prop:irred-cpt]{Proposition \ref*{Prop:irred-cpt}} to the irreducible component $X'\subset X$, there exists an open dense $U\subset X'$ such that, for all $x\in U(L)$, we can find $N$ and $c$ with the property that, if $r\geq N$ and $\eta\in T_{f(x)}^{(r)}Y$, then $\eta$ restricts to something in the image of $T_x^{([r/c])}X'$, and hence to something in $T_{f(x)}^{([r/c])}Y'$. Since $f(U)\subset Y'$ is constructible and dense, it contains an open dense subset of $Y'$. Applying \hyperref[Prop:irred-cpt]{Proposition \ref*{Prop:irred-cpt}} once more, we deduce that $Y'\subset Y$ is an irreducible component.

$(1)\Rightarrow(2)$ when $f\colon X'\to Y$ is separable: Applying \hyperref[Prop:irred-cpt]{Proposition \ref*{Prop:irred-cpt}} to the irreducible components $X'\subset X$ and $Y'\subset Y$, and applying \hyperref[Prop:sep]{Proposition \ref*{Prop:sep}} to the morphism $f\colon X'\to Y'$, we see that we can find an open dense $U\subset X$ such that, for all $x\in U(L)$, we have both $T_x^{(\infty)}X'=T_x^{(\infty)}X$ and $T_{f(x)}^{(\infty)}Y' = T_{f(x)}^{(\infty)}Y$, and $d_xf\colon T_x^{(\infty)}X'\to T_{f(x)}^{(\infty)}Y'$ is surjective.
\end{proof}

\subsection{Subscheme structure on irreducible components}

\begin{Prop}\label{Prop:irred-comp}
Let $Y$ be a locally Noetherian scheme and $X$ an irreducible component of $Y$. Then there is a subscheme structure on $X$ such that $T_x^{(r)}X=T_x^{(r)}Y$ for all $r$ and all $x$ in an open dense subset of $X$.
\end{Prop}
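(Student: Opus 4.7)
The plan is to define the subscheme structure on $X$ locally by taking the minimal-prime primary component of $(0)$ at the minimal prime corresponding to $X$, verify these local constructions glue to a global closed subscheme of $Y$, and then exploit the Noetherian hypothesis to deduce that $X$ and $Y$ agree as subschemes on a dense open of $X$.

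First I would work locally. Choose an affine open $U = \Spec A$ of $Y$ meeting $X$; then $X \cap U$ is an irreducible component of $U$ (since $X$ is a closed irreducible subset of $Y$, maximal among such) corresponding to a unique minimal prime $\mathfrak p \subseteq A$. Set $\mathfrak q := \ker(A \to A_{\mathfrak p})$, the $\mathfrak p$-primary component of $(0)$, and equip $X \cap U$ with the closed subscheme structure $V(\mathfrak q) \subseteq U$. These patches glue: for any $f \notin \mathfrak p$, exactness of localization gives $\mathfrak q A_f = \ker(A_f \to A_{\mathfrak p})$, which is again the minimal-prime primary component of zero inside $A_f$; and minimality of $\mathfrak p$ ensures that $\mathfrak q$ is canonically determined by $X$ rather than being part of some choice in a primary decomposition. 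Hence the locally-defined subschemes patch to a closed subscheme $X \hookrightarrow Y$ whose underlying set is the given irreducible component.

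For the jet-space equality, the key observation is that $\mathfrak q A_{\mathfrak p} = 0$, since $\mathfrak q$ is the kernel of a localization. Because $A$ is Noetherian, $\mathfrak q$ is finitely generated, so some $s \in A \setminus \mathfrak p$ annihilates every generator of $\mathfrak q$ simultaneously, giving $\mathfrak q A_s = 0$. On the distinguished open $D(s) \subseteq U$ the ideal sheaf of $X$ therefore vanishes, so $X \cap D(s)$ and $Y \cap D(s)$ coincide as subschemes of $Y$. Consequently $\mathcal O_{X,x} = \mathcal O_{Y,x}$, and therefore $T_x^{(r)}X = T_x^{(r)}Y$ for every $r$ and every $L$-valued point $x \in X \cap D(s)$; this open is dense in $X$ because it contains the generic point of $X$ (as $s \notin \mathfrak p$). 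The principal obstacle is essentially bookkeeping: confirming that the intrinsic character of the minimal-prime primary component really produces a well-defined closed subscheme of $Y$. Once this is done, everything else is a direct application of primary decomposition in the Noetherian setting.
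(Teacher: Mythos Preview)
Your proof is correct and follows essentially the same approach as the paper: both define the subscheme structure via the $\mathfrak p$-primary component of zero (the paper phrases it via primary decomposition, you via $\ker(A\to A_{\mathfrak p})$, but these coincide for a minimal prime), check compatibility with localisation to glue, and then find an element outside $\mathfrak p$ annihilating $\mathfrak q$ so that $X$ and $Y$ agree on a dense principal open. The only cosmetic difference is that the paper packages the annihilating element as some $a\in\mathfrak q'\setminus\mathfrak p$ from a decomposition $0=\mathfrak q\cap\mathfrak q'$, while you extract it directly from finite generation of $\mathfrak q$ and the vanishing $\mathfrak q A_{\mathfrak p}=0$.
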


\begin{proof}
Let $\xi$ be the generic point of $X$. For each open affine $U\subset Y$ the point $\xi$ determines an ideal $\mathfrak p=\mathfrak p_U$ of $\mathcal O_Y(U)$. If $\xi\in U$, then this is a minimal prime, in which case we know \cite[Corollary 4.11]{AM} that there is a unique $\mathfrak p$-primary ideal $\mathfrak q=\mathfrak q_U$ in the primary decomposition of the zero ideal in $\mathcal O_Y(U)$. We can therefore write $0=\mathfrak q\cap\mathfrak q'$ with $\mathfrak q'\not\subset\mathfrak p$. (In fact, if $a\in\mathfrak q'\setminus\mathfrak p$, then $\mathfrak q=\{b:ab=0\}$.) If $\xi\not\in U$, then set $\mathfrak p_U=\mathfrak q_U=\mathcal O_Y(U)$.

These ideals are compatible with localisation, in the sense of \hyperref[Lem:gluing]{Lemma \ref*{Lem:gluing}}. For, this clearly holds for the prime ideals $\mathfrak p_U$, so it necessarily holds for the primary ideals $\mathfrak q_U$ by uniqueness. Thus, by that lemma, there is an ideal sheaf $\mathcal I\lhd\mathcal O_Y$ such that $\mathcal I(U)=\mathfrak q_U$ for all open affines $U$. Since the support of the quotient sheaf $\mathcal O_Y/\mathcal I$ is just the irreducible component $X$, the ideal sheaf $\mathcal I$ determines a closed subscheme structure on $X$.

For the result about jet spaces, it is enough to prove it for affine schemes. We therefore have a Noetherian $K$-algebra $A$, a minimal prime $\mathfrak p$, and the $\mathfrak p$-primary ideal $\mathfrak q$ in the primary decomposition of the zero ideal. Writing $0=\mathfrak q\cap\mathfrak q'$ with $\mathfrak q'\not\subset\mathfrak p$ as above we get $A\hookrightarrow(A/\mathfrak q)\times(A/\mathfrak q')$. Then, for any $a\in\mathfrak q'\setminus\mathfrak p$ we have $A_a\cong A_a\otimes_A(A/\mathfrak q)$. In other words, the schemes $X$ and $Y$ agree on the non-empty distinguished open $D(a)$, so they have the same jet spaces.
\end{proof}

Note that the subscheme structure on irreducible components is not uniquely determined by this property on jet spaces. For, consider $A=K[X,Y]/(X^2,Y^3)$ and its proper quotient $B=A/(XY^2)$. Then $T^{(r)}\Spec B=T^{(r)}\Spec A$ for all $r\in[1,\infty]$.

\section{Group schemes}\label{Sec:group-schemes}

In our applications we will be interested in morphisms of the form $G\times X\to Y$, where $G$ is a group scheme acting on a scheme $Y$, and $X\subset Y$ is a subscheme. In particular, we want to know when such a morphism is separable.

Recall that a $K$-scheme $X$ is called geometrically irreducible provided that $X^{\overline K}$ is irreducible, or equivalently if $X\times Y$ is irreducible for all irreducible $K$-schemes $Y$. Similarly $X$ is called geometrically reduced provided that $X^{\overline K}$ is reduced, or equivalently if $X\times Y$ is reduced for all reduced $K$-schemes $Y$. Finally, $X$ is called geometrically integral provided that it is both geometrically irreducible and geometrically reduced, so that $X\times Y$ is integral for all integral $K$-schemes $Y$.

We note that an integral $K$-scheme $X$ is geometrically irreducible if and only if $K(X)/K$ is a primary field extension, is geometrically reduced if and only if $K(X)/K$ is a separable field extension, and geometrically integral if and only if $K(X)/K$ is a regular field extension.

Let $G$ be a group scheme, locally of finite type over $K$. We say that $G$ is connected provided the scheme is irreducible, in which case $G$ is geometrically irreducible \cite[II \S5 1.1]{DG}. Moreover, $G$ is smooth (so every point is non-singular) if and only if the identity element is a non-singular point, in which case $G$ is geometrically reduced \cite[II \S5 2.1]{DG}. Thus if $G$ is smooth and connected, then it is geometrically integral. Conversely, if $K$ is perfect, then $G$ reduced implies $G$ smooth \cite[II \S5 Corollary 2.3]{DG}. We also know that $G$ is pure, so all irreducible components have the same dimension \cite[II \S5 Theorem 1.1]{DG}.

We say that a group scheme $G$ acts on a scheme $X$ if there is a morphism $\mu\colon G\times X\to X$ inducing for all $R$ an action of the group $G(R)$ on the set $X(R)$. When working with group actions one runs into the problem that the category of schemes, although complete, is not cocomplete, so arbitrary coproducts or coequalisers need not exist. Thus orbits and more general quotients will in general not exist. For this reason it is sometimes convenient to embed the category of schemes into a cocomplete category and consider quotients in this larger category.

One way of doing this is to consider the category of faisceaux.\footnote{
A functor $X$ is a faisceau if it satisfies the sheaf property with respect to the fppf topology; that is, it respects finite direct products, so $X(R\times S)\cong X(R)\times X(S)$, and if $S$ is a faithfully-flat and finitely-presented $R$-algebra, then the map $X(R)\to X(S)$ identifies $X(R)$ with the equaliser of the two maps $X(\mathrm{id}_S\otimes1),X(1\otimes\mathrm{id}_S)\colon X(S)\to X(S\otimes_RS)$.
}
This is complete \cite[III \S1 1.12]{DG} and cocomplete \cite[III \S1 1.14]{DG}. In fact, faisceaux form an exact reflective subcategory of the category of all functors, so the inclusion has a left adjoint which preserves finite limits \cite[III \S1 Theorem 1.8]{DG}. Also, every scheme is a faisceau \cite[III \S1 Corollary 1.3]{DG}, and any morphism of schemes which is faithfully flat and locally of finite presentation is an epimorphism of faisceaux \cite[III \S1 Corollary 2.10]{DG}.

So, given a group action $\mu\colon G\times X\to X$, one may consider the pair of morphisms $\mu,\mathrm{pr}_2\colon G\times X\to X$ and take their coequaliser $X/G$ in the category of faisceaux. This is the faisceau associated to the functor $R\mapsto X(R)/G(R)$. 

On the other hand, the category of all locally-ringed spaces is also both complete \cite[I \S1 Remark 1.8]{DG} and cocomplete \cite[I \S1 Proposition 1.6]{DG}. As for schemes we can associate to any locally-ringed space a functor from $K$-algebras to sets. This determines a functor from the category of locally-ringed spaces to the category of faisceaux \cite[III \S1 Proposition 1.3]{DG}, which in turn has a left adjoint called the geometric realisation \cite[I \S1 Proposition 4.1]{DG}. It follows that the geometric realisation commutes with colimits. In particular, the geometric realisation of $X/G$ is automatically the coequaliser in the category of locally-ringed spaces.

Following \cite{GIT} a morphism of schemes $\pi\colon X\to Y$ is called a geometric quotient if it is submersive (so surjective and $Y$ has the quotient topology), constant on $G$-orbits, the induced morphism $\Psi\colon G\times X\to X\times_YX$ is surjective, and $\mathcal O_Y=\pi_\ast(\mathcal O_X)^G$; it is called a universal geometric quotient if for all $Z\to Y$, the base change $X_Z\to Z$ is still a geometric quotient for the action $G_Z\times X_Z\to X_Z$.

The next result is an improvement on \cite[Proposition 0.1]{GIT}. 

\begin{Prop}\label{Prop:geom-quotient}
Let $G$ act on $X$, and let $\pi\colon X\to Y$ be constant on $G$-orbits. Then $\pi$ is a geometric quotient if and only if it is the coequaliser of $\mu,\mathrm{pr}_2\colon G\times X\to X$ in the category of locally-ringed spaces.
\end{Prop}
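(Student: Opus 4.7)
The plan is to exhibit an explicit candidate $\pi'\colon X\to Y'$ for the coequaliser of $\mu,\mathrm{pr}_2\colon G\times X\to X$ in the category of locally-ringed spaces and verify that this candidate automatically satisfies all four conditions defining a geometric quotient. Once this is done, the proposition reduces to the assertion that any geometric quotient is canonically isomorphic to $\pi'$, which is a straightforward matching of the defining data and implies both directions at once.

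For the construction I would take the underlying topological space $|Y'|$ to be $|X|$ modulo orbit equivalence (already an equivalence relation because we are quotienting by a group action) endowed with the quotient topology, and define $\mathcal O_{Y'}:=(\pi'_*\mathcal O_X)^G$, equivalently the equaliser subsheaf of $\pi'_*\mathcal O_X$ cut out by $\mu^\#$ and $\mathrm{pr}_2^\#$. I would then check that the stalks are local and that this really does give the coequaliser: any morphism of locally-ringed spaces $f\colon X\to Z$ with $f\mu=f\mathrm{pr}_2$ factors set-theoretically through $|Y'|$ because $f$ is orbit-constant, continuously through $Y'$ because $Y'$ carries the quotient topology, and at the sheaf level because $f^\#\colon\mathcal O_Z\to f_*\mathcal O_X$ equalises $\mu^\#$ and $\mathrm{pr}_2^\#$ and so factors through the $G$-invariants $f_*(\mathcal O_X)^G$.

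With $\pi'$ in hand, the geometric quotient conditions are immediate: surjectivity and the quotient topology are built into the construction, $\mathcal O_{Y'}=(\pi'_*\mathcal O_X)^G$ is the definition, and $\Psi\colon G\times X\to X\times_{Y'}X$ is surjective precisely because the fibres of $\pi'$ are orbits. Conversely, given any $\pi\colon X\to Y$ constant on orbits, the universal property of $\pi'$ supplies a canonical morphism $\varphi\colon Y'\to Y$ with $\varphi\circ\pi'=\pi$. If $\pi$ is a geometric quotient then $\varphi$ is bijective on points (surjectivity of $\pi$ and of $\Psi$ force the set-theoretic fibres of $\pi$ to equal the orbits), a homeomorphism (both $Y$ and $Y'$ carry the quotient topology from $X$), and an isomorphism of structure sheaves (both are identified with $(\pi_*\mathcal O_X)^G$); hence $\pi\cong\pi'$ is the coequaliser. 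In the other direction, if $\pi$ is the coequaliser then $\pi\cong\pi'$ by uniqueness of coequalisers, so $\pi$ inherits the geometric quotient properties.

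The main technical obstacle will be the explicit verification that $(|X|/{\sim},(\pi'_*\mathcal O_X)^G)$ has local stalks and genuinely realises the coequaliser in the category of locally-ringed spaces. In particular, for the universal property the lifted stalk map $\mathcal O_{Z,z}\to\mathcal O_{Y',y}$ must be a local homomorphism; I expect to handle this by observing that $(\pi')^\#$ is injective on stalks (a $G$-invariant germ vanishing in $\mathcal O_{X,x}$ is already zero) so that the local property transfers from the composite $\mathcal O_{Z,z}\to\mathcal O_{Y',y}\to\mathcal O_{X,x}$ to its first factor. The remainder of the argument is bookkeeping with the universal properties of orbit quotients and invariant subsheaves.
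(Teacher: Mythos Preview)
Your overall shape---construct the coequaliser explicitly and then compare---is close in spirit to the paper's argument, which invokes \cite[I \S1 Proposition 1.6]{DG} to reduce the statement to: assuming $\pi$ is submersive, $Y$ is the coequaliser in the category of sets if and only if $\Psi$ is surjective. However, your treatment of precisely this step contains the main gap.

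You write that $|Y'|$ is $|X|$ modulo ``orbit equivalence (already an equivalence relation because we are quotienting by a group action)'', and later that ``$\Psi$ is surjective precisely because the fibres of $\pi'$ are orbits''. This presumes that the underlying set $|G|$ acts as a group on $|X|$, which is generally false for group schemes: the multiplication $G\times G\to G$ does not make $|G|$ into a group, and a point of $|G\times X|$ is not a pair of points of $|G|$ and $|X|$. The honest equivalence relation on $|X|$ is the one \emph{generated} by $\mu(\ptz)\sim\mathrm{pr}_2(\ptz)$ for $\ptz\in|G\times X|$, and two points in the same class are a priori connected only by a chain $\ptz_1,\ldots,\ptz_n$. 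Showing that a single $\ptz$ suffices---equivalently, that $\Psi$ is surjective---is the actual content. The paper's proof handles this by lifting the chain to $L$-valued points for a large algebraically closed field $L$, where one \emph{does} have a genuine group $G(L)$ acting on $X(L)$, and then multiplies the group elements $g_n\cdots g_1$ to collapse the chain. Conversely, surjectivity of $\Psi$ gives the coequaliser in sets via the non-emptiness of $\Spec\big(\kappa(\ptx)\otimes_{\kappa(\pty)}\kappa(\ptx')\big)$. Both directions are short but neither is the tautology your sketch suggests; you should replace the appeal to ``orbit equivalence'' with this argument.
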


\begin{proof}
Recall from  the proof of \cite[I \S1 Proposition 1.6]{DG} that $Y$ is the coequaliser in the category of all locally ringed spaces if and only if $Y$ is the coequaliser in the category of topological spaces, and $\mathcal O_Y$ is the equaliser (in the category of sheaves of rings on $Y$) of the two morphisms $\mu^\ast,\mathrm{pr}_2^\ast\colon\pi_\ast(\mathcal O_X)\to\psi_\ast(\mathcal O_{G\times X})$, where $\psi=\pi\mu=\pi\,\mathrm{pr}_2$.

The second property is clearly equivalent to saying that $\mathcal O_Y\cong\pi_\ast(\mathcal O_X)^G$, and $Y$ is the coequaliser in the category of topological spaces if and only if it is the coequaliser in the category of sets and $\pi$ is submersive. Thus it is enough to prove that when $\pi$ is submersive, $Y$ is the coequaliser in the category of sets if and only if $\Psi$ is surjective.

Suppose first that $\Psi$ is surjective, and let $\ptx,\ptx'\in\pi^{-1}(\pty)$. We know that the set of points in $X\times_YX$ projecting to $\ptx,\ptx'$ is given by $\Spec\big(\kappa(\ptx)\otimes_{\kappa(\pty)}\kappa(\ptx')\big)$ \cite[I \S1 Corollary 5.2]{DG}, so this is non-empty. Let $\xi$ be any such point. Since $\Psi$ is surjective there exists some $\ptz\in G\times X$ mapping to $\xi$, and hence $\mathrm{pr}_2(\ptz)=\ptx$ and $\mu(\ptz)=\ptx'$. As $\pi$ is surjective, this shows that $Y$ is the coequaliser in the category of sets.

Conversely, let $Y$ be the coequaliser in the category of sets and suppose $\ptx,\ptx'\in X$ map to the same point $\pty\in Y$. By definition there exists a sequence $\ptz_1,\ldots,\ptz_n$ in $G\times X$ such that
\[ \mathrm{pr}_2(\ptz_1)=\ptx, \quad \mu(\ptz_i)=\mathrm{pr}_2(\ptz_{i+1}), \quad \mu(\ptz_n)=\ptx'. \]
Now take a sufficiently large algebraically-closed field $L$ and points $(g_i,x_i)\in G(L)\times X(L)$ corresponding to $\ptz_i$. Set $x=x_1$ and $g:=g_n\cdots g_1\in G(L)$. Then $(g,x)\in G(L)\times X(L)$, so corresponds to some $\ptz\in G\times X$. Since $x=x_1$ and $g\cdot x=g_n\cdot x_n$ we have both $\mathrm{pr}_2(\ptz)=\mathrm{pr}_2(\ptz_1)=\ptx$ and $\mu(\ptz)=\mu(\ptz_n)=\ptx'$. Thus $\Psi\colon G\times X\to X\times_YX$ is surjective.
\end{proof}

\begin{Lem}\label{Lem:geom-quotient}
Let $G$ be a group scheme acting on a scheme $X$, and let $\pi\colon X\to Y$ be a morphism of schemes which is constant on $G$-orbits.
\begin{enumerate}
\item If $Y\cong X/G$ in the category of faisceaux, then $\pi$ is faithfully flat and a universal geometric quotient.
\item If $\pi$ is faithfully flat and quasi-compact, then it is a universal geometric quotient.
\end{enumerate}
\end{Lem}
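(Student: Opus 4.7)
My plan is to reduce part (2) to part (1) and establish (1) by exploiting the topos-theoretic structure of faisceaux as set up in \cite{DG}.

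For (1), the hypothesis $Y\cong X/G$ in faisceaux says exactly that $\pi$ is the coequaliser of $(\mu,\mathrm{pr}_2)\colon G\times X\to X$ in the fppf topos, hence an effective epimorphism with kernel pair $G\times X\rightrightarrows X$. I would first extract from this that $\pi$ is faithfully flat and locally of finite presentation: unpacking the sheafification, the identity on $Y$ lifts to $X$ after some fppf cover $Y'\to Y$, producing a section of $\pi$ over $Y'$; translating by the $G$-action yields a faisceau isomorphism $G\times Y'\cong X\times_Y Y'$, from which faithful flatness and local finite presentation of $\pi$ descend. The kernel pair identification gives $\Psi\colon G\times X\to X\times_Y X$ as a faisceau isomorphism, in particular surjective on points. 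Submersivity follows from faithful flatness plus quasi-compactness (obtained locally on $Y$), and \hyperref[Prop:geom-quotient]{Proposition \ref*{Prop:geom-quotient}} (applied via the geometric realisation adjunction, which sends faisceau coequalisers to locally-ringed-space coequalisers) then yields that $\pi$ is a geometric quotient. The universal property is automatic: for any $Z\to Y$, the pullback $X_Z\to Z$ is again a faisceau quotient $X_Z/G$ since pullback in a topos preserves the relevant colimits, so the same argument reapplies verbatim.

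For (2), the hypothesis that $\pi$ is faithfully flat and quasi-compact makes it an effective epimorphism of faisceaux with kernel pair the scheme-theoretic equivalence $X\times_Y X\rightrightarrows X$. Since $\pi$ is constant on $G$-orbits, $\Psi$ is defined into $X\times_Y X$, and I would use faithfully flat descent on $\mathcal O_X$ together with the $G$-equivariance to promote $\Psi$ to a faisceau epimorphism, thus identifying $Y\cong X/G$ in faisceaux. Applying (1) together with the fact that faithful flatness and quasi-compactness are stable under base change then yields the claim.

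The main obstacle I foresee is precisely the last step of (2): showing that $\Psi$ is an epimorphism of faisceaux, equivalently that the $G$-orbit equivalence relation exhausts $X\times_Y X$ in the fppf topology. This does not follow purely formally from the stated hypotheses, and I expect it will require leveraging the faithful flatness of $\pi$ together with the algebraic structure of the $G$-action (rather than merely a topological or set-theoretic argument), making it the crux where the representability of the action plays its decisive role.
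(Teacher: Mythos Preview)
Your treatment of (1) runs parallel to the paper's: both invoke the geometric-realisation adjunction together with \hyperref[Prop:geom-quotient]{Proposition~\ref*{Prop:geom-quotient}} to obtain the geometric-quotient conclusion, and both get universality from stability of faisceau coequalisers under base change (the paper cites \cite[III \S1 Example 2.5]{DG}). The substantive difference lies in the flatness argument. Your claimed isomorphism $G\times Y'\cong X\times_Y Y'$ after choosing an fppf-local section holds only when the action is free; without freeness that map is merely a faisceau epimorphism, from which flatness does not descend in the way you suggest. The paper argues more tersely: it notes that $\pi$ is a faisceau epimorphism whose self-pullback is the flat projection $G\times X\to X$, and invokes the descent criterion \cite[III \S1 Corollary 2.11]{DG} directly.

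For (2) your strategy diverges from the paper's, and the obstacle you flag is genuine. You want to reduce to (1) by proving $Y\cong X/G$ in faisceaux, which requires $\Psi\colon G\times X\to X\times_Y X$ to be a faisceau epimorphism; but the hypotheses of (2) by themselves do not force the orbit relation to exhaust the kernel pair of $\pi$, so there is no purely formal way to complete this step from faithful flatness and $G$-invariance alone. The paper does not attempt any such reduction. It instead cites \cite[I \S2 Theorem 2.7]{DG} to conclude directly that a faithfully flat quasi-compact $\pi$ is already a coequaliser in the category of locally-ringed spaces, then applies \hyperref[Prop:geom-quotient]{Proposition~\ref*{Prop:geom-quotient}}; universality follows because faithful flatness and quasi-compactness are preserved under base change. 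The paper's argument thus stays in locally-ringed spaces throughout rather than climbing back to the faisceau quotient, making it both shorter and structurally different from your plan.
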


\begin{proof}
(1) Suppose $Y\cong X/G$. Since $Y$ is a scheme it coincides with its geometric realisation, so $\pi$ is a geometric quotient by the proposition above and the preceding remarks. Moreover, given any $Z\to X/G$, the base change $X_Z\to Z$ is still a coequaliser for the action $G_Z\times X_Z\to X_Z$ \cite[III \S1 Example 2.5]{DG}, so that $\pi$ is in fact a universal geometric quotient. We also know that $\pi$ is an epimorphism of faisceaux and that its pull-back along itself is just the projection $G\times X\to X$, so flat. Thus $\pi$ is flat by \cite[III \S1 Corollary 2.11]{DG}. Since $\pi$ is surjective, it is faithfully flat.

(2)  If $\pi$ is faithfully flat and quasi-compact, then it is the coequaliser in the category of locally-ringed spaces \cite[I \S2 Theorem 2.7]{DG}, and hence is a geometric quotient. Moreover, since being faithfully flat and quasi-compact is preserved under base change \cite[I \S2 Propositions 2.2 and 2.5]{DG}, we see that it is a universal geometric quotient.
\end{proof}

Regarding the surjectivity of $\pi$, we remark that $X(L)/G(L)\to(X/G)(L)$ is a bijection for all algebraically-closed fields $L$ \cite[III \S1 Remark 1.15]{DG}.

We say that $G$ acts freely on $X$ if $G(R)$ acts freely on $X(R)$ for all $R$. It follows that the natural map $X(R)/G(R)\to(X/G)(R)$ is injective for all $K$-algebras $R$ \cite[I \S5.5]{RAGS}. In other words, if $x,x'\in X(R)$, then $\pi(x)=\pi(x')$ in $(X/G)(R)$ if and only if there exists $g\in G(R)$ such that $x'=g\cdot x$. We deduce that $\Psi\colon G\times X\to X\times_{X/G}X$, $(g,x)\mapsto(x,g\cdot x)$ is an isomorphism, so $\pi$ is a $G$-torsor \cite[III \S4 Corollary 1.7]{DG}. 

\begin{Cor}\label{Cor:geom-quotient}
Let $G$ be a group scheme acting freely on a scheme $X$. Let $\pi\colon X\to Y$ be a faithfully flat morphism of schemes which is constant on $G$-orbits, and assume $G$, $X$ and $Y$ are all locally of finite type over $K$.
\begin{enumerate}
\item $\dim X=\dim Y+\dim G$ and $X$ is pure if and only if $Y$ is pure.
\item $\pi$ is a smooth (respectively affine) morphism if and only if $G$ is smooth (respectively affine).
\item $X$ smooth implies $Y$ smooth, with the converse holding when $G$ is smooth.
\item If $X$ is irreducible and $G$ smooth, then $\pi\colon X_\red\to Y$ is separable.
\end{enumerate}
\end{Cor}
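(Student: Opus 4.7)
The pivotal observation, noted in the text preceding the corollary, is that $\pi\colon X\to Y$ is a $G$-torsor: the map $\Psi\colon G\times X\xrightarrow{\sim}X\times_Y X$, $(g,x)\mapsto(x,g\cdot x)$, is an isomorphism. Equivalently, the pull-back of $\pi$ along itself is identified with $\mathrm{pr}_2\colon G\times X\to X$, i.e.\ with the base-change of $G\to\Spec K$ along $X\to\Spec K$. Each part of the corollary reduces to combining this with an appropriate descent statement.

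For (1), each fibre $\pi^{-1}(\pty)$ is a torsor under $G\times_K\kappa(\pty)$ and is therefore pure of dimension $\dim G$ (group schemes locally of finite type over a field are pure by \cite[II \S5 Theorem 1.1]{DG}, and purity, like dimension, is preserved under base-change and along torsors). The dimension formula for flat morphisms locally of finite type then gives $\dim_\ptx X=\dim_{\pi(\ptx)}Y+\dim G$ for every $\ptx\in X$; taking suprema and using surjectivity of $\pi$ yields both $\dim X=\dim Y+\dim G$ and the equivalence of purity. For (2), smoothness and affineness both descend along faithfully flat morphisms and are stable under base-change, so $\pi$ has the property in question if and only if its pull-back $\mathrm{pr}_2\colon G\times X\to X$ does, which is if and only if $G\to\Spec K$ does.

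For (3), the converse is immediate: if $G$ is smooth, then $\pi$ is smooth by (2), so $Y$ smooth implies $X$ smooth by composition. For the forward direction, fix $\pty\in Y$ and $\ptx\in X$ with $\pi(\ptx)=\pty$. Smoothness of $X$ over $K$ gives $\mathcal O_{X,\ptx}$ regular and $\kappa(\ptx)/K$ separable. The induced local homomorphism $\mathcal O_{Y,\pty}\to\mathcal O_{X,\ptx}$ is faithfully flat, and regularity descends under faithfully flat local homomorphisms of Noetherian local rings, so $\mathcal O_{Y,\pty}$ is regular; since separability of field extensions is inherited by subextensions, $\kappa(\pty)/K$ is also separable, so $Y$ is smooth at $\pty$. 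Finally for (4): $\pi$ is smooth by (2), and since $\pi$ is a continuous surjection, $X$ irreducible forces $Y$ irreducible; hence $X_\red$ and $Y_\red$ are integral, their generic points $\eta_X,\eta_Y$ satisfy $\pi(\eta_X)=\eta_Y$, and the scheme-theoretic image of $X_\red\to Y$ is $Y_\red$. The fibre $\pi^{-1}(\eta_Y)$ is smooth over $\kappa(\eta_Y)=K(Y_\red)$, so at its point $\eta_X$ the residue field $\kappa(\eta_X)=K(X_\red)$ is separable over $\kappa(\eta_Y)=K(Y_\red)$, giving the required separability of $\pi|_{X_\red}$.

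I expect the main subtlety to be the faithfully-flat descent of regularity invoked in the forward direction of (3) --- a standard but non-trivial fact about Noetherian local rings --- together with the careful invocation of the dimension formula for flat morphisms in (1). The remaining steps reduce to routine applications of base-change stability, fppf/fpqc descent, and the principle that smoothness over a field forces residue fields to be separable over the base.
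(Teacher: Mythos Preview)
Your arguments for (1), (2), and the converse direction of (3) are essentially the paper's. However, the forward direction of (3) contains a genuine error: the claim that smoothness of $X$ over $K$ forces $\kappa(\ptx)/K$ to be separable for every $\ptx\in X$ is false over imperfect fields. Take $K$ of characteristic $p$ with $a\in K\setminus K^p$, and let $\ptx\in\bA^1_K$ be the closed point cut out by $t^p-a$; then $\bA^1_K$ is smooth over $K$ but $\kappa(\ptx)=K(a^{1/p})$ is purely inseparable. Since the corollary is stated without any perfectness hypothesis on $K$, your step ``$\kappa(\ptx)/K$ separable, hence $\kappa(\pty)/K$ separable'' does not go through. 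The paper avoids this entirely by citing descent of smoothness along faithfully flat morphisms (\cite[Proposition 17.7.7]{EGA4.4}): if $X\to Y$ is faithfully flat and $X\to\Spec K$ is smooth, then $Y\to\Spec K$ is smooth. Your descent-of-regularity idea is sound but only recovers the result when $K$ is perfect; over general $K$ you need the full descent statement.

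Your argument for (4) is correct but follows a genuinely different route from the paper. You work directly with the definition: $\pi$ is smooth, so the generic fibre $\pi^{-1}(\eta_Y)$ is smooth over $K(Y_\red)$, and $\eta_X$ is a generic point of this fibre (its local ring there is Artinian and regular, hence a field), so geometric reducedness of smooth schemes gives $K(X_\red)/K(Y_\red)$ separable. The paper instead invokes its own \hyperref[Thm:separable]{Theorem \ref*{Thm:separable}}: using the torsor isomorphism $G\times X\cong X\times_YX$ one gets by induction $G^{n-1}\times X\cong X^{\times\nfold Yn}$, and since $G$ is geometrically reduced this product is reduced for every $n$, whence separability. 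Your approach is shorter and more self-contained; the paper's approach has the virtue of illustrating the iterated-fibre-product criterion developed earlier in the text.
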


\begin{proof}
(1) Since every fibre is isomorphic to $G$, which is pure by \cite[II \S5 Theorem 1.1]{DG}, we can apply \cite[I \S3 Corollary 6.3]{DG} to get $\dim_\ptx X=\dim_{\pi(\ptx)}Y+\dim G$ for all $\ptx\in X$. Since $\pi$ is surjective, the result follows.

(2) A morphism $f\colon X\to Y$ is called smooth if it is flat, locally of finite presentation and all fibres are smooth schemes; it is called affine if $X\times_Y\Spec R$ is an affine scheme for all $\Spec R\to Y$. Next, a morphism is smooth if and only if its pull-back along a faithfully-flat morphism is smooth \cite[I \S4 4.1]{DG}. Similarly a morphism is affine if and only if its pull-back along a faithfully-flat and quasi-compact morphism is affine (one direction is immediate from the definition of affine morphisms, the other is \cite[I \S2 Corollary 3.9]{DG}).

Since $\pi$ is faithfully-flat and all schemes are locally of finite type over $K$, we see that $\pi$ is smooth (respectively affine) if and only if its pull-back along itself is smooth (respectively affine). This pull-back is just the projection $G\times X\to X$, which is smooth (respectively affine) if and only if $G$ is a smooth (respectively affine) scheme.

(3) If $X$ is smooth, then the faithful flatness of $\pi$ implies that $Y$ is smooth \cite[Proposition 17.7.7]{EGA4.4}. Conversely, if $G$ and $Y$ are smooth, then $\pi$ is smooth, and hence $X$ is smooth, since a composition of smooth morphisms is again smooth \cite[I \S4 Corollary 4.4]{DG}.

(4) As $G$ is smooth, it is geometrically reduced, so the subscheme $G\times X_\red$ is reduced and hence $\mu$ induces a morphism $G\times X_\red\to X_\red$. Thus $G$ acts on $X_\red$, so we may assume that $X$ is reduced, hence integral. We now observe that for all $n\geq1$ there is an isomorphism
\[ G^{n-1}\times X \xrightarrow{\sim} X^{\times\nfold Yn}, \quad (g_2,\ldots,g_n,x) \mapsto (x,g_2\cdot x,\ldots,g_n\cdot x). \]
The case $n=1$ is trivial, whereas for $n=2$ this is just the statement that $\Psi$ is an isomorphism. The general case follows by induction. Again, since $G$ is geometrically reduced, we conclude that $X^{\times\nfold Yn}$ is reduced for all $n$, whence $\pi$ is separable by \hyperref[Thm:separable]{Theorem \ref*{Thm:separable}}.
\end{proof}

For each $x\in X(L)$ we have the orbit map $\mu_x\colon G^L\to X^L$; this sends $g\in G(R)$ (for an $L$-algebra $R$) to $g\cdot x^R$. We also have the stabiliser $\Stab_G(x)$, defined to be the fibre of $G^L\to X^L$ over $x$, so a closed subgroup scheme of $G^L$. Note that the stabiliser acts freely on $G^L$ (on the right), and we define the orbit faisceau to be $\Orb_G(x):=G^L/\Stab_G(x)$.

\begin{Prop}\label{Prop:group-action}
Let a group scheme $G$ act on a scheme $X$, both of finite type over $K$.
\begin{enumerate}
\item For all $x\in X(L)$ the orbit $\Orb_G(x)$ is a subscheme of $X$. A point $y\in X(R)$ lies in $\Orb_G(x)(R)$ if and only if there exists a faithfully-flat and finitely-presented $R$-algebra $S$ and an element $g\in G(S)$ such that $y^S=g\cdot x^S$.
\item The orbit map $\mu_x\colon G^L\to\Orb_G(x)$ is faithfully flat and $\Orb_G(x)$ is pure of dimension $\dim G-\dim\Stab_G(x)$. 
\item The function $x\mapsto\dim\Stab_G(x)$ is upper semi-continuous on $X$.
\item $\mu_x$ is a smooth (respectively affine) morphism if and only if $\Stab_G(x)$ is smooth (respectively affine).
\item If $G$ is smooth, then so too is $\Orb_G(x)$, so is given by the reduced subscheme structure on the corresponding locally-closed subset of $X$.
\item If $G$ is connected and $\Stab_G(x)$ is smooth, then the orbit map $\mu_x\colon G^L_\red\to X^L$ is separable.
\end{enumerate}
\end{Prop}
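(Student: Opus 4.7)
The plan is to handle the six parts in order, with parts (1) and (2) doing the main work of realising $\Orb_G(x)$ as a well-behaved quotient, and then deducing (3)--(6) from the machinery of Section~\ref{Sec:group-schemes}.

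For part (1) I would realise $\Orb_G(x)$ as the faisceau quotient $G^L/\Stab_G(x)$, where $\Stab_G(x)$ acts freely on $G^L$ by right translation. By standard results on quotients by free actions of closed subgroup schemes \cite[III \S3]{DG}, this quotient is representable by a scheme of finite type, and the universal property yields a monomorphism $G^L/\Stab_G(x) \hookrightarrow X^L$ factoring $\mu_x$. To see this monomorphism identifies $\Orb_G(x)$ with a locally-closed subscheme, use Chevalley's theorem to produce a dense open subset of $\overline{\mu_x(G^L)}$ contained in the image, then translate this open under $G^L$ (which preserves the closure of any orbit by continuity) to show the full image is open in its closure. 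The description of $R$-points then follows from the fppf sheafification defining the faisceau quotient.

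Parts (2)--(4) follow directly from the quotient description. Since $\Orb_G(x) \cong G^L/\Stab_G(x)$ as faisceaux, \hyperref[Lem:geom-quotient]{Lemma \ref*{Lem:geom-quotient}}(1) shows $\mu_x$ is faithfully flat; the dimension formula and purity in (2) are then \hyperref[Cor:geom-quotient]{Corollary \ref*{Cor:geom-quotient}}(1), using that group schemes locally of finite type are pure. For (3), consider the closed subscheme $Z \subset G \times X$ defined as the preimage of the diagonal under $(g,x) \mapsto (g\cdot x, x)$; the fibre of $Z \to X$ over any $x$ is the stabiliser $\Stab_G(x)$. \hyperref[Prop:usc]{Proposition \ref*{Prop:usc}} gives upper semi-continuity of $\dim_{(g,x)} Z_x$ on $Z$; pulling back along the identity section $x \mapsto (e,x)$ and using that $\dim_e H = \dim H$ for any group scheme $H$ of finite type yields the claim. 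Part (4) is then \hyperref[Cor:geom-quotient]{Corollary \ref*{Cor:geom-quotient}}(2) applied to the right action of $\Stab_G(x)$ on $G^L$ with quotient $\mu_x$.

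For (5), if $G$ is smooth then so is $G^L$, so \hyperref[Cor:geom-quotient]{Corollary \ref*{Cor:geom-quotient}}(3) gives that $\Orb_G(x)$ is smooth and hence reduced; combined with (1), it must coincide with the reduced subscheme structure on the locally-closed orbit subset. For (6), $G$ connected implies $G$ geometrically irreducible, so $G^L$ is irreducible. With $\Stab_G(x)$ smooth and acting freely on $G^L$, and $\mu_x$ faithfully flat by (2), \hyperref[Cor:geom-quotient]{Corollary \ref*{Cor:geom-quotient}}(4) gives that $(G^L)_\red \to \Orb_G(x)$ is separable, i.e.\ the induced extension of function fields $K(\Orb_G(x)_\red) \hookrightarrow K((G^L)_\red)$ is separable. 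Composing with the locally-closed immersion $\Orb_G(x) \hookrightarrow X^L$, the scheme-theoretic image is $\overline{\Orb_G(x)}_\red$, which has the same function field, so $\mu_x\colon G^L_\red \to X^L$ is separable by definition. The main obstacle is part (1): carefully establishing representability of the faisceau quotient $G^L/\Stab_G(x)$ together with the fact that the induced monomorphism into $X^L$ is a locally-closed immersion. The representability is a nontrivial theorem but can be imported from \cite{DG}, and the local-closedness argument combines Chevalley's theorem on constructible images with homogeneity of the orbit under $G^L$. Once part (1) is in hand, the remaining parts are relatively direct consequences of the general theory of quotient faisceaux already developed.
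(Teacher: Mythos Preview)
Your proposal is correct and follows essentially the same route as the paper's proof: part (1) is obtained via \cite[III \S3 Proposition 5.2]{DG} (your Chevalley-plus-translation sketch is the standard content of that result) together with the fppf description of points from \cite[I \S5.5]{RAGS}; parts (2), (4), (5), (6) are deduced from \hyperref[Lem:geom-quotient]{Lemma \ref*{Lem:geom-quotient}} and \hyperref[Cor:geom-quotient]{Corollary \ref*{Cor:geom-quotient}} exactly as you do; and your argument for (3) via the universal stabiliser $Z\to X$ and the identity section is identical to the paper's. The only cosmetic difference is that in (3) the paper phrases the passage from $\dim_z\Stab_G(\pi(z))$ to $\dim\Stab_G(\pi(z))$ via purity of group schemes rather than your $\dim_e H=\dim H$, but these are the same observation.
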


\begin{proof}
(1) That the orbit $\Orb_G(x)\subset X^L$ is a subscheme follows from \cite[III \S3 Proposition 5.2]{DG}. The description of the points of $\Orb_G(x)(R)$ is given in \cite[I \S5.5]{RAGS}.

(2) The morphism $G^L\to G^L/\Stab_G(x)$ is faithfully flat by \hyperref[Lem:geom-quotient]{Lemma \ref*{Lem:geom-quotient}}, whereas by \hyperref[Cor:geom-quotient]{Corollary \ref*{Cor:geom-quotient}} (1) we know that the orbit $\Orb_G(x)$ is pure and $\dim G=\dim G^L=\dim\Stab_G(x)+\dim G^L/\Stab_G(x)$.

(3) Let $Z$ be the pull-back of $G\times X\to X\times X$ along the diagonal and consider the morphism $\pi\colon Z\to X$, so $\pi^{-1}(x)\cong\Stab_G(x)$. By the upper semi-continuity of fibre dimension we know that $z\mapsto\dim_z\pi^{-1}(\pi(z))=\dim_z\Stab_G(\pi(z))$ is upper semi-continuous on $Z$, and since $\Stab_G(\pi(z))$ is pure, the map $z\mapsto\dim\Stab_G(\pi(z))$ is upper semi-continuous. Now compose $\pi$ with the section $X\to Z$, $x\mapsto(1,x)$, to conclude that $x\mapsto\dim\Stab_G(x)$ is upper semi-continuous on $X$.

(4), (5) and (6) follow from \hyperref[Cor:geom-quotient]{Corollary \ref*{Cor:geom-quotient}} (2), (3) and (4) respectively.
\end{proof}

Let $G$ act on two schemes $X$ and $X'$. We then have a diagonal action on $X\times X'$, and hence we may consider the quotient faisceau $X\times^GX':=(X\times X')/G$, called the associated fibration. Note that, even if $X/G$ is a scheme, it does not automatically follow that $X\times^GX'$ is a scheme. We observe that if $G$ acts freely on $X$, then the diagonal action on $X\times X'$ is also free.

The following is useful for identifying associated fibrations.

\begin{Lem}\label{Lem:assoc-fib}
Let $G$ be a group faisceau acting on faisceaux $X$ and $X'$, and such that the action of $G$ on $X$ is free. Then $Z\cong X\times^GX'$ if and only if there exists a commutative diagram
\[ \begin{CD}
X\times X' @>{\phi}>> Z\\
@VV{\mathrm{pr}_1}V @VV{q}V\\
X @>{\pi}>> X/G
\end{CD} \]
such that $\phi$ is constant on $G$-orbits and the induced morphism $X\times X'\to X\times_{X/G}Z$ is an isomorphism.
\end{Lem}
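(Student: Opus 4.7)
The plan is to exploit the fact that under the freeness hypothesis, the quotient morphism $\pi\colon X\to X/G$ is a $G$-torsor, and that $G$-torsors are stable under pullback and descend isomorphisms. The two fibre products in the statement are both torsors over the relevant base, and the result will follow by comparing them.

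For the forward direction, assume $Z=X\times^GX'=(X\times X')/G$. Take $\phi$ to be the quotient morphism and let $q\colon Z\to X/G$ be the morphism induced on quotients from the $G$-equivariant first projection $\mathrm{pr}_1\colon X\times X'\to X$ (which sends diagonal $G$-orbits to $G$-orbits in $X$); commutativity of the square and constancy of $\phi$ on $G$-orbits are then tautological. For the remaining condition, consider $\theta\colon X\times X'\to X\times_{X/G}Z$, $(x,x')\mapsto(x,\phi(x,x'))$. This map is $G$-equivariant for the diagonal action on the source and the action through the first factor on the target, and both sides are $G$-torsors over $Z$: $X\times X'\to Z$ by definition of the quotient (using that $G$ acts freely on $X$, hence freely on $X\times X'$), and $X\times_{X/G}Z\to Z$ as the pullback of the $G$-torsor $\pi$ along $q$. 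A $G$-equivariant morphism between $G$-torsors over the same base is automatically an isomorphism, so $\theta$ is an isomorphism.

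For the reverse direction, since $\phi$ is constant on $G$-orbits it factors as $\phi=\bar\phi\circ\pi'$ where $\pi'\colon X\times X'\to(X\times X')/G$ is the quotient faisceau morphism and $\bar\phi\colon(X\times X')/G\to Z$ is induced; we need to show $\bar\phi$ is an isomorphism. Form the commutative square
\[ \begin{CD}
X\times X' @>{\theta}>> X\times_{X/G}Z\\
@VV{\pi'}V @VV{p_Z}V\\
(X\times X')/G @>{\bar\phi}>> Z
\end{CD} \]
which is $G$-equivariant with the diagonal action on the top-left and the first-factor action on the top-right. Both vertical arrows are $G$-torsors ($\pi'$ by freeness on $X\times X'$, and $p_Z$ as the pullback of $\pi$ along $q$), so taking the quotient of each column by $G$ identifies $\bar\phi$ with the morphism induced on $G$-quotients by the isomorphism $\theta$. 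Hence $\bar\phi$ is itself an isomorphism.

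The main point to watch is bookkeeping: one must keep straight which $G$-action is being used on each fibre product ($G$ acts diagonally on $X\times X'$ but only through the first factor on $X\times_{X/G}Z$) and verify that both projections to $Z$ are $G$-torsors under their respective actions. Once these torsor structures are in place, the equivalence is essentially descent along $\pi$, with the hypothesis that $\theta$ is an isomorphism being exactly the input needed to apply it.
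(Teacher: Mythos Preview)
Your proof is correct and follows essentially the same strategy as the paper's. The paper cites \cite[III \S4 3.1]{DG} for the forward direction and, for the converse, pulls back the induced map $f\colon X\times^GX'\to Z$ along the epimorphism $\pi$ and invokes \cite[III \S1 Example 2.6]{DG}; your torsor formulation (an equivariant map of $G$-torsors over the same base is an isomorphism, and a $G$-equivariant isomorphism descends to an isomorphism on quotients) is exactly the content of those references unpacked.
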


\begin{proof}
We begin by showing that the associated fibration $X\times^GX'$ satisfies the conditions. Let $\phi'\colon X\times X'\to X\times^GX'$ be the coequaliser associated to the diagonal $G$-action on $X\times X'$, and observe that this is necessarily constant on $G$-orbits. The morphism $\pi\,\mathrm{pr}_1\colon X\times X'\to X/G$ clearly factors through $\phi'$, giving $q'\colon X\times^GX'\to X/G$ such that $q'\phi'=\pi\,\mathrm{pr}_1$. Finally, since $G\times X\xrightarrow{\sim}X\times_{X/G}X$, the induced morphism $X\times X'\to X\times_{X/G}(X\times^GX')$ is an isomorphism by \cite[III \S4 3.1]{DG}.

Now suppose we have $Z$, $\phi$ and $q$. Since $\phi$ is constant on $G$-orbits it induces a morphism $f\colon X\times^GX'\to Z$ such that $\phi=f\phi'$, so
\[ q'\phi' = \pi\,\mathrm{pr}_1 = q\phi = qf\phi'. \]
Since $\phi'$ is an epimorphism we must have $q'=qf$. Finally, taking the pull-back of $q'=qf$ along the epimorphism $\pi\colon X\to X/G$ gives the morphism
\[ X \times X' \xrightarrow{\sim} X\times_{X/G}(X\times^GX') \to X\times_{X/G}Z, \quad (x,x') \mapsto \big(x,\phi(x,x')\big), \]
which is an isomorphism by hypothesis. It follows from \cite[III \S1 Example 2.6]{DG} that $f$ is an isomorphism.
\end{proof}

Let $G$ act freely on $X$. A morphism of schemes $\pi\colon X\to Y$ is called a principal $G$-bundle provided it is locally trivial in the Zariski topology; that is, we have an open cover $Y=\bigcup_iU_i$ and isomorphisms $\phi_i\colon G\times U_i\xrightarrow{\sim}\pi^{-1}(U_i)$ such that $\pi\phi_i=\mathrm{pr}_2$ is the second projection, and $\phi_i(g,u)=g\cdot\phi_i(1,u)$.

\begin{Lem}\label{Lem:principal-G-bundle}
Let $\pi\colon X\to Y$ be a principal $G$-bundle. Then $Y\cong X/G$ is isomorphic to the quotient faisceau. In particular, $\pi$ is a universal geometric quotient.
\end{Lem}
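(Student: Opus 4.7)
The plan is to show that $\pi\colon X\to Y$ is the coequaliser of $\mu,\mathrm{pr}_2\colon G\times X\rightrightarrows X$ in the category of faisceaux, so that $Y\cong X/G$; the ``In particular'' part then follows immediately from \hyperref[Lem:geom-quotient]{Lemma \ref*{Lem:geom-quotient}}(1). To verify this universal property it suffices to establish two facts: (a) $\pi$ is a faisceau epimorphism, and (b) the shear map $\Psi\colon G\times X\to X\times_YX$, $(g,x)\mapsto(x,g\cdot x)$, is an isomorphism of schemes. Indeed, any faisceau epimorphism exhibits its target as the coequaliser of $X\times_YX\rightrightarrows X$, and the isomorphism $\Psi$ (together with the evident orbit-invariance of $\pi$) identifies these two projections with $\mu$ and $\mathrm{pr}_2$, so the coequaliser is precisely $X/G$ by definition.

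Both (a) and (b) are local on $Y$, and the local trivialisations $\phi_i\colon G\times U_i\xrightarrow{\sim}\pi^{-1}(U_i)$ reduce each to the corresponding statement for the trivial bundle $\pi_i\colon G\times U_i\to U_i$, $(h,u)\mapsto u$, with $G$ acting by left multiplication on the first factor. For (a), $\pi_i$ is the base change of the structure morphism $G\to\Spec K$, which is flat (every $K$-scheme is flat over the field $K$), surjective, and locally of finite presentation (since $G$ is locally of finite type over $K$); hence $\pi_i$ is faithfully flat and locally of finite presentation, and therefore an epimorphism of faisceaux by \cite[III \S1 Corollary 2.10]{DG}. For (b), on the trivialised piece $\Psi$ becomes the morphism
\[ G\times G\times U_i \to (G\times U_i)\times_{U_i}(G\times U_i)\cong G\times G\times U_i, \quad (g,h,u)\mapsto(h,gh,u), \]
which has the explicit inverse $(h_1,h_2,u)\mapsto(h_2h_1^{-1},h_1,u)$.

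Globalising is routine: fibre products over $Y$ are local on $Y$, so $\Psi$ being an isomorphism over each $U_i$ forces it to be a global isomorphism; and since ``faithfully flat and locally of finite presentation'' is a property local on the target, $\pi$ is globally such, hence a faisceau epimorphism. There is no real obstacle here; the essential point is just that Zariski-local triviality of a $G$-bundle is already strong enough for the fppf-faisceau machinery for quotients to apply on the nose.
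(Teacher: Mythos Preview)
Your proof is correct and takes a genuinely different route from the paper's. The paper first forms the abstract quotient $p\colon X\to X/G$, obtains the comparison map $f\colon X/G\to Y$ from the universal property, and then checks that $f$ becomes an isomorphism after base change along the fppf cover $q\colon\coprod_iU_i\to Y$ (using \cite[III \S1 Examples 2.5, 2.6]{DG}); descent for isomorphisms along $q$ finishes the argument. You instead bypass forming $X/G$ altogether and verify directly that $Y$ has the required coequaliser property: $\pi$ is fppf (checked on the trivialising cover), hence an effective epimorphism of faisceaux, and the kernel pair $X\times_YX$ is identified with $G\times X$ via the shear map. Your approach is arguably more transparent, since the step ``fppf $\Rightarrow$ effective epimorphism'' is essentially the sheaf condition defining faisceaux; the paper's approach, on the other hand, fits more visibly into the associated-fibration machinery developed in the surrounding section. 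One small point worth making explicit in your write-up: the implication ``faisceau epimorphism $\Rightarrow$ coequaliser of its kernel pair'' holds because the category of faisceaux is a Grothendieck topos (or, more concretely, because for an fppf morphism the equaliser diagram $\Hom(Y,Z)\to\Hom(X,Z)\rightrightarrows\Hom(X\times_YX,Z)$ is exactly the sheaf condition for $Z$).
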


\begin{proof}
Set $U:=\coprod_iU_i$, and let $q\colon U\to Y$ be the induced morphism, which is clearly faithfully flat and locally of finite presentation. If $p\colon X\to X/G$ denotes the faisceau quotient, then there is a morphism $f\colon X/G\to Y$ such that $\pi=fp$. Set $V:=(X/G)\times_YU$. Then by \cite[III \S1 Example 2.5]{DG} we have $V\cong(X\times_{X/G}V)/G$, using the natural action of $G$ on $X\times_{X/G}V$. On the other hand, since $\pi$ is locally trivial we have $X\times_{X/G}V\cong X\times_YU\cong G\times U$, under which the natural $G$-action corresponds to left multiplication on itself. We deduce that $V\cong U$, so that the pull-back of $f$ along $q$ is an isomorphism. Since $q$ is an epimorphism in the category of faisceaux \cite[III \S1 Corollary 2.10]{DG}, we conclude from \cite[III \S1 Example 2.6]{DG} that $f$ is an isomorphism, so $Y\cong X/G$.
\end{proof}

\begin{Lem}\label{Lem:principal-G-bundle-subscheme}
Let $\pi\colon X\to Y$ be a principal $G$-bundle. If $X'\subset X$ is a $G$-stable subscheme, then there exists a subscheme $Y'\subset Y$ such that $\pi\colon X'\to Y'$ is again a principal $G$-bundle.
\end{Lem}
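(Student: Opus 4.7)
My approach is to work locally with the trivialisations of $\pi$ and show that every $G$-stable subscheme of a trivial bundle $G\times U$ (with $G$ acting on itself by left translation) is of the form $G\times V$ for a unique subscheme $V\subset U$; the locally defined $V$'s then glue to the required $Y'\subset Y$.

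First, fix an open cover $Y=\bigcup_iU_i$ with trivialisations $\phi_i\colon G\times U_i\xrightarrow\sim\pi^{-1}(U_i)$, and set $Z_i:=\phi_i^{-1}\bigl(X'\cap\pi^{-1}(U_i)\bigr)$, a $G$-stable subscheme of $G\times U_i$ where $G$ acts by $g\cdot(h,u)=(gh,u)$.

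The key local step is to show that every $G$-stable subscheme $Z\subset G\times U$ has the form $G\times V$ for a unique subscheme $V\subset U$. I would define $V$ as the scheme-theoretic pull-back along the unit section $\sigma\colon U\to G\times U$, $u\mapsto(1,u)$, that is $V:=Z\times_{G\times U}U$. Since locally closed immersions are stable under base change, $V\to U$ is a locally closed immersion, so $V$ is a subscheme of $U$. Comparing functors of points, $V(R)=\{u\in U(R):(1,u)\in Z(R)\}$, and the $G$-stability of $Z$ gives $(h,u)\in Z(R)$ if and only if $(1,u)=h^{-1}\cdot(h,u)\in Z(R)$, i.e.\ if and only if $u\in V(R)$. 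Hence $Z(R)=(G\times V)(R)$ naturally in $R$, so $Z=G\times V$ as subschemes of $G\times U$ by Yoneda's lemma, with $V$ uniquely determined by the pull-back construction.

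Applying this to each $Z_i$ yields a subscheme $V_i\subset U_i$ with $\phi_i(G\times V_i)=X'\cap\pi^{-1}(U_i)$. On the overlap $U_i\cap U_j$, the transition $\phi_j^{-1}\phi_i$ is an automorphism of the trivial $G$-torsor over $U_i\cap U_j$ commuting with the $G$-action, hence of the form $(h,u)\mapsto(h\cdot\alpha_{ij}(u),u)$ for some morphism $\alpha_{ij}\colon U_i\cap U_j\to G$; in particular it preserves the projection onto $U_i\cap U_j$, hence carries any subscheme of the form $G\times V$ to itself. Combined with the uniqueness above this forces $V_i$ and $V_j$ to coincide on $U_i\cap U_j$, so the $V_i$'s glue to a subscheme $Y'\subset Y$. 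The restrictions of the $\phi_i$ to $G\times V_i$ then exhibit $\pi\colon X'\to Y'$ as a principal $G$-bundle with trivialising cover $\{V_i\}$.

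The main obstacle is the local step: passing from the set-theoretic observation that the fibres of $Z\to U$ must be either empty or all of $G$ to a \emph{scheme-theoretic} identification $Z=G\times V$. Introducing $V$ as the pull-back along the unit section and verifying the identification through Yoneda's lemma is what makes this scheme-theoretic argument run cleanly, and once it is in hand the gluing step and the verification of local triviality of $\pi\colon X'\to Y'$ are routine.
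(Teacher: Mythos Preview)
Your proposal is correct and follows essentially the same approach as the paper: both define the local pieces $V_i$ (the paper's $Y'_i$) as the scheme-theoretic preimage of $X'$ along the unit section $u\mapsto\phi_i(1,u)$, then glue. You are more explicit than the paper in justifying the local identification $Z_i=G\times V_i$ via the functor of points and in spelling out why the transition maps preserve the $V_i$, whereas the paper simply asserts that $\phi_i$ restricts to an isomorphism $G\times Y'_i\xrightarrow\sim X'_i$ and that the gluing works; but the underlying argument is the same.
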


\begin{proof}
Since $\pi$ is locally trivial there is an open covering $Y=\bigcup_iY_i$ and $G$-equivariant isomorphisms $\phi_i\colon G\times Y_i\xrightarrow{\sim}X_i:=\pi^{-1}(Y_i)$ satisfying $\pi\phi_i=\mathrm{pr}_2$. In particular, $Y$ is formed by gluing the schemes $Y_i$ along the open subschemes $U_{ij}:=\iota_i^{-1}(X_i\cap X_j)$, where $\iota_i\colon Y_i\to X_i$ is the section of $\pi$ given by $y\mapsto\phi_i(1,y)$.

Now, given a $G$-stable subscheme $X'\subset X$, set $X'_i:=X_i\cap X'$ and $Y'_i:=\iota_i^{-1}(X'_i)$. Then we can glue the $Y'_i$ along the subschemes $U'_{ij}:=\iota_i^{-1}(X'_i\cap X'_j)$, and hence obtain a subscheme $Y'\subset Y$ satisfying $Y'\cap Y_i=Y'_i$. It is then clear that $\phi_i$ restricts to an isomorphism $G\times Y'_i\xrightarrow\sim X'_i$, so that $\pi\colon X'\to Y'$ is again a principal $G$-bundle.
\end{proof}

\subsection{A separability criterion}

Let $G$ be a group scheme acting on a scheme $Y$, and let $X\subset Y$ be a subscheme. We can restrict the action of $G$ to get a morphism $\Theta\colon G\times X\to Y$, and we write $\overline{G\cdot X}$ for its scheme-theoretic image. Associated to $y\in Y(L)$ we have the fibre $\Theta^{-1}(y)$, which is the closed subscheme of $G^L\times X^L$ having $R$-valued points the pairs $(g,x)$ with $g\cdot x=y^R$. On the other hand we have the transporter of $y$ into $X$, denoted $\Transp(y,X)$, which is the fibre product of the orbit map $\mu_y\colon G^L\to Y^L$ with the immersion $X^L\to Y^L$. Thus $\Transp(y,X)$ is a subscheme of $G^L$ having $R$-valued points those $g\in G(R)$ such that $g\cdot y^R\in X(R)$. (This is a special case of \cite[II, \S1, Definition 3.4]{DG}.) There is clearly an isomorphism $\Theta^{-1}(y)\xrightarrow{\sim}\Transp(y,X)$, $(g,x)\mapsto g^{-1}$.

This leads to the following sufficient criterion for separability.

\begin{Thm}
Let $G$ be a smooth, connected group scheme, locally of finite type over $K$. Suppose $G$ acts on a scheme $Y$ and let $X\subset Y$ be a locally Noetherian integral subscheme. Then $\Theta\colon G\times X\to Y$ is separable whenever there exists, for each $n$, an open dense $U\subset X$ such that $\mathrm{Transp}(x,U)^n$ is reduced for all $x\in U$.
\end{Thm}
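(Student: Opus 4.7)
The plan is to apply \hyperref[Thm:separable]{Theorem \ref*{Thm:separable}} to $\Theta$. Since $G$ is smooth and connected (hence geometrically integral over $K$), $G\times X$ is integral; replacing $Y$ by the scheme-theoretic image $\overline{G\cdot X}$ puts us in the setting of that theorem. So for each $n\geq1$ it suffices to exhibit an open dense $V'\subset G\times X$ with $(V')^{\times\nfold Yn}$ reduced.

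Fix $n$ and, using the hypothesis with parameter $n-1$, choose an open dense $U\subset X$ such that $\Transp(u,U)^{n-1}$ is reduced for every $u\in U$. The substitution $k_i:=g_1^{-1}g_i$ for $2\leq i\leq n$ gives an isomorphism of schemes
\[
G\times\tilde Z \xrightarrow{\sim} (G\times U)^{\times\nfold Yn},\qquad (g,u,k_2,\dots,k_n)\mapsto\bigl((g,u),(gk_2,k_2^{-1}u),\dots,(gk_n,k_n^{-1}u)\bigr),
\]
where $\tilde Z\subset U\times G^{n-1}$ is the locally closed subscheme defined by the conditions $k_i^{-1}u\in U$. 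Its bijectivity on functor points is routine, and the fibre of the projection $\tilde Z\to U$ over $u$ is identified, via inversion on each factor of $G^{n-1}$, with $\Transp(u,U)^{n-1}$, hence is reduced.

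Since $G$ is smooth it is geometrically reduced and faithfully flat over $K$, so $G\times\tilde Z$ is reduced if and only if $\tilde Z$ is. It therefore suffices to find an open dense $U'\subset U$ such that the analogously defined scheme $\tilde Z(U')\subset U'\times G^{n-1}$ is reduced: then $V':=G\times U'$ gives $(V')^{\times\nfold Yn}\cong G\times\tilde Z(U')$ reduced, and \hyperref[Thm:separable]{Theorem \ref*{Thm:separable}} completes the argument.

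The main technical hurdle is this final reducedness claim, which upgrades fibrewise reducedness to reducedness on an open subscheme. On an affine open $\Spec B\subset\tilde Z$ lying over an affine open $\Spec A\subset U$, the ring $A$ is a Noetherian domain and the generic fibre $B\otimes_A\mathrm{Frac}(A)$ is reduced by hypothesis; since $B$ is Noetherian, $\mathrm{nil}(B)$ is finitely generated and so is annihilated by some nonzero $a\in A$, whence $\Spec B[a^{-1}]$ is reduced. As only one open dense $V'$ is required, we may restrict at the outset to an affine open of $G\times X$ so that only finitely many affine pieces of $\tilde Z$ are involved; intersecting the corresponding distinguished opens $D(a)\subset U$ yields the desired $U'$.
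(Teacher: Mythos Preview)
Your approach is essentially the paper's: identify the fibres of the first projection $(G\times U)^{\times_Y n}\to G\times U$ (equivalently, after stripping off a $G$-factor, of $\tilde Z\to U$) with $\Transp(x,U)^{n-1}$, and use reducedness of these fibres to conclude reducedness of the iterated fibre product over a dense open, whence \hyperref[Thm:separable]{Theorem \ref*{Thm:separable}} applies. The only substantive difference is in the last step: the paper shrinks $U$ via Generic Flatness so that $\pi$ becomes faithfully flat, and then invokes the standard fact that a faithfully flat morphism with reduced base and reduced fibres has reduced total space; your direct nilradical argument is the affine-local version of the same idea.

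One point needs tightening. Your sentence ``we may restrict at the outset to an affine open of $G\times X$ so that only finitely many affine pieces of $\tilde Z$ are involved'' does not do what you want: $\tilde Z$ lives in $U\times G^{n-1}$, and shrinking the single $G$-factor in $G\times X$ does not control the $(n-1)$ copies of $G$ appearing there. The clean fix is exactly what the paper does: apply Generic Flatness to $\tilde Z\to U$ (or to $\pi$) to get flatness over a dense open $U'$, and then use flat\,$+$\,reduced fibres\,$+$\,reduced base $\Rightarrow$ reduced total space. Alternatively, first shrink $U$ to an affine and observe that $\tilde Z$ is a locally closed subscheme of the Noetherian scheme $U\times G^{n-1}$ (here one needs $G$ of finite type, which holds in all the paper's applications and is automatic for smooth connected group schemes over a field), hence $\tilde Z$ is Noetherian and a finite affine cover exists; your per-chart argument then goes through.
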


\begin{proof}
We want to apply \hyperref[Thm:separable]{Theorem \ref*{Thm:separable}}. Note therefore that, since $G$ is smooth and connected, it is geometrically integral, so $G\times X$ is integral; since both $G$ and $X$ are locally Noetherian, so too is $G\times X$; finally, as $G\times Y\to Y$ is locally of finite type, so is $G\times X\to Y$.

Given $n$, choose $U$ such that $\Transp(x,U)^{n-1}$ is reduced for all $x\in U$, and consider the projection $\pi\colon (G\times U)^{\times\nfold Yn}\to G\times U$ onto the first component. Then $\pi^{-1}(g,x)$ is isomorphic to $\Transp(x,U)^{n-1}$ via
\begin{align*}
\Transp(x,U)^{n-1} &\to (G\times U)^{\times\nfold Yn}\\
(h_2,\ldots,h_n) &\mapsto \big((g,x),(gh_2^{-1},h_2\cdot x),\ldots,(gh_n^{-1},h_n\cdot x)\big).
\end{align*}
In particular, each fibre is reduced. Since $G\times U$ is integral we can apply the Theorem of Generic Flatness \cite[I \S3 Theorem 3.7]{DG} to assume further (after possibly shrinking $U$) that $\pi$ is faithfully flat. Thus $\pi$ is faithfully flat with reduced fibres and reduced image, so has reduced domain \cite[(21.E) Corollary (iii)]{Matsumura1}. This proves that $(G\times U)^{\times\nfold Yn}$ is reduced, so $\Theta$ is separable by \hyperref[Thm:separable]{Theorem \ref*{Thm:separable}}.
\end{proof}

\begin{Lem}\label{Lem:orbit-transporter}
Let $G$ and $Y$ be of finite type over $K$, $H\leq G$ a closed subgroup and $y\in Y(L)$. Then the morphism
\[ H^L\times\Stab_G(y) \to \Transp(y,\Orb_H(y)), \quad (h,s) \mapsto hs, \]
induces an isomorphism between the associated fibration $H^L\times^{\Stab_H(y)}\Stab_G(y)$ and the transporter $\Transp(y,\Orb_H(y))$. In particular, $\Transp(y,\Orb_H(y))$ is pure of dimension $\dim H+\dim\Stab_G(y)-\dim\Stab_H(y)$.
\end{Lem}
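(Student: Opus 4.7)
The plan is to exhibit the map $\phi\colon H^L\times\Stab_G(y)\to G^L$, $(h,s)\mapsto hs$, as a model for the associated fibration by checking the hypotheses of \hyperref[Lem:assoc-fib]{Lemma \ref*{Lem:assoc-fib}}, and then to read off the dimension and purity from \hyperref[Cor:geom-quotient]{Corollary \ref*{Cor:geom-quotient}}(1).

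First I would observe that the image of $\phi$ does lie in $\Transp(y,\Orb_H(y))$: for any $R$-valued point $(h,s)$ we have $(hs)\cdot y^R=h\cdot(s\cdot y^R)=h\cdot y^R\in\Orb_H(y)(R)$. The free right action of $\Stab_H(y)$ on $H^L$ gives $\pi\colon H^L\to H^L/\Stab_H(y)$, and by \hyperref[Prop:group-action]{Proposition \ref*{Prop:group-action}} this quotient is canonically isomorphic to $\Orb_H(y)$. The morphism $q\colon\Transp(y,\Orb_H(y))\to\Orb_H(y)$, $g\mapsto g\cdot y$, is well-defined by definition of the transporter, and the square
\[ \begin{CD}
H^L\times\Stab_G(y) @>{\phi}>> \Transp(y,\Orb_H(y))\\
@VV{\mathrm{pr}_1}V @VV{q}V\\
H^L @>{\pi}>> \Orb_H(y)
\end{CD} \]
commutes, since $q(hs)=hs\cdot y=h\cdot y=\pi(h)$. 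Moreover $\phi$ is constant on the orbits of the free $\Stab_H(y)$-action $t\cdot(h,s)=(ht^{-1},ts)$ used in forming the associated fibration, as $(ht^{-1})(ts)=hs$.

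To invoke \hyperref[Lem:assoc-fib]{Lemma \ref*{Lem:assoc-fib}} I still need the induced morphism
\[ H^L\times\Stab_G(y) \to H^L\times_{\Orb_H(y)}\Transp(y,\Orb_H(y)), \quad (h,s) \mapsto (h,hs), \]
to be an isomorphism. Its candidate inverse is $(h,g)\mapsto(h,h^{-1}g)$: this is well-defined because if $(h,g)$ lies in the fibre product, then $h\cdot y=g\cdot y$, so $(h^{-1}g)\cdot y=y$, i.e.\ $h^{-1}g\in\Stab_G(y)$. The two composites are clearly the identity, so the induced morphism is an isomorphism. Therefore \hyperref[Lem:assoc-fib]{Lemma \ref*{Lem:assoc-fib}} yields the desired isomorphism $H^L\times^{\Stab_H(y)}\Stab_G(y)\xrightarrow{\sim}\Transp(y,\Orb_H(y))$.

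For the final claim, note that since $H$ and $\Stab_G(y)$ are group schemes of finite type over their base fields, they are pure; hence $H^L\times\Stab_G(y)$ is pure of dimension $\dim H+\dim\Stab_G(y)$. The quotient map
\[ H^L\times\Stab_G(y) \to \Transp(y,\Orb_H(y)) \]
is the pull-back of the faithfully flat morphism $H^L\to\Orb_H(y)$ along $q$, hence is itself faithfully flat, and $\Stab_H(y)$ acts freely on $H^L\times\Stab_G(y)$. Applying \hyperref[Cor:geom-quotient]{Corollary \ref*{Cor:geom-quotient}}(1) we conclude that $\Transp(y,\Orb_H(y))$ is pure of dimension $\dim H+\dim\Stab_G(y)-\dim\Stab_H(y)$. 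The only delicate point is keeping the left/right conventions on the $\Stab_H(y)$-actions consistent so that $\phi$ really is constant on orbits and the induced map to the fibre product is bijective on points; once that is pinned down, everything else is a direct application of the results already established.
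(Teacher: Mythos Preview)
Your argument is correct and follows essentially the same route as the paper: set up the commutative square, verify it is a pull-back so that \hyperref[Lem:assoc-fib]{Lemma \ref*{Lem:assoc-fib}} applies, and then read off purity and dimension from \hyperref[Cor:geom-quotient]{Corollary \ref*{Cor:geom-quotient}}. Your presentation is slightly more explicit (you write down the inverse $(h,g)\mapsto(h,h^{-1}g)$ and spell out why the quotient map is faithfully flat via pull-back), but there is no substantive difference.
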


\begin{proof}
We have a commutative diagram
\[ \begin{CD}
H^L\times\Stab_G(y) @>>> \Transp(y,\Orb_H(y))\\
@VVV @VVV\\
H^L @>>> \Orb_H(y)
\end{CD} \]
where the upper morphism $(h,s)\mapsto hs$ is constant on $\Stab_H(y)$-orbits. Moreover, this is a pull-back diagram. For, if $(h,g)$ is an $R$-valued point of the fibre product, then $h\cdot y^R=g\cdot y^R\in Y(R)$, so $s:=h^{-1}g\in\Stab_G(y)(R)$ and $(h,g)=(h,hs)$ as required. The isomorphism with the associated fibration now follows from \hyperref[Lem:assoc-fib]{Lemma \ref*{Lem:assoc-fib}}, whereas the purity and the dimension formula follow from \hyperref[Cor:geom-quotient]{Corollary \ref*{Cor:geom-quotient}}.
\end{proof}

Assume now that we have group schemes $H\leq G$, a $G$-action on $Y$, an $H$-stable subscheme $X\subset Y$, and that all schemes are of finite type over $K$. For each $x\in X(L)$ we have the orbits $\Orb_H(x)\subset X^L$ and $\Orb_G(x)\subset Y^L$, so we define $N_{X,x}:=T_xX/T_x\Orb_H(x)$ and similarly $N_{Y,x}:=T_xY/T_x\Orb_G(x)$. Writing $\Theta\colon G\times X\to Y$ as above for the restriction of the $G$-action on $Y$, we observe that the differential $d_x\Theta:=d_{(1,x)}\Theta$ induces a map $\theta_x\colon N_{X,x}\to N_{Y,x}$.

\begin{Thm}\label{Thm:group-action}
With the notation as above, assume that $G$ and $H$ are smooth and connected, and that $X$ is integral. Suppose further that there exists an open dense $U\subset X$ such that, for all $x\in U(L)$, both $\Stab_G(x)$ and $\Stab_H(x)$ are smooth, and that there are only finitely many $H(\bar L)$-orbits on $\big(\!\Orb_G(x)\cap X\big)(\bar L)$. Then $\Theta$ is separable if and only if $\theta_x$ is injective on an open dense subset of $X$. On the other hand, $d_x\Theta$ is surjective if and only if $\theta_x$ is surjective.
\end{Thm}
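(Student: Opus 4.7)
I will handle the two claims separately.

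For the surjectivity claim I plan to compute directly. The differential $d_{(1,x)}\Theta\colon T_1G\oplus T_xX\to T_xY$ is $(\xi,\eta)\mapsto d_1\mu_x(\xi)+\eta$, where $\mu_x\colon G^L\to Y^L$ is the orbit map. Since $\Stab_G(x)$ is smooth, Proposition~\ref{Prop:group-action}(4) makes $\mu_x$ smooth onto $\Orb_G(x)$, so $d_1\mu_x$ surjects onto $T_x\Orb_G(x)$. Hence $\Ima d_{(1,x)}\Theta=T_xX+T_x\Orb_G(x)$, which fills $T_xY$ exactly when $\theta_x$ is surjective.

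The separability claim will follow from Theorem~\ref{Thm:surj} by computing both sides of its criterion. The same formula gives
\[ \dim\ker d_{(1,x)}\Theta \;=\; \dim\Stab_G(x)+\dim\bigl(T_xX\cap T_x\Orb_G(x)\bigr), \]
and $G$-equivariance of $\Theta$ in the $G$-factor (translating by $g_0$ on $G$ conjugates $\Theta$ to the $G$-action by $g_0$ on $Y$) makes this dimension depend only on $x$, so the generic value on $G\times X$ is attained at $(1,x)$ for generic $x\in U$.

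For the generic fibre dimension, I observe that $Y':=\overline{\Theta(G\times X)}$ is $G$-stable, so a generic point $y\in Y'$ may be moved to lie in $U\subset X$, giving $\Theta^{-1}(y)\cong\Transp(y,X)$. Since $\mu_y$ is a $\Stab_G(y)$-torsor onto $\Orb_G(y)$ by Proposition~\ref{Prop:group-action}(2), one has
\[ \dim\Transp(y,X) \;=\; \dim(\Orb_G(y)\cap X)+\dim\Stab_G(y). \]
The finiteness hypothesis decomposes $\Orb_G(y)\cap X$ topologically into finitely many $H$-orbits, and upper semi-continuity of $\dim\Stab_H$ (Proposition~\ref{Prop:group-action}(3)) forces the generic value of $\dim\Orb_H$ on $X$ to be the maximum over all $H$-orbits in $X$. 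Therefore $\Orb_H(y)$ is a component of maximal dimension in $\Orb_G(y)\cap X$, whence $\dim(\Orb_G(y)\cap X)=\dim\Orb_H(y)=\dim H-\dim\Stab_H(y)$.

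Combining these computations, Theorem~\ref{Thm:surj} says $\Theta$ is separable if and only if, on an open dense subset,
\[ \dim\bigl(T_xX\cap T_x\Orb_G(x)\bigr) \;=\; \dim H-\dim\Stab_H(x) \;=\; \dim T_x\Orb_H(x), \]
where I have used that $\dim\Stab_G$ is constant along the generic $G$-orbit. The inclusion $T_x\Orb_H(x)\subseteq T_xX\cap T_x\Orb_G(x)$ is automatic, so equality of dimensions is equivalent to generic injectivity of $\theta_x$. The main obstacle is the dimension identity for the generic fibre: one must justify that the orbit $\Orb_H(y)$ itself (rather than some other $H$-orbit in $\Orb_G(y)\cap X$) is of maximal dimension for $y$ generic, which is exactly what the upper semi-continuity of $\dim\Stab_H$ combined with the finiteness hypothesis delivers.
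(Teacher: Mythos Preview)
Your argument is correct and follows the same overall strategy as the paper: apply Theorem~\ref{Thm:surj} by computing both the generic kernel dimension of $d_{(1,x)}\Theta$ and the generic fibre dimension of $\Theta$, then compare. Your execution differs in bookkeeping. Where the paper invokes Lemma~\ref{Lem:orbit-transporter} to compute $\dim\Transp(x,\Orb_H(x_i))$ for each $H$-orbit separately, you use the torsor property of $\mu_y$ directly to get $\dim\Transp(y,X)=\dim(\Orb_G(y)\cap X)+\dim\Stab_G(y)$ in one step, then bound the first term by the finiteness hypothesis and upper semi-continuity. Where the paper sets up two commutative diagrams to relate $\Ker(d_x\Theta)$, $\Ker(d_x\mu)$ and $\Ker(\theta_x)$, you compute $\Ker(d_{(1,x)}\Theta)$ directly as the preimage of $T_xX$ under $d_1\mu_x$. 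Both routes yield the same numerical identity; yours is a bit more elementary, while the paper's exact diagrams make the surjectivity statement and the relation $\dim\Ker(d_x\Theta)=\dim\Ker(d_x\mu)+\dim\Ker(\theta_x)$ transparent simultaneously.

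One omission: the hypothesis only asks that $X$ be irreducible, not reduced, so $G\times X$ need not be integral and Theorem~\ref{Thm:surj} does not apply as stated. The paper handles this by observing at the outset that the stabilisers, the finiteness hypothesis, and (since $\Orb_H(x)$ is smooth and hence lies in $X_\red$) the injectivity of $\theta_x$ all pass to $X_\red$, so one may assume $X$ integral. You should insert the same reduction before invoking Theorem~\ref{Thm:surj}.
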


\begin{proof}
Write $n$ for the relative degree of $\Theta$, so $n:=\dim G+\dim X-\dim\overline{G\cdot X}$. Since $G\times X$ is integral and of finite type we can apply \hyperref[Thm:surj]{Theorem \ref*{Thm:surj}} to deduce that $\Theta$ is separable if and only if $\dim\Ker(d_{(g,x)}\Theta)=n$ on an open dense subset of $G\times X$. Using the $G$-action we see that $\dim\Ker(d_{(g,x)}\Theta)$ is independent of $g$, so it is enough to show $\dim\Ker(d_x\Theta)=n$ on an open dense subset of $X$. Also, since the dimensions remain unchanged after base change, we may assume that $K$ is algebraically closed, in which case it is enough to consider rational points $x\in X(K)$.

By upper semi-continuity of the dimensions of the stabiliser groups, \hyperref[Prop:group-action]{Proposition \ref*{Prop:group-action}}, and shrinking $U$ if necessary, we may assume that $\dim\Stab_G(x)=a$ and $\dim\Stab_H(x)=b$ are constant for all $x\in U(K)$, and that these values are the generic, or minimal, values on all of $X$. We have already seen that the fibre $\Theta^{-1}(x)$ is isomorphic to $\Transp(x,X)=\Transp(x,\Orb_G(x)\cap X)$. By assumption we can write $\Orb_G(x)\cap X$ as a finite union $\bigcup_i\Orb_H(x_i)$, so that $\Transp(x,X)=\bigcup_i\Transp(x,\Orb_H(x_i))$. It then follows from \hyperref[Lem:orbit-transporter]{Lemma \ref*{Lem:orbit-transporter}} that
\[ \dim\Theta^{-1}(x) = \max_i\{\dim\Transp(x,\Orb_H(x_i))\} = \dim H+a-b. \]
For, $x_i\in\Orb_G(x)$, so $\dim\Stab_G(x_i)=\dim\Stab_G(x)=a$, and $\dim\Stab_H(x)=b=\min_i\{\dim\Stab_H(x_i)\}$. Thus all fibres of $\Theta$ have dimension $\dim H+a-b$, and this is necessarily the relative degree $n$.

Consider now the commutative squares
\[ \begin{CD}
G\times H @>{(1,\mu_x)}>> G\times\Orb_H(x)\\
@VV{\alpha}V @VV{\mu}V\\
G @>{\mu_x}>> \Orb_G(x)
\end{CD}
\qquad\textrm{and}\qquad
\begin{CD}
G\times\Orb_H(x) @>>> G\times X\\
@VV{\mu}V @VV{\Theta}V\\
\Orb_G(x) @>>> Y
\end{CD} \]
where $\alpha\colon(g,h)\mapsto gh$ is just the group multiplication. Since both $\Stab_G(x)$ and $\Stab_H(x)$ are smooth we know from \hyperref[Prop:group-action]{Proposition \ref*{Prop:group-action}} that the orbit maps are separable, so their differentials are generically surjective by \hyperref[Prop:sep]{Proposition \ref*{Prop:sep}}. Using the group actions we conclude that the differentials are always surjective. So, writing  $\mathfrak g=\mathrm{Lie}(G)=T_1G$, and similarly $\mathfrak h=\mathrm{Lie}(H)=T_1H$, and setting $d_x\mu:=d_{(1,x)}\mu$, we get two exact commutative diagrams
\[ \begin{CD}
\mathfrak g\times\mathfrak h @>>> \mathfrak g\times T_x\Orb_H(x) @>>> 0\\
@V{d\alpha}VV @V{d_x\mu}VV\\
\mathfrak g @>>> T_x\Orb_G(x) @>>> 0
\end{CD} \]
and
\[ \begin{CD}
0 @>>> \mathfrak g\times T_x\Orb_H(x) @>>> \mathfrak g\times T_xX @>>> N_{X,x} @>>> 0\\
@. @V{d_x\mu}VV @V{d_x\Theta}VV @V{\theta_x}VV\\
0 @>>> T_x\Orb_G(x) @>>> T_xY @>>> N_{Y,x} @>>> 0
\end{CD} \]
Now, $\alpha$ factors as the isomorphism $G\times H\xrightarrow\sim G\times H$, $(g,h)\mapsto(gh,h)$, followed by the projection onto the first co-ordinate. Thus $d\alpha$ is surjective, so $d_x\mu$ is also surjective, and hence $d_x\Theta$ is surjective if and only if $\theta_x$ is surjective. Moreover, since the groups $G$ and $H$ are smooth, as are the orbits, we can compute
\begin{align*}
\dim\Ker(d_x\mu) &= \dim G + \dim\Orb_H(x) - \dim\Orb_G(x)\\
&= \dim H + \dim\Stab_G(x) - \dim\Stab_H(x) = n.
\end{align*}
Using that $\dim\Ker(d_x\Theta)=\dim\Ker(d_x\mu)+\dim\Ker(\theta_x)$ we conclude from \hyperref[Thm:surj]{Theorem \ref*{Thm:surj}} that $\Theta$ is separable if and only if $\theta_x$ is injective on an open dense subset of $X$.
\end{proof}

We summarise these considerations in the following corollary, which will be sufficient for all our applications.

\begin{Cor}\label{Cor:irred-cpt}
Suppose we have smooth, connected group schemes $H\leq G$, a $G$-action on $Y$, an $H$-stable subscheme $X\subset Y$, and that all schemes are of finite type over $K$. Assume further that there is an open dense subset $U\subset X$ such that, for all $x\in U(L)$,
\begin{enumerate}
\item the stabilisers $\Stab_G(x)$ and $\Stab_H(x)$ are smooth.
\item there are only finitely many $H(\bar L)$-orbits on $\big(\!\Orb_G(x)\cap X\big)(\bar L)$.
\item $\theta_x$ is injective.
\item $T_1^{(\infty)}G\times T_x^{(\infty)}X\to T_x^{(\infty)}Y$ is surjective if and only if $\theta_x$ is surjective.
\end{enumerate}
Then for any irreducible component $X'\subset X$ we have that $\overline{G\cdot X'}$ is an irreducible component of $Y$ if and only if $\theta_x$ is surjective for all $x$ in an open dense subset of $X'$.
\end{Cor}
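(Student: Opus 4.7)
The plan is to combine \hyperref[Thm:group-action]{Theorem \ref*{Thm:group-action}}, which characterises when $\Theta$ is separable, with \hyperref[Thm:sep]{Theorem \ref*{Thm:sep}}, which detects irreducible components of the image via jet spaces; hypothesis (4) is then exactly the bridge that converts the jet-space criterion into a statement about $\theta_x$. I would begin by noting that since $X'$ is an irreducible component of $X$, the given open dense $U\subset X$ meets $X'$ in an open dense subset $U'$ (otherwise $X'$ would be contained in the proper closed set $X\setminus U$, forcing $X'$ to lie inside the union of the other irreducible components, a contradiction). Applying \hyperref[Thm:group-action]{Theorem \ref*{Thm:group-action}} to the irreducible subscheme $X'$, and using conditions (1), (2), (3) of the corollary restricted to $U'$, one concludes that the restriction $\Theta\colon G\times X'\to Y$ is separable as a map onto its scheme-theoretic image $\overline{G\cdot X'}$.

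Next, I would apply \hyperref[Thm:sep]{Theorem \ref*{Thm:sep}} to $\Theta\colon G\times X\to Y$ with the irreducible component $G\times X'\subset G\times X$ (which is irreducible because the smooth connected $G$ is geometrically integral, so products with irreducible schemes remain irreducible). The separability just established makes conditions (1) and (2) of that theorem equivalent, so $\overline{G\cdot X'}$ is an irreducible component of $Y$ if and only if there exists an open dense $V\subset G\times X'$ such that
\[ T_{(g,x)}^{(\infty)}(G\times X) \to T_{\Theta(g,x)}^{(\infty)} Y \]
is surjective for all $(g,x)\in V(L)$.

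Finally, I would use the $G$-action to reduce this jet-space condition to one purely on $X'$. Since $(G\times X)(D_r)=G(D_r)\times X(D_r)$, the jet space factorises as $T_{(g,x)}^{(\infty)}(G\times X)\cong T_g^{(\infty)}G\times T_x^{(\infty)}X$; left translation by $g^{-1}$ on $G$, combined with the $G$-equivariance of $\Theta$, then fits into a commutative square which identifies the displayed jet-space map with the map $T_1^{(\infty)}G\times T_x^{(\infty)}X\to T_x^{(\infty)}Y$ at $(1,x)$. Hence its surjectivity depends only on $x$, and $V$ may be taken of the form $G\times U''$ for some open dense $U''\subset X'$. Shrinking $U''$ to lie inside $U'$ and invoking hypothesis (4) converts jet-space surjectivity into surjectivity of $\theta_x$, completing the chain of equivalences. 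The main conceptual point, and the only real obstacle, is to recognise that hypothesis (4) is arranged precisely to bridge the jet-space surjectivity produced by \hyperref[Thm:sep]{Theorem \ref*{Thm:sep}} and the infinitesimal condition appearing in \hyperref[Thm:group-action]{Theorem \ref*{Thm:group-action}}; once this is in place, the corollary is obtained by a routine assembly of the two theorems together with $G$-equivariance bookkeeping.
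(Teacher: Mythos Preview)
Your proposal is correct and follows essentially the same route as the paper: apply \hyperref[Thm:group-action]{Theorem \ref*{Thm:group-action}} with hypotheses (1)--(3) to obtain separability of $\Theta\colon G\times X'\to Y$, then invoke \hyperref[Thm:sep]{Theorem \ref*{Thm:sep}} and use hypothesis (4) to translate the jet-space criterion into surjectivity of $\theta_x$. You supply details the paper omits (that $U\cap X'$ is open dense in $X'$, and the $G$-equivariance reduction from $(g,x)$ to $(1,x)$), but the structure is identical.
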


\begin{proof}
Let $X'\subset X$ be an irreducible component, endowed with its reduced subscheme structure. Since $H$ is connected, it must act on $X'$. In particular, if $x\in X'(L)$, then $\Orb_H(x)\subset X'$, and hence $N_{X',x}\subset N_{X,x}$. Thus since $\theta_x$ is injective for all $x\in U(L)\cap X'(L)$, so too is $\theta'_x\colon N_{X',x}\to N_{Y,x}$. There are also only finitely many $H(\bar L)$-orbits on $\big(\!\Orb_G(x)\cap X'\big)(\bar L)$, and so by the previous theorem the morphism $\Theta'\colon G\times X'\to Y$ is separable.

Now, by \hyperref[Thm:sep]{Theorem \ref*{Thm:sep}}, the scheme-theoretic image $\overline{G\cdot X'}$ is an irreducible component of $Y$ if and only if $T_1^{(\infty)}G\times T_x^{(\infty)}X\to T_x^{(\infty)}Y$ is surjective on an open dense subset of $X'$, which by (4) is if and only if $\theta_x$ is surjective on an open dense subset of $X'$.
\end{proof}

\section{Schemes of representations}\label{Sec:rep-schemes}

In the following four sections we will apply \hyperref[Cor:irred-cpt]{Corollary \ref*{Cor:irred-cpt}} to various schemes arising from the representation theory of finitely-generated algebras. The schemes we will consider all have a natural description as functors, but are in general non-reduced, even generically non-reduced (so the local rings at the generic points are non-reduced).

We will work over a fixed perfect field $K$.

We first consider the case studied by Crawley-Boevey and Schr\"oer in \cite{CBS}.

\subsection{Schemes parameterising representations}

Let $\Lambda=K\langle x_1,\ldots,x_N\rangle/I$ be a finitely generated (but not necessarily commutative) $K$-algebra. For each $d$ there is an affine scheme $\rep_\Lambda^d$ parameterising all $d$-dimensional $\Lambda$-modules, whose $R$-valued points (for a commutative $K$-algebra $R$) are given by
\[ \rep_\Lambda^d(R) := \{(X_1,\ldots,X_N)\in\bM_d(R)^N : f(X)=0 \textrm{ for all }f\in I\}. \]
Thus $\rep_\Lambda^d$ is a closed subscheme of $\bM_d^N$, and hence is of finite type over $K$. It is clear that the points of $\rep_\Lambda^d(R)$ are in bijection with the set of $K$-algebra homomorphisms $\Lambda\to\bM_d(R)$, and we will usually identify these two sets.

Associated to $\rho\in\rep_\Lambda^d(R)$ is a $\Lambda$-module $M_\rho$, having underlying vector space $R^d$ and action determined by the algebra homomorphism $\rho\colon\Lambda\to\bM_d(R)$; in fact $M_\rho$ is naturally a module over $\Lambda^R:=\Lambda\otimes_KR$, free of rank $d$ as an $R$-module. Also, given $\sigma\in\rep_\Lambda^e(R)$, we can identify $\Hom_{\Lambda^R}(M_\rho,M_\sigma)$ with
\begin{align*}
\Hom(\rho,\sigma) &:= \{ f\in\bM_{e\times d}(R) : f\rho=\sigma f \}\\
&\phantom{:}= \{ f\in\bM_{e\times d}(R) : f\rho_i=\sigma_if\textrm{ for all }i \}.
\end{align*}
The group scheme $\GL_d$ acts on $\rep_\Lambda^d$ by `change of basis'
\[ \GL_d(R)\times\rep_\Lambda^d(R) \to \rep_\Lambda^d(R), \  g\cdot(\rho_1,\ldots,\rho_N)=(g\rho_1g^{-1},\ldots,g\rho_Ng^{-1}). \]
It follows that for $L$-valued points the orbits are in bijection with the set of isomorphism classes of $d$-dimensional $\Lambda^L$-modules. Note that if $\rho\in\rep_\Lambda^d(L)$, then $\Stab_{\GL_d}(\rho)\cong\Aut_{\Lambda^L}(M_\rho)$, which is open in the vector space $\End_{\Lambda^L}(M_\rho)$ and hence is smooth and irreducible.

Given two representations $\rho\in\rep_\Lambda^d(R)$ and $\sigma\in\rep_\Lambda^e(R)$, their direct sum is the representation
\[ \rho\oplus\sigma\in\rep_\Lambda^{d+e}(R), \quad (\rho\oplus\sigma)_i := \begin{pmatrix}\rho_i&0\\0&\sigma_i\end{pmatrix}. \]
In terms of algebra homomorphisms we have
\[ \rho\oplus\sigma \colon \Lambda \to \bM_{d+e}(R), \quad a \mapsto \begin{pmatrix}\rho(a)&0\\0&\sigma(a)\end{pmatrix}. \]
This induces a closed immersion
\[ \rep_\Lambda^d\times\rep_\Lambda^e \to \rep_\Lambda^{d+e}, \]
which we can combine with the action of $\GL_{d+e}$ on $\rep_\Lambda^{d+e}$ to obtain a morphism of schemes
\[ \Theta \colon \GL_{d+e}\times\rep_\Lambda^d\times\rep_\Lambda^e \to \rep_\Lambda^{d+e}, \quad (g,\rho,\sigma) \mapsto g\cdot(\rho\oplus\sigma). \]
In the notation of \hyperref[Cor:irred-cpt]{Corollary \ref*{Cor:irred-cpt}} we have the smooth, connected groups $G=\GL_{d+e}$ and $H=\GL_d\times\GL_e$, acting on the schemes $Y=\rep_\Lambda^{d+e}$ and $X=\rep_\Lambda^d\times\rep_\Lambda^e$. Note that all stabilisers are smooth, so condition (1) is satisfied, and the following lemma proves that condition (2) also holds.

\begin{Lem}\label{Lem:fin-many-orbits}
For all fields $L$ and all $\rho\oplus\sigma\in\rep_\Lambda^d(L)\times\rep_\Lambda^e(L)$, there are only finitely many $\GL_d(L)\times\GL_e(L)$-orbits on $\Orb_{\GL_{d+e}(L)}(\rho\oplus\sigma)\cap\big(\rep_\Lambda^d(L)\times\rep_\Lambda^e(L)\big)$.
\end{Lem}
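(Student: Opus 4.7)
The plan is to reduce the statement to a Krull--Schmidt-style uniqueness result for direct sum decompositions of finite-dimensional modules. Recall that $\GL_d(L)\times\GL_e(L)$-orbits on $\rep_\Lambda^d(L)\times\rep_\Lambda^e(L)$ correspond bijectively to isomorphism classes of pairs $(A,B)$ of $\Lambda^L$-modules with $\dim_L A=d$ and $\dim_L B=e$. On the other hand, a point of $\rep_\Lambda^d(L)\times\rep_\Lambda^e(L)$ lies in $\Orb_{\GL_{d+e}(L)}(\rho\oplus\sigma)$ precisely when the associated pair satisfies $A\oplus B\cong M_\rho\oplus M_\sigma$ as $\Lambda^L$-modules. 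So the lemma reduces to showing that only finitely many isomorphism classes of pairs $(A,B)$ arise as summand complements inside the fixed module $M:=M_\rho\oplus M_\sigma$.

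The key point is that $M$ is finite-dimensional over $L$, hence its endomorphism ring $E:=\End_{\Lambda^L}(M)$ embeds into $\bM_{d+e}(L)$ and is therefore a finite-dimensional, and so Artinian, $L$-algebra. Artinian rings are semiperfect, and for modules whose endomorphism ring is semiperfect the Krull--Schmidt theorem holds: there is a decomposition $M\cong N_1^{m_1}\oplus\cdots\oplus N_k^{m_k}$ into indecomposable $\Lambda^L$-modules with local endomorphism rings, and this decomposition is unique up to isomorphism and reordering. Consequently every summand $A\le M$ (and similarly its complement $B$) must be isomorphic to some $\bigoplus_i N_i^{a_i}$ with $0\leq a_i\leq m_i$, and $B$ is then determined up to isomorphism by $b_i=m_i-a_i$.

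The number of admissible tuples $(a_1,\ldots,a_k)$ is $\prod_i(m_i+1)$, which is finite, and gives an explicit upper bound on the number of pairs $(A,B)$ with $A\oplus B\cong M$, hence on the number of $\GL_d(L)\times\GL_e(L)$-orbits in question. Since this count is independent of the base field $L$, the lemma holds uniformly.

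The only genuine subtlety is the application of Krull--Schmidt. Since $\Lambda^L$ itself need not be Noetherian or finite-dimensional, one cannot simply invoke the standard statement for modules over Artin algebras; instead one must use the version formulated at the level of a single module whose endomorphism ring is semiperfect (which is what Azumaya's theorem gives). Once that is in place, the argument is essentially bookkeeping with Krull--Schmidt decompositions, and no separate computation with the matrices $\rho$ and $\sigma$ is needed.
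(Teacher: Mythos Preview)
Your proof is correct and follows essentially the same route as the paper: both arguments decompose $M_\rho\oplus M_\sigma$ into indecomposables via Krull--Remak--Schmidt and then observe that the $\GL_d(L)\times\GL_e(L)$-orbits in question are parameterised by the finitely many ways of splitting this decomposition into two parts. Your version is slightly more careful in justifying why Krull--Schmidt applies (via semiperfectness of the finite-dimensional endomorphism ring, which is indeed the right thing to say since $\Lambda^L$ itself need not be Artinian), whereas the paper simply invokes the theorem by name.
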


\begin{proof}
By the Krull-Remak-Schmidt Theorem we can find representations $\tau_i$ such that each $M_i:=M_{\tau_i}$ is indecomposable as a $\Lambda^L$-module, and $M_\rho\oplus M_\sigma\cong\bigoplus_iM_i$. Now $\rho'\oplus\sigma'$ lies in $\Orb_{\GL_{d+e}(L)}(\rho\oplus\sigma)\cap\big(\rep_\Lambda^d(L)\times\rep_\Lambda^e(L)\big)$ if and only if there exists a set $I$ such that $M_{\rho'}\cong\bigoplus_{i\in I}M_i$ and $M_{\sigma'}\cong\bigoplus_{i\not\in I}M_i$, in which case $\rho'\oplus\sigma'$ lies in the same $\GL_d(L)\times\GL_e(L)$-orbit as $\big(\bigoplus_{i\in I}\tau_i\big)\oplus\big(\bigoplus_{i\not\in I}\tau_i\big)$. We therefore see that the $\GL_d(L)\times\GL_e(L)$-orbits in the intersection are parameterised by certain subsets $I$, and hence there are only finitely many.
\end{proof}

\subsubsection{Example}

Consider the algebra
\[ \Lambda = \begin{pmatrix}\bC&\bC\\0&\bR\end{pmatrix}. \]
Using the presentation
\[ \bR\langle x,y\rangle/\big(yx,(1+x^2)x,(1+x^2)y\big)\xrightarrow{\sim}\Lambda, \quad x\mapsto\begin{pmatrix}i&0\\0&0\end{pmatrix}, \quad
y\mapsto\begin{pmatrix}0&1\\0&0\end{pmatrix} \]
one can easily compute some small examples.

The scheme $\rep_\Lambda^1$ is given by the spectrum of the algebra
\[ \bR[X,Y]/\big(YX,(1+X^2)X,(1+X^2)Y\big) \cong \bR[X]/(X+X^3) \cong \bR\times\bC. \]
Thus there is a unique $\bR$-valued point, and this corresponds to the simple injective $\Lambda$-module $I$. There are also three $\bC$-valued points
\[ \bR[X]/(X+X^3)\to\bC, \quad X\mapsto 0,\pm i. \]
The first corresponds to $I\otimes_\bR\bC$, which is the simple injective $\Lambda^\bC$-module, whereas the latter two correspond to the two simple projective $\Lambda^\bC$-modules. Note that, via restriction of scalars, the latter two both induce the simple projective $\Lambda$-module $P$, but $P$ should rather be thought of as the following $\bR$-rational point of $\rep_\Lambda^2$
\[ \Lambda \mapsto \bM_2(\bR), \quad x \mapsto \begin{pmatrix}0&1\\-1&0\end{pmatrix}, \quad y \mapsto \begin{pmatrix}0&0\\0&0\end{pmatrix}. \]

\subsection{Tangent spaces and derivations}

Define a closed subscheme $\Der(d,e)\subset\rep_\Lambda^{d+e}$ by taking those representations in block form having a zero block of size $d\times e$ in the bottom left corner. Thus $\begin{pmatrix}\sigma&\xi\\0&\rho\end{pmatrix}$ lies in $\Der(d,e)(R)$ if and only if $\rho\in\rep_\Lambda^d(R)$ and $\sigma\in\rep_\Lambda^e(R)$, and $\xi\in\Der(\rho,\sigma):=\Der_K\big(\Lambda,\Hom_R(M_\rho,M_\sigma)\big)$ is a $K$-derivation, or crossed homomorphism, so a $K$-linear map
\[ \xi \colon \Lambda\to\Hom_R(M_\rho,M_\sigma) \quad\textrm{satisfying}\quad \xi(ab) = \xi(a)\rho(b)+\sigma(a)\xi(b). \]
The associated module $M_\xi$ fits naturally into a short exact sequence
\[ 0 \to M_\sigma \to M_\xi \to M_\rho \to 0, \]
and hence induces an extension class in $\Ext^1_{\Lambda^R}(M_\rho,M_\sigma)$.

Note that the projection $\pi\colon\Der(d,e)\to\rep_\Lambda^d\times\rep_\Lambda^e$ has fibres $\pi^{-1}(\rho,\sigma)=\Der(\rho,\sigma)$.

Next, since $\rep_\Lambda^d$ is an affine scheme, the tangent bundle $T\rep_\Lambda^d$ is given by $T\rep_\Lambda^d(R)=\rep_\Lambda^d\big(R[t]/(t^2)\big)$, so consists of algebra homomorphisms $\rho+\xi t\colon\Lambda\to\bM_d\big(R[t]/(t^2)\big)$; equivalently, $\rho\in\rep_\Lambda^d(R)$ and $\xi\in\Der(\rho,\rho)$. Thus $T\rep_\Lambda^d$ is isomorphic to the fibre product of $\pi\colon\Der(d,d)\to\rep_\Lambda^{2d}$ with the diagonal map $\rep_\Lambda^d\to\rep_\Lambda^{2d}$, so is a closed subscheme of $\rep_\Lambda^{2d}$. In particular, given $\rho\in\rep_\Lambda(L)$, we can identify the tangent space $T_\rho\rep_\Lambda^d$ with $\Der(\rho,\rho)$, and this comes with a natural morphism to $\Ext^1_{\Lambda^L}(M_\rho,M_\rho)$.

Finally observe that if $\rho\in\rep_\Lambda^d(R)$ and $\sigma\in\rep_\Lambda^e(R)$, then the natural decomposition of $\End_R(M_\rho\oplus M_\sigma)$ induces a corresponding decomposition of $\Der(\rho\oplus\sigma,\rho\oplus\sigma)$.

\subsubsection{Voigt's Lemma and Hochschild cohomology}

Recall that we have the group action $\GL_d\times\rep_\Lambda^d \to \rep_\Lambda^d$. On $D_1:=L[t]/(t^2)$-valued points this gives
\[ (1+\gamma t)\cdot(\rho+\xi t) = \rho+(\xi+\gamma\rho-\rho\gamma)t, \]
so taking the differential at $(1,\rho)$ yields the additive group action
\[ \bM_d(L)\times T_\rho\rep_\Lambda^d \to T_\rho\rep_\Lambda^d, \quad (\gamma,\xi) \mapsto \xi+(\gamma\rho-\rho\gamma). \]
In particular this restricts to a linear map
\[ \delta_\rho \colon \bM_d(L) \to T_\rho\rep_\Lambda^d=\Der(\rho,\rho), \quad \delta_\rho(\gamma) = \gamma\rho-\rho\gamma. \]
More generally, given $\sigma\in\rep_\Lambda^e(L)$, we have a linear map
\[ \delta_{\rho,\sigma} \colon \bM_{e\times d}(L) \to \Der(\rho,\sigma), \quad \delta_{\rho,\sigma}(\gamma) := \gamma\rho-\sigma\gamma. \]

\begin{Lem}[Voigt \cite{Gabriel}]\label{Lem:Voigt}
Given $\rho\in\rep_\Lambda^d(L)$ and $\sigma\in\rep_\Lambda^e(L)$ there exists an exact sequence
\[ 0 \longrightarrow \Hom_{\Lambda^L}(M_\rho,M_\sigma) \longrightarrow \bM_{e\times d}(L)
\overset{\delta_{\rho,\sigma}}{\longrightarrow} \Der(\rho,\sigma) \overset{\varepsilon_{\rho,\sigma}}{\longrightarrow} \Ext^1_{\Lambda^L}(M_\rho,M_\sigma) \longrightarrow 0, \]
where $\varepsilon_{\rho,\sigma}$ takes $\xi$ to the class of the natural extension
\[ 0 \longrightarrow M_\sigma \longrightarrow M_\xi \longrightarrow M_\rho \longrightarrow 0. \]
\end{Lem}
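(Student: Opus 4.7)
The plan is to verify exactness at each of the three middle terms in a direct manner, by unwinding the definitions and exploiting the standard correspondence between $K$-derivations $\Lambda \to \Hom_L(M_\rho, M_\sigma)$ and extensions of $M_\rho$ by $M_\sigma$ as $\Lambda^L$-modules.

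First I would check the two maps are well-defined. For $\delta_{\rho,\sigma}$, a direct Leibniz calculation shows $(\gamma\rho - \sigma\gamma)(ab) = (\gamma\rho - \sigma\gamma)(a)\rho(b) + \sigma(a)(\gamma\rho - \sigma\gamma)(b)$, so inner derivations land in $\Der(\rho,\sigma)$. For $\varepsilon_{\rho,\sigma}$, given $\xi \in \Der(\rho,\sigma)$, define $M_\xi$ to have underlying space $M_\sigma \oplus M_\rho$ with action $a\cdot(s,r) := (\sigma(a)s + \xi(a)r, \rho(a)r)$; the derivation property is precisely what makes this action associative, and the evident sequence $0 \to M_\sigma \to M_\xi \to M_\rho \to 0$ is a short exact sequence of $\Lambda^L$-modules, yielding the extension class.

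Exactness at $\bM_{e\times d}(L)$ is immediate: $\gamma \in \Ker(\delta_{\rho,\sigma})$ iff $\gamma \rho(a) = \sigma(a) \gamma$ for every $a \in \Lambda$, which is precisely the condition that $\gamma$ intertwines the $\Lambda^L$-module structures, i.e.\ $\gamma \in \Hom_{\Lambda^L}(M_\rho, M_\sigma)$. For exactness at $\Der(\rho,\sigma)$, the extension $M_\xi$ is split iff there exists a $\Lambda^L$-linear section $M_\rho \to M_\xi$; writing any vector space section as $r \mapsto (\gamma r, r)$ for some $\gamma \in \bM_{e\times d}(L)$, the $\Lambda^L$-linearity requirement becomes $\sigma(a)\gamma + \xi(a) = \gamma\rho(a)$, i.e.\ $\xi = \delta_{\rho,\sigma}(\gamma)$. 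Thus $\xi \in \Ker(\varepsilon_{\rho,\sigma})$ iff $\xi \in \Ima(\delta_{\rho,\sigma})$.

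For surjectivity of $\varepsilon_{\rho,\sigma}$, take any extension $0 \to M_\sigma \to E \to M_\rho \to 0$ of $\Lambda^L$-modules. Choose an $L$-linear section $s\colon M_\rho \to E$, which identifies $E$ as a vector space with $M_\sigma \oplus M_\rho$; the $\Lambda$-action necessarily has block form $a\cdot(m,r) = (\sigma(a)m + \xi(a)r,\, \rho(a)r)$ for a unique $L$-linear map $\xi(a)\colon M_\rho \to M_\sigma$, and associativity of the action forces $\xi$ to be a $K$-derivation. Hence the given extension is equivalent to $M_\xi$, proving surjectivity. There is no real obstacle here: everything reduces to the Leibniz rule and the standard equivalence between derivations into a bimodule and module extensions, so the proof is essentially bookkeeping.
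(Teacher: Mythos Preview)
Your argument is correct and complete. The paper, however, takes a different route: it identifies the exact sequence as the low-degree portion of the Hochschild cochain complex for $\Lambda$ with coefficients in the bimodule $\Hom_L(M_\rho,M_\sigma)$, observing that $\Ker(d^1)=\Der(\rho,\sigma)$, that $d^0=-\delta_{\rho,\sigma}$, and then invoking the classical isomorphism $HH^n(\Lambda,\Hom_L(M_\rho,M_\sigma))\cong\Ext^n_{\Lambda^L}(M_\rho,M_\sigma)$ from Cartan--Eilenberg to read off the exact sequence in one stroke. Your approach is the direct, by-hand unpacking of precisely this identification in degrees $0$ and $1$: you verify exactness pointwise using the explicit dictionary between derivations and upper-triangular module extensions. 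The paper's proof is shorter and situates the lemma in a broader cohomological framework (in particular, it would generalise immediately to higher Ext groups), while yours is more self-contained and avoids any appeal to Hochschild theory, making it accessible without that background.
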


\begin{proof}
We begin by making the connection to Hochschild cohomology. Consider the complex
\[ d^n \colon \Hom_K\big(\Lambda^{\otimes n},\Hom_L(M_\rho,M_\sigma)\big) \to \Hom_K\big(\Lambda^{\otimes(n+1)},\Hom_L(M_\rho,M_\sigma)\big) \]
given by
\begin{multline*}
d^n(f)(a_1\otimes\cdots\otimes a_{n+1}) := \sigma(a_1)f(a_2\otimes\cdots\otimes a_{n+1})\\
+ \smash[b]{\sum_{1\leq i\leq n}(-1)^if(a_1\otimes\cdots\otimes a_ia_{i+1}\otimes\cdots\otimes a_{n+1})}\\
+ (-1)^{n+1}f(a_1\otimes\cdots\otimes a_n)\rho(a_{n+1}).
\end{multline*}
By \cite[IX Corollary 4.3]{CE} this has cohomology
\[ HH^n\big(\Lambda,\Hom_L(M_\rho,M_\sigma)\big) \cong \Ext^n_{\Lambda^L}(M_\rho,M_\sigma). \]
Now observe that
\[ \Ker(d^1) = \Der_K\big(\Lambda,\Hom_L(M_\rho,M_\sigma)\big) \]
and
\begin{align*}
d^0 \colon \Hom_L(M_\rho,M_\sigma) &\to \Der_K\big(\Lambda,\Hom_L(M_\rho,M_\sigma)\big)\\
d^0(\gamma)(a) &= \sigma(a)\gamma-\gamma\rho(a),
\end{align*}
so $\delta_{\rho,\sigma}=-d^0$. The result follows.
\end{proof}

We make the following remark. When both representations equal $\rho\oplus\sigma$, then all four terms of the sequence in Voigt's Lemma naturally decompose, and the maps preserve the block structures, so in fact the whole sequence decomposes. For example, $\delta_{\rho\oplus\sigma}$ acts as
\[ \begin{pmatrix}\gamma_{11}&\gamma_{12}\\\gamma_{21}&\gamma_{22}\end{pmatrix} \mapsto \begin{pmatrix}\gamma_{11}\rho-\rho\gamma_{11} & \gamma_{12}\sigma-\rho\gamma_{12}\\\gamma_{21}\rho-\sigma\gamma_{21}&\gamma_{22}\sigma-\sigma\gamma_{22}\end{pmatrix}
= \begin{pmatrix}\delta_\rho(\gamma_{11}) & \delta_{\sigma,\rho}(\gamma_{12})\\\delta_{\rho,\sigma}(\gamma_{21})&\delta_\sigma(\gamma_{22})\end{pmatrix}. \]

Recall that the direct sum induces the morphism
\[ \Theta \colon \GL_{d+e}\times\rep_\Lambda^d\times\rep_\Lambda^e \longrightarrow \rep_\Lambda^{d+e}, \quad (g,\rho,\sigma) \longmapsto g\cdot(\rho\oplus\sigma), \]
whose differential at the $L$-valued point $(1,\rho,\sigma)$ is
\begin{align*}
d_{\rho,\sigma}\Theta \colon \bM_{d+e}(L)\times\big(T_\rho\rep_\Lambda^d\big)\times\big(T_\sigma\rep_\Lambda^e\big) &\longrightarrow T_{\rho\oplus\sigma}\rep_\Lambda^{d+e}\\
(\gamma,\xi,\eta) &\longmapsto (\xi\oplus\eta)+\delta_{\rho\oplus\sigma}(\gamma).
\end{align*}
Using Voigt's Lemma we deduce that, in the notation of \hyperref[Cor:irred-cpt]{Corollary \ref*{Cor:irred-cpt}},
\[ N_{X,\rho\oplus\sigma}\cong\Ext_{\Lambda^L}^1(M_\rho,M_\rho)\times\Ext_{\Lambda^L}^1(M_\sigma,M_\sigma) \]
and
\[ N_{Y,\rho\oplus\sigma}\cong\Ext_{\Lambda^L}^1(M_\rho\oplus M_\sigma,M_\rho\oplus M_\sigma). \]
Moreover, the map $\theta_{\rho,\sigma}\colon N_{X,\rho\oplus\sigma}\to N_{Y,\rho\oplus\sigma}$ induced by $d_{\rho,\sigma}\Theta$ sends $([\xi],[\eta])$ to $[\xi\oplus\eta]$, which is precisely the canonical embedding
\[ \theta_{\rho,\sigma} \colon \Ext_{\Lambda^L}^1(M_\rho,M_\rho)\times\Ext_{\Lambda^L}^1(M_\sigma,M_\sigma) \hookrightarrow \Ext_{\Lambda^L}^1(M_\rho\oplus M_\sigma,M_\rho\oplus M_\sigma). \]
It follows that condition (3) of the corollary is also satisfied, so $\Theta$ is separable on each irreducible component (with its reduced subscheme structure). Moreover, $\theta_{\rho,\sigma}$ is surjective if and only if $\Ext_{\Lambda^L}^1(M_\rho,M_\sigma)=0=\Ext_{\Lambda^L}^1(M_\sigma,M_\rho)$.

\subsubsection{Example}

If $\Lambda$ is commutative, then $\rep_\Lambda^1=\Spec \Lambda$. On the other hand, whenever $\rho\in\rep_\Lambda^1(L)$ we always get $\delta_\rho=0$, and so $T_\rho\rep_\Lambda^1=\Ext^1_{\Lambda^L}(M_\rho,M_\rho)$. Putting these together we recover the well-known result that for $\Lambda$ commutative and finitely generated and $\rho\in\Spec \Lambda(L)$, say $\rho\colon \Lambda\twoheadrightarrow \Lambda/\mathfrak p\hookrightarrow L$, we have
\[ T_\rho\Spec \Lambda = L\otimes_\Lambda\Ext^1_\Lambda(\Lambda/\mathfrak p,\Lambda/\mathfrak p). \]

\subsubsection{Upper semi-continuity}

\begin{Prop}[{\cite[Lemma 4.3]{CBS}}]\label{Prop:usc-hom-ext}
The functions $\rep_\Lambda^d\times\rep_\Lambda^e\to\mathbb N$ which send an $L$-valued point $(\rho,\sigma)$ to the dimensions of
\[ \Hom_{\Lambda^L}(M_\rho,M_\sigma), \ \Der_K(\Lambda,\Hom_L(M_\rho,M_\sigma)) \textrm{ or } \Ext^1_{\Lambda^L}(M_\rho,M_\sigma) \]
are all upper semi-continuous.
\end{Prop}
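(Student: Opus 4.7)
The plan is to reduce each of the three upper semi-continuity statements to an application of Corollary \ref{Cor:usc}, using explicit linear descriptions provided by the Hom/derivation picture, and then to deduce the Ext statement from Voigt's Lemma.

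First I would handle $\Hom_{\Lambda^L}(M_\rho,M_\sigma)$. By the explicit formula in the text,
\[ \Hom(\rho,\sigma) = \{f \in \bM_{e\times d}(L) : f\rho_i = \sigma_i f \text{ for all } i\}, \]
so it is the kernel of an $L$-linear map whose matrix entries are polynomial in the entries of $\rho$ and $\sigma$. Concretely, the closed subscheme
\[ V_{\mathrm{Hom}} := \{(\rho,\sigma,f) : f\rho_i = \sigma_i f \text{ for all }i\} \subset (\rep_\Lambda^d \times \rep_\Lambda^e) \times \bA^{ed} \]
has the property that each fibre over $(\rho,\sigma)$ is a linear subspace of $\bA^{ed}$, hence a cone. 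Corollary \ref{Cor:usc} then gives the upper semi-continuity of $\dim_L\Hom_{\Lambda^L}(M_\rho,M_\sigma)$.

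Next I would treat $\Der_K(\Lambda,\Hom_L(M_\rho,M_\sigma))$. Fix the presentation $\Lambda = K\langle x_1,\ldots,x_N\rangle/I$. A $K$-derivation from the free algebra $K\langle x_1,\ldots,x_N\rangle$ to any $\Lambda^L$-bimodule is uniquely determined by its values on the generators, so it is parameterised by a point of $\bM_{e\times d}(L)^N \cong \bA^{Nde}$. Such a derivation $\xi$ descends to $\Lambda$ iff $\xi(r)=0$ for every $r\in I$, and by the Leibniz rule each $\xi(r)$ is an expression $L$-linear in the $\xi(x_i)$'s with coefficients polynomial in $\rho,\sigma$. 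For each $r\in I$ we therefore obtain a closed condition on $(\rep_\Lambda^d\times\rep_\Lambda^e)\times\bA^{Nde}$, and intersecting over $r\in I$ yields a closed subscheme
\[ V_{\Der} \subset (\rep_\Lambda^d\times\rep_\Lambda^e)\times\bA^{Nde} \]
whose fibre over $(\rho,\sigma)$ is exactly $\Der(\rho,\sigma)$, a linear subspace (and hence a cone). Corollary \ref{Cor:usc} applies again to give the upper semi-continuity of $\dim_L\Der_K(\Lambda,\Hom_L(M_\rho,M_\sigma))$. The only slightly delicate point is that $I$ need not be finitely generated, but this causes no trouble: an arbitrary intersection of closed subschemes of a Noetherian ambient space is still closed, and the ambient space $\bA^{Nde}$ remains finite-dimensional.

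Finally, for $\Ext^1_{\Lambda^L}(M_\rho,M_\sigma)$ I would invoke Voigt's Lemma (Lemma \ref{Lem:Voigt}). The exact sequence
\[ 0 \to \Hom_{\Lambda^L}(M_\rho,M_\sigma) \to \bM_{e\times d}(L) \to \Der(\rho,\sigma) \to \Ext^1_{\Lambda^L}(M_\rho,M_\sigma) \to 0 \]
has alternating dimension sum zero, so
\[ \dim_L\Ext^1_{\Lambda^L}(M_\rho,M_\sigma) = \dim_L\Hom_{\Lambda^L}(M_\rho,M_\sigma) + \dim_L\Der(\rho,\sigma) - ed. \]
Since $ed$ is constant and the other two functions on the right are upper semi-continuous by the previous steps, so is $\dim_L\Ext^1$. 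The main conceptual content of the argument is really just the first two steps (the linear-fibre observation plus Corollary \ref{Cor:usc}), and the only mild obstacle is the bookkeeping around the possibly non-finitely-generated ideal $I$ in the derivation case, handled by taking an arbitrary intersection of closed conditions as above.
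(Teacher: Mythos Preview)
Your proof is correct and follows essentially the same route as the paper: apply Corollary~\ref{Cor:usc} to a closed subscheme with linear fibres for $\Hom$ and for $\Der$, then deduce the $\Ext^1$ case from Voigt's Lemma via the dimension identity.

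The one cosmetic difference is in the derivation step. The paper does not rebuild the derivation scheme from scratch; it simply uses the closed subscheme $\Der(d,e)\subset\rep_\Lambda^{d+e}$ already introduced (block upper-triangular representations $\begin{pmatrix}\sigma&\xi\\0&\rho\end{pmatrix}$) together with the projection to $\rep_\Lambda^d\times\rep_\Lambda^e$, whose fibre is $\Der(\rho,\sigma)$. This is the same scheme as your $V_{\Der}$, but packaged so that the finite-type property is automatic: $\rep_\Lambda^{d+e}$ is a closed subscheme of affine space, so your worry about $I$ possibly being infinitely generated never arises. Your handling of that point (arbitrary intersection of closed subschemes in a Noetherian ambient) is fine, but the paper's formulation makes it unnecessary.
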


\begin{proof}
For homomorphisms we consider the closed subscheme
\[ \rep_{\Lambda(2)}^{(d,e)} \subset \rep_\Lambda^d\times\rep_\Lambda^e\times\bM_{e\times d}, \quad \rep_{\Lambda(2)}^{(d,e)}(R) := \{ (\rho,\sigma,f) : f\rho=\sigma f \}. \]
The projection $\pi\colon\rep_{\Lambda(2)}^{(d,e)}\to\rep_\Lambda^d\times\rep_\Lambda^e$ has fibres $\pi^{-1}(\rho,\sigma)\cong\Hom_{\Lambda^R}(M_\rho,M_\sigma)$. The result therefore follows from \hyperref[Cor:usc]{Corollary \ref*{Cor:usc}}.

For derivations we apply \hyperref[Cor:usc]{Corollary \ref*{Cor:usc}} to the closed subscheme $\Der(d,e)\subset\rep_\Lambda^{d+e}$ and the projection $\pi\colon \Der(d,e)\to\rep_\Lambda^d\times\rep_\Lambda^e$, having fibres $\pi^{-1}(\rho,\sigma)=\Der(\rho,\sigma)$.

Finally, for extensions, Voigt's Lemma tells us that
\[ \dim_L\Ext^1_{\Lambda^L}(M_\rho,M_\sigma) = \dim_L\Hom_{\Lambda^L}(M_\rho,M_\sigma)+\dim_L\Der(\rho,\sigma)-de, \]
and a sum of upper semi-continuous functions is again upper semi-continuous.
\end{proof}

If $K$ is algebraically-closed, and $X\subset\rep_\Lambda^d$ and $Y\subset\rep_\Lambda^e$ are irreducible, then $X\times Y$ is again irreducible. We denote by $\hom(X,Y)$ and $\ext(X,Y)$ the generic, or minimal, values on $X\times Y$ of the functions $\dim_L\Hom_{\Lambda^L}(M_\rho,M_\sigma)$ and $\dim_L\Ext^1_{\Lambda^L}(M_\rho,M_\sigma)$.

\subsection{Jet space computations}\label{Section:schemes}

Setting $D_r:=L[t]/(t^{r+1})$ as usual, we see that the points of the jet space $T_\rho^{(r)}\rep_\Lambda^d$ are given by algebra homomorphisms $\hat\rho:=\rho+\xi_1t+\cdots+\xi_rt^r\colon\Lambda\to\bM_d(D_r)$, which, by restriction of scalars, we can also regard as the subset of $\rep_\Lambda^{(r+1)d}(L)$ given by those algebra homomorphisms of the form
\begin{equation}\label{eq:r-lifting}
\tilde\rho \colon \Lambda \to \bM_{(r+1)d}(L), \quad a \mapsto
\begin{pmatrix}
\rho(a)&\xi_1(a)&\xi_2(a)&\cdots&\xi_r(a)\\0&\rho(a)&\xi_1(a)&\ddots&\vdots\\
\vdots&\ddots&\ddots&\ddots&\xi_2(a)\\\vdots&&\ddots&\rho(a)&\xi_1(a)\\0&\cdots&\cdots&0&\rho(a)
\end{pmatrix}.
\end{equation}

Consider the direct sum morphism
\[ \Theta \colon \GL_{d+e}\times\rep_\Lambda^d\times\rep_\Lambda^e \to \rep_\Lambda^{d+e} \]
and in particular the induced morphisms on jet spaces
\[ d_{\rho,\sigma}^{(r)}\Theta \colon T_1^{(r)}\GL_{d+e} \times T_\rho^{(r)}\rep_\Lambda^d \times T_\sigma^{(r)}\rep_\Lambda^e \to T_{\rho\oplus\sigma}^{(r)}\rep_\Lambda^{d+e}. \]

\begin{Prop}\label{Prop:surj-diff1}
The following are equivalent for $\rho\in\rep_\Lambda^d(L)$ and $\sigma\in\rep_\Lambda^e(L)$.
\begin{enumerate}
\item $d_{\rho,\sigma}^{(r)}\Theta$ is surjective for all $r\in[1,\infty]$.
\item $d_{\rho,\sigma}^{(r)}\Theta$ is surjective for some $r\in[1,\infty]$.
\item $\theta_{\rho,\sigma}$ is surjective.
\end{enumerate}
In particular, condition (4) of \hyperref[Cor:irred-cpt]{Corollary \ref*{Cor:irred-cpt}} holds true.
\end{Prop}

\begin{proof}
$(1)\Rightarrow(2)\colon$ Trivial.

$(2)\Rightarrow(3)\colon$ Assume $\Ext_{\Lambda^L}^1(M_\rho,M_\sigma)\neq0$ and take some $\xi\in\Der(\rho,\sigma)$ representing a non-split extension. Then for any $r\in[1,\infty]$ the map
\[ \begin{pmatrix}\rho&0\\0&\sigma\end{pmatrix} + \begin{pmatrix}0&0\\\xi&0\end{pmatrix}t \colon \Lambda \to \bM_{d+e}(D_r) \]
is an algebra homomorphism, so corresponds to a point in $T_{\rho\oplus\sigma}^{(r)}\rep_\Lambda^{d+e}$. On the other hand, it cannot lie in the image of the morphism on jet spaces. For, if it did, then $\xi$ would necessarily lie in the image of the differential, and so the extension class $[\xi]$ would have to lie in the image of $\theta_{\rho,\sigma}$, which it does not.

$(3)\Rightarrow(1)\colon$ Suppose $\Ext_{\Lambda^L}^1(M_\rho,M_\sigma)=0=\Ext_{\Lambda^L}^1(M_\sigma,M_\rho)$. Given $r\in[1,\infty]$ take an element of $T_{\rho\oplus\sigma}^{(r)}\rep_\Lambda^{d+e}$, say
\[ \widehat{\rho\oplus\sigma} := \begin{pmatrix}\rho&0\\0&\sigma\end{pmatrix} + \sum_{i=1}^r\begin{pmatrix}\xi_i&y_i\\x_i&\eta_i\end{pmatrix}t^i \in \rep_\Lambda^{d+e}(D_r). \]

Let $s\geq1$ be minimal such that $x_i,y_i$ are not both zero. Restriction of scalars gives $\widetilde{\rho\oplus\sigma}\in\rep_{\Lambda}^{(s+1)(d+e)}(L)$ satisfying
\[ \widetilde{\rho\oplus\sigma} = \begin{pmatrix}A_0&A_1&\cdots&A_s\\0&\ddots&\ddots&\vdots\\\vdots&\ddots&\ddots&A_1\\0&\cdots&0&A_0\end{pmatrix}, \]
where
\[ A_0 := \begin{pmatrix}\rho&0\\0&\sigma\end{pmatrix} \quad\textrm{and}\quad A_i := \begin{pmatrix}\xi_i&y_i\\x_i&\eta_i\end{pmatrix}. \]
Let $N$ be the corresponding module. Next define submodules $U\leq V\leq N$, where $U$ is given by taking the odd-numbered rows and columns up to $2s-1$, whereas for $V$ we take additionally the second and $(2s+1)$-st rows and columns, but reordered so that these appear last. Note that $U$ and $V$ correspond respectively to the representations
\[ U \leftrightarrow \begin{pmatrix}
\rho&\xi_1&\cdots&\xi_{s-1}\\
&\ddots&\ddots&\vdots\\
&&\ddots&\xi_1\\
&&&\rho
\end{pmatrix}
\quad\textrm{and}\quad
V\leftrightarrow\begin{pmatrix}
\rho&\xi_1&\cdots&\xi_{s-1}&0&\xi_s\\
&\ddots&\ddots&\vdots&\vdots&\vdots\\
&&\ddots&\xi_1&0&\xi_2\\
&&&\rho&0&\xi_1\\
&&&&\sigma&x_s\\
&&&&&\rho
\end{pmatrix} \]
Clearly $U\hookrightarrow V$ with quotient $V/U$ corresponding to the representation $\begin{pmatrix}\sigma&x_s\\0&\rho\end{pmatrix}$, so the natural extension $0\to M_\sigma\to V/U\to M_\rho\to 0$ lies in $\Ext_{\Lambda^L}^1(M_\rho,M_\sigma)$. Since this is zero we know from \hyperref[Lem:Voigt]{Lemma \ref*{Lem:Voigt}} that $x_s=\gamma\rho-\sigma\gamma$ for some $\gamma\in\bM_{e\times d}(L)$.

An analogous argument yields $\delta\in\bM_{d\times e}(L)$ such that $y_s=\delta\sigma-\rho\delta$. Finally, conjugating by $1-\begin{pmatrix}0&\delta\\\gamma&0\end{pmatrix}t^s\in\GL_{d+e}(D_r)$ yields another representation in $\rep_\Lambda^{d+e}(D_r)$, but now with $x_i=0=y_i$ for all $i\leq s$.

So, starting from $\widehat{\rho\oplus\sigma}$, we can define inductively $g_s=1-\begin{pmatrix}0&\delta_s\\\gamma_s&0\end{pmatrix}t^s$ such that conjugation by $g_s\cdots g_2g_1$ yields an element of $\rep_\Lambda^{d+e}(D_r)$ having zero in the off-diagonal entries of the coeffiecient of $t^i$ for all $i\leq s$. Now set $g$ to be the product of all the $g_s$. Note that this also makes sense for $r=\infty$, so $g\in\GL_{d+e}(D_r)$, and by construction $g\cdot\widehat{\rho\oplus\sigma}$ has zero in the off-diagonal entries of the coefficient of all $t^i$. This element clearly lies in the image of $d^{(r)}_{\rho,\sigma}\Theta$ as required.
\end{proof}

\subsubsection{Relationship to Massey products}

We note that the condition on algebra homomorphisms given by \eqref{eq:r-lifting} is reminiscent of the condition that the $r$-fold Massey product contains zero, which we now recall.

Let $M_i$ be a family of $\Lambda^L$-modules and $\eta_i\in\Ext^1_{\Lambda^L}(M_{i+1},M_i)$ a family of extensions. Choose representations $\rho_i\colon\Lambda\to\End_L(M_i)$ corresponding to $M_i$ and derivations $\xi_i\in\Der(\rho_{i+1},\rho_i)$ corresponding to $\eta_i$, so $\begin{pmatrix}\rho_i&\xi_i\\0&\rho_{i+1}\end{pmatrix}$ is a representation. We say that the $r$-fold Massey product $\langle\eta_1,\ldots,\eta_r\rangle$ contains zero if and only if there is a representation of the form
\[ a \mapsto \begin{pmatrix}\rho_1(a)&\xi_1(a)&\star&\cdots&\star\\0&\rho_2(a)&\xi_2(a)&\ddots&\vdots\\
\vdots&\ddots&\ddots&\ddots&\star\\\vdots&&\ddots&\rho_r(a)&\xi_r(a)\\
0&\cdots&\cdots&0&\rho_{r+1}(a)\end{pmatrix}. \]

As a special case let $\eta=[\xi]$ be the extension class coming from $\xi\in T_\rho\rep_\Lambda^d$. Then $\xi\in T_\rho^{(r)}\rep_\Lambda^d$ implies that there exists an algebra homomorphism as in \eqref{eq:r-lifting}, and so the $r$-fold Massey product $\langle\eta,\cdots,\eta\rangle$ contains zero.

In \cite[IV \S2 Exercise 3 (f)]{GM} it is falsely claimed that the tangent cone of $\Spec A$ at a point $\rho\in\Spec A(L)$ is given by those extensions $\eta\in\Ext_{A^L}^1(M_\rho,M_\rho)$ such that, for all $r$, the $r$-fold Massey product $\langle\eta,\cdots,\eta\rangle$ contains zero.

If $\rho\colon A\to K$ has kernel $\mathfrak m$, then
\[ T_\rho\Spec A = \Hom_K(\mathfrak m/\mathfrak m^2,K) \cong \Ext^1_A(A/\mathfrak m,A/\mathfrak m) \]
whereas the tangent cone $TC_\rho\Spec A$ is by definition given by the $K$-valued points of $\Spec\big(\bigoplus_r\mathfrak m^r/\mathfrak m^{r+1}\big)$, viewed as a quotient of the ring of functions on $T_\rho\Spec A$. The claim would be that the tangent cone equals
\[ \{\eta\in\Ext^1_A(A/\mathfrak m,A/\mathfrak m) : 0\in\langle\underbrace{\eta,\ldots,\eta}_{r}\rangle\textrm{ for all }r\}. \]
This is easily seen to fail, for example by considering the cuspidal cubic at the origin, so
\[ A = K[X,Y]/(X^3-Y^2), \quad \rho \colon A \to K, \ \rho(X)=\rho(Y)=0, \  \mathfrak m = (X,Y). \]
Then $\bigoplus_r\mathfrak m^r/\mathfrak m^{r+1}\cong K[X,Y]/(Y^2)$, so
\[ T_\rho\Spec A = K^2 \quad\textrm{and}\quad TC_\rho\Spec A = \{(x,0):x\in K\}. \]
A point $\xi=(x,y)\in T_\rho\Spec A$ corresponds to the class $\eta$ of the extension with middle term
\[ A \to \bM_2(K), \quad X \mapsto \begin{pmatrix}0&x\\0&0\end{pmatrix}, \quad Y \mapsto \begin{pmatrix}0&y\\0&0\end{pmatrix}. \]
We can lift this to a four-dimensional module
\[ A \to \bM_4(K), \quad X \mapsto \begin{pmatrix}0&x&x_2&x_4\\0&0&x&x_3\\0&0&0&x\\0&0&0&0\end{pmatrix}, \quad
Y \mapsto \begin{pmatrix}0&y&y_2&y_4\\0&0&y&y_3\\0&0&0&y\\0&0&0&0\end{pmatrix} \]
if and only if $x=y=0$ (just compute the image of $X^3-Y^2$). We therefore see that $0\in\langle\eta,\eta,\eta\rangle$ if and only if $\xi=(0,0)$.

Keeping with the same algebra $A=K[X,Y]/(X^3-Y^2)$, consider the two-dimensional representation
\[ \rho\colon A\to\bM_2(K), \quad X \mapsto \begin{pmatrix}0&1\\0&0\end{pmatrix}, \quad Y \mapsto \begin{pmatrix}0&0\\0&0\end{pmatrix}. \]
Then $T_\rho\rep_A^2$ consists of pairs of matrices $\left(\begin{pmatrix}x_1&x_2\\0&x_4\end{pmatrix},\begin{pmatrix}y_1&y_2\\y_3&y_4\end{pmatrix}\right)$ and the map $\delta_\rho$ from Voigt's Lemma is given by
\[ \delta_\rho \colon \bM_2(K) \to T_\rho\rep_A^2, \quad \begin{pmatrix}a&b\\c&d\end{pmatrix}\mapsto\left(\begin{pmatrix}-c&a-d\\0&c\end{pmatrix},\begin{pmatrix}0&0\\0&0\end{pmatrix}\right). \]
We can also compute
\[ \overline T_\rho^{(2)}\rep_A^2 = \left\{\left(\begin{pmatrix}x_1&x_2\\0&x_4\end{pmatrix},\begin{pmatrix}y_1&y_2\\y_3&-y_1\end{pmatrix}\right):y_1^2+y_2y_3=0\right\}, \]
whereas $\overline T_\rho^{(3)}\rep_A^2$ is the union of
\[ \left\{\left(\begin{pmatrix}x_1&x_2\\0&x_4\end{pmatrix},\begin{pmatrix}y_1&y_2\\y_3&y_4\end{pmatrix}\right):y_1^2+y_2y_3=0,\ (y_2,y_3)\neq(0,0)\right\} \]
and
\[\left\{\left(\begin{pmatrix}x_1&x_2\\0&-x_1\end{pmatrix},\begin{pmatrix}0&0\\0&0\end{pmatrix}\right)\right\}. \]

\subsection{Irreducible components}

The next two theorems were proved by Crawley-Boevey and Schr\"oer using slightly different methods, and generalise Kac's `canonical decomposition' for representations of quivers to all finitely-generated algebras. Note that the second result appears as \cite[Lemma (1.3)]{delaPena}. We remark that the restriction to algebraically-closed fields is just to ensure that the direct product of two irreducible schemes is again irreducible.

\begin{Thm}[\cite{CBS}]\label{Thm:CBS1}
Let $\Lambda$ be a finitely-generated algebra over an algebraically-closed field $K$. Let $X\subset\rep_\Lambda^d$ and $Y\subset\rep_\Lambda^e$ be irreducible components. Then the closure $\overline{X\oplus Y}$ of the image of $\GL_{d+e}\times X\times Y$ is an irreducible component of $\rep_\Lambda^{d+e}$ if and only if $\ext(X,Y)=0=\ext(Y,X)$.
\end{Thm}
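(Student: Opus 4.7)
The plan is to apply Corollary \ref{Cor:irred-cpt} to the setup with group schemes $H=\GL_d\times\GL_e\leq G=\GL_{d+e}$, with $G$ acting on the ambient scheme $\rep_\Lambda^{d+e}$ and with the $H$-stable irreducible subscheme being $X\times Y\subset\rep_\Lambda^d\times\rep_\Lambda^e$, embedded via the block-diagonal closed immersion. All groups involved are smooth and connected, and everything is of finite type over $K$, so the hypotheses of the corollary apply once we verify the four numbered conditions on some open dense $U\subset X\times Y$.

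First I would run through the four conditions. Condition (1) is immediate because for any $\rho\oplus\sigma\in(X\times Y)(L)$ the stabiliser in $\GL_d\times\GL_e$ is the product $\Aut_{\Lambda^L}(M_\rho)\times\Aut_{\Lambda^L}(M_\sigma)$, and the stabiliser in $\GL_{d+e}$ is $\Aut_{\Lambda^L}(M_\rho\oplus M_\sigma)$; both are open inside their respective endomorphism algebras, hence smooth. Condition (2) is exactly \hyperref[Lem:fin-many-orbits]{Lemma \ref*{Lem:fin-many-orbits}}. Condition (3), the injectivity of $\theta_{\rho,\sigma}$, has already been computed via Voigt's Lemma: it is the canonical embedding
\[
\theta_{\rho,\sigma}\colon\Ext^1_{\Lambda^L}(M_\rho,M_\rho)\oplus\Ext^1_{\Lambda^L}(M_\sigma,M_\sigma)\hookrightarrow\Ext^1_{\Lambda^L}(M_\rho\oplus M_\sigma,M_\rho\oplus M_\sigma).
\]
Condition (4) is precisely the content of \hyperref[Prop:surj-diff1]{Proposition \ref*{Prop:surj-diff1}}. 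Hence Corollary \ref{Cor:irred-cpt} applies to the irreducible component $X\times Y$ of $\rep_\Lambda^d\times\rep_\Lambda^e$, yielding that $\overline{X\oplus Y}=\overline{G\cdot(X\times Y)}$ is an irreducible component of $\rep_\Lambda^{d+e}$ if and only if $\theta_{\rho,\sigma}$ is surjective on an open dense subset of $X\times Y$.

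Next I would translate this surjectivity into the numerical $\Ext$-condition. From the block decomposition of the Voigt sequence for $M_\rho\oplus M_\sigma$, the cokernel of $\theta_{\rho,\sigma}$ is $\Ext^1_{\Lambda^L}(M_\rho,M_\sigma)\oplus\Ext^1_{\Lambda^L}(M_\sigma,M_\rho)$, so $\theta_{\rho,\sigma}$ is surjective if and only if both groups vanish. By the upper semi-continuity of $\dim\Ext^1$ on $\rep_\Lambda^d\times\rep_\Lambda^e$ from \hyperref[Prop:usc-hom-ext]{Proposition \ref*{Prop:usc-hom-ext}}, together with the fact that $X\times Y$ is irreducible (using that $K$ is algebraically closed, so the direct product of two irreducible $K$-schemes is irreducible), the two functions $(\rho,\sigma)\mapsto\dim\Ext^1(M_\rho,M_\sigma)$ and $(\rho,\sigma)\mapsto\dim\Ext^1(M_\sigma,M_\rho)$ each attain their generic minimal values $\ext(X,Y)$ and $\ext(Y,X)$ on a common open dense subset of $X\times Y$. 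Hence both vanish generically on $X\times Y$ if and only if $\ext(X,Y)=0=\ext(Y,X)$, and the theorem follows.

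The main conceptual work has already been done upstream: the hard step is the jet-space calculation in \hyperref[Prop:surj-diff1]{Proposition \ref*{Prop:surj-diff1}} (the implication $(3)\Rightarrow(1)$ there, which does the iterative conjugation to kill the off-diagonal blocks $x_s,y_s$ using $\Ext^1$-vanishing), since that is what secures condition (4) of the corollary. Once that is in hand, the proof of the theorem itself is essentially a matter of assembling the separability machinery and invoking upper semi-continuity; no further delicate argument is needed.
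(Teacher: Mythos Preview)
Your proposal is correct and follows exactly the paper's approach: verify conditions (1)--(4) of \hyperref[Cor:irred-cpt]{Corollary \ref*{Cor:irred-cpt}} using the stabiliser description, \hyperref[Lem:fin-many-orbits]{Lemma \ref*{Lem:fin-many-orbits}}, the Voigt computation of $\theta_{\rho,\sigma}$, and \hyperref[Prop:surj-diff1]{Proposition \ref*{Prop:surj-diff1}}, then translate surjectivity of $\theta_{\rho,\sigma}$ into $\ext(X,Y)=0=\ext(Y,X)$ via upper semi-continuity. The paper's own proof is a one-sentence summary of precisely this chain of references.
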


\begin{proof}
We have shown that the direct sum map
\[ \Theta \colon \GL_{d+e}\times\rep_\Lambda^d\times\rep_\Lambda^e \to \rep_\Lambda^{d+e} \]
satisfies the conditions of \hyperref[Cor:irred-cpt]{Corollary \ref*{Cor:irred-cpt}}, and also that $\theta_{\rho,\sigma}$ is surjective if and only if $\Ext_{\Lambda^L}^1(M_\rho,M_\sigma)=0=\Ext_{\Lambda^L}^1(M_\sigma,M_\rho)$. Thus $\overline{X\oplus Y}$ is an irreducible component if and only if $\theta_{\rho,\sigma}$ is surjective on an open dense subset of $X\times Y$, which is if and only if $\ext(X,Y)=0=\ext(Y,X)$.
\end{proof}

We call an irreducible component $X\subset\rep_\Lambda^d$ generically indecomposable provided that $X$ contains a dense open subset, all of whose points correspond to indecomposable $\Lambda$-modules.

\begin{Thm}[\cite{delaPena,CBS}]\label{Thm:CBS2}
Every irreducible component $X\subset\rep_\Lambda^d$ can be written uniquely (up to reordering) as a direct sum $X=\overline{X_1\oplus\cdots\oplus X_n}$ of generically indecomposable components $X_i\subset\rep_\Lambda^{d_i}$.
\end{Thm}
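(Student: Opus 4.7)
The plan is to establish existence by strong induction on $d$, and uniqueness via Krull--Schmidt combined with a density argument.

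For existence, take an irreducible component $X \subset \rep_\Lambda^d$. If $X$ is generically indecomposable, take $n=1$ and $X_1=X$. Otherwise, the decomposable locus of $X$ is dense, and this locus is the finite union $\bigcup_{d_1+d_2=d,\,d_1,d_2\geq 1}(X \cap S_{d_1,d_2})$, where $S_{d_1,d_2}$ is the (constructible) image of the direct sum morphism $\Theta\colon \GL_d \times \rep_\Lambda^{d_1} \times \rep_\Lambda^{d_2} \to \rep_\Lambda^d$. Irreducibility of $X$ forces $X \subset \overline{S_{d_1,d_2}}$ for some $(d_1, d_2)$ with $1 \leq d_1, d_2 < d$. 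Since $K$ is algebraically closed, products of irreducible components are irreducible, hence $\overline{S_{d_1,d_2}} = \bigcup_{C_1,C_2}\overline{C_1\oplus C_2}$ with the union taken over pairs of irreducible components $C_i \subset \rep_\Lambda^{d_i}$. A second use of irreducibility gives $X\subset\overline{C_1\oplus C_2}$ for some choice, and maximality of $X$ as an irreducible closed subset of $\rep_\Lambda^d$ promotes this to an equality (so that, by \hyperref[Thm:CBS1]{Theorem \ref*{Thm:CBS1}}, also $\ext(C_1,C_2)=0=\ext(C_2,C_1)$). Applying the induction hypothesis to each $C_i$ and invoking the associativity $\overline{\overline{A\oplus B}\oplus C}=\overline{A\oplus B\oplus C}$ (immediate from the definition of direct sum via $\Theta$) yields the desired decomposition of $X$.

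For uniqueness, suppose $X=\overline{X_1\oplus\cdots\oplus X_n}=\overline{Y_1\oplus\cdots\oplus Y_m}$, with each $X_i, Y_j$ a generically indecomposable irreducible component. The two direct sum morphisms
\[ \GL_d \times X_1 \times \cdots \times X_n \to X \quad\text{and}\quad \GL_d\times Y_1\times\cdots\times Y_m\to X \]
are dominant, so for a general $\rho\in X$ one may write $\rho=g\cdot(\rho_1\oplus\cdots\oplus\rho_n)=h\cdot(\sigma_1\oplus\cdots\oplus\sigma_m)$ with $\rho_i\in X_i$ and $\sigma_j\in Y_j$ generic, so with each $M_{\rho_i}$ and $M_{\sigma_j}$ indecomposable. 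Krull--Schmidt applied to $M_\rho$ gives $n=m$ and a permutation $\pi$ (depending a priori on $\rho$) with $M_{\rho_i}\cong M_{\sigma_{\pi(i)}}$. Only finitely many permutations occur, so after shrinking $X$ to an open dense subset we may assume $\pi$ is constant. Comparing dimensions gives $\dim X_i=\dim Y_{\pi(i)}$, and the $\GL$-conjugacy between $\rho_i$ and $\sigma_{\pi(i)}$ shows $\rho_i\in X_i\cap Y_{\pi(i)}$. As $\rho$ ranges over the open dense subset of $X$, the point $\rho_i$ ranges over a dense subset of $X_i$ (because the projection from the preimage to $X_i$ is dominant). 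Therefore $X_i\cap Y_{\pi(i)}$ is a dense closed subset of the irreducible scheme $X_i$, so equals $X_i$, and $X_i\subset Y_{\pi(i)}$; by symmetry $X_i=Y_{\pi(i)}$.

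The main obstacle I anticipate is the uniqueness step, specifically the claim that $\rho_i$ varies densely in $X_i$ as $\rho$ varies densely in $X$, and that the Krull--Schmidt permutation $\pi$ can be made constant on an open dense subset. Care is required to handle the $\GL$-orbit ambiguities---particularly when several of the $X_i$ share a common dimension, which introduces extra permutations on the preimage side---so as to show that stratifying $X$ by $\pi$ is compatible with these ambiguities and the stratum associated to the ``correct'' $\pi$ really is dense.
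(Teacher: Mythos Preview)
Your proof is correct and follows essentially the same approach as the paper's: Noetherian induction for existence, and Krull--Remak--Schmidt together with openness of the projection $\GL_d\times X_1\times\cdots\times X_n\to X_i$ for uniqueness. Your worry about fixing $\pi$ dissolves once you note that for each dimension-preserving permutation $\pi$ the locus $\{(g,\rho_1,\ldots,\rho_n):\rho_i\in Y_{\pi(i)}\text{ for all }i\}$ is \emph{closed} in the irreducible preimage $\Theta^{-1}(U)$ (each $Y_j$ being closed and $\GL$-stable in its $\rep_\Lambda^{d_j}$), and these finitely many closed sets cover, so one of them is everything; the paper makes exactly this move, only more tersely.
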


\begin{proof}
If $X$ is not generically indecomposable, then $X$ lies in the union of the closed sets $\overline{\rep_\Lambda^e\oplus\rep_\Lambda^{d-e}}$ for $0<e<d$, and hence for some $e$ and some irreducible components $X_1\subset\rep_\Lambda^e$ and $X_2\subset\rep_\Lambda^{d-e}$ we can write $X=\overline{X_1\oplus X_2}$. By induction on dimension we see that every irreducible component can be written as a direct sum of generically indecomposable components.

Suppose now that $X=\overline{X_1\oplus\cdots\oplus X_m}=\overline{Y_1\oplus\cdots\oplus Y_n}$, where $X_i$ and $Y_j$ are generically indecomposable components in some schemes of representations. Then on an open dense subset $U\subset X$ every representation can be written as $\rho\cong\rho_1\oplus\cdots\oplus\rho_m\cong\sigma_1\oplus\cdots\oplus\sigma_n$, where $\rho_i\in X_i$ and $\sigma_j\in Y_j$ are indecomposable representations. By the Krull-Remak-Schmidt Theorem we deduce that $m=n$ and, after reordering, $\rho_i\cong\sigma_i$. Thus $X_i,Y_i\subset\rep_\Lambda^{d^i}$ and $\rho_i\in X_i\cap Y_i$. On the other hand, the projection $\GL_d\times X_1\times\cdots\times X_n\to X_i$ is an open map, so $X_i\cap Y_i$ contains an open set. Thus $X_i=Y_i$.
\end{proof}

\subsubsection{Orbits and irreducible components}

We know that if $\rho\in\rep_\Lambda^d(K)$, then its orbit $\Orb_{\GL_d}(\rho)$ is a smooth, irreducible subscheme of $\rep_\Lambda^d$; also, the morphism $\GL_d\to\Orb_{\GL_d}(\rho)$ is smooth, affine and separable, and a universal geometric quotient for the action of $\Aut_\Lambda(M_\rho)$.

In general it is not easy to describe the $R$-valued points of the orbit: \textit{a priori} we know that $\sigma\in\rep_\Lambda^d(R)$ lies in the orbit if and only if there exists a finitely-presented and faithfully-flat $R$-algebra $S$ and $g\in\GL_d(S)$ such that $\sigma=g\cdot\rho^S$. In particular, $\GL_d(L)$ acts transitively on $\Orb_{\GL_d}(\rho)(L)$ for all algebraically closed fields $L$.

In fact we can do better.

\begin{Lem}\label{Lem:orbit-points}
Let $\rho\in\rep_\Lambda^d(K)$. If $D_r=L[[t]]/(t^{r+1})$ for some field $L$ and some $r\in[0,\infty]$, then the morphism $\GL_d(D_r)\to\Orb_{\GL_d}(\rho)(D_r)$ is onto.
\end{Lem}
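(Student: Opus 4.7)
The plan is to reduce to two ingredients: the Noether-Deuring theorem for the residue field case $r=0$, and the formal smoothness of the orbit map $\mu_\rho\colon\GL_d\to\Orb_{\GL_d}(\rho)$ for lifting along nilpotent thickenings, applied directly for finite $r$ and via an inverse limit for $r=\infty$. That $\mu_\rho$ is smooth (hence formally smooth) follows from \hyperref[Prop:group-action]{Proposition \ref*{Prop:group-action}}(4), since $\Stab_{\GL_d}(\rho)\cong\Aut_\Lambda(M_\rho)$ is an open subscheme of the vector space $\End_\Lambda(M_\rho)$, hence smooth.

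For $r=0$, let $\sigma\in\Orb_{\GL_d}(\rho)(L)$. By the description of orbit points in \hyperref[Prop:group-action]{Proposition \ref*{Prop:group-action}}(1), there is a faithfully-flat finitely-presented $L$-algebra $S$ and $g\in\GL_d(S)$ with $\sigma^S=g\cdot\rho^S$, so $M_\sigma\otimes_LS\cong M_\rho\otimes_KS$ as $\Lambda^S$-modules. Passing to the residue field $L'$ at a maximal ideal of $S$ yields $M_\sigma\otimes_LL'\cong M_\rho\otimes_KL'$ as $\Lambda^{L'}$-modules. Since both sides are annihilated by the same ideal and act through the finite-dimensional quotient of $\Lambda^L$ that annihilates $M_\sigma\oplus(M_\rho\otimes_KL)$, the classical Noether-Deuring theorem descends the isomorphism to $L$, giving $M_\sigma\cong M_\rho\otimes_KL$ as $\Lambda^L$-modules. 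Any $L$-linear isomorphism is then an element $g_0\in\GL_d(L)$ with $g_0\cdot\rho^L=\sigma$.

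For finite $r\geq 1$, let $\sigma\in\Orb_{\GL_d}(\rho)(D_r)$ and let $\sigma_0\in\Orb_{\GL_d}(\rho)(L)$ be its reduction modulo $(t)$. Pick $g_0\in\GL_d(L)$ with $g_0\cdot\rho^L=\sigma_0$ from the previous paragraph. The closed immersion $\Spec L\hookrightarrow\Spec D_r$ is a nilpotent thickening since $(t)^{r+1}=0$ in $D_r$, so formal smoothness of $\mu_\rho$ applied to the commutative square whose top arrow is $g_0$ and bottom arrow is $\sigma$ produces a lift $g\in\GL_d(D_r)$ with $\mu_\rho\circ g=\sigma$, i.e.\ $g\cdot\rho^{D_r}=\sigma$.

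For $r=\infty$, let $\sigma\in\Orb_{\GL_d}(\rho)(L[[t]])$ and let $\sigma_r\in\Orb_{\GL_d}(\rho)(D_r)$ be its reduction modulo $t^{r+1}$. Apply formal smoothness of $\mu_\rho$ inductively along the nilpotent thickenings $\Spec D_{r-1}\hookrightarrow\Spec D_r$ to build a compatible sequence $g_r\in\GL_d(D_r)$ lifting $g_{r-1}$ and satisfying $g_r\cdot\rho^{D_r}=\sigma_r$. Since $\GL_d$ is affine of finite type over $K$, its functor of points commutes with the inverse limit $L[[t]]=\varprojlim D_r$, so $(g_r)$ determines an element $g\in\GL_d(L[[t]])$, and the relations $\mu_\rho(g_r)=\sigma_r$ pass to the limit to give $g\cdot\rho=\sigma$. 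The main obstacle is the $r=0$ step: a smooth surjective morphism need not be surjective on $L$-points for non-algebraically-closed $L$ (smooth $L$-varieties can lack rational points), so Noether-Deuring is the essential arithmetic input; everything else is formal deformation theory and completion.
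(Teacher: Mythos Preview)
Your proof is correct and close in spirit to the paper's, but the mechanism for $r\geq 1$ is genuinely different. For $r=0$ both arguments are descent: the paper passes to the algebraic closure $\bar L$, observes that $\dim_L\Hom_{\Lambda^L}(M_\sigma,N)=\dim_L\Hom_{\Lambda^L}(M_\rho^L,N)$ for all $N$, and invokes Auslander--Reiten (\cite{AR}) to conclude $M_\sigma\cong M_\rho^L$; you instead reduce to a finite-dimensional quotient algebra and invoke Noether--Deuring, which is equally valid and arguably more classical. For $r\geq 1$ the paper appeals to its own \hyperref[Prop:sep]{Proposition~\ref*{Prop:sep}}: separability of $\mu_\rho$ (from \hyperref[Prop:group-action]{Proposition~\ref*{Prop:group-action}}(6)) gives surjectivity of $T_g^{(r)}\GL_d\to T_{g\cdot\rho}^{(r)}\Orb_{\GL_d}(\rho)$ on a dense open, and then homogeneity under $\GL_d$ extends this to all $g$, uniformly in $r\in[1,\infty]$. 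Your route via formal smoothness of $\mu_\rho$ (from \hyperref[Prop:group-action]{Proposition~\ref*{Prop:group-action}}(4)) is more direct and bypasses the separability machinery entirely; the price is that you must handle $r=\infty$ separately by an inverse-limit argument, whereas the paper's \hyperref[Prop:sep]{Proposition~\ref*{Prop:sep}} covers $r=\infty$ in one stroke. Both approaches are clean; yours is self-contained standard deformation theory, while the paper's keeps the argument inside the jet-space framework developed earlier.
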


\begin{proof}
Consider first the case $r=0$, so $D_r=L$. Let $\sigma\in\Orb_{\GL_d}(\rho)(L)$ and let $F=\bar L$ be the algebraic closure of $L$. We know that $\GL_d(F)$ acts transitively on $\Orb_{\GL_d}(\rho)(F)$, so $M_\sigma^F\cong M_\rho^F$ as $\Lambda^F$-modules. Now let $N$ be any $\Lambda^L$-module. Then $\Hom_{\Lambda^F}(M_\sigma^F,N^F)\cong\Hom_{\Lambda^L}(M_\sigma,N)^F$, so that
\[ \dim_L\Hom_{\Lambda^L}(M_\sigma,N) = \dim_L\Hom_{\Lambda^L}(M_\rho^L,N). \]
It follows from \cite{AR} that $M_\sigma\cong M_\rho^L$ as $\Lambda^L$-modules, so there exists $g\in\GL_d(L)$ such that $\sigma=g\cdot\rho^L$.

Next, by \hyperref[Prop:sep]{Proposition \ref*{Prop:sep}}, the morphism $T_g^{(r)}\GL_d\to T_{g\cdot\rho}^{(r)}\Orb_{\GL_d}(\rho)$ is surjective for all $r\in[1,\infty]$ and all $g$ in an open dense subset of $\GL_d$. Since $\GL_d$ is a group we see that this holds for all $g$, and we have just shown that every $\sigma\in\Orb_{\GL_d}(\rho)(L)$ is of the form $g\cdot\rho^L$ for some $g\in\GL_d(L)$. This proves that the morphism $\GL_d(D_r)\to\Orb_{\GL_d}(\rho)(D_r)$ is always onto.
\end{proof}

Using \hyperref[Lem:sep-non-sing]{Lemma \ref*{Lem:sep-non-sing}} we recover the well-known sufficient criterion for an orbit closure to be an irreducible component.

\begin{Lem}\label{Lem:orbit-irred-cpt}
Let $\rho\in\rep_\Lambda^d(K)$. If $\Ext_\Lambda^1(M_\rho,M_\rho)=0$, then $\overline{\Orb_{\GL_d}(\rho)}$ is an irreducible component of $\rep_\Lambda^d$.
\end{Lem}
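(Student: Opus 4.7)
The plan is to apply Lemma \ref{Lem:sep-non-sing} to the orbit map $\mu_\rho\colon\GL_d\to\rep_\Lambda^d$, $g\mapsto g\cdot\rho$. Note that $\GL_d$ is smooth, connected, and integral, and its scheme-theoretic image is $\overline{\Orb_{\GL_d}(\rho)}$ by the very construction of the orbit.

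First I would identify the differential of $\mu_\rho$ at the identity element. Unwinding the formula $(1+\gamma t)\cdot\rho=\rho+(\gamma\rho-\rho\gamma)t$ recorded in the subsection on Voigt's Lemma, one sees that $d_1\mu_\rho\colon\bM_d(K)\to T_\rho\rep_\Lambda^d=\Der(\rho,\rho)$ is precisely the map $\delta_\rho$ from Voigt's Lemma. By the exact sequence in Lemma \ref{Lem:Voigt}, the cokernel of $\delta_\rho$ is naturally identified with $\Ext^1_\Lambda(M_\rho,M_\rho)$, which vanishes by hypothesis. Hence $d_1\mu_\rho$ is surjective.

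Next, since $\GL_d$ acts on itself by left translation and on $\rep_\Lambda^d$ via the given action, the orbit map is $\GL_d$-equivariant, so the surjectivity of the differential at the identity propagates to every point of $\GL_d$: for any $g\in\GL_d(L)$, composing with the isomorphism given by left multiplication by $g^{-1}$ reduces surjectivity of $d_g\mu_\rho$ to surjectivity of $d_1\mu_\rho^L$, which is just the base change of $\delta_\rho$ and remains surjective. Thus $d_g\mu_\rho$ is surjective on all of $\GL_d$, and in particular on an open dense subset.

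The hypotheses of Lemma \ref{Lem:sep-non-sing} are now verified ($K$ is perfect, $\GL_d$ is integral, and both schemes are of finite type over $K$), so $\overline{\mu_\rho(\GL_d)}=\overline{\Orb_{\GL_d}(\rho)}$ is an irreducible component of $\rep_\Lambda^d$. There is no real obstacle here; the content of the statement is entirely packaged into Voigt's Lemma (translating the deformation-theoretic hypothesis $\Ext^1=0$ into surjectivity of the tangent map at a single point) together with the homogeneity provided by the group action.
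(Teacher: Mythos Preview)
Your argument is correct and follows essentially the same route as the paper: use Voigt's Lemma to see that the differential of the orbit map at the identity is $\delta_\rho$ with cokernel $\Ext^1_\Lambda(M_\rho,M_\rho)=0$, propagate surjectivity to every point by the $\GL_d$-equivariance, and conclude via Lemma~\ref{Lem:sep-non-sing}. The paper's proof is a one-line compression of exactly this reasoning.
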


\begin{proof}
Using Voigt's Lemma we see that our condition on $\rho$ implies that the differential $T_1\GL_d\to T_\rho\rep_\Lambda^d$ is surjective, whence $T_g\GL_d\to T_{g\cdot\rho}\rep_\Lambda^d$ is surjective for all $g$, and so $\overline{\Orb_{\GL_d}(\rho)}$ is an irreducible component of $\rep_\Lambda^d$ by \hyperref[Lem:sep-non-sing]{Lemma \ref*{Lem:sep-non-sing}}.
\end{proof}

\subsection{A refinement to dimension vectors}\label{Section:dim-vectors}

We begin with some notation which will also be useful in the next section. Recall that there is a closed subscheme $\mathrm{rank}_{<s}\subset\bM_{e\times d}$ given by
\[ \mathrm{rank}_{<s}(R) := \{ f\in\bM_{e\times d}(R) : \textrm{all $s$ minors vanish} \}. \]
Its complement $\mathrm{rank}_{\geq s}$ is the union of the distinguished open subschemes given by inverting the $s$ minors, so a matrix $f\in\bM_{e\times d}(R)$ lies in $\mathrm{rank}_{\geq s}(R)$ precisely when the $s$ minors of $f$ generate the unit ideal in $R$. Note that if $R$ is a local ring, then we can simplify this to
\[ \mathrm{rank}_{\geq s}(R) := \{ f\in\bM_{e\times d}(R) : \textrm{at least one $s$ minor of $f$ is invertible} \}. \]
The scheme of matrices of rank precisely $s$ is the subscheme
\[ \mathrm{rank}_s := \mathrm{rank}_{<s+1}\cap\mathrm{rank}_{\geq s} \subset \bM_{e\times d}. \]
We have the following useful lemma.

\begin{Lem}[{\cite[Lemma 3.3]{Zwara}}]\label{Lem:rank}
Let $(R,\mathfrak m)$ be a local $L$-algebra of dimension $r$. Let $\hat f\in\bM_{e\times d}(R)$, let $f\in\bM_{e\times d}(R/\mathfrak m)$ be its image under $R\to R/\mathfrak m$, and let $\tilde f\in\bM_{er\times dr}(L)$ be given by restriction of scalars. Then $\hat f\in\mathrm{rank}_s(R)$ if and only if $f\in\mathrm{rank}_s(R/\mathfrak m)$ and $\tilde f\in\mathrm{rank}_{sr}(L)$.
\end{Lem}

Consider now a complete set of orthogonal idempotents $e^i\in\Lambda$, so $e^ie^j=\delta_{ij}e^i$ and $\sum_ie^i=1$. Given a decomposition, or dimension vector, $d\updot=(d^1,\ldots,d^n)$ and setting $d=\sum_id^i$, we can regard matrices in $\bM_d(R)$ as being in block form according to this dimension vector. Write $E_i$ for the block diagonal matrix having the identity $1_{d^i}$ in block $(i,i)$ and zeros elsewhere. We define the closed subscheme $\rep_\Lambda^{d\updot}$ of $\rep_\Lambda^d$ by
\[ \rep_\Lambda^{d\updot}(R) := \{ \rho\in\rep_\Lambda^d(R) : \rho(e^i)=E_i \textrm{ for all }i \}, \]
the locally-closed subscheme $Y_\Lambda^{d\updot}\subset\rep_\Lambda^d$ by
\[ Y_\Lambda^{d\updot}(R) := \{ \rho\in\rep_\Lambda^d : \rho(e^i)\in\mathrm{rank}_{d^i}(R) \textrm{ for all }i \}, \]
and the closed subgroup $\GL_{d\updot}:=\prod_i\GL_{d^i}\leq\GL_d$ by taking the block-diagonal matrices. Observe that the action of $\GL_d$ on $\rep_\Lambda^d$ restricts to an action of $\GL_{d\updot}$ on $\rep_\Lambda^{d\updot}$.

The following is a slight generalisation of results in \cite{Bongartz,Gabriel}.

\begin{Thm}\label{Thm:direct-product}
\begin{enumerate}
\item The $Y_\Lambda^{d\updot}$ are both open and closed subschemes of $\rep_\Lambda^d$, and $\rep_\Lambda^d=\coprod_{d\updot}Y_\Lambda^{d\updot}$ is the disjoint union over all decompositions $d\updot$ of $d$.
\item If $\Lambda\cong K^n$ and the $e^i$ are the standard basis, then $Y_\Lambda^{d\updot}\cong\GL_d/\GL_{d\updot}$ is a homogeneous space, so is smooth and irreducible.
\item More generally, $Y_\Lambda^{d\updot}\cong\GL_d\times^{\GL_{d\updot}}\rep_\Lambda^{d\updot}$.
\end{enumerate}
\end{Thm}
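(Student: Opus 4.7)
The plan is to handle the three parts in order, with the determinantal formula of \hyperref[Lem:det-sum]{Lemma \ref*{Lem:det-sum}} doing the scheme-theoretic work for part~(1) and \hyperref[Lem:assoc-fib]{Lemma \ref*{Lem:assoc-fib}} providing the associated-fibration description in part~(3). For part~(1), I would introduce the closed subscheme $Z_{d\updot}\subset\rep_\Lambda^d$ cut out by the conditions $\rho(e^i)\in\mathrm{rank}_{\leq d^i}$ for each $i$, and the open subscheme $U_{d\updot}$ cut out by $\rho(e^i)\in\mathrm{rank}_{\geq d^i}$, so that $Y_\Lambda^{d\updot}=Z_{d\updot}\cap U_{d\updot}$, and prove that $Z_{d\updot}=U_{d\updot}$. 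Set-theoretically this is the observation that for $\rho\in\rep_\Lambda^d(R)$ the matrices $\rho(e^i)$ are orthogonal idempotents summing to $1_d$, so $R^d=\bigoplus_i\rho(e^i)R^d$ as finitely-generated projective $R$-modules; the local ranks are locally constant on $\Spec R$ and sum to $d$, so once they are globally bounded above by $d^i$ they must equal $d^i$ everywhere. To make this scheme-theoretic, I would expand the determinant $\det\bigl(\sum_it_i\rho(e^i)\bigr)$ using \hyperref[Lem:det-sum]{Lemma \ref*{Lem:det-sum}}: the coefficients of the monomials $\prod_it_i^{e^i}$ give regular functions $f_{e\updot}\in\mathcal{O}(\rep_\Lambda^d)$ which, since the $\rho(e^i)$ are orthogonal idempotents summing to $1$, form a complete set of orthogonal idempotents. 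Then $Y_\Lambda^{d\updot}=D(f_{d\updot})=V(1-f_{d\updot})$ is clopen, and the disjoint union decomposition $\rep_\Lambda^d=\coprod_{d\updot}Y_\Lambda^{d\updot}$ follows from $\sum_{d\updot}f_{d\updot}=1$.

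Part~(2) is essentially formal once the framework is in place. If $\Lambda\cong K^n$ then $\{e^i\}$ is a $K$-basis, so an algebra homomorphism $\rho\colon\Lambda\to\bM_d(R)$ is determined by the tuple $(\rho(e^i))_i$, and the requirement $\rho(e^i)=E_i$ pins $\rho$ down completely. Hence $\rep_\Lambda^{d\updot}\cong\Spec K$, and $Y_\Lambda^{d\updot}$ consists of the $\GL_d$-conjugates of the standard decomposition; the stabiliser of $(E_i)_i$ under conjugation is exactly $\GL_{d\updot}$, and any other point of $Y_\Lambda^{d\updot}$ is fppf-locally conjugate to the standard one by choosing a trivialisation of the projective decomposition, so $Y_\Lambda^{d\updot}\cong\GL_d/\GL_{d\updot}$ as a faisceau quotient, hence as a homogeneous scheme via \hyperref[Lem:geom-quotient]{Lemma \ref*{Lem:geom-quotient}}, and is therefore smooth and irreducible.

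For part~(3), I would apply \hyperref[Lem:assoc-fib]{Lemma \ref*{Lem:assoc-fib}} with $\GL_{d\updot}$ acting freely on $\GL_d$ by right multiplication and on $\rep_\Lambda^{d\updot}$ by restriction of the $\GL_d$-action. Take $\phi\colon\GL_d\times\rep_\Lambda^{d\updot}\to Y_\Lambda^{d\updot}$, $(g,\sigma)\mapsto g\cdot\sigma$, which is $\GL_{d\updot}$-invariant for the diagonal action, together with $q\colon Y_\Lambda^{d\updot}\to\GL_d/\GL_{d\updot}$ sending $\rho$ to the class of $(\rho(e^i))_i$ in the orbit picture from part~(2). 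The square commutes by construction, so it remains to verify that the induced morphism $\GL_d\times\rep_\Lambda^{d\updot}\to\GL_d\times_{\GL_d/\GL_{d\updot}}Y_\Lambda^{d\updot}$ is an isomorphism; this can be checked on $R$-valued points, where an element of the target is a pair $(g,\rho)$ satisfying $\rho(e^i)=gE_ig^{-1}$, and then $\sigma:=g^{-1}\cdot\rho\in\rep_\Lambda^{d\updot}(R)$ furnishes the unique preimage.

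The main obstacle is part~(1): since $\rep_\Lambda^d$ is in general non-reduced, verifying the equality $Z_{d\updot}=U_{d\updot}$ scheme-theoretically (rather than merely on field-valued points) is where the paper's hypothesis $\Lambda=\Lambda$ finitely generated, without assuming finite dimensionality or existence of a semisimple complement, bites. The determinantal identity of \hyperref[Lem:det-sum]{Lemma \ref*{Lem:det-sum}} is what supplies the explicit global idempotents $f_{d\updot}$ needed to bypass any Wedderburn-type splitting.
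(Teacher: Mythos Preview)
Your approach is correct and reaches the same conclusion, but the route differs from the paper's in parts (1) and (2).

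For (1), the paper applies \hyperref[Lem:det-sum]{Lemma \ref*{Lem:det-sum}} directly to $\det(\sum_i f_i)=\det(1_d)=1$: on $Y^{\leq d\updot}$ every minor of $f_i$ of size exceeding $d^i$ vanishes, so in the expansion only the terms with $|I_i|=d^i$ survive, exhibiting $1$ as an $R$-linear combination of products of $d^i$-minors and hence forcing each ideal of $d^i$-minors to be the unit ideal. This gives $Y^{\leq d\updot}(R)\subset Y^{\geq d\updot}(R)$ for all $R$; the reverse inclusion is left implicit and rests on the local structure of idempotent matrices. Your variant introduces parameters $t_i$ and reads off the coefficients $f_{d\updot}$ of $\det(\sum_i t_i f_i)$ as global idempotents---a pleasant packaging, since the identity $\det(\sum_i t_i f_i)\det(\sum_i s_i f_i)=\det(\sum_i t_i s_i f_i)$ (using $f_if_j=0$ for $i\neq j$) immediately yields $f_{d\updot}f_{e\updot}=\delta_{d\updot,e\updot}f_{d\updot}$ and $\sum_{d\updot}f_{d\updot}=1$. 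Note, however, that to then identify $D(f_{d\updot})$ with $Z_{d\updot}$ \emph{scheme-theoretically} you still need the same local fact about idempotents. In fact your ``set-theoretic'' paragraph already \emph{is} the scheme-theoretic argument: because each $f_i$ is idempotent, over any local $R$ the ideal of $k$-minors of $f_i$ is either $0$ or $R$ (it is a Fitting ideal of the free module $\Coker f_i$), so the minor conditions defining $Z_{d\updot}$ and $U_{d\updot}$ are equivalent, over every localisation, to the rank conditions $r^i\leq d^i$ and $r^i\geq d^i$, and your pigeonhole on the locally-constant ranks finishes it. You undersell that step and then work harder than necessary with the $f_{d\updot}$.

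For (2), the paper shows $Y_{K^n}^{d\updot}$ is set-theoretically a single $\GL_d$-orbit (over algebraically closed fields) and then invokes Voigt's Lemma to compute $\dim T_\sigma Y^{d\updot}=d^2-\sum_i(d^i)^2=\dim\Orb_{\GL_d}(\bar\rho)$, concluding smoothness and hence equality with the orbit. Your functor-of-points argument---that any complete system of orthogonal idempotents of the prescribed ranks in $\bM_d(R)$ is Zariski-locally conjugate to the standard one, so $Y^{d\updot}$ represents the fppf sheaf $\GL_d/\GL_{d\updot}$---is more direct and sidesteps the tangent-space computation entirely.

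Part (3) matches the paper's proof: the same commutative square and the same appeal to \hyperref[Lem:assoc-fib]{Lemma \ref*{Lem:assoc-fib}}, with the pull-back verification on $R$-points.
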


Given a $\Lambda^L$-module $M$, we may therefore define the dimension vector of $M$ to be $\underline\dim\,M=d\updot$, where $M\cong M_\rho$ for some $\rho\in\rep_\Lambda^{d\updot}(L)$.

The proof of the theorem uses the following lemma, which although easy to prove, does not seem to be well-known. Given an ordered partition $I=(I_1,\ldots,I_r)$ of $\{1,\ldots,d\}$, we say that $\sigma\in S_d$ is an $I$-shuffle provided it preserves the ordering of the elements in each $I_i$. Also, given two subsets $I,J$ of $\{1,\ldots,d\}$ of size $r$ and a matrix $M\in\bM_d(R)$, we define $\Delta_{IJ}(M)$ to be the $r$-minor formed from the rows and columns indexed by $I$ and $J$, respectively.

\begin{Lem}\label{Lem:det-sum}
For matrices $M_1,\ldots,M_n\in\bM_d(R)$ we have
\[ \det(M_1+\cdots+M_n) = \sum_{I,\sigma}\mathrm{sgn}(\sigma)\Delta_{I_1\sigma(I_1)}(M_1)\cdots\Delta_{I_n\sigma(I_n)}(M_n), \]
where the sum is taken over all ordered partitions $I=(I_1,\ldots,I_n)$ of $\{1,\ldots,d\}$ and all $I$-shuffles $\sigma$.

In particular, when $n=2$ we can write this as
\[ \det(M+N) = \sum_{\substack{I=\{i_1,\ldots,i_a\}\\J=\{j_1,\ldots,j_a\}}}(-1)^{i_1+j_1+\cdots+i_a+j_a}\Delta_{IJ}(M)\Delta_{I^cJ^c}(N), \]
where $I^c$ denotes the complement of $I$.
\end{Lem}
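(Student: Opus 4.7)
The plan is to expand the determinant by the Leibniz formula and then reorganise the sum by grouping together terms that pick the same matrix in the same rows. Specifically, write
\[ \det(M_1+\cdots+M_n) = \sum_{\tau\in S_d}\mathrm{sgn}(\tau)\prod_{k=1}^d(M_1+\cdots+M_n)_{k,\tau(k)}, \]
and expand each factor $(M_1+\cdots+M_n)_{k,\tau(k)}$. Each term in the expansion is indexed by a function $f\colon\{1,\ldots,d\}\to\{1,\ldots,n\}$ recording which matrix $M_{f(k)}$ contributes to row $k$. Collect such functions by the ordered partition $I=(I_1,\ldots,I_n)$ with $I_i=f^{-1}(i)$.

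For a fixed $I$, the inner sum becomes a sum over permutations $\tau\in S_d$ of $\mathrm{sgn}(\tau)\prod_i\prod_{k\in I_i}(M_i)_{k,\tau(k)}$. The data of $\tau$ is equivalent to the data of an ordered partition $(J_1,\ldots,J_n)$ of $\{1,\ldots,d\}$ with $|J_i|=|I_i|$ together with a choice of bijection $\tau_i\colon I_i\to J_i$ for each $i$. For fixed $J$, let $\sigma$ be the unique permutation sending $I_i$ to $J_i$ in the order-preserving way; by definition this is precisely an $I$-shuffle, and any $\tau$ above factors uniquely as $\tau=\rho\circ\sigma$ where $\rho\in\prod_i\mathrm{Sym}(J_i)$. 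Since $\mathrm{sgn}(\tau)=\mathrm{sgn}(\sigma)\prod_i\mathrm{sgn}(\rho|_{J_i})$, the inner sum factorises, and for each $i$ one recognises
\[ \sum_{\rho_i\in\mathrm{Sym}(J_i)}\mathrm{sgn}(\rho_i)\prod_{k\in I_i}(M_i)_{k,\rho_i(\sigma(k))} = \Delta_{I_i\sigma(I_i)}(M_i) \]
by the definition of the minor. Summing over $J$ is the same as summing over $I$-shuffles $\sigma$, which yields the main formula.

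The main obstacle is bookkeeping the signs, specifically the factorisation $\mathrm{sgn}(\tau)=\mathrm{sgn}(\sigma)\prod_i\mathrm{sgn}(\rho_i)$; this is straightforward once one notes that $\sigma$ restricted to each $I_i$ is order-preserving so $\rho_i$ is indeed an honest permutation of $J_i$.

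For the $n=2$ special case, one just needs to verify that the sign of the $I$-shuffle $\sigma$ with $I_1=\{i_1<\cdots<i_a\}$, $J_1=\sigma(I_1)=\{j_1<\cdots<j_a\}$ equals $(-1)^{i_1+j_1+\cdots+i_a+j_a}$. This is the classical sign appearing in Laplace expansion and follows by counting inversions: the inversions of $\sigma$ between $I_1$ and $I_2=I_1^c$ are in bijection with pairs $(k,\ell)$ with $k\in I_1,\ell\in I_1^c$ and either $k<\ell,\sigma(k)>\sigma(\ell)$ or $k>\ell,\sigma(k)<\sigma(\ell)$, and a direct count gives $\sum_k(i_k-k)+\sum_k(j_k-k)\equiv\sum_k i_k+\sum_k j_k\pmod 2$.
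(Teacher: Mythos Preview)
Your proof is correct and takes a genuinely different route from the paper's. The paper does not argue directly from the Leibniz formula; instead it first reduces the general case to $n=2$ by induction, and then proves the $n=2$ formula by a chain of classical identities: assuming $N$ invertible (via the polynomial-identity/density trick), writing $\det(M+N)=\det(MN^{-1}+1_d)\det(N)$, expanding the characteristic-polynomial-type sum over principal minors, applying Cauchy--Binet to $\Delta_{II}(MN^{-1})$, and finally using Jacobi's identity to convert $\Delta_{JI}(N^{-1})\det(N)$ into $(-1)^{S(I)+S(J)}\Delta_{I^cJ^c}(N)$. Only after this does the paper match the resulting sign $(-1)^{S(I)+S(J)}$ with $\mathrm{sgn}(\sigma)$ via the factorisation $\sigma=\sigma_J\sigma_I^{-1}$ and the inversion count $\mathrm{inv}(\sigma_I)=S(I)-\binom{|I|+1}{2}$, which is essentially the same sign computation you do at the end.

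Your approach is more elementary and self-contained: it needs no auxiliary determinantal identities, no reduction to invertible $N$, and it establishes the general-$n$ formula in one pass rather than by induction from $n=2$. The paper's approach, by contrast, situates the formula within the classical web of Cauchy--Binet and Jacobi, which some readers may find illuminating, and it makes the $n=2$ Laplace-type expansion the primary object. Both approaches converge on the same inversion-counting argument for the shuffle sign; your phrasing of that count is slightly compressed (what you are really computing is $\mathrm{inv}(\sigma_I)+\mathrm{inv}(\sigma_J)=\sum_k(i_k-k)+\sum_k(j_k-k)$ via the same decomposition $\sigma=\sigma_J\sigma_I^{-1}$ the paper uses), but the conclusion is correct.
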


\begin{proof}
The first formula follows easily by induction, once we have proved the case for $n=2$. Consider now the second formula. Since both sides are polynomial functions and $\bM_d$ is irreducible, it is enough to prove this when $N$ is invertible. In this case we have
\[ \det(M+N) = \det(MN^{-1}+1_d)\det(N) = \sum_{I}\Delta_{II}(MN^{-1})\det(N). \]
For, let $I_r$ be the matrix having 1s in the first $r$ places on the diagonal and zeros elsewhere. Then an easy induction, expanding out the $r$-the column, gives that $\det(A+1_r)=\sum_I\Delta_{II}(A)$, where the sum is taken over subsets $I$ containing $\{r+1,\ldots,d\}$.

Using the Cauchy-Binet Formula (see for example \cite[Equation(16)]{BS}) we can write this as
\[ \det(M+N) = \sum_a\sum_{|I|=|J|=a}\Delta_{IJ}(M)\Delta_{JI}(N^{-1})\det(N). \]
Finally, using Jacobi's Identity \cite[Equation (12)]{BS} we have
\[ \det(M+N) = \sum_a\sum_{|I|=|J|=a}(-1)^{S(I)+S(J)}\Delta_{IJ}(M)\Delta_{I^cJ^c}(N), \]
where $S(I)=\sum_{i\in I}i$. This proves the second formula.

Now let $I=\{i_1,\ldots,i_a\}$ and $I^c=\{i'_1,\ldots,i'_{d-a}\}$, assumed to be in increasing order, and similarly for $J$. Let $\sigma$ be the corresponding shuffle, so that $\sigma(i_t)=j_t$ and $\sigma(i'_t)=j'_t$. Clearly $\sigma=\sigma_J\sigma_I^{-1}$, where $\sigma_I(t)=i_t$ for $1\leq t\leq a$ and $\sigma_I(a+t)=i'_t$ for $1\leq t\leq d-a$. Using the formula
\[ \mathrm{sgn}(\sigma) = (-1)^{\mathrm{inv}(\sigma)}, \quad \mathrm{inv}(\sigma) := |\{(i,j):i<j, \ \sigma(i)>\sigma(j)\}| \]
we see that
\[ \mathrm{inv}(\sigma_I) = |\{(i,i')\in I\times I^c : i'<i\}| = S(I)-\binom{|I|+1}{2}, \]
and hence that
\[ \mathrm{sgn}(\sigma) = \mathrm{sgn}(\sigma_I)\mathrm{sgn}(\sigma_J) = (-1)^{S(I)+S(J)}. \]
Thus the two formulae agree in the case $n=2$.
\end{proof}

We observe that writing a matrix $M=(m_{ij})$ as the sum $M=\sum_{i,j}m_{ij}E_{ij}$, where the $E_{ij}$ are the usual elementary matrices, one recovers the well-known Leibniz Formula
\[ \det(M) = \sum_{\sigma\in S_d}\mathrm{sgn}(\sigma)m_{1\sigma(1)}m_{2\sigma(2)}\cdots m_{d\sigma(d)}. \]

\begin{Prop}
The open subscheme
\[ Y_\Lambda^{\geq d\updot}(R) := \{\rho\in\rep_\Lambda^d(R) : \rho(e^i)\in\mathrm{rank}_{\geq d^i}(R) \textrm{ for all }i\} \]
and the closed subscheme
\[ Y_\Lambda^{\leq d\updot}(R) := \{\rho\in\rep_\Lambda^d(R) : \rho(e^i)\in\mathrm{rank}_{\leq d^i}(R) \textrm{ for all }i\}, \]
coincide, and both are equal to $Y_\Lambda^{d\updot}$. Moreover, $\rep_\Lambda^d=\coprod_{d\updot}Y_\Lambda^{d\updot}$.
\end{Prop}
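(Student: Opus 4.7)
The plan is to reduce both statements to proving the two inclusions $Y_\Lambda^{\leq d\updot}\subseteq Y_\Lambda^{\geq d\updot}$ and $Y_\Lambda^{\geq d\updot}\subseteq Y_\Lambda^{\leq d\updot}$; by definition $Y_\Lambda^{d\updot}=Y_\Lambda^{\geq d\updot}\cap Y_\Lambda^{\leq d\updot}$, so these two inclusions together force all three subschemes to coincide. Throughout, one uses that for any $\rho\in\rep_\Lambda^d(R)$ the matrices $E_i:=\rho(e^i)\in\bM_d(R)$ are orthogonal idempotents with $\sum_iE_i=1_d$; combined with $\sum_id^i=d$, this is the source of every numerical coincidence that follows.

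For $Y_\Lambda^{\leq d\updot}\subseteq Y_\Lambda^{\geq d\updot}$ I would apply \hyperref[Lem:det-sum]{Lemma \ref*{Lem:det-sum}} to the identity $1_d=E_1+\cdots+E_n$, giving
\[
1=\det(1_d)=\sum_{(I,\sigma)}\mathrm{sgn}(\sigma)\prod_k\Delta_{I_k\sigma(I_k)}(E_k).
\]
If $\rho\in Y_\Lambda^{\leq d\updot}(R)$, all $(d^k+1)$-minors of $E_k$ vanish, so any term with $|I_k|>d^k$ for some $k$ drops out; but $\sum_k|I_k|=d=\sum_kd^k$, so the surviving terms all satisfy $|I_k|=d^k$ for every $k$. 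Hence $1$ lies in the product of ideals $\prod_k\mathfrak a_k$, where $\mathfrak a_k\lhd R$ is generated by the $d^k$-minors of $E_k$; this forces each $\mathfrak a_k=R$, so $\rho\in Y_\Lambda^{\geq d\updot}(R)$.

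For the reverse inclusion I would localize: the vanishing of all $(d^i+1)$-minors of $E_i$ can be tested in each localization at a maximal ideal, so assume $R$ is local. Then each $P_i:=\mathrm{Im}(E_i)$ is a direct summand of $R^d$, hence finitely generated projective, hence free of some rank $r_i$, with $R^d=\bigoplus_iP_i$ and $\sum_ir_i=d$. Choosing a basis of $R^d$ adapted to this decomposition conjugates $E_i$ into a standard block-diagonal idempotent of rank $r_i$; since the ideal of $s$-minors of a matrix is invariant under conjugation by an invertible matrix, one reads off $E_i\in\mathrm{rank}_{\geq s}(R)\iff r_i\geq s$ and $E_i\in\mathrm{rank}_{\leq s}(R)\iff r_i\leq s$. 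For $\rho\in Y_\Lambda^{\geq d\updot}(R)$ one then has $r_i\geq d^i$ for each $i$, and combined with $\sum r_i=d=\sum d^i$ this forces $r_i=d^i$ for all $i$, whence $\rho\in Y_\Lambda^{\leq d\updot}(R)$ as well.

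With the three subschemes identified, $Y_\Lambda^{d\updot}$ is both open and closed in $\rep_\Lambda^d$, and the $Y_\Lambda^{d\updot}$ are pairwise disjoint because distinct ordered partitions require incompatible ranks. For the coproduct decomposition, the rank functions $\mathfrak p\mapsto r_i(\mathfrak p)$ on $\Spec R$ are locally constant (each $P_i$ being finitely generated projective), so for any $\rho$ the spectrum $\Spec R$ partitions into finitely many open-and-closed subsets on each of which $\rho$ lies in a single $Y_\Lambda^{d\updot}$; this exhibits $\rep_\Lambda^d$ as the coproduct $\coprod_{d\updot}Y_\Lambda^{d\updot}$ in the category of schemes. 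The principal obstacle is the inclusion $Y_\Lambda^{\leq d\updot}\subseteq Y_\Lambda^{\geq d\updot}$: this is where \hyperref[Lem:det-sum]{Lemma \ref*{Lem:det-sum}} does essential work, converting the combinatorial equality $\sum d^i=d$ into the non-obvious algebraic statement that annihilating the large minors of the $E_i$ forces the smaller ones to generate the unit ideal.
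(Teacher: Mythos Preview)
Your argument is correct and uses the same key idea as the paper. For the inclusion $Y_\Lambda^{\leq d\updot}\subset Y_\Lambda^{\geq d\updot}$ you apply \hyperref[Lem:det-sum]{Lemma \ref*{Lem:det-sum}} exactly as the paper does, exploiting $\sum_k|I_k|=d=\sum_kd^k$ to force the $d^k$-minors of each $E_k$ to generate the unit ideal. The paper treats the reverse inclusion and the coproduct decomposition more tersely, saying these follow ``by looking at $L$-valued points for fields $L$''; your localisation argument over local rings is a more careful version of this step, and is genuinely needed since field-valued points alone only yield the topological containment $|Y_\Lambda^{\geq d\updot}|\subset|Y_\Lambda^{\leq d\updot}|$, whereas one must check that the $(d^i+1)$-minors actually vanish (not merely lie in every prime) on $Y_\Lambda^{\geq d\updot}$. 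Your observation that the images $P_i=\Ima(E_i)$ are finitely generated projective, hence locally free of constant local rank summing to $d$, makes this transparent and simultaneously gives the coproduct decomposition.
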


\begin{proof}
Since the $Y_\Lambda^{\geq d\updot}$ are open subschemes, by looking at $L$-valued points for fields $L$ we can prove that the $Y_\Lambda^{\geq d\updot}$ are disjoint, that they cover $\rep_\Lambda^d$, and that $Y_\Lambda^{\leq d\updot}\subset Y_\Lambda^{\geq d\updot}$ is a closed subscheme. To prove that we have equality, take $\rho\in Y_\Lambda^{\leq d\updot}(R)$. Then $\rho(e^i)=f_i\in\mathrm{rank}_{\leq d^i}(R)$ and $\sum_if_i=1$. Applying the previous lemma to $\det(f_1+\cdots+f_n)$ shows that we can write 1 as a linear combination of terms of the form $\prod_i\Delta_{I_i\sigma(I_i)}(f_i)$ where $|I_i|=d^i$. It follows that for each $i$, the $d^i$ minors of $f_i$ generate the unit ideal, so $\rho\in Y_\Lambda^{\geq d\updot}(R)$.
\end{proof}

\begin{proof}[Proof of Theorem \ref*{Thm:direct-product}]
(1) This follows immediately from the proposition above.

(2) We have $\Lambda=K^n$, so $\rep_\Lambda^{d\updot}\cong\Spec K$. This consists of the unique representation $\bar\rho$ such that $\bar\rho(e^i)=E_i$ for all $i$. We also have the corresponding orbit map $\pi\colon\GL_d\to Y_\Lambda^{d\updot}$, $g\mapsto g\cdot\bar\rho$.

Let $L$ be an algebraically-closed field and $\sigma\in Y_\Lambda^{d\updot}(L)$. The matrices $\sigma(e^i)$ form a complete set of orthogonal idempotents; in particular they commute, so are simultaneously diagonalisable. It follows that there exists $g\in\GL_d(L)$ such that $g\sigma(e^i)g^{-1}=E_i$, so $g\cdot\sigma=\bar\rho^L$ and hence $Y_\Lambda^{d\updot}(L)=\Orb_{\GL_d}(\bar\rho)(L)$. Since the orbit is smooth and irreducible, we deduce that $\Orb_{\GL_d}(\bar\rho)=\big(Y_\Lambda^{d\updot}\big)_\red$.

It is easy to see that if $\sigma\in Y_\Lambda^{d\updot}(L)$, then $\End_{L^n}(M_\sigma)\cong\prod_i\bM_{d_i}(L)$. On the other hand, $\Lambda=K^n$ is a semisimple algebra, so all extension groups vanish. Thus by Voigt's Lemma
\[ \dim T_\sigma Y_\Lambda^{d\updot} = d^2-\sum_id_i^2 = \dim\GL_d-\dim\GL_{d\updot} = \dim\Orb_{\GL_d}(\bar\rho). \]
This proves that $Y_\Lambda^{d\updot}$ is smooth, so equals $\Orb_{\GL_d}(\bar\rho)\cong\GL_d/\GL_{d\updot}$.

(3) The idempotents $e^i$ induce an algebra homomorphism $K^n\to\Lambda$, and hence a morphism $\rep_\Lambda^d\to\rep_{K^n}^d$. In terms of representations, this is just restriction of scalars. This restricts to a morphism $q\colon Y_\Lambda^{d\updot}\to Y_{K^n}^{d\updot}\cong\GL_d/\GL_{d\updot}$. In particular, for $\rho\in\rep_\Lambda^d(R)$ we have $\rho\in\rep_\Lambda^{d\updot}(R)$ if and only if $q(\rho)=\bar\rho^R$. We therefore obtain a commutative diagram
\[ \begin{CD}
\GL_d\times\rep_\Lambda^{d\updot} @>{\phi}>> Y_\Lambda^{d\updot}\\
@VV{\mathrm{pr}_1}V @VV{q}V\\
\GL_d @>{\pi}>> Y_{K^n}^{d\updot}
\end{CD} \]
where $\phi(g,\rho):=g\cdot\rho$ is constant on $\GL_{d\updot}$-orbits and $\pi(g)=g\cdot\bar\rho$. It is easy to check that this is a pull-back diagram, and hence $Y_\Lambda^{d\updot}\cong\GL_d\times^{\GL_{d\updot}}\rep_\Lambda^{d\updot}$ by \hyperref[Lem:assoc-fib]{Lemma \ref*{Lem:assoc-fib}}.
\end{proof}

\subsubsection{Examples}

Well-known examples of this situation are when we take $\Lambda=KQ/I$ to be a quotient of the path algebra of a quiver, and take $e^i$ to be the (images of the) trivial paths in $Q$, so giving a complete set of primitive idempotents in $\Lambda$. We can also apply this to a tensor algebra $\Lambda\otimes_KKQ$, where we now use the idempotents $e^i$ in $KQ$ to get idempotents $1\otimes e^i$ in $\Lambda\otimes_KKQ$.

Since we will also need this later, we introduce the quiver $Q_n$ to be the linearly oriented quiver of Dynkin type $\mathbb A_n$, so that $Q_n\colon 1\to 2\to\cdots\to n$ and the path algebra $KQ_n$ is isomorphic to the subalgebra of upper-triangular matrices inside $\bM_n(K)$. We define $\Lambda(n)$ to be the tensor algebra $\Lambda\otimes_KKQ_n$, so isomorphic to the subalgebra of upper-triangular matrices inside $\bM_n(\Lambda)$, and $\Gamma\cong\Lambda^n$ is the subalgebra of diagonal matrices.

Observe that a $\Lambda(n)$-module can be thought of as a sequence $U^1\xrightarrow{f^1} U^2\xrightarrow{f^2}\cdots\xrightarrow{f^{n-1}}U^n$ in the category of $\Lambda$-modules, which we can write as a pair $(U\updot,f\updot)$. Thus fixing a dimension vector $d\updot$ corresponds to fixing the dimensions $\dim U^i=d^i$ and we have a closed immersion
\[ \rep_{\Lambda(n)}^{d\updot} \subset \prod_i\rep_\Lambda^{d^i}\times\prod_i\bM_{d^{i+1}\times d^i}, \]
where
\[ \rep_{\Lambda(n)}^{d\updot}(R) = \{(\rho\updot,f\updot):f^i\rho^i=\rho^{i+1}f^i\textrm{ for all }i\}. \]
Note that the projection
\[ \pi \colon \rep_{\Lambda(n)}^{d\updot} \to \prod_i\rep_\Lambda^{d^i} \]
has fibres $\pi^{-1}(\rho\updot)\cong\prod_i\Hom_{\Lambda^R}(M_{\rho^i},M_{\rho^{i+1}})$.

We remark that both \hyperref[Thm:CBS1]{Theorem \ref*{Thm:CBS1}} and \hyperref[Thm:CBS2]{Theorem \ref*{Thm:CBS2}} can be refined to the case when we consider dimension vectors (with respect to some complete set of orthogonal idempotents), as can \hyperref[Lem:orbit-irred-cpt]{Lemma \ref*{Lem:orbit-irred-cpt}}.

In fact, since $\rep_\Lambda^d=\coprod_{d\updot}Y_\Lambda^{d\updot}$, the set of irreducible components of $\rep_\Lambda^d$ is the union of the sets of irreducible components of the $Y_\Lambda^{d\updot}$. On the other hand, consider the morphism $\GL_d\times\rep_\Lambda^{d\updot}\to Y_\Lambda^{d\updot}$. All fibres are isomorphic to $\GL_{d\updot}$, so are irreducible of the same dimension. It follows that the preimage of an irreducible subset of $Y_\Lambda^{d\updot}$ is again irreducible (c.f. \cite[Theorem 11.14]{Harris}). We obtain in this manner a bijection between the irreducible $\GL_d$-invariant subsets of $Y_\Lambda^{d\updot}$ and the irreducible $\GL_{d\updot}$-invariant subsets of $\rep_\Lambda^{d\updot}$. Since each irreducible component is invariant under the group action, this restricts to a bijection between the irreducible components of $Y_\Lambda^{d\updot}$ and those of $\rep_\Lambda^{d\updot}$.

\section{Subschemes determined by homomorphisms}\label{Sec:rep-schemes-homs}

We can generalise the previous result to subschemes parameterising those representations having a fixed number of homomorphisms to a predetermined module. Examples of this situation will be given at the end of the section.

We keep the same assumptions that $K$ is a perfect field and $\Lambda:=K\langle x_1,\ldots,x_N\rangle/I$ is a finitely-generated algebra.

\subsection{Schemes of modules determined by homomorphisms}

Fix a representation $\tau\in\rep_\Lambda^m(K)$ and write $M=M_\tau$. For any $K$-algebra $R$ we thus obtain the representation $\tau^R\in\rep_\Lambda^m(R)$ and the corresponding $\Lambda^R$-module $M^R=M\otimes_KR$. Recall that
\[ \rep_{\Lambda(2)}^{(d,m)}(R) = \{(\rho,\sigma,\phi)\in\rep_\Lambda^d(R)\times\rep_\Lambda^m(R)\times\bM_{m\times d}(R) : \phi\rho=\sigma\phi\}. \] 
Thus the construction
\begin{align*}
\rep_{\Lambda(2)}^{(d,\tau)}(R) &= \{ (\rho,\sigma,\phi)\in\rep_{\Lambda(2)}^{(d,m)}(R):\sigma=\tau^R \}\\
&\cong \{ (\rho,\phi)\in\rep_\Lambda^d(R)\times\bM_{m\times d}(R) : \phi\rho=\tau^R\phi \}
\end{align*}
defines a closed subscheme of $\rep_{\Lambda(2)}^{(d,m)}$, and the projection $\pi\colon\rep_{\Lambda(2)}^{(d,\tau)}\to\rep_\Lambda^d$ is of finite type. Moreover, for $\rho\in\rep_\Lambda^d(R)$ we have $\pi^{-1}(\rho)\cong\Hom_{\Lambda^R}(M_\rho,M^R)$. Hence by \hyperref[Prop:usc-hom-ext]{Proposition \ref*{Prop:usc-hom-ext}} we have a locally-closed subset $X_{d,u}\subset\rep_\Lambda^d$ such that for any field $L$
\[ X_{d,u}(L) := \{\rho\in\rep_\Lambda^d(L) : \dim_L\Hom_{\Lambda^L}(M_\rho,M^L)=u\}. \]

In order to provide $X_{d,u}$ with the structure of a scheme we will use instead the following description. Given $\rho\in\rep_\Lambda^d(R)$ we obtain a linear map
\[ \Phi(\rho) \colon \bM_{m\times d}(R) \to \bM_{m\times d}(R)^N, \quad \phi \mapsto \big(\phi\rho_j-\tau_j\phi\big)_j, \]
which we could regard as a matrix  in $\bM_{dmN\times dm}(R)$. The coefficients of $\Phi(\rho)$ are polynomials of degree at most one in the coefficients of $\rho$, and hence we get a linear morphism of schemes
\[ \Phi \colon \rep_\Lambda^d \to \Hom\big(\bM_{m\times d},\bM_{m\times d}^N\big) \cong \bM_{dmN\times dm}, \quad \rho \mapsto \Phi(\rho). \]
We set $X_{d,u}$ to be the preimage of the subscheme $\mathrm{rank}_{dm-u}\subset\bM_{dmN\times dm}$.

It is clear that the action of $\GL_d$ on $\rep_\Lambda^d$ restricts to an action on $X_{d,u}$. Moreover, the direct sum map induces a morphism
\[ \Theta \colon \GL_{d+e}\times X_{d,u}\times X_{e,v} \to X_{d+e,u+v}, \quad (g,\rho,\sigma) \mapsto g\cdot(\rho\oplus\sigma). \]
To see this, we regard matrices in $\bM_{m\times(d+e)}$ in block form. Then for $\rho\in\rep_\Lambda^d(R)$ and $\sigma\in\rep_\Lambda^e(R)$ we have
\[ \Phi(\rho\oplus\sigma)(\phi,\psi) = \big((\phi\rho_j-\tau_j\phi,\psi\sigma_j-\tau_j\sigma)\big) = \big(\Phi(\rho)(\phi),\Phi(\sigma)(\psi)\big). \]
Thus $\Phi(\rho\oplus\sigma)$ and $\Phi(\rho)\oplus\Phi(\sigma)$ are equivalent matrices, so have the same rank.

In the notation of \hyperref[Cor:irred-cpt]{Corollary \ref*{Cor:irred-cpt}} we have the smooth, connected groups $G=\GL_{d+e}$ and $H=\GL_d\times\GL_e$, acting on the schemes $Y=X_{d+e,u+v}$ and $X=X_{d,u}\times X_{e,v}$, and conditions (1) and (2) again both hold.

\subsubsection{Remark}

There is an analogous construction for the functor $\Hom_\Lambda(M_\tau,-)$, yielding a scheme $X'_{d,u}$ such that
\[ X'_{d,u}(L) = \{\rho\in\rep_\Lambda^d(L) : \dim_L\Hom_{\Lambda^L}(M^L,M_\rho)=u\}. \]
In this case we use the map
\[ \Phi'(\rho) \colon \bM_{d\times m}(R) \to \bM_{d\times m}(R)^N, \quad \phi \mapsto \big(\phi\tau_j-\rho_j\phi\big)_j. \]
This construction has been considered by Zwara in \cite[\S3.3]{Zwara} (although he restricted to finite-dimensional algebras over an algebraically-closed field). For, consider the following free presentation of $M$
\[ \Lambda^N\otimes_KM \overset{\chi}{\to} \Lambda\otimes_K M \overset{\pi}{\to} M \to 0, \]
where $\pi(a\otimes m):=am$ and $\chi(a_jj\otimes m_j):=\sum_j(a_jx_j\otimes m_j-a_j\otimes x_jm_j)$.
Then for $\rho\in\rep_\Lambda^d(R)$ we have $\Hom_{\Lambda^R}(\chi^R,M_\rho)=\Phi'(\rho)$ as $R$-linear maps $\Hom_R(M^R,M_\rho)\to\Hom_R(M^R,M_\rho)^N$.

\subsection{Jet spaces}

\begin{Lem}[{c.f. \cite[Lemma 3.5]{Zwara}}]\label{Lem:jet-spaces}
As usual set $D_{r-1}:=L[t]/(t^r)$. Take $\hat\rho\in\rep_\Lambda^d(D_{r-1})$, let $\rho\in\rep_\Lambda^d(L)$ be its image under $D_{r-1}\to L$, and let $\tilde\rho\in\rep_\Lambda^{dr}(L)$ be the restriction of scalars. Then $\hat\rho\in X_{d,u}(D_{r-1})$ if and only if $\rho\in X_{d,u}(L)$ and $\tilde\rho\in X_{dr,ur}(L)$.
\end{Lem}

\begin{proof}
We first compute the image of $\Phi(\hat\rho)$ under the map $D_{r-1}\to L$, as well as its restriction of scalars to $L$.

Consider $\hat\phi=\sum_{i=0}^{r-1}\phi_it^i\in\bM_{m\times d}(D_{r-1})$. This is sent under $\Phi(\hat\rho)$ to $\big(\hat\phi\hat\rho_j-\tau_j\hat\phi\big)_j$. Writing $\hat\rho_j=\rho_j+\sum_i\xi_{ji}t^i$, then for a fixed $j=1,\ldots,N$ this equals
\[ \sum_{i=0}^{r-1}\Big((\phi_i\rho_j-\tau_j\phi_i)+\phi_{i-1}\xi_{j1}+\cdots+\phi_0\xi_{ji}\Big)t^i. \]
Under the map $D_{r-1}\to L$ this is just the map sending $\phi_0$ to $\big(\phi_0\rho_j-\tau_j\phi_0)_j$, so equals $\Phi(\rho)$. On the other hand, under restriction of scalars to $L$, we have the map sending $\tilde\phi=(\phi_0,\ldots,\phi_{r-1})$ to
\[ \Big(\phi_0\rho_j-\tau_j\phi_0,\phi_1\rho_j-\tau_j\phi_1+\phi_0\xi_{j1},\ldots,\phi_{r-1}\rho_j-\tau_j\phi_{r-1}+\phi_{r-2}\xi_{j1}+\cdots+\phi_0\xi_{jr-1}\Big)_j, \]
which is precisely $\Phi(\tilde\rho)(\tilde\phi)$. Thus the restriction of scalars of $\Phi(\hat\rho)$ is just $\Phi(\tilde\rho)$.

We can now apply \hyperref[Lem:rank]{Lemma \ref*{Lem:rank}} to deduce that $\hat\rho\in X_{d,u}(D_{r-1})$ if and only if $\rho\in X_{d,u}(L)$ and $\tilde\rho\in X_{dr,ur}(L)$.
\end{proof}

Note that, for $r=1$, the lemma implies that
\[ T_\rho X_{d,u} = X_{2d,2u}(L)\cap T_\rho\rep_\Lambda^d, \]
where we have used the identification of $T_\rho\rep_\Lambda^d$ with a subset of $\rep_\Lambda^{2d}(L)$. This gives
\[ T_\rho X_{d,u} = \Der_{2u}(\rho,\rho) := \{\xi\in\Der(\rho,\rho) : \dim\Hom_{\Lambda^L}(M_\xi,M^L)=2u\}. \]
More generally, given $\rho\in X_{d,u}(L)$ and $\sigma\in X_{e,v}(L)$, set
\[ \Der_{u+v}(\rho,\sigma) := \{\xi\in\Der(\rho,\sigma) : \dim\Hom_{\Lambda^L}(M_\xi,M^L)=u+v\}, \]
and let $E_{u+v}(\rho,\sigma)\subset\Ext_{\Lambda^L}^1(M_\rho,M_\sigma)$ be the image of $\Der_{u+v}(\rho,\sigma)$.

\subsubsection{Voigt's Lemma}

We begin with the following analogue of Voigt's Lemma.

\begin{Lem}\label{Lem:Voigt2}
Let $\rho\in X_{d,u}(L)$ and $\sigma\in X_{e,v}(L)$. Then $\Der_{u+v}(\rho,\sigma)\subset\Der(\rho,\sigma)$ is a subspace, and we have an exact sequence
\[ 0 \longrightarrow \Hom_{\Lambda^L}(M_\rho,M_\sigma) \longrightarrow \bM_{e\times d}(L) \overset{\delta_{\rho,\sigma}}{\longrightarrow} \Der_{u+v}(\rho,\sigma) \overset{\varepsilon_{\rho,\sigma}}{\longrightarrow} E_{u+v}(\rho,\sigma) \longrightarrow 0. \]
\end{Lem}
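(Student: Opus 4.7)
The plan is to reduce to the classical Voigt Lemma (\hyperref[Lem:Voigt]{Lemma \ref*{Lem:Voigt}}) by showing that $E_{u+v}(\rho,\sigma)$ is cut out inside $\Ext^1_{\Lambda^L}(M_\rho,M_\sigma)$ by linear conditions, after which the asserted four-term sequence will simply be the restriction of Voigt's sequence. First I would apply the functor $\Hom_{\Lambda^L}(-,M^L)$ to the canonical short exact sequence $0\to M_\sigma\to M_\xi\to M_\rho\to 0$ attached to an arbitrary $\xi\in\Der(\rho,\sigma)$. This yields the familiar left-exact sequence
\[ 0\to\Hom_{\Lambda^L}(M_\rho,M^L)\to\Hom_{\Lambda^L}(M_\xi,M^L)\to\Hom_{\Lambda^L}(M_\sigma,M^L)\xrightarrow{\partial_\xi}\Ext^1_{\Lambda^L}(M_\rho,M^L), \]
and together with the hypotheses $\dim\Hom_{\Lambda^L}(M_\rho,M^L)=u$ and $\dim\Hom_{\Lambda^L}(M_\sigma,M^L)=v$ it shows that $\dim_L\Hom_{\Lambda^L}(M_\xi,M^L)=u+v-\dim_L\Ima(\partial_\xi)\leq u+v$, with equality precisely when $\partial_\xi=0$. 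Hence the condition $\xi\in\Der_{u+v}(\rho,\sigma)$ is characterised by the vanishing of $\partial_\xi$.

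Next I would identify $\partial_\xi$ concretely: the standard computation of the connecting morphism gives $\partial_\xi(f)=f_*([\xi])$, the pushout of the extension class $[\xi]$ along $f\colon M_\sigma\to M^L$. Consequently, the assignment $[\xi]\mapsto\partial_\xi$ is the adjoint of the $L$-bilinear pairing
\[ \Ext^1_{\Lambda^L}(M_\rho,M_\sigma)\times\Hom_{\Lambda^L}(M_\sigma,M^L)\to\Ext^1_{\Lambda^L}(M_\rho,M^L),\quad (\eta,f)\mapsto f_*(\eta), \]
and so its kernel is an $L$-linear subspace. This kernel is, by the previous paragraph, precisely $E_{u+v}(\rho,\sigma)$; pulling back under the classical $\varepsilon_{\rho,\sigma}$ then shows that $\Der_{u+v}(\rho,\sigma)=\varepsilon_{\rho,\sigma}^{-1}(E_{u+v}(\rho,\sigma))$ is a linear subspace of $\Der(\rho,\sigma)$.

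For the exact sequence, I would observe that the image of $\delta_{\rho,\sigma}$ is precisely the kernel of $\varepsilon_{\rho,\sigma}$ by classical Voigt, and consists of derivations with trivial extension class; these satisfy $M_\xi\cong M_\rho\oplus M_\sigma$ and hence $\dim_L\Hom_{\Lambda^L}(M_\xi,M^L)=u+v$ automatically, placing them inside $\Der_{u+v}(\rho,\sigma)$. The remaining exactness and surjectivity of the restricted $\varepsilon_{\rho,\sigma}$ onto $E_{u+v}(\rho,\sigma)$ are then direct consequences of \hyperref[Lem:Voigt]{Lemma \ref*{Lem:Voigt}} together with the very definitions of $\Der_{u+v}$ and $E_{u+v}$. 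The only technical point, though entirely routine, is the explicit identification of $\partial_\xi$ with pushout by $[\xi]$, since everything else follows from this by pure linear algebra.
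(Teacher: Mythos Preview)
Your proposal is correct and follows essentially the same route as the paper: both apply $\Hom_{\Lambda^L}(-,M^L)$ to the canonical extension $0\to M_\sigma\to M_\xi\to M_\rho\to 0$ and characterise $\xi\in\Der_{u+v}(\rho,\sigma)$ by the surjectivity of $\Hom_{\Lambda^L}(M_\xi,M^L)\to\Hom_{\Lambda^L}(M_\sigma,M^L)$. The only difference is one of packaging: the paper unwinds this surjectivity explicitly as the condition that every $\phi\colon M_\sigma\to M^L$ satisfies $\phi\xi=\tau\psi-\psi\rho$ for some $L$-linear $\psi$, whereas you phrase it homologically as the vanishing of the connecting map $\partial_\xi$ and then identify $\partial_\xi(f)=f_*([\xi])$, which has the pleasant side effect of exhibiting $E_{u+v}(\rho,\sigma)$ as the kernel of the pushforward map $\Ext^1_{\Lambda^L}(M_\rho,M_\sigma)\to\Hom_L\big(\Hom_{\Lambda^L}(M_\sigma,M^L),\Ext^1_{\Lambda^L}(M_\rho,M^L)\big)$.
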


\begin{proof}
For each $\xi\in\Der(\rho,\sigma)$ we have a short exact sequence $0\to M_\sigma\to M_\xi\to M_\rho\to0$. Applying $\Hom_{\Lambda^L}(-,M^L)$ and comparing dimensions we see that $\xi\in\Der_{u+v}(\rho,\sigma)$ if and only if, for every $\Lambda^L$-homomorphism $\phi\colon M_\sigma\to M^L$, there exists an $L$-linear map $\psi\colon M_\rho\to M^L$ such that $\phi\xi=\tau\psi-\psi\rho=\Phi(\rho)(\psi)$. It is now clear that $\Der_{u+v}(\rho,\sigma)$ is a subspace of $\Der(\rho,\sigma)$. Moreover, it contains the image of $\delta_{\rho,\sigma}$, since if $\xi=\gamma\rho-\sigma\gamma$, then given $\phi\colon M_\sigma\to M^L$ we can take $\psi:=-\phi\gamma$.
\end{proof}

We next want to investigate how these maps interact with the direct sum morphism. For this we will need the following easy lemma.

\begin{Lem}\label{Lem:filtered}
Set $\dim\Hom_\Lambda(X,M)=x$ and $\dim\Hom_\Lambda(Y,M)=y$. Let $N$ be any module filtered by $a$ copies of $X$ and $b$ copies of $Y$ (so $N$ has a filtration with precisely these subquotients in some order). Then $\dim\Hom_\Lambda(N,M)\leq ax+by$. Moreover, if we have equality for $N$, then we have equality for all submodules of $U\leq N$
such that both $U$ and $N/U$ are filtered by copies of $X$ and $Y$.
\end{Lem}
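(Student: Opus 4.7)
The plan is to prove both assertions simultaneously by induction on the filtration length $a+b$. The whole argument rests on left-exactness of $\Hom_\Lambda(-,M)$ combined with a short-exact-sequence dimension count, so no deeper ingredient is needed.

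First I would handle the trivial base case $a+b\leq 1$: if $N=0$ the claim is vacuous, whereas if $N\cong X$ or $N\cong Y$ the bound $ax+by$ is by definition an equality. For the inductive step, let $U\leq N$ be a submodule such that $U$ is filtered by $a'$ copies of $X$ and $b'$ copies of $Y$, and $N/U$ is filtered by $a-a'$ copies of $X$ and $b-b'$ copies of $Y$ (the hypothesis of the lemma guarantees this decomposition). Applying $\Hom_\Lambda(-,M)$ to $0\to U\to N\to N/U\to 0$ gives the left-exact sequence
\[ 0 \to \Hom_\Lambda(N/U,M) \to \Hom_\Lambda(N,M) \to \Hom_\Lambda(U,M), \]
so that
\[ \dim\Hom_\Lambda(N,M) \leq \dim\Hom_\Lambda(U,M) + \dim\Hom_\Lambda(N/U,M). \]
Since $U$ and $N/U$ both have strictly shorter filtrations by $X$'s and $Y$'s, the inductive hypothesis yields $\dim\Hom_\Lambda(U,M)\leq a'x+b'y$ and $\dim\Hom_\Lambda(N/U,M)\leq(a-a')x+(b-b')y$. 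Adding these gives exactly $\dim\Hom_\Lambda(N,M)\leq ax+by$, establishing the bound. (To check that such a $U$ exists so the induction can actually proceed, take $U$ to be the bottom term of the given filtration of $N$, which is isomorphic to $X$ or $Y$.)

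For the \emph{moreover} clause, assume $\dim\Hom_\Lambda(N,M)=ax+by$, and let $U$ be any submodule satisfying the hypothesis, with $a',b'$ as above. Stringing together the inequalities just proved gives
\[ ax+by = \dim\Hom_\Lambda(N,M) \leq \dim\Hom_\Lambda(U,M) + \dim\Hom_\Lambda(N/U,M) \leq a'x+b'y+(a-a')x+(b-b')y = ax+by. \]
Since the two ends coincide, every inequality in between must be an equality; in particular $\dim\Hom_\Lambda(U,M)=a'x+b'y$ and $\dim\Hom_\Lambda(N/U,M)=(a-a')x+(b-b')y$, as desired.

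There is no real obstacle here: the only point requiring mild care is that in the inductive step one must use a submodule for which both $U$ and $N/U$ inherit a filtration by copies of $X$ and $Y$ — which is automatic if one picks $U$ to be a single term of the given filtration of $N$, and is exactly the hypothesis in the \emph{moreover} statement.
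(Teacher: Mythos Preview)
Your proof is correct and follows essentially the same approach as the paper: both argue by induction, apply the left-exactness of $\Hom_\Lambda(-,M)$ to a short exact sequence $0\to U\to N\to N/U\to 0$ to obtain the additive bound, and then observe that equality for $N$ forces equality in every intermediate inequality, hence equality for both $U$ and $N/U$. The only cosmetic difference is that the paper inducts on dimension rather than filtration length, which amounts to the same thing here.
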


\begin{proof}
Suppose we have a short exact sequence $0\to N_1\to N\to N_2\to 0$ such that $N_i$ is filtered by $a_i$ copies of $X$ and $b_i$ copies of $Y$, so that $a=a_1+a_2$ and $b=b_1+b_2$. By induction on dimension we know that $\dim\Hom_\Lambda(N_i,M)\leq a_ix+b_iy$, and since $\dim\Hom_\Lambda(N,M)\leq\dim\Hom_\Lambda(N_1,M)+\dim\Hom_\Lambda(N_2,M)$ we get the result for $N$. Conversely, if we have equality for $N$, then we must also have equality for both $N_1$ and $N_2$.
\end{proof}

\begin{Lem}
Let $\rho\in X_{d,u}(L)$ and $\sigma\in X_{e,v}(L)$. Then the standard decomposition
\[ T_{\rho\oplus\sigma}\rep_\Lambda^{d+e} \cong \Der(\rho,\rho) \times \Der(\sigma,\sigma) \times \Der(\rho,\sigma)\times \Der(\sigma,\rho) \]
restricts to a decomposition
\[ T_{\rho\oplus\sigma}X_{d+e,u+v} \cong \Der_{2u}(\rho,\rho) \times \Der_{2v}(\sigma,\sigma) \times \Der_{u+v}(\rho,\sigma) \times \Der_{u+v}(\sigma,\rho), \]
which in turn induces a decomposition of the extension groups
\[ E_{2(u+v)}(\rho\oplus\sigma,\rho\oplus\sigma) \cong E_{2u}(\rho,\rho) \times E_{2v}(\sigma,\sigma) \times E_{u+v}(\rho,\sigma) \times E_{u+v}(\sigma,\rho). \]
\end{Lem}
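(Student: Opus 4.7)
The plan is to translate the tangent space condition and the four block conditions into vanishing of the same Yoneda connecting map, and thereby obtain both implications at once. First I would invoke \hyperref[Lem:jet-spaces]{Lemma \ref*{Lem:jet-spaces}} applied at $r=1$ to rephrase $\xi\in T_{\rho\oplus\sigma}X_{d+e,u+v}$ as the statement that $\dim_L\Hom_{\Lambda^L}(M_\xi,M^L)=2(u+v)$, this being the bound permitted by \hyperref[Lem:filtered]{Lemma \ref*{Lem:filtered}} since $M_\xi$ is filtered by two copies of $M_\rho$ and two of $M_\sigma$. Writing the standard block decomposition $\xi=(\xi_{ij})$ with $\xi_{ij}\in\Der(\rho_j,\rho_i)$ and $\rho_1:=\rho$, $\rho_2:=\sigma$, the block conditions similarly say that $\dim\Hom(M_{\xi_{ij}},M^L)$ attains its own maximum.

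The key step would be to apply $\Hom(-,M^L)$ to the short exact sequence $0\to M_{\rho\oplus\sigma}\to M_\xi\to M_{\rho\oplus\sigma}\to 0$. The resulting long exact sequence gives
\[ \dim_L\Hom(M_\xi,M^L) = 2(u+v)-\dim_L\Ima(\delta_\xi), \]
where $\delta_\xi\colon\Hom(M_{\rho\oplus\sigma},M^L)\to\Ext^1_{\Lambda^L}(M_{\rho\oplus\sigma},M^L)$ is Yoneda multiplication $\phi\mapsto\phi\cdot[\xi]$; so the tangent space condition is equivalent to $\delta_\xi=0$.

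Next I would decompose according to $M_{\rho\oplus\sigma}=M_\rho\oplus M_\sigma$. Writing $\phi=(\phi_1,\phi_2)$ and identifying $[\xi]$ with its block matrix $([\xi_{ij}])\in\bigoplus_{i,j}\Ext^1_{\Lambda^L}(M_{\rho_j},M_{\rho_i})$, bilinearity of the Yoneda pairing yields
\[ \phi\cdot[\xi] = \big(\phi_1\cdot[\xi_{11}]+\phi_2\cdot[\xi_{21}],\ \phi_1\cdot[\xi_{12}]+\phi_2\cdot[\xi_{22}]\big) \]
inside $\Ext^1(M_\rho,M^L)\oplus\Ext^1(M_\sigma,M^L)$. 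Since $\phi_1,\phi_2$ vary independently, $\delta_\xi=0$ splits into the four separate conditions $\phi_i\cdot[\xi_{ij}]=0$ for all $\phi_i\in\Hom(M_{\rho_i},M^L)$, one per pair $(i,j)\in\{1,2\}^2$. Running the very same long-exact-sequence argument on $0\to M_{\rho_i}\to M_{\xi_{ij}}\to M_{\rho_j}\to 0$ then identifies each of these with $\xi_{ij}$ lying in the corresponding $\Der_\bullet(\rho_j,\rho_i)$, giving the tangent space decomposition.

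Finally, for the extension groups, the Voigt sequence of \hyperref[Lem:Voigt2]{Lemma \ref*{Lem:Voigt2}} is additive in both arguments, so the surjection $\varepsilon_{\rho\oplus\sigma,\rho\oplus\sigma}$ respects the block decompositions of $\Der(\rho\oplus\sigma,\rho\oplus\sigma)$ and $\Ext^1_{\Lambda^L}(M_{\rho\oplus\sigma},M_{\rho\oplus\sigma})$. Since $E_{2(u+v)}$ is by definition the image of $\Der_{2(u+v)}$ under $\varepsilon$, the extension-group statement follows by taking images in the derivation decomposition just proved. The main obstacle I anticipate is carefully verifying the bilinear block description of $\phi\cdot[\xi]$ above; this is purely a matter of bookkeeping with the functoriality of $\Ext^1$ in both slots, but it is what packages the four separate conditions into a single clean vanishing statement.
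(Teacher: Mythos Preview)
Your proof is correct and takes a genuinely different route from the paper. The paper argues one inclusion directly (each factor on the right sits inside the left by a dimension count), and for the converse constructs explicit submodules $U\leq V\leq N:=M_\xi$ so that $V/U\cong M_{\xi_{21}}$; since $U$, $V/U$, $N/V$ are all filtered by copies of $M_\rho$ and $M_\sigma$, \hyperref[Lem:filtered]{Lemma \ref*{Lem:filtered}} forces $\dim\Hom(V/U,M^L)=u+v$, whence $\xi_{21}\in\Der_{u+v}(\rho,\sigma)$, and the other blocks are handled the same way.

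Your approach instead rewrites the whole condition as vanishing of the connecting map $\delta_\xi\colon\Hom(M_{\rho\oplus\sigma},M^L)\to\Ext^1(M_{\rho\oplus\sigma},M^L)$, $\phi\mapsto\phi_*[\xi]$, and then uses bilinearity of the Yoneda pairing together with the block decomposition of $[\xi]$ to split this into the four independent vanishing conditions $\phi_i\cdot[\xi_{ij}]=0$. This is cleaner and more functorial: it makes transparent that the statement is really just additivity of $\Ext^1$ in both variables, and it handles both inclusions simultaneously. The paper's filtration method is more hands-on, but it has the advantage of being exactly the template reused in \hyperref[Prop:surj-diff2]{Proposition \ref*{Prop:surj-diff2}} for higher jets, where one inductively constructs submodules of the $(r{+}1)(d{+}e)$-dimensional module; your Yoneda reformulation does not generalise as directly to that setting. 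For the final extension-group statement both arguments coincide: it follows by taking images under $\varepsilon$ in the block-decomposed Voigt sequence.
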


\begin{proof}
By comparing dimensions of homomorphisms to $M^L$ it is clear that each factor on the right is a subspace of the tangent space $T_{\rho\oplus\sigma}X_{d+e,u+v}$. Conversely, suppose we have a tangent vector $\xi=\begin{pmatrix}\xi_{11}&\xi_{12}\\\xi_{21}&\xi_{22}\end{pmatrix}$, so the module $N:=M_\xi$ corresponds to the representation
\[ \begin{pmatrix}\rho&0&\vline&\xi_{11}&\xi_{12}\\0&\sigma&\vline&\xi_{21}&\xi_{22}\\\hline0&0&\vline&\rho&0\\0&0&\vline&0&\sigma\end{pmatrix}. \]
Let $U\leq V\leq N$ be the submodules corresponding to the subrepresentations given by the first row and column, respectively the first three rows and columns. Then $V/U$ corresponds to the representation $\begin{pmatrix}\sigma&\xi_{21}\\0&\rho\end{pmatrix}$, so is isomorphic to $M_{\xi_{21}}$. Clearly each of $U$, $V/U$ and $N/V$ is filtered by copies of $M_\rho$ and $M_\sigma$, so we can apply \hyperref[Lem:filtered]{Lemma \ref*{Lem:filtered}} to deduce that $\dim_L\Hom_{\Lambda^L}(V/U,M)=u+v$. Thus $\xi_{21}\in\Der_{u+v}(\rho,\sigma)$ as required. The other cases are entirely analogous.
\end{proof}

In the notation of \hyperref[Cor:irred-cpt]{Corollary \ref*{Cor:irred-cpt}},
\[ N_{X,\rho\oplus\sigma}\cong E_{2u}(\rho,\rho)\times E_{2v}(\sigma,\sigma) \]
and
\begin{align*}
N_{Y,\rho\oplus\sigma} &\cong E_{2(u+v)}(\rho\oplus\sigma,\rho\oplus\sigma)\\
&\cong E_{2u}(\rho,\rho) \times E_{2v}(\sigma,\sigma) \times E_{u+v}(\rho,\sigma) \times E_{u+v}(\sigma,\rho).
\end{align*}
Moreover, the map $\theta_{\rho,\sigma}\colon N_{X,\rho\oplus\sigma}\to N_{Y,\rho\oplus\sigma}$ induced by the differential $d_{\rho,\sigma}\Theta$ is just the canonical embedding. It follows that condition (3) of the corollary is also satisfied, so $\Theta$ is separable on each irreducible component (with its reduced subscheme structure). Moreover, $\theta_{\rho,\sigma}$ is surjective if and only if $E_{u+v}(\rho,\sigma)=0=E_{u+v}(\sigma,\rho)$.

\subsubsection{Upper semi-continuity}

\begin{Lem}
The function sending $(\rho,\sigma)\in X_{d,u}(L)\times X_{e,v}(L)$ to  $\dim_LE_{u+v}(\rho,\sigma)$ is upper semi-continuous.
\end{Lem}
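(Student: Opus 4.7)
The plan is to reduce, via the Voigt-type exact sequence of Lemma \ref{Lem:Voigt2}, to showing upper semi-continuity of $\dim_L\Der_{u+v}(\rho,\sigma)$, and then to exhibit the latter as a fibre dimension of a projection with cone fibres, so as to apply Corollary \ref{Cor:usc}.

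Indeed, Lemma \ref{Lem:Voigt2} gives the four-term exact sequence
\[ 0 \to \Hom_{\Lambda^L}(M_\rho,M_\sigma) \to \bM_{e\times d}(L) \to \Der_{u+v}(\rho,\sigma) \to E_{u+v}(\rho,\sigma) \to 0, \]
whose alternating sum of dimensions yields
\[ \dim_L E_{u+v}(\rho,\sigma) = \dim_L\Hom_{\Lambda^L}(M_\rho,M_\sigma) + \dim_L\Der_{u+v}(\rho,\sigma) - de. \]
Since sums of upper semi-continuous functions are upper semi-continuous, and since the $\Hom$-term is upper semi-continuous by Proposition \ref{Prop:usc-hom-ext}, it suffices to prove the corresponding statement for $\dim_L\Der_{u+v}(\rho,\sigma)$.

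For this I would form $\Der_{u+v}(d,e) := \Der(d,e) \cap X_{d+e,u+v}$ inside $\rep_\Lambda^{d+e}$. Viewing $\Der(d,e)$ as a (closed) subscheme of $\rep_\Lambda^d \times \rep_\Lambda^e \times \bA^{edN}$ (the last factor carrying the $N$-tuple of off-diagonal block entries), and restricting the base to $X_{d,u} \times X_{e,v}$, the natural projection
\[ \pi \colon \Der_{u+v}(d,e) \cap \pi^{-1}\!\bigl(X_{d,u} \times X_{e,v}\bigr) \to X_{d,u} \times X_{e,v} \]
has fibre over each $L$-point $(\rho,\sigma)$ equal to $\Der_{u+v}(\rho,\sigma)$: indeed, by definition of $X_{d+e,u+v}$, a derivation $\xi\in\Der(\rho,\sigma)$ yields an extension $M_\xi\in X_{d+e,u+v}(L)$ precisely when $\dim_L\Hom_{\Lambda^L}(M_\xi,M^L) = u+v$. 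By Lemma \ref{Lem:Voigt2} each such fibre is an $L$-linear subspace of $\bA^{edN}(L)$, hence a cone.

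The delicate point is to check that this total space is actually a \emph{closed} subscheme of $(X_{d,u} \times X_{e,v}) \times \bA^{edN}$, as Corollary \ref{Cor:usc} requires. Here Lemma \ref{Lem:filtered}, applied to the two-step filtration $M_\sigma \subset M_\xi$ with subquotients $M_\sigma$ and $M_\rho$, does the work: on this restricted base we always have $\dim_L\Hom_{\Lambda^L}(M_\xi,M^L) \leq u+v$, so the condition $M_\xi\in X_{d+e,u+v}$ collapses to the single inequality $\dim_L\Hom_{\Lambda^L}(M_\xi,M^L) \geq u+v$, which cuts out a closed subset by Proposition \ref{Prop:usc-hom-ext}. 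Corollary \ref{Cor:usc} then yields the upper semi-continuity of $\dim_L\Der_{u+v}(\rho,\sigma)$, and the displayed identity above completes the proof.
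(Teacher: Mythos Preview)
Your argument is correct and follows essentially the same route as the paper: form $\Der_{u+v}(d,e):=\Der(d,e)\cap X_{d+e,u+v}$, apply Corollary~\ref{Cor:usc} to the projection onto $X_{d,u}\times X_{e,v}$ to get upper semi-continuity of $\dim_L\Der_{u+v}(\rho,\sigma)$, and then use the dimension identity coming from Lemma~\ref{Lem:Voigt2} together with Proposition~\ref{Prop:usc-hom-ext}. Your invocation of Lemma~\ref{Lem:filtered} to check that the total space is genuinely \emph{closed} over the restricted base (so that Corollary~\ref{Cor:usc} applies as stated) is a point the paper leaves implicit, so your version is in fact slightly more careful.
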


\begin{proof}
Consider the scheme $\Der_{u+v}(d,e):=\Der(d,e)\cap X_{d+e,u+v}$ together with the projection $\pi$ to $\rep_\Lambda^d\times\rep_\Lambda^e$. The fibre over a point $(\rho,\sigma)\in X_{d,u}(L)\times X_{e,v}(L)$ is $\Der_{u+v}(\rho,\sigma)$, so we can apply \hyperref[Cor:usc]{Corollary \ref*{Cor:usc}} to deduce that the function $(\rho,\sigma)\mapsto\dim_L\Der_{u+v}(\rho,\sigma)$ is upper semi-continuous on $X_{d,u}\times X_{e,v}$. By the analogue of Voigt's Lemma, \hyperref[Lem:Voigt2]{Lemma \ref*{Lem:Voigt2}},
\[ \dim_LE_{u+v}(\rho,\sigma) = \dim_L\Hom_{\Lambda^L}(M_\rho,M_\sigma) + \dim_L\Der_{u+v}(\rho,\sigma) - de, \]
so this function is also upper semi-continuous.
\end{proof}

As before, if $K$ is algebraically closed and $X\subset X_{d,u}$ and $Y\subset X_{e,v}$ are irreducible, then $e_{u+v}(X,Y)$ is the generic, or minimal, value of $\dim_L E_{u+v}(\rho,\sigma)$.

\subsubsection{Surjectivity of the differential}

Consider the direct sum morphism
\[ \Theta \colon \GL_{d+e}\times X_{d,u}\times X_{e,v} \longrightarrow X_{d+e,u+v} \]
and in particular the induced morphisms on jet spaces
\[ d_{\rho,\sigma}^{(r)}\Theta \colon T_1^{(r)}\GL_{d+e} \times T_\rho^{(r)}X_{d,u} \times T_\sigma^{(r)}X_{e,v} \longrightarrow T_{\rho\oplus\sigma}^{(r)}X_{d+e,u+v}. \]

\begin{Prop}\label{Prop:surj-diff2}
The following are equivalent for $\rho\in\rep_\Lambda^d(L)$ and $\sigma\in\rep_\Lambda^e(L)$.
\begin{enumerate}
\item $d_{\rho,\sigma}^{(r)}\Theta$ is surjective for all $r\in[1,\infty]$.
\item $d_{\rho,\sigma}^{(r)}\Theta$ is surjective for some $r\in[1,\infty]$.
\item $\theta_{\rho,\sigma}$ is surjective.
\end{enumerate}
In particular, condition (4) of \hyperref[Cor:irred-cpt]{Corollary \ref*{Cor:irred-cpt}} holds true.
\end{Prop}

\begin{proof}
$(1)\Rightarrow(2)\colon$ Trivial.

$(2)\Rightarrow(3)\colon$ As before, if $\xi\in\Der_{u+v}(\rho,\sigma)$ induces a non-split extension, then $(\rho\oplus\sigma)+\xi t\in T_{\rho\oplus\sigma}^{(r)}X_{d+e,u+v}$ but is not in the image of $d_{\rho,\sigma}^{(r)}\Theta$.

$(3)\Rightarrow(1)\colon$ We keep the same notation as in the proof of \hyperref[Prop:surj-diff1]{Proposition \ref*{Prop:surj-diff1}}, so we start with a representation $\widehat{\rho\oplus\sigma}\in X_{d+e,u+v}(D_r)$. We construct the modules $U\leq V\leq N$ in the same way, and observe that $N$, being given by restriction of scalars, satisfies $\dim\Hom_{\Lambda^L}(N,M^L)=(s+1)(u+v)$ by \hyperref[Lem:jet-spaces]{Lemma \ref*{Lem:jet-spaces}}. Also, the modules $U$, $V$ and $U/V$ are all filtered by copies of $M_\rho$ and $M_\sigma$, so $\dim\Hom_{\Lambda^L}(V/U,M^L)=u+v$ by \hyperref[Lem:filtered]{Lemma \ref*{Lem:filtered}}. In particular, the extension $0\to M_\sigma\to V/U\to M_\rho\to 0$ lies in $E_{u+v}(\rho,\sigma)$. We can therefore apply the analogue of Voigt's Lemma, \hyperref[Lem:Voigt2]{Lemma \ref*{Lem:Voigt2}} to obtain the matrix $g_s\in\GL_{d+e}(D_r)$.

Proceeding by induction as before we obtain $g\in\GL_{d+e}(D_r)$ such that $g\cdot\widehat{\rho\oplus\sigma}=\hat\rho\oplus\hat\sigma$ with $\hat\rho\in\rep_\Lambda^d(D_r)$ and $\hat\sigma\in\rep_\Lambda^e(D_r)$. Using \hyperref[Lem:filtered]{Lemma \ref*{Lem:filtered}} once more we conclude that $\hat\rho\in X_{d,u}(D_r)$ and $\hat\sigma\in X_{e,v}(D_r)$, proving that $d_{\rho,\sigma}\Theta^{(r)}$ is surjective.
\end{proof}

\subsection{Irreducible Components}

We can now state the analogues of \hyperref[Thm:CBS1]{Theorems \ref*{Thm:CBS1}} and \hyperref[Thm:CBS2]{\ref*{Thm:CBS2}} for the schemes $X_{d,u}$. After making the obvious changes, the proofs go through exactly as before.

\begin{Thm}
Let $K$ be an algebraically-closed field, $\Lambda$ a finitely-generated $K$-algebra, and $\tau\in\rep_\Lambda^m(K)$. Let $X\subset X_{d,u}$ and $Y\subset X_{e,v}$ be irreducible components. Then the closure $\overline{X\oplus Y}$ of the image of $\GL_{d+e}\times X\times Y$ is an irreducible component of $X_{d+e,u+v}$ if and only if $e_{u+v}(X,Y)=0=e_{u+v}(Y,X)$.
\end{Thm}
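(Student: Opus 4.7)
The plan is to reduce the statement to \hyperref[Cor:irred-cpt]{Corollary \ref*{Cor:irred-cpt}} applied to the restriction
\[ \Theta \colon \GL_{d+e}\times X_{d,u}\times X_{e,v} \to X_{d+e,u+v} \]
of the direct sum morphism, exactly as in the proof of \hyperref[Thm:CBS1]{Theorem \ref*{Thm:CBS1}} for $\rep_\Lambda^d$. With $G=\GL_{d+e}$, $H=\GL_d\times\GL_e$, $Y=X_{d+e,u+v}$ and $X=X_{d,u}\times X_{e,v}$, I want to check the four hypotheses of the corollary on an open dense subset, and then translate the surjectivity of $\theta_{\rho,\sigma}$ into the condition $e_{u+v}(X,Y)=0=e_{u+v}(Y,X)$.

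First I would note that hypotheses (1) and (2) have already been established in the text: stabilisers $\Aut_{\Lambda^L}(M_\rho\oplus M_\sigma)$ are open in the ambient endomorphism algebra, hence smooth, and \hyperref[Lem:fin-many-orbits]{Lemma \ref*{Lem:fin-many-orbits}} (applied to $\rho\oplus\sigma$, and using that any decomposition lying in the intersection $\Orb_G(\rho\oplus\sigma)\cap(X_{d,u}\times X_{e,v})$ is still obtained by reshuffling Krull-Remak-Schmidt summands) gives only finitely many $H(\bar L)$-orbits on $\big(\!\Orb_G(\rho\oplus\sigma)\cap X\big)(\bar L)$. For hypothesis (3), the key input is the decomposition
\[ N_{X,\rho\oplus\sigma}\cong E_{2u}(\rho,\rho)\times E_{2v}(\sigma,\sigma), \]
\[ N_{Y,\rho\oplus\sigma}\cong E_{2u}(\rho,\rho)\times E_{2v}(\sigma,\sigma)\times E_{u+v}(\rho,\sigma)\times E_{u+v}(\sigma,\rho), \]
obtained from the preceding lemma and the analogue of Voigt's Lemma \hyperref[Lem:Voigt2]{Lemma \ref*{Lem:Voigt2}}, under which $\theta_{\rho,\sigma}$ is identified with the canonical inclusion into the first two factors, hence automatically injective. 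Hypothesis (4) is precisely the content of \hyperref[Prop:surj-diff2]{Proposition \ref*{Prop:surj-diff2}}.

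With all four hypotheses verified, \hyperref[Cor:irred-cpt]{Corollary \ref*{Cor:irred-cpt}} applies: $\overline{X\oplus Y}=\overline{G\cdot(X\times Y)}$ is an irreducible component of $X_{d+e,u+v}$ if and only if $\theta_{\rho,\sigma}$ is surjective for all $(\rho,\sigma)$ in an open dense subset of $X\times Y$. Since the identification above makes surjectivity of $\theta_{\rho,\sigma}$ equivalent to $E_{u+v}(\rho,\sigma)=0=E_{u+v}(\sigma,\rho)$, and since the functions $(\rho,\sigma)\mapsto\dim_L E_{u+v}(\rho,\sigma)$ and $(\rho,\sigma)\mapsto\dim_L E_{u+v}(\sigma,\rho)$ are upper semi-continuous on $X\times Y$ by the upper semi-continuity lemma preceding \hyperref[Prop:surj-diff2]{Proposition \ref*{Prop:surj-diff2}}, the condition on an open dense subset collapses to the generic condition $e_{u+v}(X,Y)=0=e_{u+v}(Y,X)$.

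The only real obstacle I foresee is making sure that hypothesis (2) of \hyperref[Cor:irred-cpt]{Corollary \ref*{Cor:irred-cpt}} still holds in this subscheme setting, i.e.\ that the finitely many $H(\bar L)$-orbits from \hyperref[Lem:fin-many-orbits]{Lemma \ref*{Lem:fin-many-orbits}} all still lie inside $X_{d,u}\times X_{e,v}$; but this is automatic because $\dim\Hom_{\Lambda^L}(M_\rho\oplus M_\sigma,M^L)$ depends only on the isomorphism class of $M_\rho\oplus M_\sigma$, so every reshuffling of summands gives the pair $(u,v)$ of hom-dimensions back (using \hyperref[Lem:filtered]{Lemma \ref*{Lem:filtered}} applied with equality), and so every such orbit lies in $X_{d,u}\times X_{e,v}$. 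Once this is in place, the proof is a direct transcription of the proof of \hyperref[Thm:CBS1]{Theorem \ref*{Thm:CBS1}} with $\Ext^1$ replaced by $E_{u+v}$.
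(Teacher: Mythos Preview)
Your approach is correct and matches the paper's: the paper simply says ``after making the obvious changes, the proofs go through exactly as before,'' meaning precisely the verification of hypotheses (1)--(4) of \hyperref[Cor:irred-cpt]{Corollary \ref*{Cor:irred-cpt}} that you spell out, followed by the upper semi-continuity reduction to the generic values $e_{u+v}(X,Y)$ and $e_{u+v}(Y,X)$.

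One small correction to your final paragraph: the claim that ``every reshuffling of summands gives the pair $(u,v)$ of hom-dimensions back'' is false in general (e.g.\ swap two indecomposable summands of equal dimension but different $\Hom$-dimensions to $M$ between $\rho$ and $\sigma$), and \hyperref[Lem:filtered]{Lemma \ref*{Lem:filtered}} does not give this. But you do not need it: hypothesis (2) asks only for finitely many $H(\bar L)$-orbits on $\big(\Orb_G(\rho\oplus\sigma)\cap X\big)(\bar L)$, and since $X=X_{d,u}\times X_{e,v}$ is a subscheme of $\rep_\Lambda^d\times\rep_\Lambda^e$, this intersection is a subset of the one handled by \hyperref[Lem:fin-many-orbits]{Lemma \ref*{Lem:fin-many-orbits}}, hence automatically has at most as many orbits. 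So the obstacle you anticipate does not arise, and your worry and its resolution can both be deleted.
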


\begin{Thm}
Every irreducible component $X\subset X_{d,u}$ can be written uniquely (up to reordering) as a direct sum $X=\overline{X_1\oplus\cdots\oplus X_n}$ of generically indecomposable components $X_i\subset X_{d_i,u_i}$, where $d=\sum_id_i$ and $u=\sum_iu_i$.
\end{Thm}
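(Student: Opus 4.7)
The plan is to mimic closely the proof of \hyperref[Thm:CBS2]{Theorem \ref*{Thm:CBS2}}, with the only extra ingredient being the bookkeeping needed to track the integer $u$ under direct sum decompositions. Existence of a decomposition will be obtained by Noetherian induction on $d$ after exhibiting the decomposable locus in $X_{d,u}$ as a finite union of closed sets of the form $\overline{X_1\oplus X_2}$; uniqueness will then follow from the Krull-Remak-Schmidt theorem applied at a sufficiently general point of $X$.

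For existence, I would take an irreducible component $X\subset X_{d,u}$ and, assuming it is not generically indecomposable, find a dense open $U\subset X$ on which every representation is decomposable. The key observation is that if $M_\rho\cong N_1\oplus N_2$ with $\dim N_i=d_i$, then
\[ \dim\Hom_{\Lambda^L}(N_1,M^L)+\dim\Hom_{\Lambda^L}(N_2,M^L)=u, \]
so there is some $0\leq a\leq u$ with $N_1\in X_{d_1,a}(L)$ and $N_2\in X_{d_2,u-a}(L)$. Consequently $X$ is contained in the finite union, over $0<e<d$ and $0\leq a\leq u$, of the closures in $X_{d,u}$ of the images $\Theta(\GL_d\times X_{e,a}\times X_{d-e,u-a})$. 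Each such closure is in turn a finite union, over the (finitely many) irreducible components $X_1\subset X_{e,a}$ and $X_2\subset X_{d-e,u-a}$, of the irreducible closed sets $\overline{X_1\oplus X_2}$. Since $X$ is irreducible it must coincide with one of these, and Noetherian induction on $d$ then provides a decomposition of $X$ into generically indecomposable components.

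For uniqueness, the argument is a direct copy of that for \hyperref[Thm:CBS2]{Theorem \ref*{Thm:CBS2}}: given two decompositions $\overline{X_1\oplus\cdots\oplus X_m}=\overline{Y_1\oplus\cdots\oplus Y_n}$ with $X_i\subset X_{d_i,u_i}$ and $Y_j\subset X_{e_j,v_j}$ generically indecomposable, I pick a $\rho$ in the intersection of the generically indecomposable open subsets on both sides which decomposes as $\rho_1\oplus\cdots\oplus\rho_m\cong\sigma_1\oplus\cdots\oplus\sigma_n$ with $\rho_i\in X_i$ and $\sigma_j\in Y_j$. The Krull-Remak-Schmidt theorem then forces $m=n$ and, after reordering, $\rho_i\cong\sigma_i$, so in particular $d_i=e_i$, $u_i=v_i$ and $\rho_i\in X_i\cap Y_i$. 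As before, the projection $\GL_d\times X_1\times\cdots\times X_m\to X_i$ is open, so $X_i\cap Y_i$ contains an open subset of both components, forcing $X_i=Y_i$.

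The main obstacle lies entirely in the existence step, namely in verifying that the decomposable locus really is exhausted by the finitely many images of $\Theta\colon\GL_d\times X_{e,a}\times X_{d-e,u-a}\to X_{d,u}$. This reduces to the additivity of $\dim\Hom(-,M^L)$ under direct sums together with constructibility of the image of $\Theta$ (by Chevalley), so that the closures inside $X_{d,u}$ really do give a finite collection of closed subsets containing the decomposable locus. Once this is in place, the rest of the argument is formal and proceeds exactly as in the representation-scheme case.
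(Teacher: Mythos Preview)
Your proposal is correct and takes essentially the same approach as the paper; indeed, the paper's own proof simply reads ``After making the obvious changes, the proofs go through exactly as before,'' and what you have written is precisely those obvious changes (tracking how $u$ splits additively under direct sum decompositions, via additivity of $\Hom_{\Lambda^L}(-,M^L)$). Your care in noting that one must also range over $0\le a\le u$ in the existence step is exactly the extra bookkeeping required.
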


We observe that both of these theorems can again be refined to the case when we consider dimension vectors (with respect to some complete set of orthogonal idempotents).

\subsubsection{Remarks}

Zwara used the schemes $X_{d,u}$ to investigate the scheme-theoretic properties of the restriction of scalars functor, and in particular to relate the singularity types of orbit closures \cite{Zwara}.

As an example of the schemes $X'_{d,u}$, let $e_i\in\Lambda$ be a complete set of orthogonal idempotents. Fixing the dimensions $d^i$ of $\Hom_\Lambda(\Lambda e_i,-)$ we recover the scheme $Y_\Lambda^{d\updot}=\GL_d\times^{\GL_{d\updot}}\rep_\Lambda^{d\updot}$ (c.f. \cite[Remark 3.14]{Zwara}).

Suppose that $\Lambda=KQ$ is the path algebra of a quiver $Q$ without oriented cycles, so that $\Lambda$ is finite dimensional. Then $\Lambda$ has a complete set of idempotents $e_i$ indexed by the vertices $i$ of $Q$. Let  $M$ be a module of dimension vector $m\updot$. We now have the `standard resolution' of $M$
\[ 0 \to P_1\xrightarrow{\phi_M} P_0\to M\to 0, \quad\textrm{where } P_0 := \bigoplus_i\Lambda e_i\otimes e_iM \textrm{ and }
P_1 := \bigoplus_{i\to j}\Lambda e_j\otimes e_iM, \]
and the sums are over the vertices and arrows of $Q$ respectively  (see for example \cite{RZ}). If $\langle\underline\dim\,M,d\updot\rangle=0$ for some dimension vector $d\updot$, then $\dim\Hom_\Lambda(P_0,N)=\dim\Hom_\Lambda(P_1,N)$ for all modules $N$ of dimension vector $d\updot$. The semi-invariant $c_M$ is then defined to be
\[ c_M \colon \rep_\Lambda^{d\updot} \to \bA^1, \quad N\mapsto\det\Hom_{\Lambda^R}((\phi_M)^R,N). \]
One usually studies the zero set $c_M^{-1}(0)$, which can be given a scheme structure by taking $(\Phi')^{-1}(\mathrm{rank}_{<t})$ for $t=\sum_im_id_i$, where now
\[ \Phi'(\rho) \colon \bigoplus_i\Hom_L(M_i^L,N_i) \to \bigoplus_{i\to j}\Hom_L(M_i^L,N_j). \]

When $\Lambda$ is a finite-dimensional algebra our construction also covers the Ext functors $\Ext^r_\Lambda(M,-)$ and $\Ext^r_\Lambda(-,M)$. For, fix a short exact sequence $0\to\Omega\to\Lambda\otimes_KM\overset{\pi}{\to} M\to0$. Then for any field $L$ and $\rho\in\rep_\Lambda^d(L)$ we have
\[ \dim_L\Ext^1_{\Lambda^L}(M^L,M_\rho) = \dim_L\Hom_{\Lambda^L}(M^L\oplus\Omega^L)-dm. \]
So, using the module $M\oplus\Omega$, the corresponding scheme $X'_{d,u}$ has $L$-valued points
\[ X'_{d,u}(L) = \{\rho\in\rep^d_\Lambda(L) : \dim_L\Ext^1_{\Lambda^L}(M^L,M_\rho) = u-dm\}. \]
In general, if $\Omega^r$ is the $r$-th syzygy of $M$, then $\dim\Ext^1(\Omega^r,X)=\dim\Ext^{r+1}(M,X)$ for $r\geq1$, so we can construct a subscheme of $\rep_\Lambda^d$ whose $L$-valued points parameterise those $d$-dimensional modules satisfying $\dim_L\Ext^r_{\Lambda^L}(M^L,X)=u$, for any fixed $u$. Doing this for a sincere semisimple module $M$ (so every simple module is isomorphic to a direct summand of $M$), we can construct a subscheme of $\rep_\Lambda^d$ parameterising those $d$-dimensional modules of a given projective dimension.

Dually, for the functor $\Ext^1_\Lambda(-,M)$, we can use the short exact sequence
\[ 0 \to M \overset{\epsilon}{\to} \Hom_K(\Lambda,M) \to \Omega^{-1} \to 0, \quad \epsilon(m)(a):=am, \]
so that $\Hom_{\Lambda^L}(M_\rho,\Hom_K(\Lambda,M)^L)\cong\Hom_L(M_\rho,M^L)$ has the same dimension for all $\rho\in\rep_\Lambda^d(L)$. Then using the cosyzygies $\Omega^{-r}$ we can fix the dimension of $\Ext^r_{\Lambda^L}(M_\rho,M^L)$.

As a final example, let $\Lambda$ be a finite dimensional algebra, and let us fix representatives of the preprojective and preinjective indecomposable modules (up to a suitably large dimension depending on $d$). By specifying the dimensions of the homomorphism spaces to the preprojectives and from the preinjectives, one can consider subschemes parameterising modules with prescribed preprojective and preinjective summands. In particular, this yields an alternative approach to orbits. More precisely, given a representation $\tau\in\rep_\Lambda^d(K)$ such that $M_\tau$ has only preprojective and preinjective summands, we can construct a (possibly non-reduced) subscheme $X\subset\rep_\Lambda^d$ such that, for all fields $L$, we have $X(L)=\mathrm{Orb}_{\GL_d}(\tau)(L)$, and hence that $\mathrm{Orb}_{\GL_d}(\tau)=X_\red$. Note that, by \hyperref[Lem:jet-spaces]{Lemma \ref*{Lem:jet-spaces}}, if $D_{r-1}:=L[t]/(t^r)$ and $\hat\rho\in\rep_\Lambda^d(D_{r-1})$, then $\hat\rho\in X(D_{r-1})$ if and only if $M_\rho$ is isomorphic to $M_\tau$ and $M_{\tilde\rho}$ is isomorphic to $M_\tau^r$. In fact, this holds for all Artinian local rings $R$ when $K$ is a perfect field (so that $R$ has a coefficient field containing $K$).

\section{Grassmannians of submodules}\label{Sec:Grassmannians}

Our next application is to Grassmannians of submodules, which are projective schemes parameterising the $d$-dimensional submodules of a fixed $\Lambda$-module.

Again, $K$ will denote a perfect field and $\Lambda=K\langle x_1,\ldots,x_N\rangle/I$ a finitely-generated $K$-algebra.

\subsection{Grassmannians}

Let $M$ be an $m$-dimensional vector space over $K$. Then the Grassmannian of $d$-dimensional subspaces of $M$ is the projective scheme given by
\[ \Gr_K\binom Md(R) := \big\{ \textrm{$R$-module direct summands $U\leq M^R$ of rank $d$} \big\}. \]
If now $M=M_\tau$ for some representation $\tau\in\rep_\Lambda^m(K)$, then the Grassmannian of $d$-dimensional submodules of $M$ is the closed subscheme given by
\[ \Gr_\Lambda\binom Md(R) := \Big\{ U\in\Gr_K\binom Md(R) : \tau^R(U)\subset U \Big\}. \]
As usual we have written $\tau^R$ for the corresponding representation in $\rep_\Lambda^d(R)$.

It will be useful to consider the following construction of the Grassmannian as a geometric quotient. Recall that we have the open subscheme of $\bM_{m\times d}$ given via
\begin{align*}
\inj_{m\times d}(R) &:= \mathrm{rank}_d(R)\\
&\;= \{f\in\bM_{m\times d}(R) : \textrm{the $d$ minors of $f$ generate the unit ideal in $R$} \}.
\end{align*}
In particular, if $R$ is local, then $f\in\inj_{m\times d}(R)$ if and only if some $d$ minor of $f$ is invertible.

The free $\GL_d$-action on $\bM_{m\times d}$, $(g,f)\mapsto g\cdot f:=fg^{-1}$, restricts to an action on this open subscheme, and the map
\[ \pi \colon \inj_{m\times d} \to \Gr_K\binom Md, \quad \theta \mapsto \Ima(f) \]
is a principal $\GL_d$-bundle. Thus $\Gr_K\binom Md\cong \inj_{m\times d}/\GL_d$ is isomorphic to the quotient faisceau, by \hyperref[Lem:principal-G-bundle]{Lemma \ref*{Lem:principal-G-bundle}}, so in particular is a universal geometric quotient.

Explicitly, let $J\subset\{1,\ldots,m\}$ be a subset of size $d$. Identifying $M\cong K^m$, let $V_J\leq K^m$ be the subspace spanned by the basis elements $e_i$ for $i\not\in J$. Then the Grassmannian $\Gr_K\binom Md$ is covered by the open subschemes $\mathcal U_J$, having as $R$-valued points those $U$ such that $U\oplus V_J^R=R^m$. Next observe that $\pi^{-1}(\mathcal U_J)=D(\Delta_J)$, the distinguished open subscheme given by the minor $\Delta_J$. To see that $\pi$ is trivial over $\mathcal U_J$ let $\alpha_J\colon\bM_{m\times d}\to\bM_d$ be the trivial vector bundle given by restricting to the rows indexed by $J$, so that $\Delta_J(f)=\det(\alpha_J(f))$. Define $U_J\subset\bM_{m\times d}$ by taking those $f$ such that $\alpha_J(f)=1_d$ is the identity matrix. Then there is an isomorphism
\[ \GL_d\times U_J \xrightarrow{\sim} D(\Delta_J), \quad (g,f) \mapsto fg^{-1} \]
with inverse $f\mapsto(\alpha_J(f)^{-1},f\alpha_J(f)^{-1})$. This induces an isomorphism $U_J\xrightarrow{\sim}\mathcal U_J$; the inverse sends $U$ to the matrix with columns $u_j$ for $j\in J$, where $e_j=(u_j,v_j)\in U\oplus V_J$.

Note also that $U_J$ is an affine scheme. For, let $\alpha_J'\colon\bM_{m\times d}\to\bM_{(m-d)\times d}$ be the trivial vector bundle given by taking the rows not in $J$. Then $\alpha_J'$ restricts to an isomorphism $U_J\xrightarrow{\sim}\bM_{(m-d)\times d}$.

We can emulate this construction for the Grassmannian of $d$-dimensional submodules of $M$ as follows. Recall the scheme
\[ \rep_{\Lambda(2)}^{(d,m)}(R) := \{(\rho,\sigma,f)\in\rep_\Lambda^d(R)\times\rep_\Lambda^m(R)\times\bM_{m\times d}(R) : f\rho=\sigma f \} \]
and its closed subscheme
\[ \rep_{\Lambda(2)}^{(d,\tau)}(R) := \{(\rho,\sigma,f)\in\rep_{\Lambda(2)}^{(d,m)}(R):\sigma=\tau^R\} \cong \{(\rho,f) : f\rho=\tau^Rf \}. \]
We can therefore consider the open subscheme $\rep\inj_{\Lambda(2)}^{(d,\tau)}$ of $\rep_{\Lambda(2)}^{(d,\tau)}$ given by
\[ \rep\inj_{\Lambda(2)}^{(d,\tau)}(R) := \{(\rho,f)\in\rep_{\Lambda(2)}^{(d,\tau)}(R) : f\in\inj_{m\times d}(R) \}. \]
We will often abuse notation and simply write $\rep\inj_\Lambda^{(d,M)}$ instead of $\rep\inj_{\Lambda(2)}^{(d,\tau)}$.

\begin{Lem}
The map
\[ \iota \colon \rep\inj_\Lambda^{(d,M)} \to \inj_{m\times d}, \quad (\rho,f)\mapsto f, \]
is a closed immersion.
\end{Lem}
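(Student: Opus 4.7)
The plan is to establish the closed immersion claim locally on the target, using the standard open cover of $\inj_{m\times d}$ by the distinguished opens $D(\Delta_J)$ for $J\subset\{1,\ldots,m\}$ of size $d$. Since being a closed immersion is local on the target, it suffices to check that $\iota^{-1}(D(\Delta_J))\to D(\Delta_J)$ is a closed immersion for each such $J$.

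The key observation is that the representation $\rho$ is uniquely determined by $f$ on $D(\Delta_J)$. Indeed, letting $\alpha_J\colon\bM_{m\times d}\to\bM_d$ denote restriction to the rows indexed by $J$ (as in the preceding discussion of $U_J$), and observing that $\alpha_J(\tau^R f)=\alpha_J(\tau^R)f$, we may apply $\alpha_J$ to the defining equation $f\rho=\tau^R f$ to obtain $\alpha_J(f)\rho=\alpha_J(\tau^R)f$. Since $\alpha_J(f)$ is invertible on $D(\Delta_J)$, this forces the formula
\[ \rho_i \;=\; \alpha_J(f)^{-1}\alpha_J(\tau_i)^R f \qquad (i=1,\ldots,N). \]
This in turn defines a morphism of schemes $\psi_J\colon D(\Delta_J)\to\bM_d^N$, $f\mapsto\big(\alpha_J(f)^{-1}\alpha_J(\tau_i)^R f\big)_i$.

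Now I would introduce the closed subscheme $Z_J\subset D(\Delta_J)$ cut out by the two types of conditions: \textbf{(a)} that $\psi_J(f)$ satisfies the relations of $\Lambda$, so that $\psi_J$ restricts to a morphism $Z_J\to\rep_\Lambda^d$; and \textbf{(b)} that $f\psi_J(f)_i=\tau_i^R f$ for all $i$. Condition (a) is closed because it pulls back the closed immersion $\rep_\Lambda^d\hookrightarrow\bM_d^N$ along $\psi_J$, and (b) is clearly closed. The morphism $Z_J\to\iota^{-1}(D(\Delta_J))$ given by $f\mapsto(\psi_J(f),f)$ is then a well-defined morphism of schemes, and by the uniqueness statement above it is a two-sided inverse to the restriction of $\iota$. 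Hence $\iota\colon\iota^{-1}(D(\Delta_J))\xrightarrow{\sim}Z_J$ is an isomorphism onto a closed subscheme, completing the local check.

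The argument is quite direct and I do not expect a significant obstacle; the only mild subtlety is verifying that the formula for $\psi_J$ together with the closed condition (b) correctly picks out the image of $\iota^{-1}(D(\Delta_J))$. One could alternatively organise the proof via the even smaller trivialising opens $U_J\hookrightarrow D(\Delta_J)$, where $\alpha_J(f)=1_d$ makes $\psi_J$ particularly transparent, and then transport the result along the isomorphism $\GL_d\times U_J\xrightarrow{\sim} D(\Delta_J)$ using $\GL_d$-equivariance of $\iota$; but the presentation above avoids this extra step.
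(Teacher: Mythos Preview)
Your proof is correct and follows the same overall strategy as the paper: check the closed immersion locally on the standard open cover $\{D(\Delta_J)\}$ of $\inj_{m\times d}$, and invoke the fact that closed immersions are local on the target. The paper's implementation differs slightly in how it writes down the closed subscheme: rather than using a left inverse of $f$ to solve explicitly for $\rho$, it constructs a local cokernel morphism $C\colon D(\Delta_J)\to\bM_{(m-d)\times m}$ and defines $V_J\subset D(\Delta_J)$ by the condition $C(f)\tau f=0$. Both your condition~(b) and the paper's cokernel condition express ``the image of $\tau f$ lies in the image of $f$'', so the resulting closed subschemes coincide. Your version has the small advantage of directly exhibiting the inverse morphism $f\mapsto(\psi_J(f),f)$; the paper's version avoids having to impose your condition~(a), which is in fact redundant: once $f\,\psi_J(f)_i=\tau_i^R f$ holds for the generators, applying any relation $p\in I$ gives $f\,p(\psi_J(f))=p(\tau^R)f=0$, and since $f$ is split injective on $D(\Delta_J)$ this forces $p(\psi_J(f))=0$. (A minor notational point: $\alpha_J$ is defined on $\bM_{m\times d}$, so $\alpha_J(\tau_i)$ is a slight abuse; you mean the $d\times m$ matrix obtained by restricting $\tau_i$ to the rows indexed by $J$.)
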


\begin{proof}
Since $f$ is injective, there is at most one $\rho$ such that $(\rho,f)\in\rep\inj_\Lambda^{(d,M)}(L)$. Such a $\rho$ exists if and only if the composite $C(f)\tau f=0$, where $C(f)$ is any cokernel for $f$. Locally we can choose cokernels as follows. Given $J\subset\{1,\ldots,m\}$ of size $d$ we have the minor $\Delta_J$ as well as the trivial vector bundles $\alpha_J\colon\bM_{m\times d}\to\bM_d$ and $\alpha'_J\colon\bM_{m\times d}\to\bM_{(m-d)\times d}$ described above. Dually we have trivial vector bundles $\bM_{(m-d)\times m}\to\bM_{(m-d)\times d}$ and $\bM_{(m-d)\times m}\to\bM_{m-d}$ by taking the columns in $J$, respectively the columns not in $J$; let $\beta_J$ and $\beta'_J$ be their respective zero sections. Clearly $\beta_J(A)f=A\alpha_J(f)$ for all matrices $A$, and similarly for $J'$. Thus
\[ C \colon D(\Delta_J)\to\bM_{(m-d)\times m}, \quad f \mapsto \beta'_J(1_{m-d})-\beta_J\big(\alpha'_J(f)\alpha_J(f)^{-1}\big), \]
is a morphism of schemes such that $C(f)$ is a cokernel for $f$. We may therefore define a closed subscheme $V_J\subset D(\Delta_J)$ by requiring that $C(f)\tau f=0$. Then the morphism $\iota\colon\rep\inj_\Lambda^{(d,M)}\to\inj_{m\times d}$ restricts to an isomorphism $\iota^{-1}(D(\Delta_J))\xrightarrow{\sim}V_J$. Since the $D(\Delta_J)$ cover $\inj_{m\times d}$, the result follows from \cite[I \S2 Corollary 4.9]{DG}.
\end{proof}

The group $\GL_d$ again acts freely via $g\cdot(\rho,f):=(g\cdot\rho,g\cdot f)=(g\rho g^{-1},f g^{-1})$ and the morphism $\iota$ is $\GL_d$-equivariant. Using \hyperref[Lem:principal-G-bundle-subscheme]{Lemma \ref*{Lem:principal-G-bundle-subscheme}} we conclude that the morphism
\[ \pi \colon \rep\inj_\Lambda^{(d,M)} \to \Gr_\Lambda\binom Md, \quad (\rho,f) \mapsto \Ima(f) \]
is again a principal $\GL_d$-bundle.

\subsection{Principal bundles}

The aim of this section is to relate the direct sum morphism for Grassmannians to the direct sum morphism for the schemes $\rep\inj_\Lambda^{(d,M)}$, which are easier to work with when computing jet spaces. In fact we will show that one of these direct sum morphisms satisfies the conditions of \hyperref[Cor:irred-cpt]{Corollary \ref*{Cor:irred-cpt}} if and only the other does, and that they both induce the same maps $\theta_x$.

We have the group scheme
\[ G = G_1\times G_2 = \GL_{d+e}\times\Aut_\Lambda(M\oplus N) \]
acting on the scheme
\[ Y = \rep\inj_\Lambda^{(d+e,M\oplus N)} \]
via $(g_1,g_2)\cdot(\rho,f):=(g_1fg_1^{-1},g_2fg_1^{-1})$, and the closed subgroup
\[ H = H_1\times H_2 = (\GL_d\times\GL_e)\times(\Aut_\Lambda(M)\times\Aut_\Lambda(N)) \]
acting on the closed subscheme
\[ X = \rep\inj_\Lambda^{(d,M)}\times\rep\inj_\Lambda^{(e,N)}. \]
Note that the closed immersion $X\to Y$ is just the restriction of the closed immersion for $\Lambda(2)$-representations
\[ \rep_{\Lambda(2)}^{(d,m)}\times\rep_{\Lambda(2)}^{(e,n)} \to \rep_{\Lambda(2)}^{(d+e,m+n)} \]
described earlier, but using the refinement to dimension vectors. Similarly the action of $G$ on $Y$ is the restriction of the action of
\[ \GL_{(d+e,m+n)} = \GL_{d+e}\times\GL_{m+n} \]
on $\rep_{\Lambda(2)}^{(d+e,m+n)}$, and the direct sum morphism for $\Lambda(2)$-representations restricts to a morphism
\[ \Theta \colon G \times X \to Y. \]
It follows immediately that this morphism satisfies conditions (1) and (2) of \hyperref[Cor:irred-cpt]{Corollary \ref*{Cor:irred-cpt}}.

We also have the principal bundles
\[ \pi_Y \colon Y \to Y/G_1 = \Gr_\Lambda\binom{M\oplus N}{d+e} \]
and
\[ \pi_X \colon X \to X/H_1 = \Gr_\Lambda\binom Md\times\Gr_\Lambda\binom Ne. \]
We therefore get an induced action of $G_2$ on $Y/G_1$, and an action of $H_2$ on $X/H_1$. Moreover, since principal bundles are quotients in the category of faisceaux, \hyperref[Lem:principal-G-bundle]{Lemma \ref*{Lem:principal-G-bundle}}, the $H_1$-invariant morphism $X\to Y\to Y/G_1$ induces a morphism $\iota\colon X/H_1\to Y/G_1$.

This is again a closed immersion. For, let $J\subset\{1,\ldots,m+n\}$ be any subset of size $d+e$ and write $J=J_1\coprod J_2$ where $J_1=J\cap\{1,\ldots,m\}$. If $J_1$ is not of size $d$, then $\iota^{-1}(\mathcal U_J)=\emptyset$. If, on the other hand, $J_1$ has size $d$, then $\iota^{-1}(\mathcal U_J)\cong\mathcal U_{J_1}\times\mathcal U_{J_2}$. Writing these as affine schemes, we obtain the morphism $\bM_{(m-d)\times d}\times\bM_{(n-e)\times e}\to\bM_{(m-d+n-e)\times(d+e)}$ given by taking the block diagonal matrices, which we know is a closed immersion. Thus $\iota$ is a closed immersion by \cite[I \S2 Corollary 4.9]{DG}.

The direct sum morphism induces a morphism
\[ \Theta \colon G_2 \times X/H_1 \to Y/G_1, \]
so a morphism
\[ \Theta \colon \Aut_\Lambda(M\oplus N)\times\Gr_\Lambda\binom Md\times\Gr_\Lambda\binom Ne \to \Gr_\Lambda\binom{M\oplus N}{d+e}. \]
This again satifies conditions (1) and (2) of \hyperref[Cor:irred-cpt]{Corollary \ref*{Cor:irred-cpt}}. To see this, note that the projection $H_1\times H_2\to H_2$ induces a group isomorphism $\Stab_H(x)\to\Stab_{H_2}(\pi_X(x))$ for each $x\in X(L)$, so (1) holds. (It is injective, since if $(h_1,h_2),(h_1',h_2)$ both fix $x$, then $h_1^{-1}h_1'$ fixes $h_2\cdot x$, whence $h_1=h_1'$. It is surjective, since if $h_2$ fixes $\pi_X(x)$, then $h_2\cdot x$ maps to $\pi_X(x)$, so $h_2\cdot x=h_1^{-1}\cdot x$ for some $h_1\in H_1$, whence $(h_1,h_2)$ fixes $x$.) For (2) we just need to observe that there is a bijection between the $H_2$-orbits on $X/H_1$ and the $H$-orbits on $X$.

As for the third and fourth conditions, we observe that for all $x\in X(L)$ and all $r\in[1,\infty]$ the morphism
\[ \pi_X \colon T_x^{(r)}X \to T_{\pi_X(x)}^{(r)}(X/H_1) \]
is surjective, with fibres the orbits under the action of the group $T_1^{(r)}H_1$ (viewed as a subgroup of $H_1(D_r)$). This follows from the local triviality of $\pi_X$ together with the fact that $D_r$ is a local ring.

When $r=1$ we deduce that there is an exact commutative diagram (of vector spaces over $L$)
\[ \begin{CD}
0 @>>> T_1H_1 @>>> T_1H @>>> T_1H_2 @>>> 0\\
@. @| @VVV @VVV\\
0 @>>> T_1H_1 @>>> T_xX @>>> T_{\pi_X(x)}(X/H_1) @>>> 0
\end{CD} \]
so the Snake Lemma yields the isomorphism $N_{X,x}\cong N_{X/H_1,\pi_X(x)}$. Similarly $N_{Y,x}\cong N_{Y/G_1,\pi_Y(x)}$, and so we can identify the two linear maps
\[ \theta_x \colon N_{X,x} \to N_{Y,x} \quad\textrm{and}\quad \theta_x \colon N_{X/H_1,\pi_X(x)} \to N_{Y/G_1,\pi_Y(x)}. \]
On the other hand, when $r=\infty$, we have the commutative square
\[ \begin{CD}
T_1^{(\infty)}G\times T_x^{(\infty)}X @>>> T_1^{(\infty)}G_2\times T_{\pi_X(x)}^{(\infty)}(X/H_1)\\
@VVV @VVV\\
T_x^{(\infty)}Y @>>> T_{\pi_Y(x)}^{(\infty)}(Y/G_1)
\end{CD} \]
and the left-hand map is surjective if and only if the right-hand map is surjective.

For, the upper and lower maps are both surjective, so if the left-hand map is surjective, so too is the right-hand map. Conversely, suppose the right-hand map is surjective and consider $\eta\in T_x^{(\infty)}Y$. By assumption we can find $(\gamma,\xi)\in T_1^{(\infty)}G\times T_x^{(\infty)}X$ such that $\eta$ and $(\gamma,\xi)$ are sent to the same element of $T_{\pi_Y(x)}^{(\infty)}(Y/G_1)$. Thus $\gamma\cdot\xi\in T_x^{(\infty)}Y$ and $\eta$ have the same image, so $\eta=\gamma_1\gamma\cdot\xi$ for some $\gamma_1\in T_1^{(\infty)}G_1$. It follows that $(\gamma_1\gamma,\xi)$ is sent to $\eta$, and so the left-hand map is surjective.

This proves that the morphism $\Theta \colon G_2\times X/H_1 \to Y/G_1$ satisfies conditions (3) and (4) of \hyperref[Cor:irred-cpt]{Corollary \ref*{Cor:irred-cpt}} if and only if the morphism $\Theta \colon G\times X \to Y$ does. In other words, we can work with either the direct sum morphism for Grassmannians
\[ \Theta \colon \Aut_\Lambda(M\oplus N)\times\Gr_\Lambda\binom Md\times\Gr_\Lambda\binom Ne \to \Gr_\Lambda\binom{M\oplus N}{d+e}, \]
or for the subschemes of $\Lambda(2)$-representations
\[ \Theta \colon \!\GL_{d+e}\times\Aut_\Lambda(M\oplus N)\times\rep\inj_\Lambda^{(d,M)}\!\times\!\rep\inj_\Lambda^{(e,N)} \to \rep\inj_\Lambda^{(d+e,M\oplus N)}\!. \]

\subsection{Jet spaces}

We need to compute the jet spaces of $\rep\inj_\Lambda^{(d,M)}$. By definition, given $(\rho,f)\in\rep\inj_\Lambda^{(d,M)}(L)$, the jet space $T_{(\rho,f)}^{(r)}\rep\inj_\Lambda^{(d,M)}$ consists of those pairs $(\hat\rho,\hat f)$, where $\hat\rho=\rho+\sum_{i=1}^r\xi_it^i\in\rep_\Lambda^d(D_r)$ and $\hat f=f+\sum_{i=1}^r\phi_it^i\in\inj_{m\times d}(D_r)$, such that $\hat f\hat\rho=\tau_M^L\hat f$. As usual we may identify $\hat\rho$ with the representation $\tilde\rho\in\rep_\Lambda^{(r+1)d}(L)$ given by
\[ \tilde\rho := \begin{pmatrix}
\rho&\xi_1&\xi_2&\cdots&\xi_r\\
0&\rho&\xi_1&\ddots&\vdots\\
\vdots&\ddots&\ddots&\ddots&\xi_2\\
\vdots&&\ddots&\rho&\xi_1\\
0&\cdots&\cdots&0&\rho
\end{pmatrix} \]
If we therefore define
\[ \tilde f := (f,\phi_1,\ldots,\phi_r) \in \bM_{m\times(r+1)d}(L), \]
then it is clear that $(\hat\rho,\hat f)\in T_{(\rho,f)}^{(r)}\rep\inj_\Lambda^{(d,M)}$ if and only if $\tilde\rho\in\rep_\Lambda^{(r+1)d}(L)$ and $\tilde f\in\Hom_{\Lambda^L}(M_{\tilde\rho},M^L)$, which we can write as $(\tilde\rho,\tilde f)\in\rep\hom_\Lambda^{((r+1)d,M)}(L)$.

In particular, when $r=1$ we have the tangent space
\[ T_{(\rho,f)}\rep\inj_\Lambda^{(d,M)} = \{(\xi,\phi)\in\Der(\rho,\rho)\times\bM_{m\times d}(L): f\xi = \tau_M^L\phi-\phi\rho \}. \]
Note that, as expected, since $f$ is injective the derivation $\xi$ is uniquely determined by $\phi$.

The action of $\GL_d$ on $\rep\inj_\Lambda^{(d,M)}$ induces the following action of $T_1\GL_d=\bM_d(L)$ on $T_{(\rho,f)}\rep\inj^{(d,M)}$
\begin{align*}
\bM_d(L) \times T_{(\rho,f)}\rep\inj^{(d,M)} &\to T_{(\rho,f)}\rep\inj^{(d,M)}\\
\gamma\cdot(\xi,\phi) &:= (\xi+\gamma\rho-\rho\gamma,\phi-f\gamma) = (\xi+\delta_\rho(\gamma),\phi-f\gamma).
\end{align*}
More generally we can apply our earlier results to the algebra $\Lambda(2)$, using the refinement to dimension vectors: given $(\rho,\rho',f)\in\rep_{\Lambda(2)}^{(d,m)}(L)$ and $(\sigma,\sigma',g)\in\rep_{\Lambda(2)}^{(e,n)}(L)$, we considered the vector space $\Der\big((\rho,\rho',f),(\sigma,\sigma',g)\big)$ consisting of triples $(\xi,\xi',\phi)$ such that
\[ \left(\begin{pmatrix}\sigma&\xi\\0&\rho\end{pmatrix},\begin{pmatrix}\sigma'&\xi'\\0&\rho'\end{pmatrix},\begin{pmatrix}g&\phi\\0&f\end{pmatrix}\right) \in \rep_{\Lambda(2)}^{(d+e,m+n)}(L), \]
which we can also write as
\[ \xi\in\Der(\rho,\sigma), \quad \xi'\in\Der(\rho',\sigma'), \quad g\xi+\phi\rho=\sigma'\phi+\xi' f. \]
The additive group $\bM_{e\times d}(L)\times\bM_{n\times m}(L)$ acted via
\[ (\gamma,\gamma')\cdot(\xi,\xi',\phi) := \big(\xi+\delta_{\rho,\sigma}(\gamma),\xi'+\delta_{\rho',\sigma'}(\gamma'),\phi+\gamma'f-g\gamma\big), \]
and from this we constructed the long exact sequence for Voigt's Lemma
\begin{multline*}
0 \longrightarrow \Hom_{\Lambda^L(2)}\big((\rho,\rho',f),(\sigma,\sigma',g)\big) \longrightarrow \bM_{e\times d}(L)\times\bM_{n\times m}(L) \overset{\delta}{\longrightarrow}\\
\Der\big((\rho,\rho',f),(\sigma,\sigma',g)\big) \longrightarrow \Ext_{\Lambda^L(2)}^1\big((\rho,\rho',f),(\sigma,\sigma',g)\big) \longrightarrow 0.
\end{multline*}
In the current situation we are fixing the representations $\rho':=\tau_M^L$ and $\sigma':=\tau_N^L$, so we restrict to the subspace
\[ \overline{\Der}\big((\rho,f),(\sigma,g)\big) := \big\{(\xi,\phi) : (\xi,0,\phi)\in\Der\big((\rho,\tau_M^L,f),(\sigma,\tau_N^L,g)\big)\big\}. \]
Accordingly we also take the subgroup
\[ \delta^{-1}\Big(\overline{\Der}\big((\rho,f),(\sigma,g)\big)\Big) = \bM_{e\times d}(L)\times\Hom_{\Lambda^L}(M^L,N^L). \]
Clearly $T_{(\rho,f)}\rep\inj_\Lambda^{(d,M)}=\overline{\Der}\big((\rho,f),(\rho,f)\big)$, so this construction specialises to the tangent spaces.

\subsubsection{Voigt's Lemma}

\begin{Lem}\label{Lem:Voigt3}
Let $(\rho,f)\in\rep\inj_\Lambda^{(d,M)}(L)$ and $(\sigma,g)\in\rep\inj_\Lambda^{(e,N)}(L)$. Then we have an exact sequence
\begin{multline*}
0 \longrightarrow \Hom_{\Lambda^L(2)}\big((\rho,\tau_M^L,f),(\sigma,\tau_N^L,g)\big) \longrightarrow \bM_{e\times d}(L)\times\Hom_{\Lambda^L}(M^L,N^L)\\
\overset{\delta}{\longrightarrow}\overline{\Der}\big((\rho,f),(\sigma,g)\big) \longrightarrow \Ext_{\Lambda^L(2)}^1\big((\rho,\tau_M^L,f),(\sigma,\tau_N^L,g)\big).
\end{multline*}
\end{Lem}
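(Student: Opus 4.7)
The plan is to deduce this sequence by restricting the full Voigt's Lemma for $\Lambda(2)$-representations (which was recalled in the paragraph preceding the statement) to the pair of $\Lambda(2)$-modules $(\rho,\tau_M^L,f)$ and $(\sigma,\tau_N^L,g)$. That gives the exact sequence
\[ 0 \to \Hom_{\Lambda^L(2)} \to \bM_{e\times d}(L)\times\bM_{n\times m}(L) \xrightarrow{\delta} \Der((\rho,\tau_M^L,f),(\sigma,\tau_N^L,g)) \xrightarrow{\varepsilon} \Ext^1_{\Lambda^L(2)} \to 0, \]
where $\delta(\gamma,\gamma')=(\gamma\rho-\sigma\gamma,\,\gamma'\tau_M^L-\tau_N^L\gamma',\,\gamma'f-g\gamma)$.

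The key observation is that the condition ``$(\xi,\xi',\phi)$ has $\xi'=0$'', i.e., the condition that defines $\overline{\Der}((\rho,f),(\sigma,g))$ as a subspace of $\Der((\rho,\tau_M^L,f),(\sigma,\tau_N^L,g))$, corresponds under $\delta$ exactly to the condition that $\gamma'\tau_M^L=\tau_N^L\gamma'$, i.e., $\gamma'\in\Hom_{\Lambda^L}(M^L,N^L)$. So I restrict the middle term $\bM_{e\times d}(L)\times\bM_{n\times m}(L)$ to the subgroup $\bM_{e\times d}(L)\times\Hom_{\Lambda^L}(M^L,N^L)$ and restrict $\delta$ and $\varepsilon$ to obtain maps $\delta'$ and $\varepsilon'$ into $\overline{\Der}$ and $\Ext^1_{\Lambda^L(2)}$ respectively.

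Exactness is then routine. Injectivity on the left follows from the full sequence, together with the fact that any $\Lambda(2)$-homomorphism $(\gamma,\gamma')\colon(\rho,\tau_M^L,f)\to(\sigma,\tau_N^L,g)$ automatically has $\gamma'\in\Hom_{\Lambda^L}(M^L,N^L)$, so $\Hom_{\Lambda^L(2)}$ embeds into the restricted subgroup. Exactness at the middle term is inherited from the full sequence since $\ker(\delta)$ already lies in the restricted subgroup. Finally, for exactness at $\overline{\Der}$: any $(\xi,\phi)\in\overline{\Der}$ with $\varepsilon'(\xi,\phi)=0$ comes, by full Voigt, from some $(\gamma,\gamma')\in\bM_{e\times d}(L)\times\bM_{n\times m}(L)$ with $\gamma'\tau_M^L-\tau_N^L\gamma'=0$, whence $\gamma'\in\Hom_{\Lambda^L}(M^L,N^L)$ and the preimage lies in the restricted subgroup. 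No step presents a real obstacle; the content is purely bookkeeping, and the main thing to be careful about is the identification of $\overline{\Der}$ with the kernel of the projection $(\xi,\xi',\phi)\mapsto\xi'$ onto the factor corresponding to the fixed representation $\tau_M^L\to\tau_N^L$.
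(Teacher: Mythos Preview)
Your proof is correct and is essentially the same argument as the paper's: both restrict the full Voigt sequence for $\Lambda(2)$ using the key observation that $\delta(\gamma,\gamma')$ lands in $\overline{\Der}$ (i.e.\ has $\xi'=0$) precisely when $\gamma'\in\Hom_{\Lambda^L}(M^L,N^L)$. The only cosmetic difference is that the paper packages the three exactness checks into a single Snake Lemma application on the square
\[
\begin{CD}
\bM_{e\times d}(L)\times\Hom_{\Lambda^L}(M^L,N^L) @>>> \bM_{e\times d}(L)\times\bM_{n\times m}(L)\\
@VVV @VVV\\
\overline{\Der}\big((\rho,f),(\sigma,g)\big) @>>> \Der\big((\rho,\tau_M^L,f),(\sigma,\tau_N^L,g)\big),
\end{CD}
\]
showing that the induced map on cokernels of the horizontal inclusions is injective, whereas you verify exactness at each term directly.
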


\begin{proof}
This is clear since $\bM_{e\times d}(L)\times\Hom_{\Lambda^L}(M^L,N^L)$ is precisely the preimage under $\delta$ of $\overline{\Der}\big((\rho,f),(\sigma,g)\big)$.
\end{proof}

We can transfer this result to Grassmannians by taking the quotient modulo the action of $\bM_{e\times d}(L)$.

\begin{Lem}
Let $(\rho,f)\in\rep\inj_\Lambda^{(d,M)}(L)$ and $(\sigma,g)\in\rep\inj_\Lambda^{(e,N)}(L)$, and choose a cokernel $(\bar\sigma,\bar g)$ for $(\sigma,g)$; that is, we have a short exact sequence
\[ 0 \to M_\sigma \xrightarrow{g} N^L \xrightarrow{\bar g} M_{\bar\sigma} \to 0. \]
Then the morphism
\[ \overline{\Der}\big((\rho,f),(\sigma,g)\big) \to \Hom_{\Lambda^L}(M_\rho,M_{\bar\sigma}), \quad (\xi,\phi)\mapsto\bar g\phi, \]
is a quotient for the $\bM_{e\times d}(L)$-action. In particular, when $(\rho,f)=(\sigma,g)$ we recover the usual isomorphism
\[ T_U\Gr_\Lambda\binom Md \cong \Hom_{\Lambda^L}(U,M^L/U). \]
\end{Lem}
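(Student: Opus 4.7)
The plan is to verify four things in turn: well-definedness, invariance, surjectivity, and the identification of fibres with orbits. The starting point is the derivation condition $g\xi + \phi\rho = \tau_N\phi$ (coming from the vanishing entry $\xi' = 0$ in the description of $\overline{\Der}$), combined with the fact that $\bar g$ is a $\Lambda^L$-module cokernel of $g$, so $\bar g \tau_N = \tau_{\bar\sigma}\bar g$ and $\bar g g = 0$, while $g$ itself is injective since $(\sigma,g)\in\rep\inj_\Lambda^{(e,N)}(L)$.

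First I would check that $\bar g\phi$ is a $\Lambda^L$-homomorphism: for each $a\in\Lambda$,
\[ (\bar g\phi)\rho(a) - \tau_{\bar\sigma}(a)(\bar g\phi) = \bar g\bigl[\phi\rho(a) - \tau_N(a)\phi\bigr] = -\bar g\,g\,\xi(a) = 0. \]
Invariance under the $\bM_{e\times d}(L)$-action is immediate because $\bar g(\phi - g\gamma) = \bar g\phi$.

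Next, surjectivity: given $\psi\in\Hom_{\Lambda^L}(M_\rho, M_{\bar\sigma})$, I would use that $0\to M_\sigma\xrightarrow{g}N\xrightarrow{\bar g}M_{\bar\sigma}\to 0$ is exact in $L$-modules to lift $\psi$ to an $L$-linear $\phi\colon M_\rho\to N$. Then, for each $a\in\Lambda$, the element $\tau_N(a)\phi - \phi\rho(a)$ lies in $\Ker(\bar g) = \Ima(g)$ precisely because $\psi$ is a module map; using injectivity of $g$, this defines a unique $\xi(a)\in\Hom_L(M_\rho,M_\sigma)$ with $g\xi(a) = \tau_N(a)\phi - \phi\rho(a)$. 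A short Leibniz-style manipulation
\[ g\xi(ab) = \tau_N(a)\bigl[\tau_N(b)\phi - \phi\rho(b)\bigr] + \bigl[\tau_N(a)\phi - \phi\rho(a)\bigr]\rho(b) = g\bigl[\sigma(a)\xi(b) + \xi(a)\rho(b)\bigr], \]
combined with the injectivity of $g$, shows $\xi$ is a derivation, so $(\xi,\phi)\in\overline{\Der}\bigl((\rho,f),(\sigma,g)\bigr)$ maps to $\psi$.

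Finally, for the fibres: if $\bar g\phi = \bar g\phi'$, then $\phi - \phi' = g\gamma$ for a unique $\gamma\in\bM_{e\times d}(L)$, and subtracting the derivation conditions for $(\xi,\phi)$ and $(\xi',\phi')$ and using injectivity of $g$ yields $\xi' - \xi = \delta_{\rho,\sigma}(\gamma)$, so the two elements lie in the same orbit. I expect no serious obstacle here — the only slightly fiddly step is keeping track of which of $\tau_M, \tau_N, \tau_{\bar\sigma}$ acts where, and repeatedly invoking injectivity of $g$ to pass from equations in $N$ to equations in $M_\sigma$. The special case $(\rho,f) = (\sigma,g)$ follows by reading off $M_\rho = U$, $M_{\bar\rho} = M^L/U$, and noting that the quotient of $T_{(\rho,f)}\rep\inj_\Lambda^{(d,M)}$ by the action of $\bM_d(L) = T_1\GL_d$ is the tangent space $T_U\Gr_\Lambda\binom{M}{d}$, since $\pi\colon\rep\inj_\Lambda^{(d,M)}\to\Gr_\Lambda\binom{M}{d}$ is a principal $\GL_d$-bundle.
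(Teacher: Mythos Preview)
Your proof is correct and follows essentially the same approach as the paper's. The only cosmetic difference is in the surjectivity step: the paper phrases it as forming the pull-back of $0\to M_\sigma\to N^L\to M_{\bar\sigma}\to 0$ along $\bar\phi\colon M_\rho\to M_{\bar\sigma}$, which packages your explicit lift-and-check-the-derivation argument into a single diagram, but the content is identical.
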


\begin{proof}
We have $g\xi+\phi\rho=\tau_N^L\phi$, so from $\bar gg=0$ and $\bar g\tau_N^L=\bar\sigma\bar g$ we get $\bar g\phi\rho=\bar g\tau_N^L\phi=\bar\sigma\bar g\phi$, whence $\bar g\phi\in\Hom_{\Lambda^L}(M_\rho,M_{\bar\sigma})$. Next, if $(\xi',\phi')$ satisfies $\bar g\phi'=\bar g\phi$, then there exists a unique $\gamma\in\bM_{e\times d}(L)$ such that $\phi'=\phi-g\gamma$. It follows that $g\xi'=g(\xi+\delta_{\rho,\sigma}(\gamma))$, so the injectivity of $g$ gives $(\xi',\phi')=\gamma\cdot(\xi,\phi)$. Finally, given any $\bar\phi\colon M_\rho\to M_{\bar\sigma}$, we can form the pull-back
\[ \begin{CD}
0 @>>> M_\sigma @>>> M_\xi @>>> M_\rho @>>> 0\\
@. @| @VV{(g,\phi)}V @VV{\bar\phi}V\\
0 @>>> M_\sigma @>{g}>> N^L @>{\bar g}>> M_{\bar\sigma} @>>> 0.
\end{CD} \]
to obtain $(\xi,\phi)\in\overline{\Der}\big((\rho,f),(\sigma,g)\big)$ such that $\bar g\phi=\bar\phi$.
\end{proof}

Given $U\in\Gr_\Lambda\binom Md(L)$, we write $U\updot:=(U\subset M^L)$ for the corresponding $\Lambda^L(2)$-representation. For convenience we also identify $M^L:=(M^L=M^L)$. It follows that  $M^L/U\updot=(M^L/U\to0)$. The analogue of Voigt's Lemma for Grassmannians can now be stated as follows.

\begin{Cor}\label{Cor:Voigt3}
Given $U\in\Gr_\Lambda\binom Md(L)$ and $V\in\Gr_\Lambda\binom Ne(L)$, we have an exact sequence
\begin{multline*}
0 \longrightarrow \Hom_{\Lambda^L(2)}(U\updot,V\updot) \longrightarrow \Hom_{\Lambda^L}(M^L,N^L)\\
\longrightarrow\Hom_{\Lambda^L}(U,N^L/V) \longrightarrow \Ext_{\Lambda^L(2)}^1(U\updot,V\updot),
\end{multline*}
which we may identify with the exact sequence
\begin{multline*}
0 \longrightarrow \Hom_{\Lambda^L(2)}(U\updot,V\updot) \longrightarrow \Hom_{\Lambda^L(2)}(U\updot,N^L)\\
\longrightarrow\Hom_{\Lambda^L(2)}(U\updot,N^L/V\updot) \longrightarrow \Ext_{\Lambda^L(2)}^1(U\updot,V\updot)
\end{multline*}
obtained by applying $\Hom_{\Lambda^L(2)}(U\updot,-)$ to the short exact sequence
\[ 0 \longrightarrow V\updot \longrightarrow N^L \longrightarrow N^L/V\updot \longrightarrow 0. \]
\end{Cor}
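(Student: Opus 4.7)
The plan is to derive the four-term exact sequence by quotienting the exact sequence of \hyperref[Lem:Voigt3]{Lemma \ref*{Lem:Voigt3}} by the free action of $\bM_{e\times d}(L)$, using the preceding lemma to identify this quotient with $\Hom_{\Lambda^L}(U,N^L/V)$. Concretely, I would first choose lifts $(\rho,f)\in\rep\inj_\Lambda^{(d,M)}(L)$ and $(\sigma,g)\in\rep\inj_\Lambda^{(e,N)}(L)$ representing $U$ and $V$, together with a cokernel $(\bar\sigma,\bar g)$ of $(\sigma,g)$ so that $M_{\bar\sigma}\cong N^L/V$. Writing $\alpha\colon\bM_{e\times d}(L)\times\Hom_{\Lambda^L}(M^L,N^L)\to\overline\Der\big((\rho,f),(\sigma,g)\big)$ for the map appearing in \hyperref[Lem:Voigt3]{Lemma \ref*{Lem:Voigt3}}, namely $\alpha(\gamma,\gamma')=(\delta_{\rho,\sigma}(\gamma),\gamma'f-g\gamma)$, one checks that $\alpha$ is equivariant for the action $\gamma_0\cdot(\gamma,\gamma')=(\gamma+\gamma_0,\gamma')$ on the left and the $\bM_{e\times d}(L)$-action on $\overline\Der$ recalled in the preceding lemma.

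The key step is then a short diagram chase. Composing $\alpha$ with the $\bM_{e\times d}(L)$-quotient $\beta\colon\overline\Der\to\Hom_{\Lambda^L}(U,N^L/V)$, $(\xi,\phi)\mapsto\bar g\phi$, the composition $\beta\alpha$ factors through the projection to the second coordinate, inducing $\bar\alpha\colon\Hom_{\Lambda^L}(M^L,N^L)\to\Hom_{\Lambda^L}(U,N^L/V)$, $\gamma'\mapsto\bar g\gamma'f$. The image of $\ker(\alpha)=\Hom_{\Lambda^L(2)}(U\updot,V\updot)$ under the projection is injective (if $(\gamma,0)\in\ker\alpha$ then $g\gamma=0$, so $\gamma=0$ by injectivity of $g$) and surjects onto $\ker\bar\alpha$ (given $\gamma'$ with $\gamma'(U)\subset V$, the restriction $\gamma'|_U$ factors as $g\gamma$ for a unique $\gamma$, and one verifies as in the proof of the preceding lemma that $\gamma\in\Hom_{\Lambda^L}(U,V)$ so $(\gamma,\gamma')\in\ker\alpha$). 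Since $\beta$ is surjective, the cokernel of $\bar\alpha$ equals the cokernel of $\alpha$, which by \hyperref[Lem:Voigt3]{Lemma \ref*{Lem:Voigt3}} embeds in $\Ext^1_{\Lambda^L(2)}(U\updot,V\updot)$. Splicing these facts together yields the desired four-term exact sequence.

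Finally I would identify this sequence with the one obtained by applying $\Hom_{\Lambda^L(2)}(U\updot,-)$ to $0\to V\updot\to N^L\to N^L/V\updot\to 0$. This reduces to the direct observations $\Hom_{\Lambda^L(2)}(U\updot,N^L)\cong\Hom_{\Lambda^L}(M^L,N^L)$ (a morphism $U\updot\to N^L=(N^L=N^L)$ is a commuting square with identity on the right, hence determined by the bottom arrow) and $\Hom_{\Lambda^L(2)}(U\updot,N^L/V\updot)\cong\Hom_{\Lambda^L}(U,N^L/V)$ (for the codomain $(N^L/V\to 0)$ only the top arrow carries information), together with the verification that the connecting map $\gamma'\mapsto\bar g\gamma'f$ produced above coincides with the natural map obtained from the short exact sequence. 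The main obstacle is bookkeeping: making sure the connecting maps agree on the nose rather than up to a non-canonical isomorphism, which is what permits the identification claimed in the last sentence of the corollary.
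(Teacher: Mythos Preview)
Your proposal is correct and follows essentially the same approach as the paper: both derive the sequence by quotienting the exact sequence of \hyperref[Lem:Voigt3]{Lemma \ref*{Lem:Voigt3}} by the free $\bM_{e\times d}(L)$-action, using the preceding lemma to identify the quotient of $\overline{\Der}$ with $\Hom_{\Lambda^L}(U,N^L/V)$, and checking injectivity on the left via the injectivity of $g$. Your version simply makes the diagram chase explicit where the paper says ``taking the quotient for the $\bM_{e\times d}(L)$-action'', and your identification of the second sequence via the descriptions of $\Hom_{\Lambda^L(2)}(U\updot,N^L)$ and $\Hom_{\Lambda^L(2)}(U\updot,N^L/V\updot)$ matches the paper's treatment.
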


\begin{proof}
The first sequence follows from \hyperref[Lem:Voigt3]{Lemma \ref*{Lem:Voigt3}} by taking the quotient for the $\bM_{e\times d}(L)$-action, as described above, and noting that, since $g\colon V\to N^L$ is injective, so too is the map
\[ \Hom_{\Lambda^L(2)}(U\updot,V\updot) \to \Hom_{\Lambda^L}(M^L,N^L). \]
We observe that the map from $\Hom_{\Lambda^L}(U,N^L/V)$ to the extension group is given by first taking a pull-back, yielding an extension $M_\xi$ of $U$ by $V$ together with a morphism $(g,\phi)\colon M_\xi\to N^L$, and then noting that this fits into an exact commutative diagram
\[ \begin{CD}
0 @>>> V @>>> M_\xi @>>> U @>>> 0\\
@. @VV{g}V @VV{\big(\begin{smallmatrix}g&\phi\\0&f\end{smallmatrix}\big)}V @VV{f}V\\
0 @>>> N^L @>>> N^L\oplus M^L @>>> M^L @>>> 0,
\end{CD} \]
which we regard as an extension of $\Lambda^L(2)$-modules.
\end{proof}

It is now clear that these maps interact well with the direct sum morphism. More precisely, the closed immersion
\[ \Gr_\Lambda\binom Md \times \Gr_\Lambda\binom Ne \to \Gr_\Lambda\binom{M\oplus N}{d+e} \]
gives rise to the embedding of tangent spaces
\[ T_U\Gr_\Lambda\binom Md \times T_V\Gr_\Lambda\binom Ne \to T_{U\oplus V}\Gr_\Lambda\binom{M\oplus N}{d+e} \]
corresponding to the standard embedding
\[ \Hom_{\Lambda^L}(U,M^L/U) \times \Hom_{\Lambda^L}(V,N^L/V) \to \Hom_{\Lambda^L}(U\oplus V,(M^L/U)\oplus(N^L/V)), \]
or equivalently the embedding
\begin{multline*}
\Hom_{\Lambda^L(2)}(U\updot,M^L/U\updot) \times \Hom_{\Lambda^L(2)}(V\updot,N^L/V\updot)\\
\to \Hom_{\Lambda^L(2)}(U\updot\oplus V\updot,M^L/U\updot\oplus N^L/V\updot).
\end{multline*}
This is compatible with the actions of the endomorphism groups
\[ \End_{\Lambda^L}(M^L)\times\End_{\Lambda^L}(N^L) \to \End_{\Lambda^L}(M^L\oplus N^L), \]
and so we see that the map $\theta_{U,V}$ is just the restriction of the standard embedding
\[ \Ext_{\Lambda^L(2)}^1(U\updot,U\updot) \times \Ext_{\Lambda^L(2)}^1(V\updot,V\updot) \to \Ext_{\Lambda^L(2)}^1(U\updot\oplus V\updot,U\updot\oplus V\updot). \]
It follows that this is always injective, so condition (3) of \hyperref[Cor:irred-cpt]{Corollary \ref*{Cor:irred-cpt}} also holds and the direct sum morphism $\Theta$ is separable on each irreducible component (with its reduced subscheme structure).

We will write $\overline\Ext(U\updot,V\updot)$ for the image of the map
\[ \Hom_{\Lambda^L}(U,N^L/V) \to \Ext_{\Lambda^L(2)}^1(U\updot,V\updot), \]
consisting of those extension classes which are pull-backs along a morphism $U\to N^L/V$. Moreover, $\overline\Ext(U\updot\oplus V\updot,U\updot\oplus V\updot)$ decomposes as
\[ \overline\Ext(U\updot,U\updot) \times \overline\Ext(V\updot,V\updot) \times \overline\Ext(U\updot,V\updot) \times \overline\Ext(V\updot,U\updot), \]
so $\theta_{U,V}$ is surjective if and only if $\overline\Ext(U\updot,V\updot)=0=\overline\Ext(V\updot,U\updot)$. In terms of extensions this says that every pull-back
\[ \begin{CD}
0 @>>> V @>{a}>> E @>{b}>> U @>>> 0\\
@. @| @VV{\phi}V @VV{\bar\phi}V\\
0 @>>> V @>{g}>> N^L @>{\bar g}>> N^L/V @>>> 0,
\end{CD} \]
gives rise to a split extension of $\Lambda^L(2)$-modules
\[ \begin{CD}
0 @>>> V @>{a}>> E @>{b}>> U @>>> 0\\
@. @VV{g}V @VV{\binom{\phi}{fb}}V @VV{f}V\\
0 @>>> N^L @>>> N^L\oplus M^L @>>> M^L @>>> 0,
\end{CD} \]
and similarly for every pull-back along some $V\to M^L/U$ of
\[ 0 \to U \to M^L \to M^L/U \to 0. \]

Note that this is a stronger condition than just saying that the pull-back of $\Lambda^L$-modules is split. For, the first pull-back is split as a sequence of $\Lambda^L$-modules if and only if there exists some $s\in\Hom_{\Lambda^L}(U,E)$ such that $bs=1$, in which case $\phi s\in\Hom_{\Lambda^L}(U,N^L)$, whereas the second diagram is split as a sequence of $\Lambda^L(2)$-modules if and only if we can find such an $s$ with the additional property that $\phi s=\gamma f$ for some $\gamma\in\Hom_{\Lambda^L}(M^L,N^L)$.

More generally we have the following result.
\begin{Lem}\label{Lem:compare-ext}
The forgetful functor $\modcat\Lambda(2)\to\modcat\Lambda$ given by taking only the first terms induces a map
\[ \Ext_{\Lambda^L(2)}^1\big(U\updot,V\updot) \to \Ext_{\Lambda^L}^1(U,V). \]
This sends $\overline\Ext(U\updot,V\updot)$ to $\overline\Ext(U,V)$, the space of extensions which are pull-backs along some $U\to N^L/V$, and has kernel isomorphic to the cokernel of
\[ \Hom_{\Lambda^L}(U,V)\times\Hom_{\Lambda^L}(M^L,N^L) \to \Hom_{\Lambda^L}(U,N^L), \  (\alpha,\beta) \mapsto g\alpha+\beta f. \]
\end{Lem}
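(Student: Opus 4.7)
The forgetful functor $F\colon \modcat\Lambda^L(2)\to\modcat\Lambda^L$ taking $(A\xrightarrow{h}B)\mapsto A$ is exact, since short exact sequences in $\modcat\Lambda^L(2)$ are checked componentwise; hence it induces a natural transformation on $\Ext^1$. The plan is to prove both assertions simultaneously by comparing the description of $\overline\Ext(U\updot,V\updot)$ from \hyperref[Cor:Voigt3]{Corollary \ref*{Cor:Voigt3}} with the standard long exact sequence computing $\overline\Ext(U,V)$.

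First I would verify the compatibility of $F$ with $\overline\Ext$. By \hyperref[Cor:Voigt3]{Corollary \ref*{Cor:Voigt3}}, an element of $\overline\Ext(U\updot,V\updot)$ is the class in $\Ext^1_{\Lambda^L(2)}(U\updot,V\updot)$ of the pull-back of the canonical sequence $0\to V\updot\to N^L\to N^L/V\updot\to 0$ along a morphism $U\updot\to N^L/V\updot$, which is nothing but a morphism $\phi\colon U\to N^L/V$. Since $F$ is exact it commutes with pull-backs, so $F$ applied to this class is precisely the pull-back of $0\to V\to N^L\to N^L/V\to 0$ along the same $\phi$, which by definition lies in $\overline\Ext(U,V)$. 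Moreover, under the presentations
\[ \overline\Ext(U\updot,V\updot)\;\cong\;\Hom_{\Lambda^L}(U,N^L/V)\,\big/\,\{\bar g\beta f:\beta\in\Hom_{\Lambda^L}(M^L,N^L)\} \]
coming from \hyperref[Cor:Voigt3]{Corollary \ref*{Cor:Voigt3}}, and
\[ \overline\Ext(U,V)\;\cong\;\Hom_{\Lambda^L}(U,N^L/V)\,\big/\,\Ima\big(\bar g\cdot\colon\Hom_{\Lambda^L}(U,N^L)\to\Hom_{\Lambda^L}(U,N^L/V)\big) \]
coming from applying $\Hom_{\Lambda^L}(U,-)$ to $0\to V\to N^L\to N^L/V\to 0$, the induced map is just the canonical quotient, since both identifications send the pull-back class to $\phi$ itself.

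Hence the kernel of the restriction equals $\Ima(\bar g\cdot)\big/\Ima(\bar g\cdot f)$, where I use that $\Ima(\bar g\cdot f)\subseteq\Ima(\bar g\cdot)$. From the exact sequence $0\to\Hom_{\Lambda^L}(U,V)\xrightarrow{g\cdot}\Hom_{\Lambda^L}(U,N^L)\xrightarrow{\bar g\cdot}\Hom_{\Lambda^L}(U,N^L/V)$ and the injectivity of $g$, I may identify $\Ima(\bar g\cdot)\cong\Hom_{\Lambda^L}(U,N^L)/(g\cdot\Hom_{\Lambda^L}(U,V))$. Pulling the subspace $\Ima(\bar g\cdot f)$ back through $\bar g\cdot$, the kernel becomes
\[ \Hom_{\Lambda^L}(U,N^L)\,\big/\,\big(g\cdot\Hom_{\Lambda^L}(U,V)+\Hom_{\Lambda^L}(M^L,N^L)\cdot f\big)\;=\;\coker\psi, \]
which is the required description. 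The only real subtlety is checking that the map $F$ really is induced by the identity on $\Hom_{\Lambda^L}(U,N^L/V)$ in the two Voigt presentations; this reduces to the already-established compatibility of $F$ with pull-backs, so the argument is essentially a bookkeeping exercise once the Voigt sequences are set up in parallel.
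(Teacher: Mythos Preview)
Your argument is correct and follows essentially the same route as the paper. Both proofs present $\overline\Ext(U\updot,V\updot)$ and $\overline\Ext(U,V)$ as quotients of $\Hom_{\Lambda^L}(U,N^L/V)$ by the nested subspaces $\{\bar g\beta f\}\subset\Ima(\bar g\cdot)$, identify the forgetful map with the canonical quotient, and read off the kernel; the only cosmetic difference is that the paper packages steps~5--7 of your computation into a single application of the Snake Lemma to the square
\[
\begin{CD}
\Hom_{\Lambda^L}(M^L,N^L) @>{\binom01}>> \Hom_{\Lambda^L}(U,N^L)\times\Hom_{\Lambda^L}(M^L,N^L)\\
@VVV @VVV\\
\Hom_{\Lambda^L}(U,N^L/V) @= \Hom_{\Lambda^L}(U,N^L/V),
\end{CD}
\]
whereas you unwind the resulting quotient by hand.
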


\begin{proof}
Consider the exact commutative diagram
\[ \minCDarrowwidth20pt \begin{CD}
\Hom_{\Lambda^L}(M^L,N^L) @>>{\binom01}> \Hom_{\Lambda^L}(U,N^L) \times \Hom_{\Lambda^L}(M^L,N^L)\\
@VVV @VVV\\
\Hom_{\Lambda^L}(U,N^L/V) @= \Hom_{\Lambda^L}(U,N^L/V).
\end{CD} \]
The vertical map on the left has cokernel $\overline\Ext(U\updot,V\updot)$, whereas the map on the right has kernel
\[ \Hom_{\Lambda^L}(U,V) \times \Hom_{\Lambda^L}(M^L,N^L) \]
and cokernel $\overline\Ext(U,V)$. Applying the Snake Lemma we get the long exact sequence
\[ \Hom_{\Lambda^L}(U,V) \times \Hom_{\Lambda^L}(M^L,N^L) \to \Hom_{\Lambda^L}(U,N^L) \to \overline\Ext(U\updot,V\updot) \to \overline\Ext(U,V) \]
as required.
\end{proof}

\subsubsection{Upper semi-continuity}

\begin{Lem}
The function sending $\big((\rho,f),(\sigma,g)\big)\in\rep\inj_\Lambda^{(d,M)}(L)\times\rep\inj_\Lambda^{(e,N)}(L)$ to $\dim_L \overline\Ext(\Ima(f)\updot,\Ima(g)\updot)$ is upper semi-continuous. Similarly for the functions sending $(U,V)\in\Gr_\Lambda\binom Md(L)\times\Gr_\Lambda\binom Ne(L)$ to either
\[ \dim_L\Hom_{\Lambda^L(2)}(U\updot,V\updot) \quad\textrm{or}\quad \dim_L \overline\Ext(U\updot,V\updot) .\]
\end{Lem}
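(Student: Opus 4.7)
The strategy is to establish upper semi-continuity first on $\rep\inj_\Lambda^{(d,M)}\times\rep\inj_\Lambda^{(e,N)}$ for all three quantities, and then transport the Grassmannian statements across the principal bundle $\pi_X\times\pi_Y$. The approach closely parallels \hyperref[Prop:usc-hom-ext]{Proposition \ref*{Prop:usc-hom-ext}}: each of the spaces in question will be realised as the fibre of an explicit scheme over $\rep\inj_\Lambda^{(d,M)}\times\rep\inj_\Lambda^{(e,N)}$, whose fibres are linear (hence cones), so that \hyperref[Cor:usc]{Corollary \ref*{Cor:usc}} applies.

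Concretely, I would first define a closed subscheme of
\[ \rep\inj_\Lambda^{(d,M)}\times\rep\inj_\Lambda^{(e,N)}\times\bM_{e\times d}^N\times\bM_{n\times d} \]
cut out by the defining conditions for $(\xi,\phi)\in\overline{\Der}\big((\rho,f),(\sigma,g)\big)$, namely $\xi_ia-b\xi_i$ type relations expressing $\xi\in\Der(\rho,\sigma)$, together with the linear identity $g\xi+\phi\rho=\tau_N^L\phi$. The fibres are precisely $\overline{\Der}\big((\rho,f),(\sigma,g)\big)$, and each is a linear subspace, so \hyperref[Cor:usc]{Corollary \ref*{Cor:usc}} yields upper semi-continuity of $\dim_L\overline{\Der}$. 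An entirely analogous construction inside
\[ \rep\inj_\Lambda^{(d,M)}\times\rep\inj_\Lambda^{(e,N)}\times\bM_{e\times d}\times\Hom_\Lambda(M,N) \]
gives a scheme with fibres $\Hom_{\Lambda^L(2)}\big((\rho,\tau_M^L,f),(\sigma,\tau_N^L,g)\big)$, so upper semi-continuity of $\dim_L\Hom_{\Lambda^L(2)}$ follows similarly.

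Next, \hyperref[Lem:Voigt3]{Lemma \ref*{Lem:Voigt3}}, together with the definition of $\overline\Ext$ as the image of $\overline{\Der}$ in $\Ext^1_{\Lambda^L(2)}$, gives the short exact sequence
\[ 0\to\Hom_{\Lambda^L(2)}\to\bM_{e\times d}(L)\times\Hom_{\Lambda^L}(M^L,N^L)\to\overline{\Der}\to\overline\Ext\to 0, \]
whence
\[ \dim_L\overline\Ext = \dim_L\overline{\Der}+\dim_L\Hom_{\Lambda^L(2)}-ed-\dim_K\Hom_\Lambda(M,N). \]
Since the last two terms are constants, and the first two are upper semi-continuous by the previous step, $\dim_L\overline\Ext$ is upper semi-continuous on $\rep\inj_\Lambda^{(d,M)}\times\rep\inj_\Lambda^{(e,N)}$, which is the first assertion of the lemma.

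Finally, to descend the statements about $\dim\Hom_{\Lambda^L(2)}(U\updot,V\updot)$ and $\dim\overline\Ext(U\updot,V\updot)$ to the Grassmannians, I use that $\pi_X\times\pi_Y$ is a principal $(\GL_d\times\GL_e)$-bundle, hence smooth, surjective, and in particular an open morphism. The two functions in question depend only on the submodules $U=\Ima(f)\subset M^L$ and $V=\Ima(g)\subset N^L$ (and not on the particular representatives $(\rho,f),(\sigma,g)$), so their pull-backs along $\pi_X\times\pi_Y$ coincide with the upper semi-continuous functions constructed on $\rep\inj\times\rep\inj$. Upper semi-continuity then descends: for each $d\in\bZ$ the preimage $\{\phi\leq d\}$ is open in $\rep\inj\times\rep\inj$, and its image under the surjective open map $\pi_X\times\pi_Y$ is the open set $\{\phi\leq d\}$ in $\Gr_\Lambda\binom{M}{d}\times\Gr_\Lambda\binom{N}{e}$. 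The only real subtlety lies in setting up the cone schemes with the correct fibres; everything else is a routine dimension count together with the pull-back/descent argument across the principal bundles.
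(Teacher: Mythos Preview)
Your proposal is correct and follows essentially the same route as the paper: realise $\overline{\Der}$ as the fibres of a scheme over $\rep\inj_\Lambda^{(d,M)}\times\rep\inj_\Lambda^{(e,N)}$, apply \hyperref[Cor:usc]{Corollary~\ref*{Cor:usc}}, then use the Voigt-type exact sequence of \hyperref[Lem:Voigt3]{Lemma~\ref*{Lem:Voigt3}} to write $\dim\overline\Ext$ as a sum of upper semi-continuous functions and constants. The paper packages the $\overline{\Der}$-scheme slightly differently (as a closed subscheme of $\rep\inj_\Lambda^{(e+d,N\oplus M)}$ given by a block-triangular shape), and for the descent to Grassmannians it phrases things dually---closed $\GL_d\times\GL_e$-stable sets have closed image under the quotient map---rather than your open-map argument, but these are cosmetic differences.
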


\begin{proof}
Consider the closed subscheme $\overline{\Der}(d,e)\subset\rep\inj_\Lambda^{(e+d,N\oplus M)}$ having $R$-valued points those pairs of the form
\[ \left( \begin{pmatrix}\sigma&\xi\\0&\rho\end{pmatrix}, \begin{pmatrix}g&\phi\\0&f\end{pmatrix} \right). \]
This comes with a projection map
\[ \pi \colon \overline{\Der}(d,e) \to \rep\inj_\Lambda^{(d,M)}\times\rep\inj_\Lambda^{(e,N)} \]
sending such a pair above to $\big((\rho,f),(\sigma,g)\big)$. The fibre of $\pi$ over an $L$-valued point $\big((\rho,f),(\sigma,g)\big)$ is isomorphic to $\overline{\Der}\big((\rho,f),(\sigma,g)\big)$, so the function
\[ \big((\rho,f),(\sigma,g)\big) \mapsto \dim_L \overline{\Der}\big((\rho,f),(\sigma,g)\big) \]
is upper semi-continuous by \hyperref[Cor:usc]{Corollary \ref*{Cor:usc}}. By the analogue of Voigt's Lemma we have
\begin{multline*}
\dim_L \overline\Ext(\Ima(f)\updot,\Ima(g)\updot) = \dim_L \overline{\Der}\big((\rho,f),(\sigma,g)\big) - de\\
- \dim_L\Hom_{\Lambda^L}(M^L,N^L) + \dim_L\Hom_{\Lambda^L(2)}(\Ima(f)\updot,\Ima(g)\updot),
\end{multline*}
so the function $\big((\rho,f),(\sigma,g)\big)\mapsto\dim_L\overline\Ext(\Ima(f)\updot,\Ima(g)\updot)$ is also upper semi-continuous.

For the Grassmannians we know that the sets
\[  \big\{ \big((\rho,f),(\sigma,g)\big) : \dim_L\Hom_{\Lambda^L(2)}\big((\rho,\tau_M^L,f),(\sigma,\tau_N^L,g)\big)\geq t \big\} \]
and
\[ \big\{ \big((\rho,f),(\sigma,g)\big) : \dim_L\overline\Ext(\Ima(f)\updot,\Ima(g)\updot)\geq t \big\} \]
are both closed inside $\rep\inj_\Lambda^{(d,M)}\times\rep\inj_\Lambda^{(d,N)}$. Since they are clearly $\GL_d\times\GL_e$-stable, their images in $\Gr_\Lambda\binom Md\times\Gr_\Lambda\binom Ne$ are also closed. 
\end{proof}

If $K$ is algebraically closed and $X\subset\Gr_\Lambda\binom Md$ and $Y\subset\Gr_\Lambda\binom Ne$ are irreducible, then we write $\overline\ext(X,Y)$ for the generic, or minimal, value of $\dim_L\overline\Ext(U\updot,V\updot)$ for $(U,V)\in(X\times Y)(L)$.

We remark that it is possible to work directly with the Grassmannians; that is, there are schemes $W$ and $Z$, together with morphisms to $\Gr_\Lambda\binom Md\times\Gr_\Lambda\binom Ne$, such that the fibres over an $L$-valued point $(U,V)$ are, respectively, $\Hom_{\Lambda^L(2)}(U\updot,V\updot)$ and $\Hom_{\Lambda^L}(U,N^L/V)$. It follows that the functions sending $(U,V)$ to the dimension of $\Hom_{\Lambda^L(2)}(U\updot,V\updot)$ and $\Hom_{\Lambda^L}(U,N^L/V)$ are both upper semi-continuous on $\Gr_\Lambda\binom Md\times\Gr_\Lambda\binom Ne$, and hence so too is the function sending $(U,V)$ to
\begin{multline*}
 \dim_L\overline\Ext(U\updot,V\updot) = \dim_L\Hom_{\Lambda^L}(U,N^L/V)\\
- \dim_L\Hom_{\Lambda^L}(M^L,N^L) + \dim_L\Hom_{\Lambda^L(2)}(U\updot,V\updot).
\end{multline*}

In fact we even have a commutative square
\[ \begin{CD}
\overline{\Der}(d,e) @>>> \rep\inj_\Lambda^{(d,M)}\times\rep\inj_\Lambda^{(e,N)}\\
@VVV @VVV\\
Z @>>> \Gr_\Lambda\binom Md\times\Gr_\Lambda\binom Ne
\end{CD} \]
where the left-hand morphism is a principal bundle for the natural action of the parabolic subgroup $P_{e,d}\leq\GL_{e+d}$ having zeros in the bottom left $d\times e$ block.

To see this, we observe that the tautological bundle $\mathcal R$ on $\Gr_K\binom Md$ is isomorphic to the associated fibration $\inj_{m\times d}\times^{\GL_d}\bM_{d\times1}$, and so its dual is isomorphic to the associated fibration $\inj_{m\times d}\times^{\GL_d}\bM_{1\times d}$. In particular there is a principal $\GL_d$-bundle $\inj_{m\times d}\times\bM_{d\times 1}\to\mathcal R$. The analogous statement for the quotient bundle $\mathcal Q$ on $\Gr_K\binom Ne$ would involve surjective maps, not injective ones; instead we may consider the semi-direct product $\GL_e\ltimes\bA^e$ (equivalently the subgroup of $\GL_{e+1}$ consisting of those matrices having bottom row $(0,0,\ldots,0,1)$) acting on the right on $\inj_{n\times e}\times\bM_{n\times 1}$ via $(f,y)\cdot(\gamma,x):=(f\gamma,y+f(x))$. Then the morphism $\inj_{n\times e}\times\bM_{n\times 1}\to\mathcal Q$ is a principal $\GL_e\ltimes\bA^e$-bundle. Finally, taking the dual of the tautological bundle on $\Gr_K\binom Md$ and the quotient bundle on $\Gr_K\binom Ne$, we can pull them back to obtain vector bundles on the product $\Gr_K\binom Md\times\Gr_K\binom Md$, and then tensor them together to obtain the vector bundle
\[ \mathcal R^\ast\boxtimes\mathcal Q \to \Gr_K\binom Md\times\Gr_K\binom Ne, \]
whose fibre over an $L$-valued point $(U,V)$ is $\Hom_L(U,N^L/V)$. Combining with the above principal bundles we conclude that there is a principal $P_{e,d}$-bundle
\[ \overline{\Der}_K(d,e) \to \mathcal R^\ast\boxtimes\mathcal Q \]
where $\overline{\Der}_K(d,e)\subset\inj_{(n+m)\times(e+d)}$ consists of those matrices whose lower left $m\times e$ block is zero, so
\[ \overline{\Der}_K(d,e) \cong \big(\inj_{m\times d}\times\inj_{n\times e})\times\bM_{n\times d}. \]
If we now restrict to $\overline{\Der}(d,e)=\overline{\Der}_K(d,e)\cap\rep\inj_\Lambda^{(e+d,N\oplus M)}$, then this is closed and $P_{d,e}$-stable, so yields the required principal $P_{d,e}$-bundle $\overline{\Der}(d,e)\to Z$ by \hyperref[Lem:principal-G-bundle-subscheme]{Lemma \ref*{Lem:principal-G-bundle-subscheme}}.

As for $W$, we know that $\Hom_{\Lambda^L(2)}(U\updot,V\updot)$ is the kernel of the homomorphism $\Hom_{\Lambda^L}(M^L,N^L)\to\Hom_{\Lambda^L}(U,N^L/V)$, so we can define $W$ to be the kernel of the morphism from the trivial vector bundle with fibre $\Hom_{\Lambda^L}(M^L,N^L)$ to $Z$.

\subsubsection{Surjectivity of the differential}

It remains to show that condition (4) of \hyperref[Cor:irred-cpt]{Corollary \ref*{Cor:irred-cpt}} holds for the direct sum morphism
\[ \Theta \colon \!\GL_{d+e}\times\Aut_\Lambda(M\oplus N)\times\rep\inj_\Lambda^{(d,M)}\!\times\!\rep\inj_\Lambda^{(e,N)} \to \rep\inj_\Lambda^{(d+e,M\oplus N)}\!. \]
This follows from the next proposition.

\begin{Prop}\label{Prop:surj-diff3}
The following are equivalent for points $(\rho,f)\in\rep\inj_\Lambda^{(d,M)}(L)$ and $(\sigma,g)\in\rep\inj_\Lambda^{(e,N)}(L)$.
\begin{enumerate}
\item $d_{(\rho,f),(\sigma,g)}^{(r)}\Theta$ is surjective for all $r\in[1,\infty]$.
\item $d_{(\rho,f),(\sigma,g)}^{(\infty)}\Theta$ is surjective for some $r\in[1,\infty]$.
\item $\theta_{(\rho,f),(\sigma,g)}$ is surjective.
\end{enumerate}
In particular, condition (4) of \hyperref[Cor:irred-cpt]{Corollary \ref*{Cor:irred-cpt}} holds true.
\end{Prop}

\begin{proof}
$(1)\Rightarrow(2)\colon$ Trivial.

$(2)\Rightarrow(3)\colon$ Set $U:=\Ima(f)$ and $V:=\Ima(g)$. Suppose we have $(\xi,\phi)\in\overline{\Der}\big((\rho,f),(\sigma,g)\big)$ such that $(M_\xi\subset M^L\oplus N^L)$ is not isomorphic to $U\updot\oplus V\updot$; that is, the image of $\bar g\phi\in\Hom_{\Lambda^L}(U,N^L/V)$ is non-zero in $\overline\Ext(U\updot,V\updot)$, where $\bar g$ is a cokernel for $g$. Then $\big((\rho\oplus\sigma)+\xi t,(f\oplus g)+\phi t\big)$ lies in $T_{(\rho\oplus\sigma,f\oplus g)}^{(\infty)}\rep\inj_\Lambda^{(d+e,M\oplus N)}$ but not in the image of $d_{(\rho,f),(\sigma,g)}^{(\infty)}\Theta$.

$(3)\Rightarrow(1)\colon$ We again use \hyperref[Prop:surj-diff1]{Proposition \ref*{Prop:surj-diff1}} as a guide. Consider an element $(A,\Phi)\in\rep\inj_\Lambda^{(d+e,M\oplus N)}(D_r)$, where $A=\sum_iA_it^i$ and $\Phi=\sum_i\Phi_it_i$, say
\[ A_0 = \begin{pmatrix}\rho&0\\0&\sigma\end{pmatrix}, \quad A_i = \begin{pmatrix}\xi_i&y_i\\x_i&\eta_i\end{pmatrix}, \quad \Phi_0 = \begin{pmatrix}f&0\\0&g\end{pmatrix}, \quad \Phi_i = \begin{pmatrix}\theta_i&\beta_i\\\alpha_i&\phi_i\end{pmatrix}. \]
Let $s\geq1$ be minimal such that not all $x_s,y_s,\alpha_s,\beta_s$ are zero. Regard $(A,\Phi)$ as an element of $\rep\hom_\Lambda^{((s+1)(d+e),M\oplus N)}(L)$ by restriction of scalars. We construct the subrepresentations $U\leq V$ as before, as well as the morphisms
\begin{align*}
(f,\theta_1,\ldots,\theta_{s-1}) &\colon U \to M^L
\intertext{and}
\begin{pmatrix}f&\theta_1&\cdots&\theta_{s-1}&0&\theta_s\\0&0&\cdots&0&g&\alpha_s\end{pmatrix}
&\colon V \to M^L\oplus N^L.
\end{align*}
It follows that
\[ V/U \leftrightarrow \begin{pmatrix} \sigma&x_s\\0&\rho \end{pmatrix} \quad\textrm{and}\quad (g,\alpha_s) \colon V/U \to N^L. \]
We conclude that $(x_s,\alpha_s)\in\overline{\Der}\big((\rho,f),(\sigma,g)\big)$. Now by assumption $\theta_{(\rho,f),(\sigma,g)}$ is surjective, so in particular $\overline\Ext(\Ima(f)\updot,\Ima(g)\updot)=0$. Thus Voigt's Lemma tells us that $(x_s,\alpha_s)=\big(\gamma\rho-\sigma\gamma,\gamma'f-g\gamma\big)$ for some $(\gamma,\gamma')\in\bM_{e\times d}(L)\times\Hom_{\Lambda^L}(M^L,N^L)$. Similarly we can find $(\delta,\delta')$ such that $(y_2,\beta_2)=(\delta\sigma-\rho\delta,\delta'g-f\delta)$. Set $g:=1-\begin{pmatrix}0&\delta\\\gamma&0\end{pmatrix}t^s\in\GL_{d+e}(D_r)$ and $h:=1-\begin{pmatrix}0&\delta'\\\gamma'&0\end{pmatrix}t^s\in\Aut_{\Lambda^{D_r}}(M^{D_r},N^{D_r}$. Then conjugating $(A,\Phi)$ by $(g,h)$ yields another element of $\rep\inj_\Lambda^{(d+e,M\oplus N)}(D_r)$ but now with $x_i,y_i,\alpha_i,\beta_i$ all zero for all $i\leq s$. Now set $g$ to be the product of all the $g_s$ (which again makes sense also for $r=\infty$). Then $g\cdot(A,\Phi)$ has zero off-diagonal entries for the coefficient of each $t^i$, and hence lies in the image of $d^{(r)}\Theta$.
\end{proof}

\subsection{Irreducible Components}

We can now prove the analogues of \hyperref[Thm:CBS1]{Theorems \ref*{Thm:CBS1}} and \hyperref[Thm:CBS2]{\ref*{Thm:CBS2}} for the Grassmannians $\Gr_\Lambda\binom Md$.

\begin{Thm}
Let $K$ be algebraically closed, $\Lambda$ a finitely-generated $K$-algebra, and $M$ and $N$ two $\Lambda$-modules. Let $X\subset\Gr_\Lambda\binom Md$ and $Y\subset\Gr_\Lambda\binom Ne$ be irreducible components. Then the closure $\overline{X\oplus Y}$ of the image of $\Aut_\Lambda(M\oplus N)\times X\times Y\to\Gr_\Lambda\binom{M\oplus N}{d+e}$ is again an irreducible component if and only if $\overline\ext(X,Y)=0=\overline\ext(Y,X)$.
\end{Thm}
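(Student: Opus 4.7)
The plan is to verify that the morphism
\[ \Theta \colon \Aut_\Lambda(M\oplus N)\times\Gr_\Lambda\binom Md\times\Gr_\Lambda\binom Ne \to \Gr_\Lambda\binom{M\oplus N}{d+e} \]
satisfies all four hypotheses of \hyperref[Cor:irred-cpt]{Corollary \ref*{Cor:irred-cpt}} with respect to the natural action of $G = \Aut_\Lambda(M\oplus N)$ and the subgroup $H = \Aut_\Lambda(M)\times\Aut_\Lambda(N)$, and then simply unpack what the corollary gives us. Most of this verification has already been assembled in the preceding subsections; the proof is mostly a matter of citing it.

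First, I would use the passage to principal bundles from the previous section to transfer the problem to the corresponding direct sum morphism on the schemes $\rep\inj_\Lambda^{(d,M)}$, since we have shown that the two versions of $\Theta$ satisfy the four conditions of \hyperref[Cor:irred-cpt]{Corollary \ref*{Cor:irred-cpt}} simultaneously, and that the associated maps $\theta_x$ may be identified. Conditions (1) and (2) were established immediately after introducing this morphism: the stabiliser of $(U,V)$ in $H$ is $\Aut_{\Lambda(2)}(U\updot)\times\Aut_{\Lambda(2)}(V\updot)$, which is open in the endomorphism algebra and hence smooth; and the $H(\bar L)$-orbits on $\big(\!\Orb_G(U\oplus V)\cap (X\times Y)\big)(\bar L)$ are finite by a Krull--Remak--Schmidt argument applied to the category of $\Lambda(2)$-modules (cf.\ \hyperref[Lem:fin-many-orbits]{Lemma \ref*{Lem:fin-many-orbits}}).

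For condition (3), I would appeal to the analogue of Voigt's Lemma (\hyperref[Cor:Voigt3]{Corollary \ref*{Cor:Voigt3}}) and the resulting identification of $\theta_{(U,V)}$ with the canonical embedding
\[ \overline\Ext(U\updot,U\updot)\times\overline\Ext(V\updot,V\updot) \hookrightarrow \overline\Ext\bigl(U\updot\oplus V\updot,\,U\updot\oplus V\updot\bigr), \]
which is always injective; we also saw that the target decomposes as
\[ \overline\Ext(U\updot,U\updot)\times\overline\Ext(V\updot,V\updot)\times\overline\Ext(U\updot,V\updot)\times\overline\Ext(V\updot,U\updot), \]
so $\theta_{(U,V)}$ is surjective precisely when $\overline\Ext(U\updot,V\updot) = 0 = \overline\Ext(V\updot,U\updot)$. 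Condition (4) is exactly the content of \hyperref[Prop:surj-diff3]{Proposition \ref*{Prop:surj-diff3}}.

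Having verified all four conditions, \hyperref[Cor:irred-cpt]{Corollary \ref*{Cor:irred-cpt}} states that $\overline{X\oplus Y}$ is an irreducible component of $\Gr_\Lambda\binom{M\oplus N}{d+e}$ if and only if $\theta_{(U,V)}$ is surjective for all $(U,V)$ in an open dense subset of $X\times Y$. By the decomposition above and the upper semi-continuity of $(U,V)\mapsto\dim_L\overline\Ext(U\updot,V\updot)$ (and similarly with the roles of $U$ and $V$ swapped), this is equivalent to saying that these two functions vanish generically on $X\times Y$, which is precisely the condition $\overline\ext(X,Y) = 0 = \overline\ext(Y,X)$. Since $K$ is algebraically closed, $X\times Y$ is irreducible, so the generic values $\overline\ext(X,Y)$ and $\overline\ext(Y,X)$ are well-defined, completing the proof. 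No step is a genuine obstacle here; essentially all of the technical work has been done in the preceding subsections, and the theorem is the formal payoff.
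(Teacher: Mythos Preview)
Your proposal is correct and follows essentially the same approach as the paper: the paper's own proof is simply the one-line statement that the result follows from \hyperref[Cor:irred-cpt]{Corollary \ref*{Cor:irred-cpt}} applied to the direct sum morphism $\Theta$, with all four hypotheses having been verified in the preceding subsections exactly as you cite them. Your write-up is just a more explicit unpacking of which earlier results establish which conditions, together with the translation of generic surjectivity of $\theta_{(U,V)}$ into the vanishing of $\overline\ext(X,Y)$ and $\overline\ext(Y,X)$.
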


\begin{proof}
This follows from \hyperref[Cor:irred-cpt]{Corollary \ref*{Cor:irred-cpt}} applied to the direct sum morphism $\Theta$.
\end{proof}

\begin{Thm}
Every irreducible component $X\subset\Gr_\Lambda\binom Md$ can be written uniquely (up to reordering) as $X=\overline{X_1\oplus\cdots\oplus X_n}$, where $M\cong M_1\oplus\cdots\oplus M_n$ and $d=d_1+\cdots+d_n$, and each $X_i\subset\Gr_\Lambda\binom{M_i}{d_i}$ is an irreducible component such that for all $U_i$ in an open dense subset of $X_i$, the corresponding $\Lambda^L(2)$-module $(U_i\subset M_i^L)$ is indecomposable.
\end{Thm}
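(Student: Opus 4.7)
The plan is to imitate the proof of the analogous statement \hyperref[Thm:CBS2]{Theorem \ref*{Thm:CBS2}} for the representation schemes, replacing $\Lambda$-modules by $\Lambda(2)$-modules of the form $(U\subset M)$. The argument splits into existence (by Noetherian induction on $\dim_K M$) and uniqueness (from Krull--Remak--Schmidt applied at a generic point).

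For existence, since $M$ has finite length, Krull--Remak--Schmidt yields only finitely many decompositions $M\cong M_1\oplus M_2$ up to isomorphism; together with the finitely many partitions $d=d_1+d_2$ with $0\leq d_i\leq\dim M_i$, one obtains a finite family of closed subsets
\[ Z_{M_1,M_2,d_1,d_2} := \overline{\Gr_\Lambda\binom{M_1}{d_1}\oplus\Gr_\Lambda\binom{M_2}{d_2}} \subset \Gr_\Lambda\binom{M}{d}. \]
A point $U$ of the Grassmannian lies in one of these sets with $0<d_i<d$ precisely when $(U\subset M^L)$ decomposes non-trivially as a $\Lambda^L(2)$-module, because any such direct sum decomposition of $(U\subset M^L)$ forces a compatible decomposition $M^L=M_1\oplus M_2$, which by Krull--Remak--Schmidt descends to a decomposition of $M$ over $K$. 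If $X$ is an irreducible component whose generic point corresponds to a decomposable $\Lambda(2)$-module, then $X$ is contained in this finite union; by irreducibility of $X$ it lies in a single $Z_{M_1,M_2,d_1,d_2}$, which itself is a finite union of irreducible closed subsets $\overline{X_1\oplus X_2}$ as $X_j$ ranges over the irreducible components of $\Gr_\Lambda\binom{M_j}{d_j}$. Thus $X\subset\overline{X_1\oplus X_2}$ for some choice, and maximality of $X$ as an irreducible component gives $X=\overline{X_1\oplus X_2}$. Noetherian induction on $\dim_K M$, applied to $X_1$ and $X_2$ (which live in Grassmannians with strictly smaller ambient modules), concludes.

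For uniqueness, suppose $X=\overline{X_1\oplus\cdots\oplus X_m}=\overline{Y_1\oplus\cdots\oplus Y_n}$ with each $X_i\subset\Gr_\Lambda\binom{M_i}{d_i}$ and $Y_j\subset\Gr_\Lambda\binom{N_j}{e_j}$ generically indecomposable. On an open dense subset of $X$ both decompositions are simultaneously realised: for a generic $L$-point $U$ one has $(U\subset M^L)\cong\bigoplus_i(U_i\subset M_i^L)\cong\bigoplus_j(V_j\subset N_j^L)$ with each summand indecomposable, $U_i$ a generic $L$-point of $X_i$, and $V_j$ a generic $L$-point of $Y_j$. Krull--Remak--Schmidt in $\modcat\Lambda^L(2)$ forces $m=n$ and, after reordering, $(U_i\subset M_i^L)\cong(V_i\subset N_i^L)$; in particular $M_i\cong N_i$ as $\Lambda$-modules. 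After fixing such isomorphisms, both $X_i$ and $Y_i$ are irreducible components of the same $\Gr_\Lambda\binom{M_i}{d_i}$ whose generic points are identified (possibly after moving by $\Aut_\Lambda(M_i)$), whence $X_i=Y_i$.

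The main obstacle is controlling the fibres of the direct sum morphism $\Theta$ over a generic point of $X$: one must show that as $U$ ranges over an open dense subset of $X$, the tuples $(g,U_1,\ldots,U_m)$ satisfying $U=g\cdot(U_1\oplus\cdots\oplus U_m)$ sweep out a dense preimage, so that each $U_i$ traces out an open dense subset of $X_i$. This rests on the finiteness of the $\Aut_\Lambda(M)(L)$-orbits on the set of Krull--Remak--Schmidt decompositions of $(U\subset M^L)$, which is immediate since any two such decompositions differ by an automorphism of $M$. Once this openness is in hand, $X_i\cap Y_i$ contains an open dense subset of each, forcing $X_i=Y_i$ and completing uniqueness.
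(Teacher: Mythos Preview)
Your argument is essentially the paper's own: it too says to rerun the proof of \hyperref[Thm:CBS2]{Theorem \ref*{Thm:CBS2}} inside $\rep\inj_\Lambda^{(d,M)}\subset\rep_{\Lambda(2)}^{(d,m)}$, using Krull--Remak--Schmidt for $\Lambda(2)$-modules to get both existence (via the finite cover by direct-sum loci and Noetherian induction) and uniqueness (via the open projection $\Aut_\Lambda(M)\times X_1\times\cdots\times X_n\to X_i$). One small slip: in the existence step the relevant finiteness condition is $M_1,M_2\neq 0$ (so that the induction on $\dim_K M$ actually decreases), not $0<d_i<d$, and the ``precisely when'' is only an inclusion --- a point in the closure $Z_{M_1,M_2,d_1,d_2}$ need not itself be decomposable --- but only that inclusion is used.
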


\begin{proof}
This follows by applying the arguments of the proof of \hyperref[Thm:CBS2]{Theorem \ref*{Thm:CBS2}} to the locally-closed subscheme $\rep\inj_\Lambda^{(d,M)}\subset\rep_{\Lambda(2)}^{(d,m)}$. In particular, by the Krull-Remak-Schmidt Theorem for $\Lambda(2)$, every module of the form $(U\subset M)$ is isomorphic to a direct sum of indecomposable representations, which are necessarily of the form $(U_i\subset M_i)$ with $U\cong\bigoplus_iU_i$ and $M\cong\bigoplus_iM_i$.
\end{proof}

We again remark that both of these results can be refined to the case where one considers dimension vectors with respect to some complete set of orthogonal idempotents. Note that, since we are taking a $\GL_d$-quotient, we have $\Gr_\Lambda\binom Md=\coprod_{d\updot}\Gr_\Lambda\binom{M}{d\updot}$.

\subsection{Constructing irreducible components}

Given a Grassmannian of submodules $\Gr_\Lambda\binom Md$ one can look at the subscheme $\mathcal S_\rho$ given by those submodules isomorphic to a given module $M_\rho$. This is irreducible, and so it is natural to ask when such a subscheme is dense in an irreducible component of the Grassmannian. (Compare with \cite[\S4]{Reineke} for related work involving certain types of flags of representations of Dynkin quivers arising from desingularisations.)

Fix $\tau\in\rep_\Lambda^m(K)$ such that $M\cong M_\tau$. Given $\rho\in\rep_\Lambda^d(K)$, write $\inj_\Lambda(\rho,\tau)\subset\Hom_\Lambda(\rho,\tau)$ for the open subscheme of injective homomorphisms, so a smooth and irreducible scheme. We observe that this is also the fibre over $\rho$ of the projection $\rep\inj_\Lambda^{(d,M)}\to\rep_\Lambda^d$.

\begin{Thm}
Given $\rho\in\rep_\Lambda^d(K)$, the quotient faisceau
\[ \mathcal S_\rho := \inj_\Lambda(\rho,\tau)/\Aut_\Lambda(M_\rho) \]
is a subscheme of $\Gr_\Lambda\binom Md$. In particular, $\mathcal S_\rho$ is smooth and irreducible, and the morphism $\inj_\Lambda(\rho,\tau)\to\mathcal S_\rho$ is smooth, affine and separable, and a universal geometric quotient.
\end{Thm}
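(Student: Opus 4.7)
The plan is to realize $\mathcal{S}_\rho$ as the descent of a $\GL_d$-stable subscheme of $\rep\inj_\Lambda^{(d,M)}$ along the principal $\GL_d$-bundle $\pi\colon\rep\inj_\Lambda^{(d,M)}\to\Gr_\Lambda\binom Md$. First I would consider the orbit $\Orb_{\GL_d}(\rho)\subset\rep_\Lambda^d$; by \hyperref[Prop:group-action]{Proposition \ref*{Prop:group-action}} this is a smooth irreducible subscheme, the key input being that $\Stab_{\GL_d}(\rho)\cong\Aut_\Lambda(M_\rho)$ is smooth as an open subscheme of the vector space $\End_\Lambda(M_\rho)$. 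Let $p\colon\rep\inj_\Lambda^{(d,M)}\to\rep_\Lambda^d$ be the $\GL_d$-equivariant projection forgetting the injection and set $W_\rho:=p^{-1}(\Orb_{\GL_d}(\rho))$. This is a locally-closed $\GL_d$-stable subscheme whose fibre over $\rho$ is precisely $\inj_\Lambda(\rho,\tau)$.

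The structural step is to identify $W_\rho$ with an associated fibration. The morphism $(g,\phi)\mapsto g\cdot(\rho,\phi)=(g\rho g^{-1},\phi g^{-1})$ fits into a commutative square
\[ \begin{CD}
\GL_d\times\inj_\Lambda(\rho,\tau) @>>> W_\rho\\
@VV{\mathrm{pr}_1}V @VV{p}V\\
\GL_d @>{\mu_\rho}>> \Orb_{\GL_d}(\rho)
\end{CD} \]
that is in fact a pull-back: given an $R$-valued point $(g,(\sigma,f))$ of the fibre product, the hypothesis $\sigma=g\rho g^{-1}$ together with the defining relation $f\sigma=\tau^R f$ forces $\phi:=fg$ to satisfy $\phi\rho=\tau^R\phi$, and $\phi$ is injective iff $f$ is. Since the upper morphism is constant on $\Aut_\Lambda(M_\rho)$-orbits, \hyperref[Lem:assoc-fib]{Lemma \ref*{Lem:assoc-fib}} yields $W_\rho\cong\GL_d\times^{\Aut_\Lambda(M_\rho)}\inj_\Lambda(\rho,\tau)$. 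Then \hyperref[Lem:principal-G-bundle-subscheme]{Lemma \ref*{Lem:principal-G-bundle-subscheme}} applied to the $\GL_d$-stable $W_\rho\subset\rep\inj_\Lambda^{(d,M)}$ produces a subscheme $\mathcal{S}_\rho\subset\Gr_\Lambda\binom Md$ such that $W_\rho\to\mathcal{S}_\rho$ is again a principal $\GL_d$-bundle, and \hyperref[Lem:principal-G-bundle]{Lemma \ref*{Lem:principal-G-bundle}} identifies $\mathcal{S}_\rho\cong W_\rho/\GL_d$ as faisceaux. Collapsing the double quotient gives $\mathcal{S}_\rho\cong\inj_\Lambda(\rho,\tau)/\Aut_\Lambda(M_\rho)$, confirming the stated description.

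With the faisceau identification established, the remaining assertions are formal. \hyperref[Lem:geom-quotient]{Lemma \ref*{Lem:geom-quotient}}(1) gives that $\inj_\Lambda(\rho,\tau)\to\mathcal{S}_\rho$ is faithfully flat and a universal geometric quotient. Since $\Aut_\Lambda(M_\rho)$ is smooth and affine, \hyperref[Cor:geom-quotient]{Corollary \ref*{Cor:geom-quotient}}(2) makes this morphism smooth and affine, and part (3) transfers smoothness of $\inj_\Lambda(\rho,\tau)$ (open in a vector space) to $\mathcal{S}_\rho$; part (4) yields separability, using that $\inj_\Lambda(\rho,\tau)$ is already integral. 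Irreducibility of $\mathcal{S}_\rho$ is immediate from surjectivity of $\pi$ and irreducibility of $\inj_\Lambda(\rho,\tau)$. The proof is essentially bookkeeping within the framework of Section \ref{Sec:group-schemes}, and I do not anticipate a genuine obstacle; the most delicate single point is the verification of the pull-back square, which rests on the functorial description of $\Orb_{\GL_d}(\rho)$ from \hyperref[Prop:group-action]{Proposition \ref*{Prop:group-action}}(1) together with the freeness of the $\GL_d$-action on $\rep\inj_\Lambda^{(d,M)}$.
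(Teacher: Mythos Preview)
Your proposal is correct and follows essentially the same route as the paper: define the $\GL_d$-stable preimage of $\Orb_{\GL_d}(\rho)$ in $\rep\inj_\Lambda^{(d,M)}$ (your $W_\rho$ is the paper's $\widetilde{\mathcal S}_\rho$), identify it as the associated fibration $\GL_d\times^{\Aut_\Lambda(M_\rho)}\inj_\Lambda(\rho,\tau)$ via the same pull-back square and \hyperref[Lem:assoc-fib]{Lemma \ref*{Lem:assoc-fib}}, descend along the principal bundle using \hyperref[Lem:principal-G-bundle-subscheme]{Lemma \ref*{Lem:principal-G-bundle-subscheme}}, and read off the remaining properties from \hyperref[Cor:geom-quotient]{Corollary \ref*{Cor:geom-quotient}}. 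Your write-up is slightly more explicit than the paper's in citing which part of \hyperref[Cor:geom-quotient]{Corollary \ref*{Cor:geom-quotient}} gives which conclusion, but the argument is the same.
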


\begin{proof}
We begin by defining $\widetilde{\mathcal S}_\rho$ to be the preimage of $\Orb_{\GL_d}(\rho)$ in $\rep\inj_\Lambda^{(d,M)}$, so a $\GL_d$-stable subscheme. We have a morphism
\[ \phi \colon \GL_d\times\inj_\Lambda(\rho,\tau) \to \widetilde{\mathcal S}_\rho, \quad (g,f) \mapsto g\cdot(\rho,f) = (g\rho g^{-1},fg^{-1}), \]
which is constant on $\Aut_\Lambda(M_\rho)$-orbits and fits into a commutative square
\begin{equation}\label{eq:tilde-S-rho}
\begin{CD}
\GL_d\times\inj_\Lambda(\rho,\tau) @>>> \widetilde{\mathcal S}_\rho\\
@VVV @VVV\\
\GL_d @>>> \Orb_{\GL_d}(\rho).
\end{CD}
\end{equation}
This is a pull-back diagram, since given an $R$-valued point $(g,(\sigma,f'))$ in the fibre product, we have $\sigma=g\cdot\rho^R$, so that $f'g\in\inj_\Lambda(\rho,\tau)(R)$ and hence $(g,(\sigma,f'))$ is the image of $(g,f'g)$. It follows from \hyperref[Lem:assoc-fib]{Lemma \ref*{Lem:assoc-fib}} that $\widetilde{\mathcal S}_\rho$ is isomorphic to the associated fibration $\GL_d\times^{\Aut_\Lambda(M_\rho)}\inj_\Lambda(\rho,\tau)$.

Next consider the principal $\GL_d$-bundle $\pi\colon\rep\inj_\Lambda^{(d,M)}\to\Gr_\Lambda\binom Md$. Since $\widetilde{\mathcal S}_\rho$ is $\GL_d$-stable, we can apply \hyperref[Lem:principal-G-bundle-subscheme]{Lemma \ref*{Lem:principal-G-bundle-subscheme}} to obtain a subscheme $\mathcal S_\rho\subset\Gr_\Lambda\binom Md$ such that $\pi\colon\widetilde{\mathcal S}_\rho\to\mathcal S_\rho$ is again a principal $\GL_d$-bundle.

Putting this together we see that $\GL_d\times\Aut_\Lambda(M_\rho)$ acts freely on $\GL_d\times\inj_\Lambda(\rho,\tau)$ via
\[ (g,a)\cdot(h,f) := (gha^{-1},fa^{-1}) \]
and
\[ \mathcal S_\rho\cong\big(\GL_d\times\inj_\Lambda(\rho \tau)\big)/\big(\GL_d\times\Aut_\Lambda(M_\rho)\big) \cong\inj_\Lambda(\rho,\tau)/\Aut_\Lambda(M_\rho). \]
The other properties now follow from \hyperref[Cor:geom-quotient]{Corollary \ref*{Cor:geom-quotient}}.
\end{proof}

\begin{Cor}
Let $\rho\in\rep_\Lambda^d(K)$. If $D_r=L[[t]]/(t^{r+1})$ for some field $L$ and some $r\in[0,\infty]$, then the morphism $\inj_\Lambda(\rho,\tau)(D_r)\to\mathcal S_\rho(D_r)$ is onto. In particular,
\[ \mathcal S_\rho(L) = \Big\{ U\in\Gr_\Lambda\binom Md(L) : U\cong M_\rho^L \Big\}. \]
\end{Cor}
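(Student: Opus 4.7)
The plan is to mirror Lemma \ref{Lem:orbit-points} exactly, with the smooth separable morphism $\pi\colon\inj_\Lambda(\rho,\tau)\to\mathcal S_\rho$ from the preceding theorem playing the role that $\mu_\rho\colon\GL_d\to\Orb_{\GL_d}(\rho)$ played there. I will treat the case $r=0$ first, then use formal smoothness to bootstrap to $r\in[1,\infty]$, and finally deduce the set-theoretic description of $\mathcal S_\rho(L)$ as an immediate consequence.

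For $r=0$ and $L$ algebraically closed, the preceding theorem presents $\mathcal S_\rho$ as the quotient faisceau $\inj_\Lambda(\rho,\tau)/\Aut_\Lambda(M_\rho)$, so the remark following Lemma \ref{Lem:geom-quotient} gives the bijection $\mathcal S_\rho(L)=\inj_\Lambda(\rho,\tau)(L)/\Aut_\Lambda(M_\rho)(L)$, hence surjectivity of the map on $L$-points. For a general field $L$ with algebraic closure $F$, any $U\in\mathcal S_\rho(L)$ base-changes to $U^F\in\mathcal S_\rho(F)$, so the algebraically closed case yields $U^F\cong M_\rho^F$ as $\Lambda^F$-modules. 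Flat base change of $\Hom$ then gives $\dim_L\Hom_{\Lambda^L}(U,N)=\dim_L\Hom_{\Lambda^L}(M_\rho^L,N)$ for every $\Lambda^L$-module $N$, and the same Auslander--Reiten argument of Lemma \ref{Lem:orbit-points} (citing \cite{AR}) then yields an isomorphism $M_\rho^L\xrightarrow\sim U$; composing with the inclusion $U\hookrightarrow M^L$ produces the required lift $f\in\inj_\Lambda(\rho,\tau)(L)$.

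For $r\geq 1$ the theorem just proved asserts that $\pi$ is smooth and (being affine of finite type) locally of finite presentation, so it satisfies the infinitesimal lifting property along any nilpotent thickening. Given $U\in\mathcal S_\rho(D_r)$ with $r$ finite, take its reduction $U_0\in\mathcal S_\rho(L)$, invoke the $r=0$ case to pick $f_0\in\inj_\Lambda(\rho,\tau)(L)$ over $U_0$, and lift along the nilpotent thickening $\Spec L\hookrightarrow\Spec D_r$ to obtain $f\in\inj_\Lambda(\rho,\tau)(D_r)$ with $\pi(f)=U$. For $r=\infty$, construct a compatible sequence $f_r\in\inj_\Lambda(\rho,\tau)(D_r)$ by iterating this formal lifting along each $\Spec D_r\hookrightarrow\Spec D_{r+1}$; since every $f_r$ has underlying topological point $f_0$, all of them factor through a common affine open $\Spec A\subset\inj_\Lambda(\rho,\tau)$, and the compatible $K$-algebra homomorphisms $A\to D_r$ assemble to $A\to D_\infty=\varprojlim_r D_r$, giving $f\in\inj_\Lambda(\rho,\tau)(D_\infty)$ mapping to $U$.

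Finally, the identification of $\mathcal S_\rho(L)$ is immediate: the $r=0$ surjectivity forces $U=\Ima(f)\cong M_\rho^L$ for some $f\in\inj_\Lambda(\rho,\tau)(L)$, while conversely any submodule $U\subseteq M^L$ isomorphic to $M_\rho^L$ produces a preimage in $\inj_\Lambda(\rho,\tau)(L)$ by composing an isomorphism with the inclusion. The main obstacle in the argument is the $r=0$ step over a non-algebraically closed field, where one must appeal to Auslander--Reiten descent to pass from the geometric fibre to $L$; once that is in place, the higher-$r$ cases are purely formal consequences of the smoothness of $\pi$.
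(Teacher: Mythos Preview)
Your proof is correct, but the paper takes a shorter and more modular route. Rather than re-running a Lemma~\ref{Lem:orbit-points}-style argument for $\pi$ itself, the paper invokes Lemma~\ref{Lem:orbit-points} directly to get $\GL_d(D_r)\to\Orb_{\GL_d}(\rho)(D_r)$ onto for all $r\in[0,\infty]$, then uses the pull-back square~\eqref{eq:tilde-S-rho} established in the preceding theorem to transfer this to $\GL_d(D_r)\times\inj_\Lambda(\rho,\tau)(D_r)\to\widetilde{\mathcal S}_\rho(D_r)$, and finally uses local triviality of the principal $\GL_d$-bundle $\widetilde{\mathcal S}_\rho\to\mathcal S_\rho$ over the local ring $D_r$; since the composite factors through the second projection, $\inj_\Lambda(\rho,\tau)(D_r)\to\mathcal S_\rho(D_r)$ is onto. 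Your approach trades this pull-back reduction for a direct Auslander--Reiten descent at $r=0$ followed by formal smoothness of $\pi$ for $r\geq1$; this is more self-contained and makes the smoothness hypothesis do the work explicitly, while the paper's version is essentially three lines because it recycles the machinery already assembled.
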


\begin{proof}
We know that $\GL_d(D_r)\to\Orb_{\GL_d}(\rho)(D_r)$ is onto by \hyperref[Lem:orbit-points]{Lemma \ref*{Lem:orbit-points}}. Thus, using the pull-back diagram \eqref{eq:tilde-S-rho}, the morphism $\GL_d(D_r)\times\inj_\Lambda(\rho,\tau)(D_r)\to\widetilde{\mathcal S}_\rho(D_r)$ is onto. Finally, since $\widetilde{\mathcal S}_\rho\to\mathcal S_\rho$ is a principal $\GL_d$-bundle, it is locally trivial, and hence $\widetilde{\mathcal S}_\rho(R)\to\mathcal S_\rho(R)$ is onto for all local rings $R$.
\end{proof}

\begin{Lem}\label{Lem:S-rho-irred-cpt}
Let $\rho\in\rep_\Lambda^d(K)$ and suppose that there is an embedding $M_\rho\hookrightarrow M$. If the map $\Hom_\Lambda(M_\rho,M)\to\Hom_\Lambda(M_\rho,M/M_\rho)$ is surjective, then $\overline{\mathcal S}_\rho$ is an irreducible component of $\Gr_\Lambda\binom Md$.

In particular, if $\Ext^1_\Lambda(M_\rho,M_\rho)=0$, then $\overline{\mathcal S}_\rho$ is an irreducible component of $\Gr_\Lambda\binom Md$ (and also $\overline{\Orb_{\GL_d}(\rho)}$ is an irreducible component of $\rep_\Lambda^d$).
\end{Lem}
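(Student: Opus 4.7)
The plan is to apply Lemma \ref{Lem:irred-cpt-non-sing} to the locally-closed immersion $\mathcal S_\rho \hookrightarrow \Gr_\Lambda\binom Md$ at the $K$-rational point $U$ arising from the given embedding $M_\rho \hookrightarrow M$. By the preceding theorem $\mathcal S_\rho$ is smooth and irreducible, so $U$ is automatically a non-singular point of $\mathcal S_\rho$; it therefore suffices to show that the induced injection $T_U \mathcal S_\rho \hookrightarrow T_U \Gr_\Lambda\binom Md$ is an equality.

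To establish this, I would exploit the surjection on tangent spaces coming from the principal $\Aut_\Lambda(M_\rho)$-bundle $\pi \colon \inj_\Lambda(\rho,\tau) \to \mathcal S_\rho$ of the preceding theorem, together with the identification $T_U \Gr_\Lambda\binom Md \cong \Hom_\Lambda(U, M/U)$ of the Voigt-type sequence in Corollary \ref{Cor:Voigt3}. Let $f \colon M_\rho \hookrightarrow M$ represent $U$. The composition $\inj_\Lambda(\rho,\tau) \to \mathcal S_\rho \hookrightarrow \Gr_\Lambda\binom Md$, $f' \mapsto \Ima(f')$, has differential at $f$ given by
\[
\Hom_\Lambda(M_\rho, M) \to \Hom_\Lambda(U, M/U), \qquad \phi \mapsto \big(u = f(x) \mapsto \phi(x) + U\big),
\]
as one sees by computing $\Ima(f + \phi t) \subset M \otimes D_1$ in the local charts of the Grassmannian. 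Under the isomorphism $\Hom_\Lambda(U, M/U) \cong \Hom_\Lambda(M_\rho, M/M_\rho)$ induced by $f$, this is exactly the canonical map induced by the quotient $M \to M/M_\rho$, which by hypothesis is surjective. Since this differential factors through $T_U \mathcal S_\rho$, the image of $T_U \mathcal S_\rho$ in $T_U \Gr_\Lambda\binom Md$ is already all of $T_U \Gr_\Lambda\binom Md$, forcing the injection to be an equality. Lemma \ref{Lem:irred-cpt-non-sing} then gives the desired conclusion.

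For the final assertion, if $\Ext^1_\Lambda(M_\rho, M_\rho) = 0$, the long exact sequence obtained by applying $\Hom_\Lambda(M_\rho, -)$ to $0 \to M_\rho \to M \to M/M_\rho \to 0$ furnishes the required surjectivity of $\Hom_\Lambda(M_\rho, M) \to \Hom_\Lambda(M_\rho, M/M_\rho)$, and Lemma \ref{Lem:orbit-irred-cpt} simultaneously tells us that $\overline{\Orb_{\GL_d}(\rho)}$ is an irreducible component of $\rep_\Lambda^d$. The main obstacle in the argument is really the modest bookkeeping needed to identify the differential of $f' \mapsto \Ima(f')$ with the evident restriction map on homomorphism spaces; once that identification is in hand, everything else is a direct application of results already developed in the paper.
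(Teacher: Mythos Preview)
Your proof is correct and follows essentially the same route as the paper: both arguments use the morphism $\inj_\Lambda(\rho,\tau)\to\Gr_\Lambda\binom Md$ and identify its differential at $f$ with the quotient map $\Hom_\Lambda(M_\rho,M)\to\Hom_\Lambda(M_\rho,M/M_\rho)$. The only cosmetic difference is that the paper invokes \hyperref[Lem:sep-non-sing]{Lemma \ref*{Lem:sep-non-sing}} directly (surjectivity of $d_f$ at one point propagates to an open dense set since $\inj_\Lambda(\rho,\tau)$ is irreducible), whereas you bypass that lemma and apply \hyperref[Lem:irred-cpt-non-sing]{Lemma \ref*{Lem:irred-cpt-non-sing}} straight to the smooth irreducible subscheme $\mathcal S_\rho$; since \hyperref[Lem:sep-non-sing]{Lemma \ref*{Lem:sep-non-sing}} itself reduces to \hyperref[Lem:irred-cpt-non-sing]{Lemma \ref*{Lem:irred-cpt-non-sing}}, the two arguments are interchangeable.
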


\begin{proof}
We have the morphism $\inj_\Lambda(\rho,\tau)\to\Gr_\Lambda\binom Md$, having image $\mathcal S_\rho$. The result therefore follows immediately from \hyperref[Lem:sep-non-sing]{Lemma \ref*{Lem:sep-non-sing}}.
\end{proof}

Dually we may fix the isomorphism class of the factor module of $M$, obtaining a scheme $\mathcal Q_{\bar\rho}$ for $\bar\rho\in\rep_\Lambda^{m-d}(K)$. This has $L$-valued points for a field $L$ those $U\subset M^L$ such that $M^L/U\cong M_{\bar\rho}^L$. All the results of this section immediately transfer to this situation.

\subsection{Examples}

It is easy to give examples where the Grassmannian of submodules is non-reduced, even generically non-reduced.

Let $\Lambda=K[X]/(X^4)$, so that $\rep_\Lambda^n(R)=\{\rho\in\bM_n(R):\rho^4=0\}$. For $i=1,2,3,4$ we set $\tau_i\in\rep_\Lambda^i(K)$ to be the matrix having ones on the upper-diagonal, and write $S_i$ for the corresponding (indecomposable) module.

\begin{enumerate}
\item We have
\[ \Gr_\Lambda\binom{S_2}{1} \cong \Proj(K[x,y]/(y^2)), \]
so consists of a single non-reduced point.

For, $\rep\inj_\Lambda^{(1,S_2)}(R)$ consists of those triples $(\rho,x,y)\in R^3$ such that $x,y$ generate the unit ideal in $R$, $\rho^4=0$, and
\[ \begin{pmatrix}0&1\\0&0\end{pmatrix}\begin{pmatrix}x\\y\end{pmatrix}=\begin{pmatrix}x\\y\end{pmatrix}\rho. \]
It follows that $y=\rho x$, $y^2=0$ and $x$ is invertible. Thus the closed immersion $\rep\inj_\Lambda^{(1,S_2)}\to\inj_{2\times1}$ has image $X$, where
\[ X(R) := \big\{ (x,y)\in R^2 : x\textrm{ invertible,  } y^2=0 \big\}. \]
Now apply the usual description of
\[ \inj_{2\times1}/\GL_1\cong\bP^1=\Proj\big(K[x,y]\big). \]

\item We also have
\[ \Gr_\Lambda\binom{S_1\oplus S_2}{1} \cong \Proj\big(K[x,y,z]/(xz,z^2)\big), \]
so looks like $\bP^1$ with a single non-reduced point corresponding to the embedding $\binom{0}{1}\colon S_1\hookrightarrow S_1\oplus S_2$.

To see this, we first observe that the closed immersion
\[ \iota \colon \rep\inj_\Lambda^{(1,S_1\oplus S_2)} \to \inj_{3\times1} \]
has image the closed subscheme $X$, where
\[ X(R) = \big\{ (x,y,z)\in R^3 : (x,y,z)^{\mathrm{tr}}\in\inj_{3\times 1}(R) : xz=z^2=0 \big\}. \]

For, $\rep\inj_\Lambda^{(1,S_1\oplus S_3)}(R)$ consists of those quadruples $(\rho,x,y,z)\in R^4$ such that $x,y,z$ generate the unit ideal in $R$, $\rho^4=0$, and
\[ \begin{pmatrix}0&0&0\\0&0&1\\0&0&0\end{pmatrix}\begin{pmatrix}x\\y\\z\end{pmatrix}=\begin{pmatrix}x\\y\\z\end{pmatrix}\rho. \]
The latter condition is equivalent to $x\rho=z\rho=0$ and $z=y\rho$. Since $x,y,z$ generate the unit ideal we deduce from $x\rho=z\rho=y\rho^2=0$ that $\rho^2=0$, and hence that $xz=z^2=0$, so $\iota$ maps to $X$. For the inverse note that $z$ is nilpotent, so $x$ and $y$ generate the unit ideal. If $ax+by=1$, then map $(x,y,z)$ to $(bz,x,y,z)$. This is well-defined, since $\rho=bz$ is the unique element of $R$ satisfying $x\rho=0$ and $y\rho=z$. Now apply the usual description of $\inj_{3\times1}/\GL_1\cong\bP^2=\Proj\big(K[x,y,z]\big)$.

We note that we can write this Grassmannian as $\overline{X_1\oplus X_2}$, where $X_1=\Gr_\Lambda\binom{S_1}{1}$ and $X_2=\Gr_\Lambda\binom{S_2}{0}$, both of which are (reduced) points. Setting $U=M=S_1$, $V=0$ and $N=S_2$, we have submodules $U\subset S_1$ and $V\subset S_2$, and the dimension conditions $\overline\Ext(U\updot,V\updot)=0=\overline\Ext(V\updot,U\updot)$ are satisfied. For, we have
\begin{align*}
\dim\Hom_\Lambda(U,N/V) &= 1, & \dim\Hom_\Lambda(V,M/U) &= 0\\
\dim\Hom_\Lambda(M,N) &= 1, & \dim\Hom_\Lambda(N,M) &= 1\\
\dim\Hom_{\Lambda(2)}(U\updot,V\updot) &= 0, & \dim\Hom_{\Lambda(2)}(V\updot,U\updot) &= 1.
\end{align*}

\item More interestingly we have
\[ \Gr_\Lambda\binom{S_1\oplus S_3}{2} \cong V := \Proj\big(K[x,y,s,t]/(xt-ys,s^3,st,t^3)\big), \]
so looks like $\bP^1$, but generically non-reduced.

To see this, let $\widetilde V\subset\inj_{4\times1}$ be the preimage of $V$, so the closed subscheme having $R$-valued points those quadruples $(x,y,s,t)\in R^4$ such that $x,y,s,t$ generate the unit ideal in $R$, and
\[ xt=ys, \quad s^3=st=t^3=0. \]
We construct a morphism
\[ \widetilde F \colon \rep\inj_\Lambda^{(2,S_1\oplus S_3)} \to \widetilde V \subset\inj_{4\times 1}, \quad (\rho,f) \mapsto (x_{12},x_{23},x_{13},x_{24}), \]
where $x_{ij}=\Delta_{ij}(f)$ is a 2 minor of $f$.

To see that the image of $\widetilde F$ lies in $\widetilde V$ note that if
\[ f = \begin{pmatrix}a&b\\c&d\\\alpha&\beta\\\gamma&\delta\end{pmatrix}, \]
then
\[ f\rho = \begin{pmatrix}0&0&0&0\\0&0&1&0\\0&0&0&1\\0&0&0&0\end{pmatrix}f = \begin{pmatrix}0&0\\\alpha&\beta\\\gamma&\delta\\0&0\end{pmatrix}. \]
Thus
\[ (a,b)\rho=0, \quad (c,d)\rho=(\alpha,\beta), \quad (\alpha,\beta)\rho=(\gamma,\delta), \quad (\gamma,\delta)\rho=0. \]
Now, $x_{13}=a\beta-b\alpha$, so upon substituting $(\alpha,\beta)=(c,d)\rho$ we see that $x_{13}=x_{12}T$ for $T:=\mathrm{Tr}(\rho)$. Similarly
\[ x_{14}=x_{13}T, \quad x_{24}=x_{23}T \quad\textrm{and}\quad x_{34}=x_{24}T. \]
Also, using that $(a,b)\rho=0=(\gamma,\delta)\rho$, we obtain $x_{14}\rho_{ij}=0=x_{34}T$, where $\rho=(\rho_{ij})$. It follows that $s^3=x^2x_{14}T=0$ and similarly $t^3=0$. Finally, we have
\[ st = x_{14}y = -\det\begin{pmatrix}a&b\\\gamma&\delta\end{pmatrix}\begin{pmatrix}d&\beta\\c&\alpha\end{pmatrix} = -\det\begin{pmatrix}ad+bc&a\beta+b\alpha\\\gamma d+\delta c&\gamma\beta+\delta\alpha\end{pmatrix}. \]
Substituting for $(\alpha,\beta)=(c,d)\rho$ and using $(a,b)\rho=0=(\gamma,\delta)\rho$, we get
\begin{align*}
st &= (\rho_{11}-\rho_{22})\det\begin{pmatrix}ad+bc&ad-bc\\\gamma d+\delta c&\gamma d-\delta c\end{pmatrix} = 2(\rho_{11}-\rho_{22})\det\begin{pmatrix}ad&bc\\\gamma d&\delta c\end{pmatrix}\\
&= 2cd(\rho_{11}-\rho_{22})x_{14} = 0.
\end{align*}
Thus the relations for $\widetilde V$ are satisfied.

Next, since $\widetilde F(g\cdot(\rho,f))=\det(g)^{-1}\widetilde F(\rho,f)$ for all $g\in\GL_2(R)$ we see that $\widetilde F$ induces a morphism
\[ F \colon \Gr_\Lambda\binom{S_1\oplus S_3}{2} \to V. \]
We prove that this is an isomorphism by defining the inverse locally. First note that if $(x,y,s,t)\in\widetilde V(R)$, then since $s,t$ are nilpotent we must have that $x,y$ generate the unit ideal. Thus $\widetilde V$ is covered by the distinguished open affines $D(x)$ and $D(y)$. If $x$ is invertible, then we have a morphism
\[ \widetilde G_x \colon D(x) \to \rep\inj_\Lambda^{(2,S_1\oplus S_3)}, \quad (x,y,s,t) \mapsto (\rho,f) \]
where
\[ \rho = \begin{pmatrix}s/x&y\\0&0\end{pmatrix} \quad\textrm{and}\quad
f = \begin{pmatrix}0&-x\\1&0\\s/x&y\\s^2/x^2&t\end{pmatrix}, \]
whereas if $y$ is invertible, then we have a morphism
\[ \widetilde G_y \colon D(y) \to \rep\inj_\Lambda^{(2,S_1\oplus S_3)}, \quad (x,y,s,t) \mapsto (\rho,f) \]
where
\[ \rho = \begin{pmatrix}0&y\\-t^2/y^3&t/y\end{pmatrix} \quad\textrm{and}\quad
f = \begin{pmatrix}s/y&-x\\1&0\\0&y\\-t^2/y^2&t\end{pmatrix}. \]
For $\lambda\in\GL_1(R)=R^\times$ set $g:=\begin{pmatrix}1&0\\0&\lambda\end{pmatrix}\in\GL_2(R)$. Then
\begin{align*}
\widetilde G_x\big(\lambda^{-1}(x,y,s,t)\big) &= g \cdot \widetilde G_x(x,y,s,t)
\intertext{and similarly}
\widetilde G_y\big(\lambda^{-1}(x,y,s,t)\big) &= g \cdot \widetilde G_y(x,y,s,t).
\end{align*}
Also, if both $x$ and $y$ are invertible, then $s^2=t^2=0$ and hence
\[ \widetilde G_y(x,y,s,t) = \begin{pmatrix}1&0\\s/xy&1\end{pmatrix}\cdot \widetilde G_x(x,y,s,t). \]
It follows that the morphisms $\widetilde G_x,\widetilde G_y$ induce morphisms from open subsets of $V$ to $\Gr_\Lambda\binom{S_1\oplus S_3}{2}$, and that they glue to give a morphism $G$ defined on all of $V$.

Finally, the morphisms $F$ and $G$ are mutually inverse. For, it is clear that $\widetilde F\widetilde G_x$ is the identity on $D(x)$, and similarly that $\widetilde F\widetilde G_y$ is the identity on $D(y)$, so $FG=\mathrm{id}$. On the other hand, given $(\rho,f)$, if $x_{12}$ is invertible, then there exists $g\in\GL_2(R)$ such that
\[ g\cdot\rho = \begin{pmatrix}s&y\\0&0\end{pmatrix}, \quad fg^{-1} = \begin{pmatrix}0&-1\\1&0\\s&y\\s^2&ys\end{pmatrix}, \]
so $g\cdot(\rho,f)=\widetilde G_x(1,y,s,ys)$. It follows that $\widetilde G_x\widetilde F(\rho,f)=h\cdot(\rho,f)$, where
\[ h := \begin{pmatrix}1&0\\0&\det(g)^{-1}\end{pmatrix}g. \]
Similarly, if $x_{23}$ is invertible, then we can find $g\in\GL_2(R)$ with $g\cdot(\rho,f)=\widetilde G_y(x,1,xt,t)$, whence  $\widetilde G_y\widetilde F(\rho,f)=h\cdot(\rho,f)$, where $h$ is again defined from $g$ as above. We conclude that $GF=\mathrm{id}$ as well.

We also observe that the Grassmannian cannot be decomposed further. For, over a field $L$, the $\Lambda^L(2)$-representation $\widetilde G_y(x,1,0,0)$ is isomorphic to
\[ \begin{pmatrix}0&-x\\1&0\\0&1\\0&0\end{pmatrix} \colon S_2 \hookrightarrow S_1\oplus S_3, \]
which is indecomposable.

\item Let $\Lambda=KQ_2$ be the algebra of upper-triangular matrices in $\bM_2(K)$ and use the standard matrix idempotents $E_{11}$ and $E_{22}$. Then $\rep_\Lambda^{(d,e)}\cong\bM_{e\times d}$, given by the image of $E_{12}$. There are three indecomposable $\Lambda$-modules up to isomorphism: the two simples $S_1$ and $S_2$ and the indecomposable projective-injective module $T$, of dimension vectors $(1,0)$, $(0,1)$ and $(1,1)$ respectively.

We take the representation $\tau\in\rep_\Lambda^{(2,2)}(K)$ given by the matrix $\begin{pmatrix}1&0\\0&0\end{pmatrix}$, whose corresponding module is $M\cong S_1\oplus S_2\oplus T$, and consider submodules of dimension vector $(1,1)$. Then
\[ \rep\inj_\Lambda^{((1,1),M)}(R) = \{ (\rho,f,f')\in R\times\inj_{2\times 1}(R)^2, \ f'\rho=\tau f \}, \]
the group $\GL_{(1,1)}:=\GL_1\times\GL_1$ acts via
\[ (g,h)\cdot(\rho,f,f') = (h\rho g^{-1},fg^{-1},f'h^{-1}), \]
and
\[ \Gr_\Lambda\binom{M}{(1,1)} := \rep\inj_\Lambda^{(1,1),M}/\GL_{(1,1)}. \]
Writing $f=\binom ab$ and $f'=\binom{a'}{b'}$, we see that $\rep\inj_\Lambda^{((1,1),M)}(R)$ consists of the quintuples $(\rho,a,b,a',b')\in R^5$ such that $a,b$ generate the unit ideal, as do $a',b'$, and also
\[ a = a'\rho, \quad b'\rho=0. \]
Setting
\[ \widetilde V := \Spec\big(K[x,y,z]/(xz)\big) \quad\textrm{and}\quad V := \Proj\big(K[x,y,z]/(xz)\big), \]
we can then prove as above that the morphism
\[ \rep\inj_\Lambda^{((1,1),M)} \to \widetilde V, \quad (\rho,a,b,a',b') \mapsto (aa',ba',bb') \]
induces an isomorphism
\[ \Gr_\Lambda\binom{M}{(1,1)} \xrightarrow\sim V. \]
Thus the Grassmannian is a union of two copies of $\bP^1$ intersecting in a point, with its reduced scheme structure.

The two irreducible components can be decomposed as
\[ \overline{X_1\oplus X_2\oplus X_3} \quad\textrm{and}\quad \overline{Y_1\oplus Y_2\oplus Y_3}, \]
where
\[ X_1 = \Gr_\Lambda\binom{S_1}{(1,0)}, \quad X_2 = \Gr_\Lambda\binom{S_2}{(0,1)}, \quad X_3 = \Gr_\Lambda\binom{T}{(0,0)} \]
and
\[ Y_1 = \Gr_\Lambda\binom{S_1}{(0,0)}, \quad Y_2 = \Gr_\Lambda\binom{S_2}{(0,0)}, \quad Y_3 = \Gr_\Lambda\binom{T}{(1,1)}, \]
all six of which are (reduced) points.
\end{enumerate}

\section{Flags of submodules}\label{Sec:flags}

More generally one can consider flags of $\Lambda$-modules of length $r$; that is, we fix a representation $\tau\in\rep_\Lambda^m(K)$ and a sequence $d\updot=(d^1,\ldots,d^r)$ with $0\leq d^1\leq\cdots\leq d^r\leq m$ and, writing $M=M_\tau$, consider the scheme
\[ \Fl_\Lambda\binom M{d\updot} (R) := \big\{ (U\updot):U^i\in\Gr_\Lambda\binom M{d^i}:U^i\subset U^{i+1} \big\}. \]
\textit{A priori} this is more general than Grassmannians, but by changing the algebra, we can view it as a special case. Recall that $\Lambda(r)$ is the algebra of upper-triangular matrices inside $\bM_r(\Lambda)$, and the elementary matrices $E_{ii}$ form a complete set of orthogonal idempotents such that $E_{ii}\Lambda(r)E_{ii}\cong\Lambda$. A module for $\Lambda(r)$ can be regarded as a sequence $(U\updot,f\updot)$, where $f^i\colon U^i\to U^{i+1}$ is a homomorphism of $\Lambda$-modules. Fixing the dimension vector $d\updot$ is then the same as fixing $\dim U^i=d^i$ for all $i$.

As for Grassmannians, we identify a $\Lambda$-module $M$ with the $\Lambda(r)$-module having $U^i=M$ and $f^i=\mathrm{id}_M$. An element of $\Gr_{\Lambda(r)}\binom{M}{d\updot}(R)$ thus consists of a sequence $U\updot$ such that $U^i\subset M^R$ is a $\Lambda^R$-submodule which is furthermore a direct summand of rank $d^i$ as an $R$-module, and also $U^i\subset U^{i+1}$ for all $i$. It follows that
\[ \Fl_\Lambda\binom M{d\updot} \cong \Gr_{\Lambda(r)}\binom M{d\updot}. \]

We can now apply all the results of the previous section to the schemes $\Fl_\Lambda\binom M{d\updot}$. In particular, the tangent space at a flag $U\updot$ equals
\[ T_{U\updot}\Fl_\Lambda\binom M{d\updot} \cong \Hom_{\Lambda^L(r)}(U\updot,M^L/U\updot), \]
where $M^L/U\updot$ is the cokernel of $U\updot\hookrightarrow M^L$, so is given by the sequence of epimorphisms
\[ M^L/U^1 \twoheadrightarrow M^L/U^2 \twoheadrightarrow \cdots \twoheadrightarrow M^L/U^r. \]

Note that, in the analogue of Voigt's Lemma for Grassmannians, \hyperref[Cor:Voigt3]{Corollary \ref*{Cor:Voigt3}}, we interpreted the differential $d\Theta$ of the direct sum morphism as a morphism in the long exact sequence for $\Hom$ in the category of $\Lambda(2)$-modules. For flags of length $r$ this becomes a morphism in the long exact sequence for $\Hom$ in the category of $\Lambda(r+1)$-modules.

More precisely, given a flag $U\updot$ of length $r$ inside $M^L$, we can extend this to a flag $\widetilde U\updot$ of length $r+1$ by adjoining the inclusion $U^r\hookrightarrow M^L$. Then, given $U\updot\in\Fl_\Lambda\binom M{d\updot}(L)$ and $V\updot\in\Fl_\Lambda\binom N{e\updot}(L)$, we have
\[ \Hom_{\Lambda^L}(M^L,N^L) \cong \Hom_{\Lambda^L(r+1)}(\widetilde U\updot,N^L) \]
and
\[ \Hom_{\Lambda^L(r)}(U\updot,N^L/V\updot) \cong \Hom_{\Lambda^L(r+1)}(\widetilde U\updot,N^L/\widetilde V\updot), \]
and hence the analogue for Voigt's Lemma becomes the following.

\begin{Prop}
Let $U\updot\in\Fl_\Lambda\binom M{d\updot}(L)$ and $V\updot\in\Fl_\Lambda\binom N{e\updot}(L)$. Then the map
\[ \Hom_{\Lambda^L}(M^L,N^L) \to \Hom_{\Lambda^L(r)}(U\updot,N^L/V\updot), \]
can be indentified with the middle map in the exact sequence
\begin{multline*}
0 \to \Hom_{\Lambda^L(r+1)}(\widetilde U\updot,\widetilde V\updot) \to \Hom_{\Lambda^L(r+1)}(\widetilde U\updot,N^L)\\
\to \Hom_{\Lambda^L(r+1)}(\widetilde U\updot,N^L/\widetilde V\updot) \to \Ext^1_{\Lambda^L(r+1)}(\widetilde U\updot,\widetilde V\updot).
\end{multline*}
Moreover, when $U\updot=V\updot$ this map coincides with the differential of the orbit map
\[ \Aut_{\Lambda^L}(M^L)\to\Fl_\Lambda\binom M{d\updot}, \]
and we can identify the above exact sequence with
\[ 0 \to \End_{\Lambda^L(r+1)}(\widetilde U\updot) \to \End_{\Lambda^L}(M^L) \to T_{U\updot}\Fl_\Lambda\binom M{d\updot} \to \Ext^1_{\Lambda^L(r+1)}(\widetilde U\updot,\widetilde U\updot). \]
\end{Prop}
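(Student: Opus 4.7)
The plan is to reduce everything to the Grassmannian case by viewing flags of length $r$ in a $\Lambda$-module as $\Lambda(r)$-submodules of a constant $\Lambda(r)$-module, and then interpreting the appearing $\Lambda(r)(2)$-modules as $\Lambda(r+1)$-modules. Under the identification $\Fl_\Lambda\binom M{d\updot}\cong\Gr_{\Lambda(r)}\binom M{d\updot}$ already spelled out in the section, $U\updot\subset M^L$ and $V\updot\subset N^L$ become $\Lambda^L(r)$-submodule inclusions. A $\Lambda^L(r)(2)$-module is a morphism of $\Lambda^L(r)$-modules, i.e.\ a ladder of two length-$r$ flags; and the particular ladders $(U\updot\hookrightarrow M^L)$ and $(V\updot\hookrightarrow N^L)$, in which the lower row is a constant flag built from a $\Lambda^L$-module, are precisely the $\Lambda^L(r+1)$-modules $\widetilde U\updot$ and $\widetilde V\updot$ obtained by appending the inclusion as the last step.

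First I would record the two bijections asserted before the exact sequence. For $\Hom_{\Lambda^L}(M^L,N^L)\cong\Hom_{\Lambda^L(r+1)}(\widetilde U\updot,N^L)$, note that the target carries the constant flag structure, so a $\Lambda^L(r+1)$-morphism out of $\widetilde U\updot$ is uniquely determined by its component at the last position $\widetilde U^{r+1}=M^L\to N^L$, any such component extending uniquely by precomposition with the structure maps of $\widetilde U\updot$. For $\Hom_{\Lambda^L(r)}(U\updot,N^L/V\updot)\cong\Hom_{\Lambda^L(r+1)}(\widetilde U\updot,N^L/\widetilde V\updot)$, observe that $N^L/\widetilde V\updot$ is the $\Lambda^L(r+1)$-module whose first $r$ terms form $N^L/V\updot$ and whose $(r+1)$-st term is zero; a $\Lambda^L(r+1)$-homomorphism from $\widetilde U\updot$ into it must be zero at level $r+1$, and is thus the same thing as a $\Lambda^L(r)$-homomorphism $U\updot\to N^L/V\updot$ compatible with the transition $U^r\to\widetilde U^{r+1}=M^L$, which is automatic.

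Second, apply $\Hom_{\Lambda^L(r+1)}(\widetilde U\updot,-)$ to the short exact sequence $0\to\widetilde V\updot\to N^L\to N^L/\widetilde V\updot\to 0$ in $\modcat\Lambda^L(r+1)$. The resulting four-term long exact sequence of $\Hom$ and $\Ext^1$ is, after the two identifications above, the exact sequence stated in the proposition, and its middle arrow is by construction the map $\Hom_{\Lambda^L}(M^L,N^L)\to\Hom_{\Lambda^L(r)}(U\updot,N^L/V\updot)$.

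For the last claim, specialise to $V\updot=U\updot\subset M^L=N^L$ so that $\widetilde V\updot=\widetilde U\updot$, and note that the composite $\End_{\Lambda^L}(M^L)\to\Hom_{\Lambda^L(r)}(U\updot,M^L/U\updot)\cong T_{U\updot}\Fl_\Lambda\binom M{d\updot}$ sends $\gamma\mapsto\bar\gamma\iota$, where $\iota\colon U\updot\to M^L$ is the inclusion and $\bar\gamma$ is the induced map on the quotient. This is precisely the formula for the differential of the orbit map at the identity, as already derived in \hyperref[Cor:Voigt3]{Corollary \ref*{Cor:Voigt3}} applied to the algebra $\Lambda^L(r)$ (or directly from the Grassmannian computation of $T_U\Gr_\Lambda\binom Md\cong\Hom_\Lambda(U,M/U)$ given earlier in \hyperref[Sec:Grassmannians]{Section \ref*{Sec:Grassmannians}}). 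The only real bookkeeping, which I expect to be the one mildly delicate step, is to check that under the above identifications the kernel of the orbit-map differential is $\End_{\Lambda^L(r+1)}(\widetilde U\updot)$; but this is immediate because an endomorphism of $M^L$ preserves $U\updot$ as a flag of submodules if and only if it lifts, through the structure maps of $\widetilde U\updot$, to a $\Lambda^L(r+1)$-endomorphism of $\widetilde U\updot$.
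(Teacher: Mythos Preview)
Your proposal is correct and follows precisely the route the paper intends: the paper gives no proof for this proposition beyond stating the two identifications $\Hom_{\Lambda^L}(M^L,N^L)\cong\Hom_{\Lambda^L(r+1)}(\widetilde U\updot,N^L)$ and $\Hom_{\Lambda^L(r)}(U\updot,N^L/V\updot)\cong\Hom_{\Lambda^L(r+1)}(\widetilde U\updot,N^L/\widetilde V\updot)$ just before it and declaring the result to be the analogue of \hyperref[Cor:Voigt3]{Corollary \ref*{Cor:Voigt3}}. Your argument fills in exactly those details, including the verification that the middle map of the long exact sequence for $\Hom_{\Lambda^L(r+1)}(\widetilde U\updot,-)$ agrees under these identifications with the natural map $\gamma\mapsto(\bar\gamma|_{U^i})_i$, and that for $U\updot=V\updot$ this is the differential of the orbit map with kernel $\End_{\Lambda^L(r+1)}(\widetilde U\updot)$.
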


Write $\overline\Ext(\widetilde U\updot,\widetilde V\updot)$ for the image of $\Hom_{\Lambda^L(r)}(U\updot,N^L/V\updot)$ in $\Ext^1_{\Lambda^L(r+1)}(\widetilde U\updot,\widetilde V\updot)$. As for Grassmannians, the function sending $(U\updot,V\updot)$ to $\dim_L\overline\Ext(\widetilde U\updot,\widetilde V\updot)$ is upper semi-continuous. If $K$ is algebraically closed, and $X\subset\Fl_\Lambda\binom M{d\updot}$ and $Y\subset\Fl_\Lambda\binom N{e\updot}$ are irreducible, then we denote by $\overline\ext(X,Y)$ the generic, or minimal, value of $\dim_L\overline\Ext(\widetilde U\updot,\widetilde V\updot)$ for $(U\updot,V\updot)\in X(L)\times Y(L)$.

We then have the following two theorems on irreducible components of flag varieties.

\begin{Thm}
Let $K$ be algebraically closed, $\Lambda$ a finitely-generated $K$-algebra, and $M$ and $N$ two $\Lambda$-modules. Let $X\subset\Fl_\Lambda\binom M{d\updot}$ and $Y\subset\Fl_\Lambda\binom N{e\updot}$ be irreducible components. Then the closure $\overline{X\oplus Y}$ of the image of $\Aut_\Lambda(M\oplus N)\times X\times Y\to\Fl_\Lambda\binom{M\oplus N}{d\updot+e\updot}$ is again an irreducible component if and only if $\overline\ext(X,Y)=0=\overline\ext(Y,X)$.
\end{Thm}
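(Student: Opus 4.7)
The strategy is to reduce to the Grassmannian case via the identification $\Fl_\Lambda\binom{M}{d\updot} \cong \Gr_{\Lambda(r)}\binom{M}{d\updot}$ established at the start of the section, and then to invoke the analogous theorem for Grassmannians (proved earlier in the excerpt) applied to the algebra $\Lambda(r)$. The plan is to view both $M$ and $N$ as $\Lambda(r)$-modules via identity structure maps, and note that the direct sum morphism for flags is then literally the same as the direct sum morphism for $\Lambda(r)$-Grassmannians. Indeed, $M \oplus N$ as a $\Lambda(r)$-module is just the direct sum with identity maps componentwise, and an automorphism of $M \oplus N$ as such a $\Lambda(r)$-module is the same as an automorphism as a $\Lambda$-module, so $\Aut_\Lambda(M \oplus N) = \Aut_{\Lambda(r)}(M \oplus N)$.

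With this identification in hand, the Grassmannian theorem immediately yields a criterion: $\overline{X \oplus Y}$ is an irreducible component of $\Fl_\Lambda\binom{M \oplus N}{d\updot + e\updot}$ if and only if the relevant extension groups $\overline\Ext^{\Lambda(r)(2)}(U\updot \subset M, V\updot \subset N)$ and $\overline\Ext^{\Lambda(r)(2)}(V\updot \subset N, U\updot \subset M)$ vanish generically on $X \times Y$, where these groups are computed in the category of $\Lambda(r)(2)$-modules (i.e., morphisms of $\Lambda(r)$-modules).

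Finally, I would invoke the proposition immediately preceding the theorem, which identifies these $\Lambda(r)(2)$-extension groups with the $\Lambda(r+1)$-extension groups $\overline\Ext(\widetilde U\updot, \widetilde V\updot)$ via the correspondence $(U\updot \subset M) \leftrightarrow \widetilde U\updot$ obtained by adjoining the inclusion $U^r \hookrightarrow M$ to the flag. This is precisely the definition of $\overline\ext(X, Y)$ used in the theorem statement, so the criteria coincide and the result follows.

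The main obstacle is the bookkeeping surrounding the identification of the abelian categories and their extension groups, together with verifying that the direct sum morphism on flag varieties genuinely corresponds to the direct sum morphism for $\Lambda(r)$-Grassmannians (including matching up the acting groups $\Aut_\Lambda(M \oplus N)$). Once this functorial identification is made carefully, there is essentially nothing new to prove beyond what is already established in the Grassmannian section, since the isomorphism of schemes $\Fl_\Lambda\binom{M}{d\updot} \cong \Gr_{\Lambda(r)}\binom{M}{d\updot}$ is natural in $M$ and in $d\updot$.
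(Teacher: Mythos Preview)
Your proposal is correct and is precisely the approach the paper takes: the paper does not even supply a separate proof for this theorem, having already noted that ``we can now apply all the results of the previous section to the schemes $\Fl_\Lambda\binom M{d\updot}$'' via the identification $\Fl_\Lambda\binom M{d\updot}\cong\Gr_{\Lambda(r)}\binom M{d\updot}$. Your unpacking of the bookkeeping --- that $\Aut_\Lambda(M\oplus N)=\Aut_{\Lambda(r)}(M\oplus N)$, that the direct sum morphisms coincide, and that the preceding proposition matches the $\Lambda(r)(2)$-extension groups with the $\Lambda(r+1)$-extension groups defining $\overline\ext(X,Y)$ --- is exactly the content the paper leaves implicit.
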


\begin{Thm}
Every irreducible component $X\subset\Fl_\Lambda\binom M{d\updot}$ can be written uniquely (up to reordering) as $X=\overline{X_1\oplus\cdots\oplus X_n}$, where $M\cong M_1\oplus\cdots\oplus M_n$ and $d\updot=d\updot_1+\cdots+d\updot_n$, and each $X_i\subset\Fl_\Lambda\binom{M_i}{d\updot_i}$ is an irreducible component such that for all $U\updot_i$ in an open dense subset of $X_i$, the corresponding $\Lambda^L(r+1)$-module $(U\updot_i\subset M_i^L)$ is indecomposable.
\end{Thm}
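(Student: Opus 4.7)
The plan is to proceed by direct analogy with the Grassmannian version (Theorem 6.7 in the paper), exploiting the identification $\Fl_\Lambda\binom M{d\updot}\cong\Gr_{\Lambda(r)}\binom M{d\updot}$ already established, together with the observation that the direct sum on flags corresponds precisely to the direct sum in the category of $\Lambda^L(r+1)$-modules of the form $(U\updot\subset M^L)$. In particular, a flag $(U\updot\subset M^L)$ is indecomposable as a $\Lambda^L(r+1)$-module if and only if it cannot be written as a non-trivial direct sum $(U\updot_1\subset M_1^L)\oplus(U\updot_2\subset M_2^L)$.

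For existence, I would argue by Noetherian induction on $\dim M$. If $X$ is not generically indecomposable, then on an open dense subset every flag decomposes non-trivially as a $\Lambda^L(r+1)$-module. Since $M$ has only finitely many decompositions $M\cong M'\oplus M''$ up to isomorphism (by Krull--Remak--Schmidt for $\Lambda$) and only finitely many relevant dimension-vector splittings $d\updot=d\updot'+d\updot''$, the scheme $X$ is covered by finitely many closed sets of the form $\overline{\Fl_\Lambda\binom{M'}{d\updot'}\oplus\Fl_\Lambda\binom{M''}{d\updot''}}\subset\Fl_\Lambda\binom M{d\updot}$. By irreducibility, $X$ lies entirely in one such closed set, whence $X=\overline{X_1\oplus X_2}$ for some irreducible components $X_1\subset\Fl_\Lambda\binom{M'}{d\updot'}$ and $X_2\subset\Fl_\Lambda\binom{M''}{d\updot''}$. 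Iterating gives a decomposition into generically indecomposable components.

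For uniqueness, suppose $X=\overline{X_1\oplus\cdots\oplus X_n}=\overline{Y_1\oplus\cdots\oplus Y_k}$ with all summands generically indecomposable. On an open dense subset $U\subset X$, every flag $(U\updot\subset M^L)$ admits two decompositions into indecomposable $\Lambda^L(r+1)$-modules, namely $\bigoplus_i(U\updot_i\subset M_i^L)\cong\bigoplus_j(V\updot_j\subset N_j^L)$. The Krull--Remak--Schmidt theorem applied to the category of $\Lambda^L(r+1)$-modules then gives $n=k$ and, after reordering, $(U\updot_i\subset M_i^L)\cong(V\updot_i\subset N_i^L)$, so in particular $M_i\cong N_i$ and $d\updot_i=e\updot_i$. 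As in the proof of Theorem 4.13, the projection $\Aut_\Lambda(M)\times X_1\times\cdots\times X_n\to X_i$ is open (this uses the associated-fibration structure already set up for Grassmannians), so $X_i\cap Y_i$ contains an open dense subset of each, forcing $X_i=Y_i$.

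The main point requiring care is the reduction to the $\Lambda(r+1)$-module Krull--Remak--Schmidt theorem: one must verify that a decomposition of $(U\updot\subset M^L)$ as a $\Lambda^L(r+1)$-module automatically yields a compatible decomposition $M\cong M'\oplus M''$ (of ordinary $\Lambda$-modules) together with a splitting of the flag, and conversely, so that the direct-sum operation used in the theorem statement agrees with direct sum in $\modcat\Lambda^L(r+1)$. This follows because the embedding $\widetilde{U\updot}\hookrightarrow M^L$ in the category of $\Lambda^L(r+1)$-modules obtained by adjoining $M^L$ at the top has the property that any idempotent endomorphism of $(U\updot\subset M^L)$ restricts to an idempotent on $M^L$, giving the decomposition of $M$; the openness of the projection onto each factor in the uniqueness step is then the second technical point, but it follows exactly as in the Grassmannian case once we write $\Fl_\Lambda\binom M{d\updot}$ as a $\GL$-quotient of the affine scheme of flag-representations.
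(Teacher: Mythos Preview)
Your proposal is correct and follows essentially the same approach as the paper. The paper does not give a separate proof for the flag case at all: it simply invokes the identification $\Fl_\Lambda\binom M{d\updot}\cong\Gr_{\Lambda(r)}\binom M{d\updot}$ and declares that all results of the Grassmannian section transfer, the Grassmannian decomposition theorem itself being proved by exactly the Krull--Remak--Schmidt plus open-projection argument you have written out (applied there to $\Lambda(2)$, here to $\Lambda(r)(2)\cong\Lambda(r+1)$).
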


\bibliographystyle{amsplain}

\end{document}